\theoremstyle{plain}
\newtheorem{theorem}{Theorem}[section]
\newtheorem{proposition}[theorem]{Proposition}
\newtheorem{lemma}[theorem]{Lemma}
\newtheorem{conjecture}[theorem]{Conjecture}
\theoremstyle{remark}
\newtheorem{remark}[theorem]{Remark}
\theoremstyle{definition}
\newtheorem{definition}[theorem]{Definition}
\newtheorem{example}[theorem]{Example}
\newtheorem{hypothesis}[theorem]{Hypothesis}
\newtheorem*{acknowledgements}{Acknowledgements}
\font\russ=wncyr10  1
\def\sha{\hbox{\russ\char88}}
\DeclareMathOperator{\Gal}{Gal}
\DeclareMathOperator{\Hom}{Hom}
\DeclareMathOperator{\N}{N}
\DeclareMathOperator{\im}{im}
\newcommand{\bQ}{\mathbb{Q}}
\newcommand{\bZ}{\mathbb{Z}}
\newcommand{\QQ}{\mathbb{Q}}
\newcommand{\cA}{\mathcal{A}}
\newcommand{\cB}{\mathcal{B}}
\newcommand{\cD}{\mathcal{D}}
\newcommand{\cF}{\mathcal{F}}
\newcommand{\cG}{\mathcal{G}}
\newcommand{\cH}{\mathcal{H}}
\newcommand{\cI}{\mathcal{I}}
\newcommand{\cK}{\mathcal{K}}
\newcommand{\cM}{\mathcal{M}}
\newcommand{\cN}{\mathcal{N}}
\newcommand{\cO}{\mathcal{O}}
\newcommand{\cP}{\mathcal{P}}
\newcommand{\cR}{\mathcal{R}}
\newcommand{\cS}{\mathcal{S}}
\newcommand{\cT}{\mathcal{T}}
\newcommand{\cW}{\mathcal{W}}
\newcommand{\fq}{\mathfrak{q}}
\newcommand{\fp}{\mathfrak{p}}
\newcommand{\fn}{\mathfrak{n}}
\newcommand{\fm}{\mathfrak{m}}
\newcommand{\CC}{\mathbb{C}}
\newcommand{\FF}{\mathbb{F}}
\newcommand{\GG}{\mathbb{G}}
\newcommand{\Q}{\mathbb{Q}}
\newcommand{\RR}{\mathbb{R}}
\newcommand{\ZZ}{\mathbb{Z}}
\newcommand{\Z}{\mathbb{Z}}
\newcommand{\bz}{\mathbb{Z}}
\begin{document}

\title[]{On the theory of higher rank \\
Euler, Kolyvagin and Stark systems, III:\\
applications}

\author{David Burns, Ryotaro Sakamoto and Takamichi Sano}

\begin{abstract} In an earlier article we proved the existence of a canonical Kolyvagin derivative homomorphism between the modules of Euler and Kolyvagin systems (in any given rank) that are associated to $p$-adic representations over number fields. We now explain how the existence of such a homomorphism leads to new results on the structure of the Selmer modules of Galois representations over Gorenstein orders and to a strategy for verifying (refinements of) the Tamagawa number conjecture of Bloch and Kato. We describe concrete applications relating to the multiplicative group over arbitrary number fields and to elliptic curves over abelian extensions of the rational numbers.
\end{abstract}

\address{King's College London,
Department of Mathematics,
London WC2R 2LS,
U.K.}
\email{david.burns@kcl.ac.uk}

\address{Graduate School of Mathematical Sciences, The University of Tokyo,
3-8-1 Komaba, Meguro-Ku, Tokyo, 153-8914, Japan}
\email{sakamoto@ms.u-tokyo.ac.jp}

\address{Osaka City University,
Department of Mathematics,
3-3-138 Sugimoto\\Sumiyoshi-ku\\Osaka\\558-8585,
Japan}
\email{sano@sci.osaka-cu.ac.jp}

\thanks{Version of February 2019}

\maketitle

\tableofcontents

\section{Introduction}

\subsection{Discussion of the main results}In an earlier article \cite{bss} we proved the existence, under a variety of technical hypotheses on a $p$-adic Galois representation $T$, of a canonical `higher Kolyvagin derivative' homomorphism between the modules of Euler and Kolyvagin systems that are associated to $T$ in any given rank, as had been conjectured to exist by Mazur and Rubin.

This homomorphism was constructed in the setting of representations $T$ that are free with respect to the action of an arbitrary Gorenstein $\Z_p$-order $\mathcal{R}$ and, in particular, implied that, under the stated hypotheses, higher rank Euler, Kolyvagin and Stark systems could control the $\mathcal{R}$-module structures of Selmer modules associated to the representation.

This article is a natural sequel to \cite{bss} and has three main purposes.

Firstly, we will prove that, for both the canonical and unramified Selmer structures, all of the technical hypotheses that are necessary to develop the general theory of \cite{bss} essentially follow from the same `standard' hypotheses on Galois representations that are introduced by Mazur and Rubin in \cite{MRkoly} in order to develop the classical theory of (rank one) Kolyvagin systems. This shows that the theory developed in \cite{bss} can be used in natural arithmetic settings to study Selmer modules that are endowed with a natural action of a Gorenstein order.

Secondly, with potential applications to leading term conjectures in mind, we explain how the constructions and results in \cite{bss} can be combined with an analysis of the determinant modules of Galois cohomology complexes to develop a strategy for reducing the proof of natural equivariant refinements of the Tamagawa number conjecture of Bloch and Kato to the proof of the original (non-equivariant) conjecture, as formulated in \cite{bk} and later re-worked and extended by Fontaine and Perrin-Riou in \cite{fpr}.

Finally, we shall use these results to give the first arithmetic applications of the theory of higher rank Euler, Kolyvagin and Stark systems.

To give a little more detail, we note first that, to help the reader, we have listed the basic hypotheses under which we can develop the general theory of \cite{bss} in \S\ref{stand hyp sec}.

The proof that these explicit hypotheses are sufficient (and, in some cases, also necessary) to guarantee that all of the hypotheses stated in loc. cit. are satisfied requires a detailed analysis of Galois cohomology groups and is given in \S\ref{section euler}.

This analysis culminates in the statement of Theorem \ref{main} but, in the course of the argument, we will also establish several supplementary results that are of independent interest.

For example, we will describe an explicit, and usable, criterion for the higher Kolyvagin derivative homomorphism constructed in \cite{bss} to be surjective and hence for the structure of Selmer modules over Gorenstein orders to be controlled by a higher rank Kolyvagin system that arises as the derivative of a higher rank Euler system (see Theorem \ref{main}(v)).

This result essentially resolves in arbitrary rank the problem that is explicitly raised  by Mazur and Rubin in \cite[Question 5.3.22]{MRkoly} in the context of rank one Kolyvagin systems and which they comment `seems to be very difficult'.

In the course of proving Theorem \ref{main} we shall also give an explicit characterization of those representations over Gorenstein orders for which there exists a suitable analogue of the `core vertices' that plays a key role in the (non-equivariant) theory of Kolyvagin systems that is developed by Mazur and Rubin in \cite{MRkoly}.

Such a result is essential for the development of any `equivariant' theory of Kolyvagin systems and is stated as Theorem \ref{free} (and see also Remarks \ref{core comp} and \ref{cartesian}).


Next we turn to discuss how this theory can be applied in the study of leading term conjectures and, in particular, how it can be used to reduce the proof of `equivariant' Tamagawa number conjectures to the proof of Tamagawa number conjectures over discrete valuation rings.

At the outset we recall that such reductions have in fact already occurred, implicitly at least, in numerous previous articles starting, we believe, with the article \cite{BG} of Greither and the first author.

However, when considering Tamagawa number conjectures with respect to coefficient rings that are not regular, such arguments have hitherto always relied on difficult Iwasawa-theoretic assumptions such as the validity of
 main conjectures, the vanishing of $\mu$-invariants and the validity of appropriate versions of the `refined class number formula' conjectured independently by Mazur and Rubin \cite{MRGm} and the third author \cite{sano} and of the `order of vanishing' conjecture for $p$-adic $L$-series due to Gross \cite{Gp}.

We would like to stress that the key feature of the approach we describe here is that it is completely independent of all such hypotheses.

Nevertheless, if one wishes to apply the theory developed in \cite{bss} in a given arithmetic setting, then one of course needs an appropriate supply of Euler, Kolyvagin or Stark systems of the appropriate rank.

As a first step in this direction, we will explain in \S\ref{bloch kato} how the general formalism of \cite{bk} and \cite{fpr} naturally leads to the conjectural construction of a family of higher rank Euler systems (in the sense of \cite{bss}) that are explicitly related to the leading terms of motivic $L$-series.

We show that this family of `Bloch-Kato Euler systems' provides a simultaneous (conjectural) generalization of several well-known families of Euler systems, including both the higher rank Euler system of `Rubin-Stark' elements that is constructed by Rubin in \cite{rubinstark} and the Euler system of `zeta elements' that is constructed by Kato in \cite{kato}.

In Theorems \ref{cor1} and \ref{hes2} we will then apply the general theory of \cite{bss} to show that, under certain mild hypotheses, the existence of the Bloch-Kato Euler system allows one both to prove that natural Iwasawa-theoretic Selmer modules are torsion and also to reduce the proof of equivariant Tamagawa number conjectures with respect to coefficient rings that are not regular to the proof of a collection of Tamagawa number conjectures over discrete valuation rings.

To help explain the interest of the latter result, we recall that whilst the equivariant Tamagawa number conjecture is, even in important arithmetic settings, still rather poorly understood and supported by little concrete evidence, in the same cases the corresponding Tamagawa number conjectures over discrete valuation rings can often coincide with classical theorems or conjectures and so are either known to be true or at least to be strongly supported by theory.

In the remainder of the article we shall then focus on two important special cases in order to describe the first arithmetic applications of this general theory.

Firstly, in \S\ref{section 4} we discuss the example that was originally considered by Rubin in \cite{rubinstark} and then subsequently by B\"uy\"ukboduk in \cite{Buyuk}.

In particular, the main result (Theorem \ref{main RS}) of this section shows that Rubin-Stark Euler systems control detailed aspects of the fine Galois structure of ideal class groups over abelian extensions $L/K$ of arbitrary number fields and also give rise to strong new evidence in support of the equivariant Tamagawa number conjecture for the untwisted Tate motive over $L/K$.

This result strongly refines all previous results in this direction including, in particular, the main results of B\"uy\"ukboduk in \cite{Buyuk} as well as of the earlier articles \cite{rubincrelle} and \cite{rubinstark} of Rubin.

For example, whilst the main result of \cite{Buyuk} deals only with initial  Fitting ideals and assumes, amongst other things, that $K$ is totally real and, crucially, that both $L/K$ has degree prime to $p$ and Leopoldt's conjecture is valid, our approach allows us to determine all Fitting ideals of the ideal class groups whilst simultaneously avoiding any hypotheses concerning either $K$, the degree of $L/K$ or the validity of Leopoldt's Conjecture.

Then, finally,  in \S\ref{section ell curves} we shall consider the case of elliptic curves $E$ over an abelian extension $F$ of $\QQ$ with $G := \Gal(F/\QQ)$.

In this case we shall first investigate the precise link between Kato's zeta elements and the relevant `Bloch-Kato elements' that we define in \S\ref{section 3}, and are thereby led to formulate a precise generalization and refinement of the well-known conjectures that are formulated by Perrin-Riou in \cite{PR} (see Conjecture \ref{kato conj} and Propositions \ref{kato prop} and \ref{kato cor}).

We shall then apply our general approach to prove, under certain very mild hypotheses, that if $E$ validates both Perrin-Riou's Conjecture and the `$p$-part' of the Birch and Swinnerton-Dyer Conjecture over $\QQ$, then Kato's Euler system completely determines all higher Fitting ideals over $\ZZ_p[G]$ of the strict $p$-Selmer group of $E$ over $F$ (see Theorem \ref{cor kato}) and also that the equivariant Tamagawa number is valid for the motive $h^1(E_{/F})(1)$ with respect to orders in $\QQ_p[G]$ that are in general far from regular and are in many cases equal to $\ZZ_p[G]$ (see Theorem \ref{theorem ell 2}).

We recall that there are by now many circumstances in which $E$ is known to validate both Perrin-Riou's Conjecture and the $p$-part of the Birch and Swinnerton-Dyer Conjecture over $\QQ$ (see Remark \ref{cor kato rem}) and, for this reason, Theorems \ref{cor kato} and \ref{theorem ell 2} constitute very strong improvements of what has been proved previously concerning both the Galois structure of Selmer groups and the validity of the equivariant Tamagawa number conjecture for elliptic curves (for more details in this regard see Remarks \ref{kurihara rem} and \ref{rem ell 2}).

\subsection{Some general notation} In this article, $K $ will always denote a number field. We also fix an algebraic closure $\QQ^c$ of $\QQ$ and regard every algebraic extension of $\QQ$ as a subfield of $\QQ^c$. For any subfield $F$ of $\QQ^c$ we set $G_F := \Gal(\QQ^c/F)$.

Let $p$ be a prime number and $A$ a continuous $G_K$-module. Then for each extension $F$ of $K$ we use the following notation.

\begin{itemize}
\item[] $S_\infty(F)$ is the set of archimedean places of $F$;
\item[] $S_p(F)$ is the set of $p$-adic places of $F$;
\item[] $S_{\rm ram}(F/K)$ is the set of places of $K$ that ramify in $F/K$;
\item[] $S_{\rm ram}(A)$ is the set of places of $K$ at which $A$ is ramified.
\end{itemize}

If $S$ is any finite set of places of $K$ that contains $S_\infty(K)$, then the ring of $S$-integers of $K$ is denoted by $\cO_{K,S}$. The ring of integers of $K$ is denoted by $\cO_K$. For a finite extension $F/K$, we denote by $S_F$ the set of places of $F$ which lie above a place in $S$ and we usually abbreviate $\cO_{F,S_F}$ to $\cO_{F,S}$.

For a natural number $n$, the group of $n$-th roots of unity in $\QQ^c$ is denoted by $\mu_n$.

Let $R$ be a commutative ring, and $M$ an $R$-module.
For any ideal $I$ we set
$$M[I] := \{x \in M \mid I x = 0\}.$$
If $M$ is finitely presented, then we denote by ${\rm Fitt}_R^j(M)$ the $j$-th Fitting ideal of $M$, as defined by Northcott in \cite{north}.

For an abelian group $M$ we write $M_{\rm div}$ for its maximal divisible subgroup. For a topological $\bZ_{p}$-module $M$ we write $M^{\vee}$ for its Pontryagin dual $\Hom_{\rm cont}(M, \bQ_{p}/\bZ_{p})$.

In the sequel we shall refer to non-archimedean places of number fields as `primes' and say that a condition is satisfied `for almost all primes' of a number field if the set of primes that fail to satisfy the condition has analytic density zero.



\section{Selmer structures} \label{section sel}

For the reader's convenience we shall first quickly recall some basic notation and definitions regarding the theory of Selmer structures. More details can be found in, for example, either the original article of Mazur and Rubin \cite{MRkoly} or our earlier article \cite{bss}.

\begin{definition}\label{sel def} Let $R$ be a finite $\Z_p$-algebra and $A$ a topological $R$-module endowed with a continuous $R$-linear action of $G_K$ for which $S_{\rm ram}(A)$ is finite. Then a `Selmer structure of $R$-modules' $\mathcal{F}$ on $A$ is a collection of the following data:
\begin{itemize}
\item a finite set $S(\mathcal{F})$ of places of $K$ such that
$S_\infty(K)\cup S_p(K) \cup S_{\rm ram}(A) \subset S(\mathcal{F})$;
\item for each place $v$ in $S(\mathcal{F})$ a choice of an $R$-submodule $H_\mathcal{F}^1(K_v, A)$ of $H^1(K_v, A)$.
\end{itemize}
In the case that $R=\ZZ_p$ we shall simply refer to a `Selmer structure' rather than to a Selmer structure of $\ZZ_p$-modules.
\end{definition}


In the following examples we fix  a finitely generated free $\ZZ_p$-module $T$ that is endowed with a continuous action of $G_K$ for which $S_{\rm ram}(T)$ is finite and write $S(T)$ for the set $S_\infty(K) \cup S_p(K) \cup S_{\rm ram}(T)$.

\begin{example}\label{can def} The `canonical Selmer structure' $\cF_{\rm can}$ on $T$ is defined (in \cite[Def. 3.2.1]{MRkoly}) so that $S(\cF_{\rm can}) = S(T)$ and for each $v$ in $S(\cF_{\rm can})$ one has
\begin{eqnarray*}
&&H_{\cF_{\rm can}}^1(K_v,T)\\
&:=&\begin{cases}
H^1_f(K_v,T):=\ker(H^1(K_v,T) \to  H^1(K_v^{\rm ur}, T \otimes_{\ZZ_p}\QQ_p)) &\text{if $v \notin S_\infty(K)\cup S_p(K)$,}\\
H^1(K_v,T) &\text{if $v \in S_\infty(K)\cup S_p(K)$.}
\end{cases}
\end{eqnarray*}
Here we write $K_v^{\rm ur}$ for the maximal unramified extension of $K_v$. 
\end{example}

\begin{example}\label{un def} The `unramified Selmer structure' $\cF_{\rm ur}$ on $T$ is defined (in \cite[Def.~5.1]{MRselmer}) so that $S(\cF_{\rm ur}) =S(T)$ and $H_{\cF_{\rm ur}}^1(K_v,T) = H_{\cF_{\rm can}}^1(K_v,T)$ for each $v$ in $S(\cF_{\rm ur})\setminus S_p(K)$. For each $v$ in $S_p(K)$ the group $H_{\cF_{\rm ur}}^1(K_v,T)$ is defined to be the saturation in $H^1(K_v,T)$ of the universal norm subgroup $\bigcap_L {\rm Cor}_{L/K_v}(H^1(L,T))$
where $L$ runs over all finite unramified extensions of $K_v$. \end{example}

\begin{example}\label{rel def} We define the `relaxed Selmer structure' $\cF_{{\rm rel}}$ on $T$ by setting $S(\cF_{\rm rel}) = S(T)$ and $H_{\cF_{{\rm rel}}}^1(K_v,T) := H^1(K_v,T)$  for every $v$ in $S(\cF_{\rm rel})$.
\end{example}
%

\begin{remark} In \cite[Cor.~5.3]{MRselmer}, Mazur and Rubin show that the Selmer structures $\cF_{\rm can}$ and $\cF_{\rm ur}$ coincide if and only if for every $p$-adic place $\fp$ of $K$ the group
$H^{0}(K_{\fp}, T^{\vee}(1))$ is finite. The Selmer structure $\cF_{\rm rel}$ naturally arises in the context of the article \cite{sbA}. 
\end{remark}

\begin{remark} There are several ways in which a Selmer structure $\cF$ on a representation $T$ as in the above examples induces Selmer structures on associated representations.

\noindent{}(i) The `dual Selmer structure' $\cF^\ast$ on the Kummer dual representation $T^\vee(1)$ is defined by the condition that $S(\cF^\ast) = S(\cF)$ and for each $v$ in $S(\cF^\ast)$ the group $H^1_{\cF^\ast}(K_v, T^\vee(1))$ is the kernel of the composite homomorphism
\[ H^1(K_v, T^\vee(1)) \simeq H^1(K_v, T)^\vee \to H^1_{\cF}(K_v,T)^\vee \]
where the isomorphism is given by local Tate duality and the second map is the dual of the inclusion $H^1_{\cF}(K_v,T) \to  H^1(K_v,T)$.

\noindent{}(ii) The induced Selmer structure $\cF(T')$ on a submodule $T'$ of $T$ is defined by the condition that $S(\cF(T')) = S(\cF)$ and for each $v$ in $S(\cF(T'))$ the group $H^1_{\cF(T')}(K_v, T')$ is the kernel of the natural map $H^1(K_v,T') \to  H^1(K_v,T)/H^1_\cF(K_v,T)$. For simplicity we shall write $\cF$ rather than $\cF(T')$ (since the subrepresentation will always be clear from context).

\noindent{}(iii) The induced Selmer structure $\cF(A)$ on a quotient $A$ of $T$ is defined by the condition that $S(\cF(A)) = S(\cF)$ and for each $v$ in $S(\cF(A))$ the group $H^1_{\cF(A)}(K_v, A)$ is the image of the natural map $H^1_{\cF}(K_v,T) \to H^1(K_v,A)$. For simplicity we write $\cF$ in place of $\cF(A)$.
\end{remark}


For any Selmer structure $\cF$ as in Definition \ref{sel def} and any place $v$ of $K$ outside $S(\cF)$ we set $H^1_\cF(K_v,A):= H^1_{\rm ur}(K_v,A)$. For each place $v$ of $K$ we also set
\[ H^{1}_{/\cF}(K_{v}, A) := H^{1}(K_v, A)/H^{1}_{\cF}(K_v,A).\]

The `Selmer module' $H^1_{\cF}(K,A)$ of $\cF$ on $A$ is then defined to be the $R$-module obtained as the kernel of the diagonal localization map
\[ H^1(K, A) \to \bigoplus_{v} H^1(K_v, A)/H^1_{\cF}(K_v,A)\]
where $v$ runs over all places of $K$.

The principal aim of the theory of higher rank Euler, Kolyvagin and Stark systems is to understand the structure of such Selmer modules.

\begin{example}\label{selmer exams} In concrete cases the Selmer modules recover natural arithmetic objects. We recall a few relevant examples.

\noindent{}(i) Let $\cF_{\rm can}$ and  $\cF_{\rm ur}$ be the canonical and unramified  Selmer structures on $\ZZ_p(1)$ respectively. Then the modules $H^1_{\cF^\ast_{\rm can}}(K,\QQ_p/\ZZ_p)$ and $H^1_{\cF^\ast_{\rm ur}}(K,\QQ_p/\ZZ_p)$ 
respectively identify with the Pontryagin duals of the $p$-primary subgroups of the ideal class groups of the rings $\cO_{K}[1/p]$ and $\cO_{K}$. 

\noindent{}(ii) Let $E$ be an elliptic curve defined over $K$, write ${\rm Sel}_{p}(E/K)$ for its classical $p$-Selmer group and define the `strict $p$-Selmer group' ${\rm Sel}_{p}^{\rm str}(E/K)$ to be the kernel of the natural localization map
\[ {\rm Sel}_{p}(E/K) \to \bigoplus_{\fp \in S_{p}(K)}H^{1}(K_{\fp}, T \otimes_{\ZZ_p}\QQ_p /\ZZ_p).\]
where $T$ denotes the $p$-adic Tate module of $E$. Then both of the groups $H^1_{\cF_{\rm can}^\ast}(K,T^\vee(1))$ and $H^1_{\cF_{\rm ur}^\ast}(K,T^\vee(1))$ identify with ${\rm Sel}_{p}^{\rm str}(E/K)$.

\noindent{}(iii) 
 Let $\cF_{\rm rel}$ be the relaxed Selmer structure on $T$ as in Example \ref{rel def}. Then an easy exercise in global duality shows that $H^1_{\cF_{\rm rel}^\ast}(K,T^\vee(1))^\vee$ identifies with
$$\ker\left(H^2(\cO_{K,S(T)},T) \to \bigoplus_{v \in S(T)}H^2(K_v,T)\right).$$ \end{example}

\section{Higher rank Euler, Kolyvagin and Stark systems} \label{section euler}

\subsection{Notation and hypotheses} \label{section hyp}

Throughout this section we assume that $p$ is odd.

\subsubsection{Field extensions} For each prime $\fq$ of $K$ we write $K(\fq)$ for the maximal $p$-extension inside the ray class field of $K$ modulo $\fq$. We denote by $K(1)$ the maximal $p$-extension inside the Hilbert class field of $K$.

For a natural number $n$, we set
$$K_{p^n}:=K(1)K(\mu_{p^n},(\cO_K^\times)^{1/p^n})$$
and we write $K_{p^\infty}$ for the field $\bigcup_{n >0} K_{p^n}.$

We then fix an abelian pro-$p$-extension $\cK$ of $K$ that satisfies the following hypothesis. (Recall that we say that a condition is satisfied `for almost all primes' if the set of primes that fail to satisfy the condition has analytic density zero.)

\begin{hypothesis}\label{hyp K} The field $\cK$ contains $K(\fq)$ for almost all primes $\fq$ of $K$ and a non-trivial $\ZZ_p$-power extension $K_\infty$ of $K$ in which no non-archimedean place of $K$ splits completely.
\end{hypothesis}

\begin{remark} The condition that no non-archimedean place splits completely in $K_\infty/K$ is automatically satisfied if $K_\infty$ contains the cyclotomic $\ZZ_p$-extension of $K$.\end{remark}

We write $\Omega(\cK/K)$ for the set of finite extensions of $K$ in $\cK$.

\subsubsection{The representations} We fix a finite extension $Q$ of $\QQ_p$ and a uniformizer $\varpi$ of the valuation ring $\cO$ of $Q$ and we write $\Bbbk$ for the residue field $\cO/\varpi\cO$.

We also fix a finitely generated free $\cO$-module $T$ that is endowed with a continuous $\cO$-linear action of $G_K$ for which the set $S_{\rm ram}(T)$ is finite.

We denote the Kummer dual $\Hom_\cO(T, \cO(1))$ of $T$ by $T^\ast(1)$ and write $\overline T$ for the residual representation $T\otimes_\cO \Bbbk$.

We then fix a field $F$ in $\Omega(\cK/K)$ and define representations
\[ \cT :={\rm Ind}_{G_K}^{G_F} (T)\,\,\text{ and }\,\, \cA_1:=\cT/\varpi\cT.\]

We note that both of the representations are unramified outside the finite set
\begin{equation}\label{Sdef} S := S_{\rm ram}(F/K) \cup S_{\rm ram}(T) \cup S_{p}(K)\end{equation}
and are also endowed with a natural action of the ring $\cO[\Gal(F/K)]$.

Finally we set $G := \Gal(F/K)$ and note that the injective homomorphism of $\mathcal{O}$-modules
\begin{align}\label{nat inj}
\Bbbk \to \Bbbk [G]
\end{align}
that sends $1$ to $\sum_{g\in G}g$ will play an important role in the sequel.

\subsubsection{The standard hypotheses}\label{stand hyp sec} For each $G_K$-module $A$ we write $K(A)$ for the minimal Galois extension of $K$ such that $G_{K(A)}$ acts trivially on $A$.

For each natural number $n$ we then set
$$F_{p^n}:=FK_{p^n}, \ F_{p^\infty}:=FK_{p^\infty}, \ F(T)_{p^n}:=F_{p^n}K(T/p^nT) \text{ and } F(T)_{p^\infty}:=F_{p^\infty}K(T)$$
%

We can now state our standard hypotheses on the representation $T$ and field $F$.

\begin{itemize}
\item[(H$_0$)] For almost all primes $\fq$ of $K$ the map ${\rm Fr}_\fq^{p^k}-1$ is injective on $T$ for every $k \geq 0$;
\item[(H$_1$)] the $\Bbbk[G_{K}]$-module $\overline T$ is irreducible;
\item[(H$_2$)] there exists $\tau$ in $G_{F_{p^\infty}}$ for which the $\cO$-module $T/(\tau -1)T$ is free of rank one;
\item[(H$_3$)] the groups $H^{1}(F(T)_{p^{\infty}}/K, \overline T)$ and $H^{1}(F(T)_{p^{\infty}}/K, \overline T^\vee(1))$ vanish;
\item[(H$_4$)] if $p = 3$, then $\overline{T}$ and $\overline{T}^{\vee}(1)$ have no nonzero isomorphic $\bZ_{p}[G_{K}]$-subquotients;
\item[(H$_5$)] for each $\fq$ in $S$ the group $H^{0}(K_{\fq}, \overline T^{\vee}(1))$ vanishes;
\item[(H$^{\rm c}_5$)] for each $\fq$ in $S$ the map $H^{1}_{/\cF_{\rm can}}(K_{\fq}, \overline{T}) \to H^{1}_{/\cF_{\rm can}}(K_{\fq}, \cA_{1})$ induced by (\ref{nat inj}) is injective;
\item[(H$_5^{\rm u}$)] for each $\fq$ in $S$ the map $H^{1}_{/\cF_{\rm ur}}(K_{\fq}, \overline{T}) \to H^{1}_{/\cF_{\rm ur}}(K_{\fq}, \cA_{1})$ induced by (\ref{nat inj}) is injective.
    \end{itemize}

\begin{remark}\label{numbers remark} The above numbering is motivated by the fact that, after taking account of \cite[Rem. 4.8]{bss}, the hypotheses (H$_1$), (H$_2$), (H$_3$) and (H$_4$) respectively correspond to the hypotheses (H.1), (H.2), (H.3) and (H.4) that are used by Mazur and Rubin in \cite{MRkoly} and \cite{MRselmer}. In addition, hypothesis (H$_0$) corresponds to the assumption (b) in \cite[Th. 3.2.4]{MRkoly}. In Lemma~\ref{compare} below it is shown that the hypothesis (H$_5$) is stronger than either (H$^{\rm c}_5$) or (H$^{\rm u}_5$) and implies that the Selmer structures $\cF_{\rm can}$, $\cF_{\rm ur}$ and $\cF_{\rm rel}$ on $\cT$ coincide.
\end{remark}

\begin{remark}\label{restrict scalars rem}  It is clear that if the data $T$ and $F$ satisfy any given subset of the above hypotheses, then so does the data $T$ and $F$ for any intermediate field $F'$ of $F/K$.
\end{remark}

\subsection{Statement of the main result}\label{main statements} In this section we fix data $K, \cK, F$ and $T$ as above. Before stating the main result we quickly review some basic notation from \cite{bss}.


For each natural number $r$ we write ${\rm ES}_r(T,\cK)$ for the module of Euler systems of rank $r$, as defined in \cite[Def.~6.4]{bss}.


Let $\cF$ denote either of the Selmer structures $\cF_{\rm can}$ or $\cF_{\rm ur}$ on $\cT$.
 We assume that the hypotheses (H$_1$), (H$_2$), (H$_3$) and (H$_{4}$) are satisfied.
We also assume that (H$_{5}^{\rm c}$), respectively (H$_{5}^{\rm u}$), is satisfied if $\cF$ denotes $\cF_{\rm can}$, respectively $\cF_{\rm ur}$.

Then, by Proposition~\ref{core equiv} below, for each natural number $r$, there exist $\cO[G]$-modules ${\rm KS}_r(\cT,\cF)$ and ${\rm SS}_r(\cT,\cF)$ of Kolyvagin and Stark systems of rank $r$, as defined in \cite[Def. 5.24 and~4.11]{bss}.

We also recall that each system $c=(c_{F'})_{F' \in \Omega(\cK/K)}$ in ${\rm ES}_r(T,\cK)$, $\kappa$ in ${\rm KS}_r(\cT,\cF)$ and $\epsilon$ in ${\rm SS}_r(\cT,\cF)$ gives rise for each non-negative integer $j$ to a canonical ideal
\[ I(c_F), \ I_j(\kappa), \text{ and }I_j(\epsilon)\]
of $\cO[G]$.

We recall, in particular, that $I(c_F)$ is simply defined to be the image of $c_F$ which is regarded as a homomorphism ${\bigwedge}_{\cO[G]}^r \Hom_{\cO[G]}(H^1(\cO_{F,S},T),\cO[G]) \to \cO[G]$ by means of the definition \cite[Def.~2.1]{bss}, whilst the definitions of $I_j(\kappa)$ and $I_j(\epsilon)$ are respectively given in \cite[Def.~5.24 and 4.11]{bss}.

We follow Mazur and Rubin in defining the `core rank' of a Selmer structure $\cF$ on $\cT$ to be the integer
\begin{equation}\label{core rank def}
\chi(\cF)=\chi(\cF,\cT) := \dim_{\Bbbk}H^{1}_{\cF}(K, \overline{T}) - \dim_{\Bbbk}H^{1}_{\cF^{*}}(K, \overline{T}^{\vee}(1)).
\end{equation}
For convenience we also abbreviate $\chi(\cF_{\rm can},\cT)$ and $\chi(\cF_{\rm ur},\cT)$ to $\chi_{\rm can}(\cT)$ and $\chi_{\rm ur}(\cT)$.

\begin{remark} The canonical (resp. unramified) Selmer structure on $T$ is in general different from the structure that is induced on $T$ by $\cF_{\rm can}$ (resp. $\cF_{\rm ur}$) on $\cT$. Thus, whilst the notation used in (\ref{core rank def}) suggests that $\chi(\cF,\cT)$ only depends only on $ T$, it does actually depend on the induced representation $\cT$.\end{remark}

We can now state the main result of this article concerning the theories of higher rank Euler, Kolyvagin and Stark systems.

\begin{theorem} \label{main} Let $K, \cK, F$ and $T$ be as above and assume that the hypotheses (H$_{0}$),  (H$_{1}$), (H$_2$), (H$_{3}$) and (H$_{4}$) are satisfied. Then all of the following claims are valid.

\begin{itemize}
\item[(i)] For each strictly positive integer $r$ there exists a canonical `higher Kolyvagin derivative' homomorphism 
$$
\cD_{r} \colon {\rm ES}_{r}(T, \cK) \to {\rm KS}_{r}(\cT, \cF_{{\rm can}}).
$$
\end{itemize}

In the rest of this result we let $\cF$ denote either $\cF_{\rm can}$ or $\cF_{\rm ur}$ and we always assume that the hypothesis (H$_{5}^{\rm c}$), respectively (H$_{5}^{\rm u}$), is satisfied if $\cF$ denotes $\cF_{\rm can}$, respectively $\cF_{\rm ur}$.

\begin{itemize}
\item[(ii)] One has
\[ \chi_{\rm can}(\cT) = \sum_{v \in S_\infty(K)\cup S_p(K)}{\rm rank}_\cO\left(H^0(K_v, T^\ast(1))\right)\]
and
\[ \chi_{\rm ur}(\cT) = \sum_{v \in S_\infty(K)}{\rm rank}_\cO\left(H^0(K_v, T^\ast(1))\right),\]
and the $\mathcal{O}[G]$-module ${\rm SS}_{\chi(\cF)}(\cT, \cF)$ is free of rank one.

Furthermore, if $\chi(\cF) $ is strictly positive, then there exists a canonical `regulator' isomorphism of $\mathcal{O}[G]$-modules
\begin{align*}
{\cR}_{\chi(\cF)} \colon {\rm SS}_{\chi(\cF)}(\cT, \cF) \xrightarrow{\sim} \, {\rm KS}_{\chi(\cF)}(\cT, \cF)
\end{align*}
and the $\mathcal{O}[G]$-module ${\rm KS}_{\chi(\cF)}(\cT, \cF)$ is free of rank one. 
\item[(iii)] If the core rank $r := \chi(\cF)$ is strictly positive, then any Euler system $c$ in ${\rm ES}_{r}(T, \cK)$ for which $\cD_{r}(c)$ belongs to ${\rm KS}_r(\cT,\cF)$ (as is automatically the case if $\cF= \cF_{\rm can}$ by claim (i)) has all of the following properties.
\begin{itemize}
\item[(a)] For all $j\ge 0$ one has
\[
I_j({\cR}_r^{-1}(\cD_{r}(c))) \subseteq {\rm Fitt}_{\cO[G]}^{j}(H^{1}_{\cF^{*}}(K, \cT^{\vee}(1))^{\vee})
\]
with equality if and only if $\cD_{r}(c)$ is an $\mathcal{O}[G]$-basis of ${\rm KS}_r(\cT,\cF)$. %
\item[(b)] For all $j\ge 0$ one has
\[ I_j(\cD_{r}(c)) \subseteq {\rm Fitt}_{\cO[G]}^{j}(H^{1}_{\cF^{*}}(K, \cT^{\vee}(1))^{\vee})\]
with equality if both $\cD_r(c)$ is an $\cO[G]$-basis of ${\rm KS}_r(\cT,\cF)$ and $\cO[G]$ is a principal ideal ring.
\item[(c)] One has
\[
I(c_{F}) \subseteq {\rm Fitt}_{\cO[G]}^{0}(H^{1}_{{\cF}^{*}}(K, \cT^{\vee}(1))^{\vee})
\]
with equality if and only if all inclusions in claim (a) are equalities.
\end{itemize}

\item[(iv)] If the hypothesis (H$_5$) is satisfied, then one has
$$\chi_{\rm can}(\cT)=\chi_{\rm ur}(\cT)=\chi(\cF_{\rm rel}).$$
Further, if this common integer, $r$ say, is strictly positive, then the homomorphism $\cD_r$ in claim (i) is surjective and there exist Euler systems $c$ in ${\rm ES}_{r}(T, \cK)$ for which all of the inclusions in claims (iii)(a) and (c) are equalities.
\end{itemize}
\end{theorem}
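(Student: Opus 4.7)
The plan is to address the three assertions of (iv) in sequence: the core-rank equalities, the surjectivity of $\cD_r$, and the resulting equalities in (iii)(a) and (c). The surjectivity step will be the technical heart of the argument.

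For the core-rank equalities, I would invoke Lemma \ref{compare} (whose content is foreshadowed in Remark \ref{numbers remark}): under (H$_5$) the three Selmer structures $\cF_{\rm can}$, $\cF_{\rm ur}$ and $\cF_{\rm rel}$ on $\cT$ coincide, so definition \eqref{core rank def} immediately yields $\chi_{\rm can}(\cT) = \chi_{\rm ur}(\cT) = \chi(\cF_{\rm rel})$. As a consistency check, the identity $\chi_{\rm can}(\cT) = \chi_{\rm ur}(\cT)$ is also visible directly from the explicit formulas of claim (ii), since (H$_5$) together with Nakayama's lemma forces $\mathrm{rank}_\cO H^0(K_\fp, T^\ast(1)) = 0$ for every $\fp \in S_p(K)$, making the $p$-adic contribution to $\chi_{\rm can}(\cT)$ vanish.

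For the surjectivity of $\cD_r$ (with $r = \chi(\cF_{\rm can}) > 0$), the strategy is to pass to the Iwasawa limit along the tower $\cK/K$. Claim (ii) already tells us that ${\rm KS}_r(\cT, \cF_{\rm can})$ is a free $\cO[G]$-module of rank one, so it is enough to exhibit a single Euler system $c \in {\rm ES}_r(T, \cK)$ whose derivative $\cD_r(c)$ is an $\cO[G]$-basis. Setting $\Lambda_F := \cO[\![\Gal(\cK/F)]\!]$ and $H^1_{\rm Iw}(\cK/F, T) := \varprojlim_{F' \in \Omega(\cK/F)} H^1(\cO_{F',S}, T)$, I would construct $c$ as a suitable element of $\bigwedge^r_{\Lambda_F} H^1_{\rm Iw}(\cK/F, T)$ satisfying the distribution relations of \cite[Def.~6.4]{bss}. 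Because (H$_5$) collapses the Selmer local conditions on $\cT$, the Iwasawa-theoretic Selmer complex reduces to the unrestricted complex $R\Gamma(\cO_{K,S}, T \otimes_\cO \Lambda_F)$, whose perfect-complex structure can be analysed using (H$_0$)--(H$_4$); the non-splitting hypothesis on $K_\infty/K$ then provides the torsion-freeness of $H^1_{\rm Iw}(\cK/F, T)$ over $\Lambda_F$ needed to extract a well-behaved basis element of a perfect representative. One then checks that the image of such a basis element under $\cD_r$, tracked through the regulator isomorphism $\cR_r$ of claim (ii), must generate the free rank-one module ${\rm KS}_r(\cT, \cF_{\rm can})$, since any non-generating image would violate the Fitting-ideal content provided by claim (iii)(a) at $j = 0$.

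Once surjectivity is established, the final assertion is immediate: choose $c \in {\rm ES}_r(T, \cK)$ so that $\cD_r(c)$ is an $\cO[G]$-basis of ${\rm KS}_r(\cT, \cF_{\rm can})$, and the \emph{if and only if} clauses of (iii)(a) and (iii)(c) automatically upgrade the inclusions there into equalities. The principal obstacle is the Iwasawa-theoretic construction in the surjectivity step: one must simultaneously control the Euler-system norm relations across the whole tower $\cK/K$ and the compatibility of the Kolyvagin derivative at each Kolyvagin prime, and it is precisely because (H$_5$) trivialises the local Selmer data that this joint control becomes feasible.
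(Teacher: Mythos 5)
Your treatment of the core-rank equalities in (iv) is correct and matches the paper's approach: Lemma~\ref{compare} shows that under (H$_5$) the structures $\cF_{\rm can}$, $\cF_{\rm ur}$ and $\cF_{\rm rel}$ coincide on $\cT$, so equality of the three core ranks follows at once from (\ref{core rank def}). Your consistency check that (H$_5$) forces $H^0(K_\fp, T^\ast(1))$ to vanish for $p$-adic $\fp$, making the two explicit formulas in (ii) agree, is also sound. And the final step, reading off the equalities in (iii)(a) and (iii)(c) once surjectivity of $\cD_r$ is in hand, is exactly what the paper does.

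The gap is in the surjectivity step, which you yourself identify as the technical heart. The paper does not reprove this from scratch: the proof of Theorem~\ref{main}(iv) consists in verifying that the hypotheses of \cite[Cor.~6.17 and Cor.~6.18(iii)]{bss} are satisfied under the conditions of claim (iv) (using that $\bigoplus_{v \in S_\infty(K)} H^0(K_v, \cT^\ast(1))$ is an $\cO[G]$-free module of rank $\chi_{\rm can}(\cT)$, and that (H$_5$) coincides with the local condition required in \cite[Cor.~6.17(b)]{bss}) and then invoking the surjectivity conclusion proved in loc.\ cit. Your proposal instead sketches an Iwasawa-theoretic construction of an Euler system $c$ and asserts that $\cD_r(c)$ must generate ${\rm KS}_r(\cT,\cF_{\rm can})$ ``since any non-generating image would violate the Fitting-ideal content provided by claim (iii)(a) at $j=0$.'' This is a non-sequitur: claim (iii)(a) gives only a one-sided inclusion, with equality characterised precisely by $\cD_r(c)$ being a basis. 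A non-generating image would simply produce a strict inclusion, which is fully consistent with (iii)(a); there is no contradiction to exploit. What makes the surjectivity of $\cD_r$ genuinely nontrivial, and what \cite[Cor.~6.17]{bss} actually establishes, is the existence of a concretely constructed Euler system whose derivative is \emph{shown} to be a generator, by tracking the regulator isomorphism $\cR_r$ and the Stark-system module across the Iwasawa tower. Your plan gestures at the right ingredients (the perfect Iwasawa complex, torsion-freeness from the non-splitting hypothesis, the regulator $\cR_r$), but the argument that actually closes the loop is missing.

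Note also that your proposal addresses only claim (iv), whereas the paper's proof of Theorem~\ref{main} establishes (i)--(iii) as well by verifying, via Theorem~\ref{free}, that the hypotheses of \cite[Th.~6.12, Th.~5.25 and Th.~4.12]{bss} hold; and claim (iv) depends on (ii) and (iii) even within your own sketch.
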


\begin{remark} The hypothesis (H$_0$) is only required for the construction of $\cD_r$ and so, in particular, 
is not required for the validity of Theorem \ref{main}(ii). In the context of Theorem \ref{main}(iv) we recall that the hypothesis (H$_5$) is stronger than both (H$_5^{\rm c})$ and (H$_5^{\rm u})$. 
\end{remark}

After establishing certain preliminary results, including an explicit characterization of the existence of `equivariant core vertices' in \S\ref{core vertex sec}, we shall prove Theorem \ref{main} in \S\ref{proof main}.

Then, in \S\ref{section 4} and \S\ref{section ell curves} we shall present concrete applications of Theorem \ref{main} in the context of the multiplicative group and of elliptic curves respectively.

\subsection{Preliminary observations}\label{prelim sec} We prove several useful preliminary results.

\begin{lemma}\label{vanish general}
Let $(R, \fm_{R})$ be an artinian local ring and $\cG$ a profinite group.
Fix a discrete free $R$-module $X$ endowed with a continuous $R$-linear action of $\cG$ and a non-negative integer $i$.

Then if the group $H^{i}(\cG, X/\fm_{R} X)$ vanishes the module $H^{i}(\cG, X)$ also vanishes.
\end{lemma}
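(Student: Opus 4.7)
The plan is a standard dévissage argument using the nilpotency of $\fm_R$. Since $R$ is artinian local, there exists $n \ge 1$ with $\fm_R^n = 0$, giving a finite filtration
$$X \supseteq \fm_R X \supseteq \fm_R^2 X \supseteq \cdots \supseteq \fm_R^n X = 0$$
by discrete $\cG$-stable $R$-submodules of $X$. The goal is to show that each successive quotient has vanishing $H^i$ and then deduce the vanishing of $H^i(\cG, X)$ by ascending through this filtration.

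First I would identify the successive quotients. Because $X$ is free over $R$, one has a canonical isomorphism $\fm_R^k X \cong \fm_R^k \otimes_R X$, and hence an isomorphism of discrete $R[\cG]$-modules
$$\fm_R^k X / \fm_R^{k+1} X \;\cong\; (\fm_R^k/\fm_R^{k+1}) \otimes_\Bbbk (X/\fm_R X),$$
where $\cG$ acts trivially on the first tensor factor and through its given action on the second. Since $R$ is artinian, the $\Bbbk$-dimension $m_k$ of $\fm_R^k/\fm_R^{k+1}$ is finite, so the above quotient is (non-canonically) isomorphic as a discrete $\cG$-module to the direct sum of $m_k$ copies of $X/\fm_R X$. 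Because continuous cohomology commutes with finite direct sums, the hypothesis $H^i(\cG, X/\fm_R X) = 0$ therefore implies
$$H^i(\cG, \fm_R^k X / \fm_R^{k+1} X) = 0 \quad \text{for every } k \ge 0.$$

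Next I would perform descending induction on $k$ to prove $H^i(\cG, \fm_R^k X) = 0$. The base case $k = n$ is trivial since $\fm_R^n X = 0$. For the inductive step, the tautological short exact sequence
$$0 \to \fm_R^{k+1} X \to \fm_R^k X \to \fm_R^k X / \fm_R^{k+1} X \to 0$$
of discrete $\cG$-modules yields a long exact cohomology sequence in which both $H^i(\cG, \fm_R^{k+1} X)$ (vanishing by induction) and $H^i(\cG, \fm_R^k X / \fm_R^{k+1} X)$ (vanishing by the previous step) are zero, forcing $H^i(\cG, \fm_R^k X) = 0$. Setting $k = 0$ gives the desired conclusion.

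The argument is purely formal, so there is no real obstacle; the only point that warrants care is the identification of $\fm_R^k X / \fm_R^{k+1} X$ as a direct sum of copies of $X/\fm_R X$, which genuinely uses that $X$ is $R$-free and that the $\cG$-action is $R$-linear.
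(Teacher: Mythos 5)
Your proof is correct and is essentially the same dévissage argument as the paper's: both use the $\fm_R$-adic filtration of $X$, the identification $\fm_R^k X/\fm_R^{k+1}X \cong (\fm_R^k/\fm_R^{k+1})\otimes_{R/\fm_R}(X/\fm_R X)$ (valid because $X$ is $R$-free), and the long exact cohomology sequence. The only cosmetic difference is that the paper phrases the induction as a chain of surjections $H^i(\cG,\fm_R^{j+1}X)\twoheadrightarrow H^i(\cG,\fm_R^j X)$ composed to give $0=H^i(\cG,\fm_R^n X)\twoheadrightarrow H^i(\cG,X)$, whereas you directly establish the vanishing of each $H^i(\cG,\fm_R^k X)$ by a sandwich between two vanishing terms; these are interchangeable bookkeeping.
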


\begin{proof} Let $j$ be a non-negative integer. Then, since $X$ is a free $R$-module, the sequence of $R[\cG]$-modules
$$
0 \to \fm^{j+1}_{R}X \to \fm^{j}_{R}X \to \fm^{j}_{R}/\fm^{j+1}_{R} \otimes_{R} X \to 0
$$
is exact and so induces, upon passing to cohomology, an exact sequence of $R$-modules
$$
H^{i}(\cG, \fm^{j+1}_{R}X) \to H^{i}(\cG, \fm^{j}_{R}X) \to H^{i}(\cG, \fm^{j}_{R}/\fm^{j+1}_{R} \otimes_{R} X).
$$
In particular, since the $R[\cG]$-module $\fm^{j}_{R}/\fm^{j+1}_{R} \otimes_{R} X$ is isomorphic to a direct sum of $\dim_{R/\fm_{R}} (\fm^{j}_{R}/\fm^{j+1}_{R})$ copies of $X/\fm_{R} X$, the assumed vanishing of $H^{i}(\cG, X/\fm_{R} X)$ implies that the group $H^{i}(\cG, \fm^{j}_{R}/\fm^{j+1}_{R} \otimes_{R} X)$ vanishes.

This in turn implies that the map $H^{i}(\cG, \fm^{j+1}_{R}X) \to H^{i}(\cG, \fm^{j}_{R}X)$ is surjective and, as $j$ is an arbitrary non-negative integer we conclude that the map $H^{i}(\cG, \fm^{j}_{R}X) \to H^{i}(\cG, X)$ is surjective.
Since $\fm^{j}_{R}X$ vanishes for sufficiently large integer $j$, the module $H^{i}(\cG, X)$ vanishes.

\end{proof}

\begin{lemma}\label{surjective}
Let $\fq$ be a prime of $K$ and $(R, \fm_{R})$ a compact complete noetherian local ring. 
Let $X$ be a free $R$-module of finite rank with an $R$-linear continuous $G_{K_{\fq}}$-action. If the group
 $H^{0}(K_{\fq}, (X/\fm_{R} X)^{\vee}(1))$ vanishes, then the following claims are valid.
\begin{itemize}
\item[(i)] For any finitely generated $R$-module $M$, the group $H^{2}(K_{\fq}, M \otimes_{R} X)$ vanishes.
\item[(ii)] Let $N_{1} \to N_{2}$ be a surjective homomorphism of finitely generated $R$-modules.
Then the induced homomorphism $H^{1}(K_{\fq}, N_{1} \otimes_{R} X) \to H^{1}(K_{\fq}, N_{2} \otimes_{R} X)$ is surjective.
\end{itemize}
\end{lemma}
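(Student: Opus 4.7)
The plan is to prove (i) first in the case $M = R$ (reducing to $H^{2}(K_{\fq}, X) = 0$), then to bootstrap to arbitrary finitely generated $M$ via a presentation and cohomological dimension, and finally to deduce (ii) from (i) through the standard long exact sequence.

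First I would establish the vanishing of $H^{2}(K_{\fq}, X)$. Since $R$ is compact, complete, noetherian and local, its residue field $R/\fm_{R}$ is finite and each quotient $R/\fm_{R}^{n}$ is an artinian ring with finite underlying set; consequently $X/\fm_{R}^{n}X$ is a finite discrete $G_{K_{\fq}}$-module. Local Tate duality then identifies $H^{2}(K_{\fq}, X/\fm_{R}X)$ with the Pontryagin dual of $H^{0}(K_{\fq}, (X/\fm_{R}X)^{\vee}(1))$, which vanishes by hypothesis. Applied to the artinian local ring $R/\fm_{R}^{n}$ and the discrete free $(R/\fm_{R}^{n})$-module $X/\fm_{R}^{n}X$, Lemma~\ref{vanish general} (with $i = 2$) therefore gives $H^{2}(K_{\fq}, X/\fm_{R}^{n}X) = 0$ for every $n \geq 1$. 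Passing to the inverse limit, and using the Mittag--Leffler exact sequence together with the finite cohomological dimension of $G_{K_{\fq}}$, yields $H^{2}(K_{\fq}, X) = \varprojlim_{n} H^{2}(K_{\fq}, X/\fm_{R}^{n}X) = 0$.

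For arbitrary finitely generated $M$, choose a presentation $0 \to L \to R^{n} \to M \to 0$ with $L$ finitely generated (possible since $R$ is noetherian). Flatness of $X$ over $R$ preserves exactness, giving the short exact sequence
\[
0 \to L \otimes_{R} X \to X^{n} \to M \otimes_{R} X \to 0
\]
of compact $G_{K_{\fq}}$-modules. The associated long exact sequence contains
\[
H^{2}(K_{\fq}, X)^{n} \to H^{2}(K_{\fq}, M \otimes_{R} X) \to H^{3}(K_{\fq}, L \otimes_{R} X).
\]
The first term vanishes by the previous step, and the third vanishes because $G_{K_{\fq}}$ has strict $p$-cohomological dimension at most $2$, which applies to continuous cohomology with values in the compact pro-$p$ module $L \otimes_{R} X$. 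This proves (i).

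For (ii), set $N_{3} := \ker(N_{1} \twoheadrightarrow N_{2})$, which is finitely generated by noetherianity of $R$. Flatness of $X$ again preserves exactness, and the resulting long exact sequence contains
\[
H^{1}(K_{\fq}, N_{1} \otimes_{R} X) \to H^{1}(K_{\fq}, N_{2} \otimes_{R} X) \to H^{2}(K_{\fq}, N_{3} \otimes_{R} X),
\]
where the final term vanishes by part (i). The main obstacle in the plan is the inverse-limit step used to deduce $H^{2}(K_{\fq}, X) = 0$ from vanishing at each finite level: one must verify that the system $\{H^{1}(K_{\fq}, X/\fm_{R}^{n}X)\}_{n}$ satisfies Mittag--Leffler (so that $\varprojlim^{1}$ vanishes), which follows since each of these groups is finite by local Tate local-duality finiteness applied to the finite modules $X/\fm_{R}^{n}X$ together with the finite cohomological dimension of $G_{K_{\fq}}$.
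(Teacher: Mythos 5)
Your proof is correct and follows essentially the same route as the paper: vanishing of $H^{2}(K_{\fq}, X/\fm_{R}^{n}X)$ at each finite level via Lemma~\ref{vanish general} and local duality, an inverse-limit argument using the finiteness of $H^{1}$ at each level to get $H^{2}(K_{\fq}, X) = 0$, right-exactness of $H^{2}$ on compact coefficients (via $\mathrm{cd}_p(G_{K_\fq}) = 2$) to handle general $M$, and the usual long exact sequence for (ii). The paper phrases the bootstrap to arbitrary $M$ slightly more economically — simply invoking surjectivity of $H^{2}(K_{\fq}, X)^{s} \to H^{2}(K_{\fq}, M\otimes_R X)$ from a presentation $R^s \twoheadrightarrow M$ — but the content is the same $H^{3}$ vanishing you spell out.
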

\begin{proof}
To prove claim (i) we note first the assumed vanishing of $H^{0}(K_{\fq}, (X/\fm_{R} X)^{\vee}(1))$ combines with the local duality isomorphism $H^{2}(K_{\fq}, X/\fm_{R} X) \simeq H^{0}(K_{\fq}, (X/\fm_{R} X)^{\vee}(1))^{\vee}$ and the result of Lemma~\ref{vanish general} to imply that $H^{2}(K_{\fq}, X/\fm_{R}^{j}X)$ vanishes for any positive integer $j$.

In addition, since the order of $R/\fm_{R}^{j}$ is finite, the module $H^{1}(K_{\fq}, X/\fm_{R}^{j}X)$ is also finite. This implies that 
$H^{2}(K_{\fq}, X)$ identifies with the inverse limit $\varprojlim_{j} H^{2}(K_{\fq}, X/\fm_{R}^{j}X)$ and hence vanishes.

Next we fix a surjective homomorphism of $R$-modules of the form $R^{s} \to M$. Then, since the cohomological dimension of $G_{{K}_{\fq}}$ is $2$, the induced homomorphism
\[ H^{2}(K_{\fq}, X)^{s} = H^{2}(K_{\fq}, R^s\otimes_R X)\to  H^{2}(K_{\fq}, M \otimes_{R} X)\]
is surjective, and so $H^{2}(K_{\fq}, M \otimes_{R} X)$ vanishes, as claimed.

To prove claim (ii) we take $M$ to be the kernel of the given surjection $N_{1} \to N_{2}$. Then, since $X$ is a free $R$-module, the induced sequence of $R$-modules
$$
H^{1}(K_{\fq}, N_{1} \otimes_{R} X) \to H^{1}(K_{\fq}, N_{2} \otimes_{R} X) \to H^{2}(K_{\fq}, M \otimes_{R} X)
$$
is exact and so  claim~(ii) follows directly from claim~(i).\end{proof}

We can now use Lemma \ref{surjective} to justify Remark \ref{numbers remark}.

\begin{lemma}\label{compare} Fix a place $\fq$ in $S$ for which the group $H^{0}(K_{\fq}, \overline{T}^{\vee}(1))$ vanishes. Then for any ideal $I$ of $\cO[G]$ one has
$$
H^{1}_{\cF_{\rm ur}}(K_{\fq}, \cT/I\cT) = H^{1}_{\cF_{\rm can}}(K_{\fq}, \cT/I\cT) = H^{1}(K_{\fq}, \cT/I\cT).
$$
In particular, if hypothesis (H$_{5}$) is satisfied, then the Selmer structures $\cF_{\rm can}$, $\cF_{\rm ur}$ and $\cF_{\rm rel}$ coincide and the hypothesis (H$_{5}^{\rm c}$), and hence also (H$_{5}^{\rm u}$), is satisfied.
\end{lemma}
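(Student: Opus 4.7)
The plan is to apply Lemma \ref{surjective} with $R = \cO[G]$ and the free $\cO[G]$-module $X = \cT$. Since $\cK$ is an abelian pro-$p$ extension of $K$, the group $G = \Gal(F/K)$ is a finite $p$-group, so $\cO[G]$ is a compact complete noetherian local ring with maximal ideal $\fm_R = (\varpi) + I_G$ and residue field $\Bbbk$. Using the standard identification $\cT/I_G \cT \cong T$ of the $G$-coinvariants of an induced module with the original module, one obtains $\cT/\fm_R \cT \cong \overline T$ as $G_K$-modules, so that the assumed vanishing of $H^0(K_\fq, \overline T^\vee(1))$ translates into the vanishing of $H^0(K_\fq, (\cT/\fm_R\cT)^\vee(1))$, permitting Lemma \ref{surjective} to be applied.

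The first step is to treat the special case $I = 0$, namely the equalities $H^1_{\cF_{\rm ur}}(K_\fq, \cT) = H^1_{\cF_{\rm can}}(K_\fq, \cT) = H^1(K_\fq, \cT)$. For $\fq$ in $S_\infty(K) \cup S_p(K)$ the equality for $\cF_{\rm can}$ is immediate from Example \ref{can def}, while the equality for $\cF_{\rm ur}$ at $p$-adic $\fq$ follows from the Mazur--Rubin criterion recalled after Example \ref{rel def}, since Nakayama's Lemma combined with the hypothesis gives $H^0(K_\fq, T^\vee(1)) = 0$ (which is in particular finite), forcing $\cF_{\rm can}$ and $\cF_{\rm ur}$ to agree. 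For $\fq$ in $S \setminus (S_\infty(K) \cup S_p(K))$ I would establish the integral equality $H^1_f(K_\fq, \cT) = H^1(K_\fq, \cT)$ by combining the Tate local Euler characteristic formula for the finite module $\overline T$ (which, under our hypothesis, yields $|H^1(K_\fq, \overline T) / H^1_{\rm ur}(K_\fq, \overline T)| = |H^0(K_\fq, \overline T^\vee(1))| = 1$) with an inverse-limit argument over finite-length quotients, relying on the vanishing of $H^2(K_\fq, \cT)$ supplied by Lemma \ref{surjective}(i) applied with $M = \cO[G]$.

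To handle general $I$, I would apply Lemma \ref{surjective}(ii) with $N_1 = \cO[G]$ surjecting onto $N_2 = \cO[G]/I$, so that the natural map $H^1(K_\fq, \cT) \to H^1(K_\fq, \cT/I\cT)$ is surjective. Since the induced canonical and unramified Selmer conditions at $\fq$ on $\cT/I\cT$ are defined to be the images of the corresponding conditions on $\cT$, and since those conditions have been shown to equal $H^1(K_\fq, \cT)$, their images equal $H^1(K_\fq, \cT/I\cT)$, as required.

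Finally, for the ``in particular'' statement: if (H$_5$) holds, then the hypothesis of the lemma is satisfied at every $\fq \in S$, so the case $I = 0$ shows that $\cF_{\rm can}$, $\cF_{\rm ur}$, and $\cF_{\rm rel}$ agree at every $\fq \in S$; as they also agree at places outside $S$ by the convention $H^1_\cF = H^1_{\rm ur}$, they coincide as Selmer structures on $\cT$. The validity of (H$^{\rm c}_5$), and hence of (H$^{\rm u}_5$), then follows because both $H^1_{/\cF_{\rm can}}(K_\fq, \overline T)$ and $H^1_{/\cF_{\rm can}}(K_\fq, \cA_1)$ vanish: the first by applying the argument above with $F = K$ (so that $\cT$ is replaced by $T$ and $G$ becomes trivial) and $I = \varpi$, the second by the first part of the lemma applied with $I = \varpi\cO[G]$. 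The main technical obstacle will be the integral equality $H^1_f(K_\fq, \cT) = H^1(K_\fq, \cT)$ at ramified non-$p$-adic places: the residual statement is immediate from Tate local duality, but the passage to $\cT$ itself requires careful handling of the limit through artinian quotients.
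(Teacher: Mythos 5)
Your plan has the right skeleton — Lemma \ref{surjective} with $R=\cO[G]$ and $X=\cT$, the identification $\cT/\fm_R\cT\cong\overline T$, treating $I=0$ first and then propagating to general $I$ via Lemma \ref{surjective}(ii), and deriving the ``in particular'' statement from the main equality — and this matches the paper's strategy. However, you leave a genuine gap at exactly the point you flag as ``the main technical obstacle'': you never actually establish the integral equality $H^1_f(K_\fq,\cT)=H^1(K_\fq,\cT)$ for $\fq\in S\setminus S_p(K)$, and the route you sketch is not the right one. An Euler-characteristic computation for $\overline T$ followed by an inverse-limit argument over artinian quotients is awkward here, because $H^1_f(K_\fq,\cT)$ is by definition the kernel of $H^1(K_\fq,\cT)\to H^1(K_\fq^{\rm ur},\cT\otimes_{\ZZ_p}\QQ_p)$ — it is a rational, not a residual, condition — so one does not straightforwardly recover it from the tower of $\cT/\fm^j\cT$. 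The paper sidesteps this entirely: Lemma \ref{surjective}(i) applied with $M=\cO[G]$ gives $H^2(K_\fq,\cT)=0$, hence $H^0(K_\fq,V^\vee(1))=0$ with $V=\cT\otimes_{\ZZ_p}\QQ_p$; then \cite[Cor.~1.3.3(ii)]{R} (for $\fq\nmid p$) yields $H^1_{\rm ur}(K_\fq,V)=H^1(K_\fq,V)$, i.e.\ the restriction $H^1(K_\fq,V)\to H^1(K_\fq^{\rm ur},V)$ is zero, and therefore $H^1_f(K_\fq,\cT)=H^1(K_\fq,\cT)$ immediately. This is the observation you are missing, and it replaces your proposed limit argument with a one-line deduction.

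A smaller imprecision: in deducing (H$_5^{\rm c}$) you justify the vanishing of $H^1_{/\cF_{\rm can}}(K_\fq,\overline T)$ ``by applying the argument above with $F=K$'', i.e.\ using the canonical Selmer structure on $T$ itself. But in the formulation of (H$_5^{\rm c}$) the symbol $\cF_{\rm can}$ denotes the structure on $\cT$, and the structure it induces on $\overline T$ (via $\overline T\hookrightarrow\cA_1$ through the map (\ref{nat inj})) is in general not the same as the canonical structure on $T$ — the paper flags exactly this distinction in the remark following (\ref{core rank def}). This imprecision is harmless only because you have already shown $H^1_{/\cF_{\rm can}}(K_\fq,\cA_1)=0$, which renders the injectivity required by (H$_5^{\rm c}$) automatic regardless of what happens on $\overline T$; it would be cleaner to argue that way directly. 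Likewise, when you invoke the Mazur--Rubin criterion at $p$-adic places you refer to $H^0(K_\fq,T^\vee(1))$, but the criterion must be applied to $\cT$, so the relevant group is $H^0(K_\fq,\cT^\vee(1))$; its vanishing is again supplied by Lemma \ref{surjective}(i) applied to $\cT$ and local duality.
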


\begin{proof}
%

At the outset we note that it suffices to prove the first claim. Indeed, if this is true, then hypothesis (H$_5$) implies
$H^{1}_{/\cF_{\rm can}}(K_{\fq}, \overline{T})$ vanishes for each $\fq$ in $S$ and the second claim follows easily from this.

In addition, if $\fq$ is $p$-adic, then the first claim follows directly upon combining \cite[Cor.~5.3]{MRselmer} with Lemma~\ref{surjective}(ii).

It thus suffices to prove the first claim for a place $\fq$ that is not $p$-adic. But, in this case, both groups $H^{1}_{\cF_{\rm ur}}(K_{\fq}, \cT)$ and $H^{1}_{\cF_{\rm can}}(K_{\fq}, \cT)$ are defined to be
 $H^{1}_{f}(K_{\fq}, \cT)$. In addition, the assumed vanishing of $H^{0}(K_{\fq}, \overline{T}^{\vee}(1))$ combines with Lemma~\ref{surjective}(i) and \cite[Cor.~1.3.3(ii)]{R} to imply that
 $H^{1}_{\rm ur}(K_{\fq}, \cT \otimes_{\bZ_{p}}\bQ_{p}) = H^{1}(K_{\fq}, \cT \otimes_{\bZ_{p}}\bQ_{p})$ and hence that $H^{1}_{f}(K_{\fq}, \cT) = H^{1}(K_{\fq}, \cT)$. Given this, the first claim follows from Lemma~\ref{surjective}(ii). \end{proof}

We now end this section by recording a useful consequence of hypotheses (H$_1$) and (H$_3$).

\begin{lemma}\label{torsionfree} Assume the hypotheses (H$_1$) and (H$_3$). Then for any finite abelian $p$-power degree extension $F'$ of $K$ the group $H^{0}(F', T)$ 
vanishes and the group $H^{1}(F', T)$ is $\cO$-torsion-free.
\end{lemma}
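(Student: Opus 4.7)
The plan is to reduce both assertions to the single statement that $(\overline T)^{G_{F'}} = 0$.

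For $H^{0}(F', T) = T^{G_{F'}}$, any nonzero $G_{F'}$-invariant of $T$ can be rescaled (using that $\cO$ is a DVR and $T$ is $\cO$-free) to lie outside $\varpi T$, and its reduction gives a nonzero $G_{F'}$-invariant in $\overline T$. For torsion-freeness of $H^{1}(F', T)$, the long exact sequence associated to $0 \to T \xrightarrow{\varpi} T \to \overline T \to 0$ identifies $H^{1}(F', T)[\varpi]$ with a quotient of $H^{0}(F', \overline T) = (\overline T)^{G_{F'}}$; since $\cO$ is a discrete valuation ring, vanishing of the $\varpi$-torsion is equivalent to $\cO$-torsion-freeness.

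Since $F'/K$ is abelian Galois, $G_{F'}$ is normal in $G_K$, so $(\overline T)^{G_{F'}}$ is a $\Bbbk[G_K]$-submodule of $\overline T$. By (H$_1$) it equals either $0$ or all of $\overline T$. Suppose for contradiction that $(\overline T)^{G_{F'}} = \overline T$. Then $G_{F'}$ acts trivially on $\overline T$, so $K(\overline T) \subseteq F'$, and hence $H := \Gal(K(\overline T)/K)$, which by construction acts faithfully and $\Bbbk$-linearly on $\overline T$, is a finite abelian $p$-group.

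I next apply the standard fact that a finite $p$-group acting $\Bbbk$-linearly on a nonzero $\Bbbk$-vector space (with $\Bbbk$ of characteristic $p$) has nonzero fixed subspace, obtaining $(\overline T)^{H} \neq 0$. Since $(\overline T)^{H} = (\overline T)^{G_K}$ is again a $\Bbbk[G_K]$-submodule of $\overline T$, hypothesis (H$_1$) forces $(\overline T)^{H} = \overline T$, and faithfulness of the action then forces $H = 1$. So $G_K$ itself acts trivially on $\overline T$, and irreducibility over $\Bbbk[G_K]$ collapses to give $\dim_{\Bbbk}\overline T = 1$.

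The final step is to invoke (H$_3$) in this collapsed situation: with $G_K$ acting trivially on the one-dimensional $\Bbbk$-space $\overline T$, one has
\[
H^{1}(F(T)_{p^{\infty}}/K, \overline T) \;=\; \Hom_{\rm cts}(\Gal(F(T)_{p^{\infty}}/K), \Bbbk),
\]
and since $F(T)_{p^{\infty}}$ contains $K_{p^{\infty}} \supseteq K(\mu_{p^{\infty}})$, hence the cyclotomic $\ZZ_p$-extension of $K$, the group $\Gal(F(T)_{p^{\infty}}/K)$ admits a nontrivial pro-$p$ quotient and the right-hand side is nonzero, contradicting (H$_3$). The main obstacle, as this argument makes clear, is precisely the case of a trivial one-dimensional residual representation, which (H$_1$) alone cannot exclude; ruling it out genuinely requires combining (H$_3$) with the observation that the cyclotomic $\ZZ_p$-extension of $K$ always sits inside $F(T)_{p^{\infty}}$.
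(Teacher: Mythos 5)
Your proof is correct, but it reaches the key vanishing $H^{0}(F', \overline T) = 0$ by a different route from the paper. The paper first cites \cite[Rem. 4.8]{bss} for the vanishing of $H^{0}(K, \overline T)$ under (H$_1$) and (H$_3$), and then transfers this to $F'$ via Lemma \ref{vanish general} applied to the induced module ${\rm Ind}_{G_K}^{G_{F'}}(\overline T)$ over an artinian local ring; the concluding exact sequence step matches yours. You instead argue at $F'$ directly: normality of $G_{F'}$ in $G_K$ plus (H$_1$) forces $(\overline T)^{G_{F'}}$ to be $0$ or all of $\overline T$; in the latter case the $G_K$-action factors through the finite $p$-group $\Gal(F'/K)$, whence the fixed-point theorem for $p$-groups in characteristic $p$ together with (H$_1$) again collapses $\overline T$ to the trivial one-dimensional $\Bbbk[G_K]$-module, and (H$_3$) excludes this because the cyclotomic $\ZZ_p$-extension sits inside $F(T)_{p^\infty}$, so the group of continuous homomorphisms $\Gal(F(T)_{p^\infty}/K) \to \Bbbk$ is nonzero. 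This final contradiction is essentially what the cited remark of \cite{bss} establishes for the base field $K$; you have inlined it and replaced the paper's descent lemma with the $p$-group argument. Both routes are valid: the paper's is more modular (Lemma \ref{vanish general} is reused elsewhere in \S\ref{prelim sec}), while yours is self-contained. One minor simplification worth noting: once $G_{F'}$ acts trivially on $\overline T$, the $G_K$-action already factors through $\Gal(F'/K)$, so introducing $K(\overline T)$ and the intermediate group $H$ separately is unnecessary.
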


\begin{proof} 
The argument of \cite[Rem.~4.8]{bss} shows that (H$_1$) and (H$_3$) combine to imply that the group $H^{0}(K, \overline{T})$ vanishes.

Set $R := \cO/(p)[\Gal(F'/K)]$. Then $R$ is an artinian local ring since $F'/K$ is a finite abelian $p$-extension and so, by applying Lemma~\ref{vanish general} with $X = {\rm Ind}_{G_{K}}^{G_{F'}}(\overline{T})$ and $\cG=G_{K}$, we deduce that the
group $H^{0}(F', \overline{T}) = H^{0}(K, {\rm Ind}_{G_{K}}^{G_{F'}}(\overline{T}))$ vanishes.

Given this, the claimed result follows immediately from the exact sequence of $\cO$-modules
\[ 0 \to H^{0}(F', T) \xrightarrow{\times \varpi} H^{0}(F', T) \to  H^{0}(F', \overline{T}) \to H^{1}(F', T)[\varpi] \to 0.\]
%
%
\end{proof}

\subsection{The existence of equivariant core vertices}\label{core vertex sec} In this section we characterize the existence of an appropriate analogue of the notion of `core vertex' (in the sense of Mazur and Rubin) in our setting.

For simplicity we set $\cR := \cO[G]$. Recalling that $\varpi$ is a uniformizer of $\cO$, for each natural number $m$ we set $\cR_m:=\cR/\varpi^m \cR$ and $\cA_m:=\cT/\varpi^m\cT$.

We assume hypothesis (H$_2$) is satisfied and fix an element $\tau$ of $G_{F_{p^\infty}}$ as in that hypothesis (so that the $\cR$-module $\cT/(\tau-1)\cT$ is free of rank one).

We fix a positive integer $m$ and, with $S$ as in (\ref{Sdef}), we write $\cP_{m}$ for the set of all primes $\fq$ of $K$ that do not belong to $S$ and are such that the associated Frobenius element ${\rm Fr}_{\fq}$ is conjugate to $\tau$ in $\Gal(F(T)_{p^m}/K)$.

We then write $\cN_{m}$ for the set of all square-free products of the primes in $\cP_{m}$. For an ideal $\fn$ in $\cN_{m}$, we denote by $\nu(\fn)$ the number of prime divisors of $\fn$. For each such $\fn$ we also use the modified Selmer structures $\cF^{\fn}$ and $(\cF^{*})_{\fn}$ that are defined in \cite[Exam.~2.1.8]{MRkoly} and \cite[Def.~2.3]{MRselmer} (and \cite[\S~3.1.3]{bss}).

The following definition is the analogue in our setting of the notion of core vertex used by Mazur and Rubin in \cite{MRkoly} and \cite{MRselmer} (see Remark \ref{core comp} below).

\begin{definition}\label{core def} We say that an ideal $\fn$ in $\cN_{m}$ is a `core vertex' of $\cF$ on $\cA_{m}$ if the $\cR_m$-module $H^{1}_{\cF^{\fn}}(K, \cA_{m})$ is free and the module $H^{1}_{(\cF^{*})_{\fn}}(K, \cA_{m}^{\vee}(1))$ vanishes.

\end{definition}

\begin{remark}\label{core remark} Assume that there exists a core vertex for $\cF$ on $\cA_{m}$. Then by combining the global duality theorem (as in \cite[Th.~3.1]{bss} or \cite[Th.~2.3.4]{MRkoly}) with \cite[Lem.~3.10]{bss}, and taking account of \cite[Lem.~5.4 and Rem.~5.9]{bss}, one finds that an ideal $\fn$ in $\cN_{m}$ is a core vertex for $\cF$ on $\cA_{m}$ if and only if the module $H^{1}_{(\cF^{*})_{\fn}}(K, \overline{T}^{\vee}(1))$ vanishes and, moreover, that the following claims are true for {\em every} core vertex $\fn$ in $\cN_{m}$:
\begin{itemize}
\item[(i)] if $I$ is any ideal of $\cR_{m}$ such that $\cR_{m}/I$ is a zero dimensional Gorenstein ring, then $\fn$ is also a core vertex for $\cF$ on $\cA_{m}/I \cA_{m}$;
\item[(ii)] the rank of the (free) $\cR_m$-module $H^{1}_{\cF^{\fn}}(K, \cA_{m})$ is $\chi(\cF) + \nu(\fn)$.
\end{itemize}
\end{remark}

The main result of this section characterizes the existence of core vertices in the above sense in a way that is straightforward to use in concrete examples.

To state this result we set $W := T \otimes_{\bZ_{p}}(\bQ_{p}/\bZ_p)$ and write $\cI_{\fq}$ for the inertia group at a prime $\fq$ of $K$.


\begin{theorem}\label{free} Let $\cF$ denote either the canonical or unramified Selmer structure on $\cT$. Then, if the standard hypotheses (H$_1$), (H$_2$) and (H$_3$) are satisfied, the following four conditions are equivalent:
\begin{itemize}
\item[(i)] For all natural numbers $m$ there exists a core vertex of $\cF$ on $\cA_{m}$;
\item[(ii)] For all natural numbers $m$ the Selmer structure $\cF$ on $\cA_{m}$ is `cartesian' in the sense of \cite[Def.~3.8]{sakamoto};
\item[(iii)] For every place $\fq$ in $S$, the homomorphism $H^{1}_{/\cF}(K_{\fq}, \overline{T}) \to H^{1}_{/\cF}(K_{\fq}, \cA_{1})$
 induced by (\ref{nat inj}) is injective;
\item[(iv)] All of the following conditions are satisfied:
\begin{itemize}
\item[(a)] for each $p$-adic place $\fp$ of $K$ the following is true:
\begin{itemize}
\item[-] if $\cF = \cF_{\rm can}$, then the natural homomorphisms
$$
H^{1}(K_{\fp},\cT) \to H^{1}(K_{\fp}, T)\,\,\text{ and }\,\, H^{0}(K_{\fp}, \cT^{\vee}(1)) \otimes_{\cO} \Bbbk \to H^{0}(K_{\fp}, T^{\vee}(1)) \otimes_{\cO} \Bbbk
$$
are both surjective;
\item[-] if $\cF = \cF_{\rm ur}$, then the homomorphism $H^{1}_{/\cF}(K_{\fp}, \overline{T}) \to H^{1}_{/\cF}(K_{\fp}, \cA_{1})$
 induced by (\ref{nat inj}) is injective;
\end{itemize}
\item[(b)] if $\fq$ belongs to $S \setminus (S_{\rm ram}(F/K) \cup S_{p}(K))$ and does not split completely in $F/K$, then the group $(W^{\cI_{\fq}}/(W^{\cI_{\fq}})_{\rm div})^{{\rm Fr}_{\fq}=1}$ vanishes;
\item[(c)] for all places $\fq$ in $S \setminus (S_{\rm ram}(T) \cup S_{p}(K))$ the group $H^{0}(K_{\fq}, \overline{T})$ vanishes;
\item[(d)] for all places $\fq \in (S_{\rm ram}(T) \cap S_{\rm ram}(F/K)) \setminus S_{p}(K)$, the natural homomorphisms
$$
H^{1}_{f}(K_{\fq}, \cT) \to H^{1}_{f}(K_{\fq}, T) \,\,\text{ and }\,\,
H^{1}_{f}(K_{\fq}, \cT^{*}(1)) \to H^{1}_{f}(K_{\fq}, T^{*}(1))
$$
are both surjective.
\end{itemize}
\end{itemize}
Furthermore, under the above equivalent conditions, one has
\[ \chi_{\rm can}(\cT) = \sum_{v \in S_\infty(K)\cup S_{p}(K)}{\rm rank}_\cO( H^0(K_v, T^\ast(1)))\]
and
\[ \chi_{\rm ur}(\cT) = \sum_{v \in S_\infty(K)}{\rm rank}_\cO(H^0(K_v, T^\ast(1))).\]
\end{theorem}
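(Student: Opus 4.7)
The plan is to prove a cycle of implications (i)~$\Leftrightarrow$~(ii)~$\Leftrightarrow$~(iii)~$\Leftrightarrow$~(iv), and then deduce the core rank formulas from any one of these equivalent conditions together with global duality. I separate the argument into the four natural steps below.

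First, for (i)~$\Leftrightarrow$~(ii), I would invoke the general machinery of cartesian Selmer structures as developed in \cite{sakamoto} (building on \cite{MRkoly}). The cartesian condition on $\cA_m$ is, by design, exactly what allows a Chebotarev density argument based on the element $\tau$ of (H$_2$) to produce, for each $m$, a core vertex in $\cP_m$; conversely, the existence of a core vertex propagates by Remark \ref{core remark}(i) to a descent statement that yields the cartesian property over all relevant quotient rings $\cR_m$. The formal ingredients needed are (H$_1$), (H$_2$), (H$_3$) (for the freeness, irreducibility and cohomological vanishing that fuel the Chebotarev step) together with Lemma \ref{torsionfree}.

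Second, for (ii)~$\Leftrightarrow$~(iii), I would unravel the definition of cartesian in \cite[Def.~3.8]{sakamoto} at each place $\fq\in S$. The condition is that the local conditions $H^1_\cF(K_\fq,\cA_m)$ behave compatibly under reduction modulo $\varpi^j$, and in our setting where $\cA_m = \cT/\varpi^m \cT$ and the quotient by $\varpi^{m-1}$ recovers $\cA_1$, this compatibility is detected by the injectivity of the map
\[
H^1_{/\cF}(K_\fq,\overline T) \longrightarrow H^1_{/\cF}(K_\fq,\cA_1)
\]
induced by $\Bbbk \hookrightarrow \Bbbk[G]$, since $\overline T$ and $\cA_1$ differ precisely by a copy of $\Bbbk[G]/\Bbbk$ worth of Galois structure. (H$_3$) and Lemma \ref{torsionfree} let one pass freely between the mod $\varpi$ and mod $\varpi^m$ statements.

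Third, for (iii)~$\Leftrightarrow$~(iv), I would partition $S$ according to the type of prime and analyze each case. For $\fq\in S_p(K)$ (case (iv)(a)), the relevant local condition is either all of $H^1$ or the universal-norm saturation; in the canonical case the injectivity is equivalent to surjectivity of the two maps listed, by Tate local duality applied to $\cT$ and $T$ together with the definition of $H^1_{\cF_{\rm can}}$ at $p$-adic primes. For $\fq \in S\setminus(S_{\rm ram}(T)\cup S_p(K))$, the module $H^1_{/\cF}(K_\fq,-)$ is computed via inertia-invariants and Frobenius, so injectivity on mod $\varpi$ coefficients reduces to the vanishing statements in (iv)(b) and (iv)(c) depending on whether $\fq$ splits completely in $F/K$. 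For $\fq \in (S_{\rm ram}(T)\cap S_{\rm ram}(F/K))\setminus S_p(K)$ (case (iv)(d)), I would use local Tate duality to rewrite the injectivity of the map on $H^1_{/\cF}$ as the surjectivity of the dual maps on $H^1_f$ for $\cT$ and $T$ and for $\cT^*(1)$ and $T^*(1)$. The main obstacle I anticipate is organizing this case analysis cleanly, particularly the $p$-adic case for the unramified Selmer structure, where the universal-norm description forces one to work with the saturation and thus requires a careful comparison between the canonical and unramified conditions; Lemma \ref{surjective} and \cite[Cor.~5.3]{MRselmer} will be the key tools there.

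Finally, assuming the equivalent conditions, I would compute the core ranks $\chi_{\rm can}(\cT)$ and $\chi_{\rm ur}(\cT)$ by combining the global Euler characteristic formula for Selmer complexes with the explicit local conditions. Concretely, Wiles' formula for the difference $\dim_\Bbbk H^1_\cF(K,\overline T) - \dim_\Bbbk H^1_{\cF^*}(K,\overline T^\vee(1))$ expresses the core rank as a sum of local terms, which under the equivalent conditions collapse to $\sum_v {\rm rank}_\cO H^0(K_v,T^*(1))$ with $v$ running over $S_\infty(K)\cup S_p(K)$ in the canonical case and only $S_\infty(K)$ in the unramified case, the difference being accounted for by the fact that $H^1_{\cF_{\rm ur}}$ at $p$-adic places is the universal-norm saturation rather than all of $H^1$. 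Here Lemma \ref{torsionfree} ensures that the mod $\varpi$ computation agrees with the integral one, and the induced representation $\cT = {\rm Ind}_{G_K}^{G_F}(T)$ satisfies $H^0(K_v,\cT^*(1)) = \bigoplus_{w\mid v} H^0(F_w,T^*(1))$ whose $\cO$-ranks sum correctly.
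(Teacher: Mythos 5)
Your overall skeleton is close to the paper's: reduce (i)~$\Leftrightarrow$~(ii)~$\Leftrightarrow$~(iii) to the existing core-vertex/cartesian machinery (Proposition~\ref{core equiv}, Lemma~\ref{p-injective}, \cite[Lem.~4.6]{sakamoto}), then handle (iii)~$\Leftrightarrow$~(iv) by a place-by-place analysis, then read off the core ranks from Wiles' formula. However, there is a genuine gap in your treatment of (i)~$\Rightarrow$~(ii). You assert that ``the existence of a core vertex propagates by Remark~\ref{core remark}(i) to a descent statement that yields the cartesian property,'' but Remark~\ref{core remark}(i) only says that a core vertex for $\cA_m$ remains a core vertex for Gorenstein quotients $\cA_m/I\cA_m$; it says nothing about the injectivity of the local maps $H^1_{/\cF}(K_\fq,\overline T)\to H^1_{/\cF}(K_\fq,\cA_1)$, which is what the cartesian condition actually demands. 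The paper's route is different and necessary: in the proof of Proposition~\ref{core equiv}, the step (b)~$\Rightarrow$~(c) goes through a global duality diagram combining \cite[Th.~3.1]{bss}, \cite[Prop.~3.5]{bss} and \cite[Lem.~3.10]{bss} with a $\Bbbk$-dimension count, and it is this global-to-local argument (not any ``descent'' of core vertices) that forces the injectivity. Without that step the cycle of implications is broken.

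Two secondary points. First, in your (ii)~$\Leftrightarrow$~(iii) discussion the phrase ``$\overline T$ and $\cA_1$ differ precisely by a copy of $\Bbbk[G]/\Bbbk$ worth of Galois structure'' is not the right mechanism; what matters (Lemma~\ref{p-injective}) is that for $\cF\in\{\cF_{\rm can},\cF_{\rm ur}\}$ the $\cO$-module $H^1_{/\cF}(K_\fq,\cT)$ is torsion-free, so injectivity for the map induced by $\overline T\to\cA_1$ automatically propagates to $\cA_1\to\cA_m$. Second, your anticipated appeal to Lemma~\ref{surjective} and \cite[Cor.~5.3]{MRselmer} for the $p$-adic case of $\cF_{\rm ur}$ is a red herring: in condition (iv)(a) the $\cF_{\rm ur}$ case is stated as the injectivity itself, so there is nothing to translate there, and the paper's only translation at $p$-adic primes is Lemma~\ref{can exp p} for $\cF_{\rm can}$. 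Your step 3 would also benefit from first isolating the intermediate statement that, for $\fq\in S\setminus S_p(K)$, injectivity of the map in (iii) is equivalent to surjectivity of both $H^1_f(K_\fq,\cT)\to H^1_f(K_\fq,T)$ and $H^1_f(K_\fq,\cT^*(1))\to H^1_f(K_\fq,T^*(1))$, and then applying Proposition~\ref{explicit} to unwind that into (iv)(b), (c), (d); this is the lever the paper actually uses.
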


\begin{remark}\label{core comp} The notion of core vertex plays  key role in the theory developed by Mazur and Rubin in \cite{MRkoly} and \cite{MRselmer}. In the special case that $\cR_m$ is a principal ideal ring, Mazur and Rubin are able to verify the existence of core vertices under certain hypotheses (see \cite[Cor.~4.1.9]{MRkoly}). However, the structure theorem of modules over principal ideal rings plays a key role in their argument (see the proof of \cite[Th.~4.1.5]{MRkoly}) and so their method cannot be used directly to prove Theorem \ref{free}. In earlier articles \cite{sbA} and \cite{bss}, the authors developed a theory of Kolyvagin and Stark systems under the {\em assumed} existence of core vertices in the above sense (see, in particular, \cite[Hyp.~3.15 and Rem.~3.18]{sbA} and \cite[Hyp.~4.2]{bss}). The significance of Theorem ~\ref{free} is that it now allows one to check whether the hypotheses of loc. cit. are satisfied in concrete cases.
\end{remark}

\begin{remark}\label{cartesian} The notion of cartesian introduced in \cite[Def.~3.8]{sakamoto} differs slightly from the corresponding notion defined by Mazur and Rubin in \cite[Def. 1.1.4]{MRkoly}. However, in the setting of Galois representation over principal ideal rings, one can show that the two notions coincide. In \cite[Lem.~4.6]{sakamoto} the second author proved that, under the hypotheses (H$_{1}$), (H$_2$) and (H$_{3}$), any Selmer structure that is cartesian (in the sense of loc. cit.) has a core vertex in the sense of Definition \ref{core def}. Theorem \ref{free} now shows that the converse of this result is also true (for more details see the proof of Proposition~\ref{core equiv} below).
\end{remark}

Theorem \ref{free} will be proved in \S\ref{proof free}.

%

\subsection{Reinterpreting the conditions}\label{understanding} In this section we clarify the nature of the conditions stated in Theorem~\ref{free} in order to prepare for its proof.

We observe first that the condition of Theorem~\ref{free}(iii) asserts directly that the Selmer structure $\cF$ on $\cA_{1}$ is cartesian in the sense of \cite[Def.~3.8]{sakamoto}. 

The following result implies that whenever this condition is satisfied, then for any natural number $m$ the Selmer structure $\cF$ on $\cA_{m}$ is also cartesian.

\begin{lemma}\label{p-injective} Let $\cF$ be any Selmer structure on $\cT$ with the property that the $\cO$-module
$H^{1}_{/\cF}(K_{\fq}, \cT)$ is torsion-free for all primes $\fq$ in $S(\cF)$. Then for any such prime $\fq$ and any natural number $m$ the homomorphism
\[
H^{1}_{/\cF}(K_{\fq}, \cA_{1}) \to H^{1}_{/\cF}(K_{\fq}, \cA_{m})
\]
that is induced by the map $\cA_{1} \xrightarrow{\times \varpi^{m-1}} \cA_{m}$ is injective.
\end{lemma}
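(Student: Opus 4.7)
The plan is to give a direct diagram chase using the long exact cohomology sequences attached to the short exact sequences $0 \to \cT \xrightarrow{\varpi^{k}} \cT \to \cA_{k} \to 0$ for $k \in \{1,m\}$. Write $A := H^{1}(K_{\fq}, \cT)$ and $A_{\cF} := H^{1}_{\cF}(K_{\fq}, \cT)$, and for $k \in \{1,m\}$ let $\pi_{k} \colon A \to H^{1}(K_{\fq}, \cA_{k})$ and $\delta_{k} \colon H^{1}(K_{\fq}, \cA_{k}) \to H^{2}(K_{\fq}, \cT)$ denote the reduction and Bockstein maps from this long exact sequence. By the definition of the Selmer structure propagated to a quotient one has $H^{1}_{\cF}(K_{\fq}, \cA_{k}) = \pi_{k}(A_{\cF})$, so it suffices to show that if $x \in H^{1}(K_{\fq}, \cA_{1})$ satisfies $\varpi^{m-1} \cdot x = \pi_{m}(z)$ for some $z \in A_{\cF}$, then $x = \pi_{1}(u)$ for some $u \in A_{\cF}$.

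The key ingredient is the morphism of short exact sequences
\[
\begin{CD}
0 @>>> \cT @>{\varpi}>> \cT @>>> \cA_{1} @>>> 0 \\
@. @V{\id}VV @V{\varpi^{m-1}}VV @VV{\varpi^{m-1}}V \\
0 @>>> \cT @>{\varpi^{m}}>> \cT @>>> \cA_{m} @>>> 0
\end{CD}
\]
lifting $\cA_{1} \xrightarrow{\varpi^{m-1}} \cA_{m}$: here the identity in the left column is forced by commutativity of the left square, and the middle column must then be multiplication by $\varpi^{m-1}$. The naturality of the Bockstein yields the key identity $\delta_{m} \circ (\varpi^{m-1}) = \delta_{1}$, in which $\varpi^{m-1}$ denotes the induced map $H^{1}(K_{\fq}, \cA_{1}) \to H^{1}(K_{\fq}, \cA_{m})$. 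Applying this to $x$ gives $\delta_{1}(x) = \delta_{m}(\varpi^{m-1} \cdot x) = \delta_{m}(\pi_{m}(z)) = 0$, and so $x = \pi_{1}(\widetilde{x})$ for some $\widetilde{x} \in A$.

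To conclude, observe that $\pi_{m}(\varpi^{m-1}\widetilde{x}) = \varpi^{m-1}\pi_{1}(\widetilde{x}) = \varpi^{m-1} \cdot x = \pi_{m}(z)$, so $z - \varpi^{m-1}\widetilde{x} \in \ker \pi_{m} = \varpi^{m}A$. Writing $z = \varpi^{m-1}(\widetilde{x} + \varpi w)$ with $w \in A$ and setting $u := \widetilde{x} + \varpi w$, we have $\varpi^{m-1} u = z \in A_{\cF}$, so the image of $u$ in $H^{1}_{/\cF}(K_{\fq}, \cT) = A/A_{\cF}$ is annihilated by $\varpi^{m-1}$. The $\cO$-torsion-freeness hypothesis now forces this image to be zero, i.e.\ $u \in A_{\cF}$, and hence $x = \pi_{1}(u) \in \pi_{1}(A_{\cF}) = H^{1}_{\cF}(K_{\fq}, \cA_{1})$, as required.

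The principal subtlety in this argument is the identification of a lift of $\cA_{1} \xrightarrow{\varpi^{m-1}} \cA_{m}$ to a morphism of short exact sequences that allows one to compare $\delta_{1}$ and $\delta_{m}$: the obvious-looking choice of $\id$ in the middle column fails (it would require $\varpi^{1-m}$ on the left), and one must instead put $\varpi^{m-1}$ there. Once this is in hand, the remaining steps consist only of tracking the Bockstein and invoking the torsion-freeness hypothesis.
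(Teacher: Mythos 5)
Your proof is correct and takes essentially the same route as the paper's: both rest on the morphism of Kummer sequences from $0\to\cT\xrightarrow{\varpi}\cT\to\cA_1\to0$ to $0\to\cT\xrightarrow{\varpi^m}\cT\to\cA_m\to0$, the induced compatibility of the boundary maps, and the torsion-freeness hypothesis on $H^1_{/\cF}(K_\fq,\cT)$. The paper packages the resulting data into a commutative diagram of short exact sequences with injective outer vertical maps (via the kernel--cokernel sequence) and appeals to the five lemma, whereas you unwind the same information into a direct element chase using the Bockstein maps $\delta_k$; the content and the points at which the hypotheses enter are identical.
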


\begin{proof} The proof of this result mimics the argument used by Mazur and Rubin to prove \cite[Lem.~3.7.1(i)]{MRkoly}. However, for the sake of the reader, we provide the details.

Since $\varpi$ is a regular element of $\cR$, there exists a natural commutative diagram with exact rows of the form
\begin{align*}
\xymatrix{
0 \ar[r] &H^{1}(K_{\fq},\cT) \otimes_{\cR} \cR_{1} \ar[r] \ar[d]^{{\rm id} \otimes \varpi^{m-1}} & H^{1}(K_{\fq},\cA_{1}) \ar[r] \ar[d] & H^{2}(K_{\fq},\cT)[\varpi] \ar@{^{(}->}[d] \ar[r] & 0
\\
0 \ar[r] & H^{1}(K_{\fq},\cT) \otimes_{\cR} \cR_{m} \ar[r] & H^{1}(K_{\fq},\cA_{m}) \ar[r] & H^{2}(K_{\fq},\cT)[\varpi^{m}] \ar[r] & 0.
}
\end{align*}

In addition, since $H^{1}_{/\cF}(K_{\fq}, \cT)$ is a torsion-free $\cO$-module, the definitions of $H^{1}_{\cF}(K_{\fq}, \cA_{1})$ and $H^{1}_{\cF}(K_{\fq}, \cA_{m})$ ensure the existence of a commutative diagram
\begin{align*}
\xymatrix{
H^{1}_{\cF}(K_{\fq}, \cT) \otimes_{\cR} \cR_{1} \xrightarrow{\sim} H^{1}_{\cF}(K_{\fq}, \cA_{1}) \ar[d] \ar@{^{(}->}[r] &H^{1}(K_{\fq},\cT) \otimes_{\cR} \cR_{1} \ar@{^{(}->}[r] \ar[d]^{{\rm id} \otimes \varpi^{m-1}} & H^{1}(K_{\fq},\cA_{1}) \ar[d]
\\
H^{1}_{\cF}(K_{\fq}, \cT) \otimes_{\cR} \cR_{m} \xrightarrow{\sim} H^{1}_{\cF}(K_{\fq}, \cA_{m}) \ar@{^{(}->}[r] & H^{1}(K_{\fq},\cT) \otimes_{\cR} \cR_{m} \ar@{^{(}->}[r] & H^{1}(K_{\fq},\cA_{m}).
}
\end{align*}

Upon combining these two diagrams with the 'kernel-cokernel' exact sequence one deduces the existence of the following commutative diagram with exact rows
\begin{align*}
\xymatrix{
0 \ar[r] & H^{1}_{/\cF}(K_{\fq}, \cT) \otimes_{\cR} \cR_{1} \ar[r] \ar@{^{(}->}[d]^{{\rm id} \otimes \varpi^{m-1}} & H^{1}_{/\cF}(K_{\fq}, \cA_{1}) \ar[r] \ar[d] & H^{2}(K_{\fq},\cT)[\varpi] \ar@{^{(}->}[d] \ar[r] & 0
\\
0 \ar[r] & H^{1}_{/\cF}(K_{\fq}, \cT) \otimes_{\cR} \cR_{m} \ar[r] & H^{1}_{/\cF}(K_{\fq}, \cA_{m})  \ar[r] & H^{2}(K_{\fq},\cT)[\varpi^{m}] \ar[r] & 0.
}
\end{align*}
Finally we note that, since the $\cO$-module $H^{1}_{/\cF}(K_{\fq}, \cT)$ is torsion-free, the left hand vertical map in this diagram is injective
 and hence, therefore, the central vertical map is also injective, as claimed.

This completes the proof.
\end{proof}

The next result reduces the condition of Theorem \ref{free}(i) to the case $m=1$.

\begin{proposition}\label{core equiv} Let $\cF$ be any Selmer structure on $\cT$ with the property that the $\cO$-module
$H^{1}_{/\cF}(K_{\fq}, \cT)$ is torsion-free for all primes $\fq$ in $S(\cF)$. Then the following conditions are equivalent:
\begin{itemize}
\item[(a)] there exists a core vertex of $\cF$ on $\cA_{m}$ for any positive integer $m$;
\item[(b)] there exists a core vertex of $\cF$ on $\cA_{1}$;
\item[(c)] for any prime $\fq$ in $S(\cF)$, the homomorphism $H^{1}_{/\cF}(K_{\fq}, \overline{T}) \to H^{1}_{/\cF}(K_{\fq}, \cA_{1})$
induced by (\ref{nat inj}) is injective.
\end{itemize}
\end{proposition}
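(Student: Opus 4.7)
I will establish the cycle (a)$\Rightarrow$(b)$\Rightarrow$(c)$\Rightarrow$(a). The first implication is immediate on taking $m=1$, so the content lies in the other two directions.

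For (c)$\Rightarrow$(a), I observe that condition (c) is precisely the statement that the Selmer structure $\cF$ on $\cA_1$ is cartesian in the sense of \cite[Def.~3.8]{sakamoto}. To promote this to cartesian on $\cA_m$ for arbitrary $m\ge 1$, I compose, for each $\fq\in S(\cF)$, the injection from (c) with the homomorphism $H^1_{/\cF}(K_\fq,\cA_1)\to H^1_{/\cF}(K_\fq,\cA_m)$ induced by multiplication by $\varpi^{m-1}$. The torsion-free hypothesis on $H^1_{/\cF}(K_\fq,\cT)$ is exactly what the preceding Lemma~\ref{p-injective} requires, and that lemma thus makes the latter map injective, so the composite witnesses cartesian on $\cA_m$. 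Now \cite[Lem.~4.6]{sakamoto}, whose hypotheses (H$_1$), (H$_2$) and (H$_3$) are in force, produces a core vertex on each $\cA_m$, yielding (a).

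The substantive direction is (b)$\Rightarrow$(c). Given a core vertex $\fn\in\cN_1$ of $\cF$ on $\cA_1$, Remark~\ref{core remark} yields freeness of $H^1_{\cF^\fn}(K,\cA_1)$ over $\cR_1=\Bbbk[G]$ together with vanishing of $H^1_{(\cF^*)_\fn}(K,\cA_1^\vee(1))$. Fixing $\fq\in S(\cF)$, I plan to study the commutative diagram of Poitou--Tate nine-term global-to-local sequences for $(\overline T,\cF^\fn)$ and $(\cA_1,\cF^\fn)$ linked by the injection $\iota\colon\overline T\hookrightarrow\cA_1$ coming from (\ref{nat inj}). The vanishing of the dual Selmer module on the $\cA_1$-side forces the associated global-to-local map to be surjective, after which the freeness of the global Selmer module controls the local structure at $\fq$: any kernel element of the map in (c) would produce, via the diagram chase, a cohomology class on the $\cA_1$-side annihilated by the augmentation ideal of $\Bbbk[G]$ but not lying in the image of the socle generator $\sum_{g\in G}g$, contradicting freeness.

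The main obstacle in (b)$\Rightarrow$(c) is precisely this transition from global freeness to local injectivity: the diagram chase must be carried out so that the augmentation-ideal annihilator and the socle-image constraint are simultaneously visible at the specified place $\fq$. The key technical input will be the observation that $\Bbbk[G]$ is a zero-dimensional Gorenstein local ring with one-dimensional socle spanned by the image of $1$ under (\ref{nat inj}), so that freeness of a finitely generated $\Bbbk[G]$-module is equivalent to the bijectivity of the natural map from the coinvariants to the socle induced by multiplication by $\sum_g g$ --- a detection criterion that can then be checked cohomologically using the core-vertex structure supplied by (b).
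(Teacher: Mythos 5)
Your handling of (a)$\Rightarrow$(b) and of (c)$\Rightarrow$(a) agrees with the paper: the latter is exactly Lemma~\ref{p-injective} combined with \cite[Lem.~4.6]{sakamoto}, as you say. The problem is the implication (b)$\Rightarrow$(c), which you describe as a ``plan'' rather than execute, and the plan as stated has a gap.

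Your sketch hinges on a claimed detection principle (``freeness of a finitely generated $\Bbbk[G]$-module is equivalent to bijectivity of the map from coinvariants to socle induced by $\sum_g g$'') and a diagram chase that you acknowledge you have not carried out. The criterion itself is not justified, and, more to the point, it is not the mechanism that actually makes the implication go through. The paper's argument rests on two specific inputs that your sketch never identifies. First, \cite[Prop.~3.5]{bss} gives bijectivity of the central vertical map
$H^{1}(\cO_{K, S_{\fn}}, \overline{T}) \to H^{1}(\cO_{K, S_{\fn}}, \cA_{1})[\fm_{\cR}]$
in the ladder of global-duality sequences; this is what propagates information from the global $\cA_1$-cohomology back to $\overline T$ and yields injectivity of the induced map $H^{1}_{\cF^{\fn}}(K, \overline{T}) \to H^{1}_{\cF^{\fn}}(K, \cA_{1})[\fm_{\cR}]$. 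Second, \cite[Lem.~3.10]{bss} gives injectivity of $H^{1}_{\cF^{\fn}}(K, \cA_{1}) \otimes_{\cR_{1}} \Bbbk \to H^{1}_{\cF^{\fn}}(K, \overline{T})$. With these, freeness of $H^{1}_{\cF^{\fn}}(K, \cA_{1})$ over $\cR_1$ is used only to identify $\dim_\Bbbk\bigl(H^{1}_{\cF^{\fn}}(K, \cA_{1})[\fm_{\cR}]\bigr)$ with ${\rm rank}_{\cR_1}\bigl(H^{1}_{\cF^{\fn}}(K, \cA_{1})\bigr)$; the two inputs then sandwich $\dim_\Bbbk H^{1}_{\cF^{\fn}}(K, \overline{T})$ between this rank and itself, forcing the left-hand vertical map to be an isomorphism, whence the right-hand map $\bigoplus_{\fq} H^{1}_{/\cF}(K_{\fq}, \overline{T}) \to \bigoplus_{\fq} H^{1}_{/\cF}(K_{\fq}, \cA_{1})$ is injective by the exactness of the rows. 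Your sketch attempts to extract a contradiction from a single kernel element at $\fq$, but without the bijectivity of the central map and the equality of dimensions that the counting argument delivers, the claimed obstruction (a socle class ``not lying in the image of $\sum_g g$'') is never actually produced; indeed a class arising as you describe can perfectly well lie in $\bigl(\sum_g g\bigr) \cdot H^{1}_{\cF^{\fn}}(K, \cA_{1})$. You should replace the diagram-chase heuristic with the paper's two-sided dimension estimate.
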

\begin{proof} Condition (a) obviously implies condition (b).

To show that condition (b) implies condition (c) we fix a core vertex $\fn$ in $\cN_{1}$ for $\cF$ on $\cA_{1}$ and set $S_{\fn} := S(\cF)\cup\{\fq \mid \fn\}$.

Then by using the global duality theorem (as in either \cite[Th.~3.1]{bss} or \cite[Th.~2.3.4]{MRkoly}), one deduces the existence of a natural commutative  diagram of the form
\begin{align*}
\xymatrix{
0 \ar[r] & H^{1}_{\cF^{\fn}}(K, \overline{T}) \ar[r] \ar[d] & H^{1}(\cO_{K, S_{\fn}}, \overline{T}) \ar[r] \ar[d]^{\simeq} & \bigoplus_{\fq \in S(\cF)}H^{1}_{/\cF}(K_{\fq}, \overline{T}) \ar[d] \ar[r] & 0
\\
0 \ar[r] & H^{1}_{\cF^{\fn}}(K, \cA_{1})[\fm_{\cR}] \ar[r] & H^{1}(\cO_{K, S_{\fn}}, \cA_{1})[\fm_{\cR}]  \ar[r] & \bigoplus_{\fq \in S(\cF)}H^{1}_{/\cF}(K_{\fq}, \cA_{1}),
}
\end{align*}
where $\fm_{R}$ denotes the maximal ideal of $\cR$.

By the result of \cite[Prop.~3.5]{bss}, one knows that the central vertical map in this diagram is bijective. It follows that the left hand vertical map is injective and hence that
$$
\dim_{\Bbbk}\left(H^{1}_{\cF^{\fn}}(K, \overline{T})\right) \leq \dim_{\Bbbk} \left(H^{1}_{\cF^{\fn}}(K, \cA_{1})[\fm_{\cR}]\right) = {\rm rank}_{\cR_{1}} \left(H^{1}_{\cF^{\fn}}(K, \cA_{1}) \right)
$$
where the last equality is valid because $H^{1}_{\cF^{\fn}}(K, \cA_{1})$ is a free $\cR_{1}$-module and the $\Bbbk$-dimension of $\cR_{1}[\fm_{\cR}]$ is one.

On the other hand, by \cite[Lem.~3.10]{bss}, one knows that the natural homomorphism
$H^{1}_{\cF^{\fn}}(K, \cA_{1}) \otimes_{\cR_{1}} \Bbbk \to H^{1}_{\cF^{\fn}}(K, \overline{T})$ is injective and hence that
${\rm rank}_{\cR_{1}} \left(H^{1}_{\cF^{\fn}}(K, \cA_{1}) \right) \le \dim_{\Bbbk}\left(H^{1}_{\cF^{\fn}}(K, \overline{T})\right)$.

Taken together, these facts imply that the left hand vertical map in the above diagram is also bijective and hence that the right hand vertical map is injective, as claimed by condition (c).

Finally, we note that Lemma~\ref{p-injective} combines with \cite[Lem.~4.6]{sakamoto} to show that condition (c) implies condition (a).

This completes the proof.
\end{proof}

In the following result we clarify the nature of the conditions in Theorem \ref{free}(iv).

\begin{proposition}\label{explicit} The conditions in Theorem \ref{free}(iv)(b), (c), and (d) are all valid if and only if for all primes $\fq $ in $S \setminus S_{p}(K)$ the natural homomorphisms
\[ H^{1}_{f}(K_{\fq}, \cT) \to H^{1}_{f}(K_{\fq}, T)\,\,\text{ and }\,\, H^{1}_{f}(K_{\fq}, \cT^{*}(1)) \to H^{1}_{f}(K_{\fq}, T^{*}(1))\]
are both surjective.
\end{proposition}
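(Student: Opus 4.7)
The plan is to verify the claimed equivalence one prime at a time. I would first observe that $S \setminus S_p(K)$ partitions into three disjoint subsets, which correspond in the obvious way to conditions (b), (c) and (d) of Theorem \ref{free}(iv), namely $S_{\rm ram}(T) \setminus (S_{\rm ram}(F/K) \cup S_p(K))$, $S_{\rm ram}(F/K) \setminus (S_{\rm ram}(T) \cup S_p(K))$ and $(S_{\rm ram}(T) \cap S_{\rm ram}(F/K)) \setminus S_p(K)$. For $\fq$ in the third subset, condition (d) literally asserts that the two maps in question are surjective, so no argument is required.

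For the remaining primes, my strategy is to use Shapiro's lemma together with a Mackey-type decomposition to rewrite the source and target of the natural maps in terms of local cohomology over the completions $F_w$, and then to compute the cokernel of the resulting corestriction-style map by means of the short exact sequence $0 \to T \to V \to W \to 0$, where $V := T \otimes_{\ZZ_p}\QQ_p$. In the second subset (condition (c)), the representation $T$ is unramified at $\fq$ while $F/K$ is ramified, so each $H^1_f(F_w, T) = T/({\rm Fr}_w - 1)T$ maps into $H^1_f(K_\fq, T) = T/({\rm Fr}_\fq - 1)T$ via the norm; I plan to identify the cokernel of the total map with a quotient of $H^0(K_\fq, \overline T)$ by a direct computation, and then use the parallel analysis for $T^{*}(1)$ together with Tate local duality to conclude that the simultaneous surjectivity of both maps is equivalent to $H^0(K_\fq, \overline T) = 0$, which is condition (c).

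In the first subset (condition (b)), $T$ is ramified but $F_w/K_\fq$ is unramified for every $w \mid \fq$. Here the computation will be more delicate. I would track the image and cokernel through the six-term sequence associated to $0 \to T \to V \to W \to 0$, both at the level of $K_\fq$ and of $F_w$, aiming to show that, provided $\fq$ does not split completely in $F/K$, the cokernel of both the map for $T$ and the analogous map for $T^{*}(1)$ is controlled by $(W^{I_\fq}/(W^{I_\fq})_{\rm div})^{{\rm Fr}_\fq = 1}$; if $\fq$ splits completely, then $\cT|_{G_{K_\fq}}$ decomposes as a direct sum of copies of $T$ and both surjectivities are automatic. This should give the equivalence with condition (b). The main obstacle will be this Case (b) computation: carefully combining the Shapiro--Mackey decomposition with the divisible-versus-torsion analysis of $W^{I_\fq}$ under Frobenius, and appealing to Tate local duality to verify that the obstructions on the $T$-side and the $T^{*}(1)$-side indeed coincide, so that the two conditions collapse into the single assertion $(W^{I_\fq}/(W^{I_\fq})_{\rm div})^{{\rm Fr}_\fq = 1} = 0$.
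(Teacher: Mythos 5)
Your partition of $S \setminus S_p(K)$ into the three disjoint subsets matching conditions (b), (c) and (d) is exactly right, and the observation that (d) is simply a restatement is the same trivial step the paper makes. Beyond that, though, your route is genuinely different: where you propose to pass through Shapiro's lemma and a Mackey decomposition to rewrite $H^1_f(K_\fq, \cT)$ as a sum of groups $H^1_f(F_w, T)$ and then control a corestriction-style map, the paper instead applies Rubin's structural description \cite[Lem.~1.3.5(iii)]{R} directly to the induced module $\cT$. This gives, with no decomposition at all, the two-row diagram whose outer terms are $\cT^{\cI_\fq}/({\rm Fr}_\fq-1)\cT^{\cI_\fq}$ and $(\cW^{\cI_\fq}/(\cW^{\cI_\fq})_{\rm div})^{{\rm Fr}_\fq=1}$, and the whole argument is then a diagram chase plus Nakayama. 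The advantage of the paper's route is that the bookkeeping you are worried about never arises: for $\fq\notin S_{\rm ram}(F/K)$ the outer terms are induced, and for $\fq\in S_{\rm ram}(F/K)\setminus S_{\rm ram}(T)$ the $\cW$-term vanishes outright and the left-hand term lands inside $\varpi T$. Your Shapiro approach is not wrong in principle, but in the case where $F_w/K_\fq$ is ramified the compatibility of the Bloch--Kato $f$-condition with Shapiro and the computation of the induced corestriction is substantially more delicate than the sketch suggests, and you would in effect be re-deriving Rubin's lemma.

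There is one concrete gap you would need to close. In your treatment of the second subset (the condition (c) primes, $\fq\in S_{\rm ram}(F/K)\setminus(S_{\rm ram}(T)\cup S_p(K))$), you claim that ``Tate local duality'' lets you pass from the parallel computation for $T^*(1)$ back to the single condition $H^0(K_\fq,\overline T)=0$. Local duality gives $H^0(K_\fq,\overline T^\vee(1))\simeq H^2(K_\fq,\overline T)^\vee$, but this does \emph{not} in general vanish simultaneously with $H^0(K_\fq,\overline T)$: for instance, $\overline T=\Bbbk$ trivial has $H^0\neq0$ while $H^0(K_\fq,\mu_p)=0$ whenever ${\rm N}\fq\not\equiv 1\pmod p$. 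What actually makes the two vanishings coincide here is the extra arithmetic input that $\fq$ ramifies in the abelian $p$-power extension $F/K$, so necessarily ${\rm N}\fq\equiv 1\pmod p$; once $\mu_p\subset K_\fq$, the unramified modules $\overline T$ and $\overline T^\vee(1)$ have $H^0(K_\fq,-)$ of the same dimension and the equivalence follows. Your sketch omits this step, and without it the claimed equivalence ``simultaneous surjectivity $\Leftrightarrow H^0(K_\fq,\overline T)=0$'' is simply false; you are implicitly using a congruence that has to be invoked explicitly.
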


\begin{proof} It suffices to show that if either $\fq$ does not belong to $S_{\rm ram}(F/K)$ or $\fq$ belongs to $S_{\rm ram}(F/K) \setminus (S_{\rm ram}(T) \cup S_{p}(K))$, then the explicit conditions given in Theorem \ref{free}(iv)(b) and (c) are respectively equivalent to the surjectivity of the two maps given above.

However, before doing so, we note that for any prime $\fq$ in $S \setminus S_{p}(K)$ the result of \cite[Lem.~1.3.5(iii)]{R} gives a commutative diagram with exact rows
\begin{equation}\label{R diagram}
\xymatrix{
0 \ar[r] & \cT^{\cI_{\fq}}/({\rm Fr}_{\fq}-1)\cT^{\cI_{\fq}}  \ar[r] \ar[d] & H^{1}_{f}(K_\fq, \cT) \ar[r] \ar[d] & (\cW^{\cI_{\fq}}/(\cW^{\cI_{\fq}})_{\rm div})^{{\rm Fr}_{\fq} = 1} \ar[d] \ar[r] & 0
\\
0 \ar[r] & T^{\cI_{\fq}}/({\rm Fr}_{\fq}-1)T^{\cI_{\fq}}  \ar[r] & H^{1}_{f}(K_\fq, T)  \ar[r] & (W^{\cI_{\fq}}/(W^{\cI_{\fq}})_{\rm div})^{{\rm Fr}_{\fq} = 1} \ar[r] & 0,
}
\end{equation}
in which the vertical maps are induced by the natural projection $\cR \to \cO$ and we set $\cW := {\rm Ind}^{G_{F}}_{G_{K}}(W)$.

We assume first that $\fq$ does not belong to $S_{\rm ram}(F/K)$. Then in this case one has $\cT^{\cI_{\fq}} =  {\rm Ind}^{G_{F}}_{G_{K}}(T^{\cI_{\fq}})$ and so the left hand vertical map in (\ref{R diagram}) is surjective. The surjectivity of the central vertical  map in (\ref{R diagram}) is thus equivalent to the surjectivity of the right hand vertical map and, since $\cW^{\cI_{\fq}}/(\cW^{\cI_{\fq}})_{\rm div} =  {\rm Ind}^{G_{F}}_{G_{K}}(W^{\cI_{\fq}}/(W^{\cI_{\fq}})_{\rm div})$, one finds that this is the case if and only if either  $(W^{\cI_{\fq}}/(W^{\cI_{\fq}})_{\rm div})^{{\rm Fr}_{\fq}=1}$ vanishes or the prime $\fq$ splits completely in $F/K$.

Furthermore, if $(W^{\cI_{\fq}}/(W^{\cI_{\fq}})_{\rm div})^{{\rm Fr}_{\fq}=1}$ vanishes, then \cite[Prop.~1.4.3(i)]{R} implies that
$H^{1}_{f}(K_{\fq}, T^{\vee}(1))$ is the orthogonal complement of $H^{1}_{\rm ur}(K_{\fq}, T)$ with respect to the local Tate pairing and hence that $H^{1}_{f}(K_{\fq}, T^{\vee}(1)) = H^{1}_{\rm ur}(K_{\fq}, T^{\vee}(1))$.

By applying \cite[Lem.~1.3.5(iii)]{R}, we can therefore deduce that the group
$$
T^{\vee}(1)^{\cI_{\fq}}/((T^{\vee}(1)^{\cI_{\fq}})_{\rm div} + ({\rm Fr}_{\fq}-1)T^{\vee}(1)^{\cI_{\fq}})
$$
vanishes. Then, since the quotient $T^{\vee}(1)^{\cI_{\fq}}/(T^{\vee}(1)^{\cI_{\fq}})_{\rm div}$ is finite, the group
$$
(T^{\vee}(1)^{\cI_{\fq}}/(T^{\vee}(1)^{\cI_{\fq}})_{\rm div})^{{\rm Fr}_{\fq}=1}
$$
also vanishes and so, by \cite[Lem.~1.3.5(iii)]{R}, one has
$$
T^{*}(1)^{\cI_{\fq}}/({\rm Fr}_{\fq}-1)T^{*}(1)^{\cI_{\fq}} = H^{1}_{f}(K_{\fq}, T^{*}(1)).
$$

This description implies that the natural homomorphism $H^{1}_{f}(K_\fq, \cT^{*}(1)) \to H^{1}_{f}(K_\fq, T^{*}(1))$ is surjective, as required.

We suppose now that $\fq$ belongs to $S_{\rm ram}(F/K) \setminus (S_{\rm ram}(T) \cup S_{p}(K))$. In this case both groups $\cW^{\cI_{\fq}}/(\cW^{\cI_{\fq}})_{\rm div}$ and $W^{\cI_{\fq}}/(W^{\cI_{\fq}})_{\rm div}$ vanish, one has $T = T^{\cI_{\fq}}$ and $\cT^{\cI_{\fq}}$ is naturally isomorphic to $T \otimes_{\cO} \cR^{\cI_{\fq}}$.

Since $\fq$ belongs to $S_{\rm ram}(F/K)$, the image of $\cT^{\cI_{\fq}}$ in $T$ is contained in $\varpi \cdot T$.
 In this case, therefore, the natural map $H^{1}_{f}(K_\fq, \cT) \to H^{1}_{f}(K_\fq, T)$ is surjective if and only if the quotient $T/({\rm Fr}_{\fq}-1)T$ vanishes. As the $\Bbbk$-vector space
$$
(T/({\rm Fr}_{\fq}-1)T )\otimes_{\cO} \Bbbk = \overline{T}/({\rm Fr}_{\fq}-1)\overline{T}
$$
is finite dimensional, this condition is in turn equivalent to the vanishing of $H^{0}(K_{\fq}, \overline{T})$.

Finally we note that, as $\fq$ ramifies in the abelian $p$-power degree extension $F/K$ one has
${\N}\fq \equiv 1\  (\bmod \  p)$, where ${\N}\fq$ denotes the cardinality of the residue field of $K$ at $\fq$, and so the vanishing of $H^{0}(K_{\fq}, \overline{T})$ implies
the vanishing of $H^{0}(K_{\fq}, \overline{T}^{\vee}(1))$, as required.

This completes the proof. \end{proof}

We finish this subsection by explaining the connection between the explicit conditions in Theorem \ref{free} (iii) and (iv)(a) in the case that $\cF$ is $\cF_{\rm can}$.

\begin{lemma}\label{can exp p} For any $p$-adic place $\fp$ of $K$ the following conditions are equivalent.
\begin{itemize}
\item[(a)] The homomorphism $H^{1}_{/\cF_{\rm can}}(K_{\fp}, \overline{T}) \to H^{1}_{/\cF_{\rm can}}(K_{\fp}, \cA_{1})$
induced by (\ref{nat inj}) is injective;
\item[(b)] The natural maps
$$
H^{1}(K_{\fp},\cT) \to H^{1}(K_{\fp}, T)\,\,\text{ and }\,\, H^{0}(K_{\fp}, \cT^{\vee}(1)) \otimes_{\cO} \Bbbk \to H^{0}(K_{\fp}, T^{\vee}(1)) \otimes_{\cO} \Bbbk
$$
are surjective.
\end{itemize}
\end{lemma}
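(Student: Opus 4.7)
The strategy is to identify both Selmer quotients appearing in (a) with $\varpi$-torsion of $H^{2}$-groups via the Bockstein sequence, and then to apply local Tate duality twice—once for each clause of (b).

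\textbf{Step 1 (Cohomological incarnation of (a)).} Since $\fp \in S_p(K)$, the canonical local condition on $T$ is the full $H^1(K_\fp,T)$; by the definition of the quotient-induced Selmer structure, $H^1_{\cF_{\rm can}}(K_\fp, \overline T)$ is the image of $H^1(K_\fp, T) \to H^1(K_\fp, \overline T)$, and the analogue holds for $\cA_1$. Applying the long exact sequences attached to $0 \to T \xrightarrow{\varpi} T \to \overline T \to 0$ and $0 \to \cT \xrightarrow{\varpi} \cT \to \cA_1 \to 0$, I would obtain canonical identifications
\[
H^1_{/\cF_{\rm can}}(K_\fp, \overline T) \cong H^2(K_\fp, T)[\varpi], \qquad
H^1_{/\cF_{\rm can}}(K_\fp, \cA_1) \cong H^2(K_\fp, \cT)[\varpi].
\]
Naturality of the Bockstein connecting map with respect to the $G_K$-equivariant $\cO$-linear lift $\alpha \colon T \to \cT$, $x \mapsto x \otimes \sum_{g \in G} g$, of (\ref{nat inj}) shows that the map of (a) is identified with the restriction of $\alpha_{\ast} \colon H^2(K_\fp, T) \to H^2(K_\fp, \cT)$ to $\varpi$-torsion.

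\textbf{Step 2 (Getting the second clause of (b)).} At the $p$-adic place $\fp$, local Tate duality furnishes Pontryagin dualities $H^2(K_\fp, T) \simeq H^0(K_\fp, T^\vee(1))^\vee$ and similarly for $\cT$, under which $\alpha_{\ast}$ is dual to the augmentation map $\alpha^\vee_{\ast} \colon H^0(K_\fp, \cT^\vee(1)) \to H^0(K_\fp, T^\vee(1))$ (coming from $\cT^\vee(1) = T^\vee(1)\otimes\cO[G]^\vee \twoheadrightarrow T^\vee(1)$ via $\cO[G] \twoheadrightarrow \cO$). Using the standard identification $M^\vee[\varpi] \simeq (M \otimes_\cO \Bbbk)^{\vee_\Bbbk}$, injectivity of the map in (a) translates into $\varpi$-divisibility of $\coker(\alpha^\vee_{\ast})$, which by Nakayama for cofinitely generated modules is equivalent to the surjectivity of $H^0(K_\fp, \cT^\vee(1)) \otimes_\cO \Bbbk \to H^0(K_\fp, T^\vee(1)) \otimes_\cO \Bbbk$---this is precisely the second clause of (b).

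\textbf{Step 3 (Getting the first clause of (b)).} To close the equivalence with the first clause, I would again invoke local Tate duality: the surjectivity of $H^1(K_\fp, \cT) \to H^1(K_\fp, T)$ is equivalent to the injectivity of the Pontryagin-dual map $H^1(K_\fp, T^\vee(1)) \to H^1(K_\fp, \cT^\vee(1))$ induced by the embedding $T^\vee(1) \hookrightarrow \cT^\vee(1)$, $f \mapsto f \otimes \sum_g g$. Running the Bockstein / long-exact-sequence analysis of Step 1 with $T^\vee(1)$ in place of $T$, and combining it with the short exact sequence $0 \to T^\vee(1) \to \cT^\vee(1) \to Q \to 0$ (where $Q$ is the cokernel of the norm-element embedding), I would extract the required $H^1$-level injectivity from (a). The converse, (b1) $\wedge$ (b2) $\Rightarrow$ (a), follows by reversing the diagram chase.

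\textbf{Main obstacle.} The principal difficulty is Step 3: the condition (a) is a priori an assertion purely at the $H^2$-torsion level, and upgrading it to the $H^1$-level surjectivity of (b1) requires careful use of the standing hypotheses (in particular, (H$_3$) and the freeness of $T$ over $\cO$) to ensure that the divisible parts of $H^0(K_\fp, T^\vee(1))$ and of $H^0(K_\fp, \cT^\vee(1))$ are controlled, so that the Herbrand-type contributions which would otherwise decouple (b1) from (b2) are forced to vanish. Getting this cross-degree passage to work cleanly—i.e., explaining why $\varpi$-torsion injectivity on $H^2$ suffices to imply $H^1$-level surjectivity—is the crux of the argument.
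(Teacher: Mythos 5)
Your Step 1 contains a misidentification that propagates through the whole argument: you claim that
\[
H^1_{/\cF_{\rm can}}(K_\fp, \overline T) \cong H^2(K_\fp, T)[\varpi],
\]
but this is not the case. The Selmer structure on $\overline T$ at $\fp$ is induced from $\cF_{\rm can}$ on $\cT$, not on $T$; that is, $H^1_{\cF_{\rm can}}(K_\fp, \overline T)$ is the image of $H^1(K_\fp, \cT) \to H^1(K_\fp, \overline T)$, which may be \emph{strictly smaller} than the image of $H^1(K_\fp, T) \to H^1(K_\fp, \overline T)$. Consequently there is only a \emph{surjection} $H^1_{/\cF_{\rm can}}(K_\fp, \overline T) \twoheadrightarrow H^2(K_\fp, T)[\varpi]$ (not an isomorphism), and its kernel is exactly $\operatorname{im}\!\bigl(H^1(K_\fp,T)\to H^1(K_\fp,\overline T)\bigr)/\operatorname{im}\!\bigl(H^1(K_\fp,\cT)\to H^1(K_\fp,\overline T)\bigr)$. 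By the exact sequence $H^1(K_\fp,T)\xrightarrow{\varpi}H^1(K_\fp,T)\to H^1(K_\fp,\overline T)\to H^2(K_\fp,T)[\varpi]\to 0$ and Nakayama's Lemma, this kernel vanishes if and only if $H^1(K_\fp,\cT)\to H^1(K_\fp,T)$ is surjective --- and this is where the first clause of (b) actually comes from. (Your identification $H^1_{/\cF_{\rm can}}(K_\fp, \cA_1) \cong H^2(K_\fp, \cT)[\varpi]$ is fine, since $\cA_1$ really is $\cT/\varpi\cT$.)

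Because of this, your Step 3 is chasing a phantom. Once the commutative square is set up correctly --- with a surjection (rather than an isomorphism) on the $\overline T$ side and an isomorphism on the $\cA_1$ side --- condition (a) is equivalent to two independent injectivity conditions: injectivity of the left vertical surjection (equivalent to surjectivity of $H^1(K_\fp,\cT)\to H^1(K_\fp,T)$, the first clause of (b)), and injectivity of the bottom horizontal map $H^2(K_\fp,T)[\varpi]\to H^2(K_\fp,\cT)[\varpi]$ (equivalent by local Tate duality to surjectivity of $H^0(K_\fp,\cT^\vee(1))\otimes_\cO\Bbbk \to H^0(K_\fp,T^\vee(1))\otimes_\cO\Bbbk$, the second clause of (b)). There is no cross-degree passage from $H^2$-torsion to $H^1$-level surjectivity, no need for a second dualization of the $H^1$-groups, and no role for $(\mathrm{H}_3)$ in this lemma. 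The ``main obstacle'' you identify is a symptom of the earlier misidentification rather than a genuine difficulty in the statement.
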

\begin{proof} The tautological short exact sequence $0 \to \cT \xrightarrow{\times \varpi} \cT \to \cA_{1} \to 0$ induces an isomorphism of
$H^{1}_{/\cF_{\rm can}}(K_{\fp}, \cA_{1}) := {\rm Coker}\left(H^{1}(K_{\fp}, \cT) \to H^{1}(K_{\fp}, \cA_{1})\right)$ with $H^{2}(K_{\fp}, \cT)[\varpi]$.

The homomorphism (\ref{nat inj}) therefore induces a commutative diagram:
\begin{align*}
\xymatrix{
H^{1}_{/\cF_{\rm can}}(K_{\fp}, \overline{T}) \ar@{->>}[d] \ar[r] &
H^{1}_{/\cF_{\rm can}}(K_{\fp}, \cA_{1}) \ar[d]^{\simeq}
\\
H^{2}(K_{\fp}, T)[\varpi] \ar[r]  & H^{2}(K_{\fp}, \cT)[\varpi]
}
\end{align*}
in which the left vertical map is induced by the exact sequence
\begin{equation}\label{non-eq} 0 \to T \xrightarrow{\times \varpi} T \to \overline{T} \to 0\end{equation}
and is surjective.

Condition (a) is thus valid if and only if the left vertical and lower horizontal maps in this diagram are both injective.

Now, by using local Tate duality, injectivity of the lower map is equivalent to surjectivity of the natural map $
H^{0}(K_{\fp}, \cT^{\vee}(1)) \otimes_{\cO} \Bbbk \to H^{0}(K_{\fp}, T^{\vee}(1)) \otimes_{\cO} \Bbbk.$

To consider the left hand map we note $H^{1}_{\cF_{\rm can}}(K_{\fp}, \overline{T})$ is defined to be the image of the natural map $H^{1}(K_{\fp}, \cT) \to H^{1}(K_{\fp}, \overline{T})$ and that (\ref{non-eq}) induces an
 exact sequence of the form
\[   H^{1}(K_{\fp}, T) \xrightarrow{\times \varpi} H^{1}(K_{\fp}, T) \to H^{1}(K_{\fp}, \overline{T}) \to H^{2}(K_{\fp}, T)[\varpi]\to 0. \]

In view of Nakayama's Lemma, this shows injectivity of the left hand map in the above diagram is equivalent to
surjectivity of the natural map $H^{1}(K_{\fp},\cT) \to H^{1}(K_{\fp}, T)$, as required.
\end{proof}

\subsection{The proof of Theorem \ref{free}}\label{proof free} The definitions of the Selmer structures $\cF_{\rm can}$ and $\cF_{\rm ur}$ imply, firstly, that they both satisfy the hypotheses of Proposition~\ref{core equiv} and, secondly, that for every prime $\fq$ in $S \setminus S_{p}(K)$ one has $H^{1}_{\cF_{\rm can}}(K_{\fq}, \cT) = H^{1}_{\cF_{\rm ur}}(K_{\fq}, \cT)$.

Given this, the observations made in \S\ref{understanding} reduce the proof of Theorem \ref{free} to showing that for any prime $\fq$ in $S \setminus S_{p}(K)$ the natural homomorphism
\begin{equation}\label{inj req}
H^{1}_{/\cF}(K_{\fq}, \overline{T}) \to H^{1}_{/\cF}(K_{\fq}, \cA_{1})\end{equation}
is injective (as required by Theorem \ref{free}(iii)) if and only if both of the natural maps
\begin{equation}\label{q case}
 H^{1}_{f}(K_{\fq}, \cT) \to H^{1}_{f}(K_{\fq}, T)\,\,\text{ and }\,\, H^{1}_{f}(K_{\fq}, \cT^{*}(1)) \to H^{1}_{f}(K_{\fq}, T^{*}(1)).
\end{equation}
are surjective.

To prove this we write $\cG$ for the canonical, respectively unramified, Selmer structure on $T$ (and its quotients) in the case that $\cF$ denotes the canonical, respective unramified, Selmer structure on $\cT$ (and its quotients).



Then, for each $\fq$  in $S$, the definitions of $\cF$ and $\cG$ combine to imply that the homomorphism (\ref{nat inj}) induces a commutative diagram
\begin{align}\label{triangle}
\xymatrix{
H^{1}_{/\cF}(K_{\fq}, \overline{T}) \ar@{->>}[d] \ar[rd] &
\\
H^{1}_{/\cG}(K_{\fq}, \overline{T}) \ar[r] & H^{1}_{/\cF}(K_{\fq}, \cA_{1})
}
\end{align}
in which the vertical map denotes the natural homomorphism (and so is surjective).

In particular, if we assume that the map (\ref{inj req}) is injective, then the commutativity of this diagram implies that $H^{1}_{\cG}(K_{\fq}, \overline{T}) = H^{1}_{\cF}(K_{\fq}, \overline{T})$.

 But, since for each such $\fq$ the $\cO$-module $H^{1}(K_{\fq}, T)/H^{1}_{\cG}(K_{\fq}, T)$ is torsion free one has
\begin{align*}
\ker\left(H^{1}_{\cG}(K_{\fq}, T) \to H^{1}_{\cG}(K_{\fq}, \overline{T})\right)
&= \ker\left(H^{1}(K_{\fq}, T) \to H^{1}(K_{\fq}, \overline{T})\right) \cap H^{1}_{\cG}(K_{\fq}, T)
\\
&= \varpi \cdot H^{1}_{\cG}(K_{\fq}, T)
\end{align*}
and so, in this case, Nakayama's Lemma implies that
\begin{align}\label{induce eq}
H^{1}_{\cG}(K_{\fq}, T) = H^{1}_{\cF}(K_{\fq}, T).
\end{align}


%

We now assume that $\fq$ belongs to $S \setminus S_{p}(K)$. Then (\ref{induce eq}) implies that the natural map
$$
H^{1}_{f}(K_{\fq},\cT) \to H^{1}_{f}(K_{\fq},T)
$$
is surjective, whilst \cite[Prop.~1.4.3]{R} combines with the injectivity of (\ref{inj req}) to imply that the natural map $
H^{1}_{f}(K_{\fq}, \cA_{1}^{\vee}(1)) \to H^{1}_{f}(K_{\fq}, \overline{T}^{\vee}(1))$ is surjective. Here $H^{1}_{f}(K_{\fq}, \cA_{1}^{\vee}(1))$ and $H^{1}_{f}(K_{\fq}, \overline{T}^{\vee}(1))$ are respectively equal to the images of $H^{1}_{f}(K_{\fq}, \cT^{*}(1))$ and $H^{1}_{f}(K_{\fq}, T^{*}(1))$ in $H^{1}(K_{\fq}, \cA_{1}^{\vee}(1))$ and $H^{1}(K_{\fq}, \overline{T}^{\vee}(1))$ (by \cite[Lem.~1.3.8(i)]{R}).

It follows that there exists a commutative diagram
\begin{align*}
\xymatrix{
H^{1}_{f}(K_{\fq}, \cT^{*}(1)) \ar@{->>}[r] \ar[d] & H^{1}_{f}(K_{\fq}, \cA_{1}^{\vee}(1)) \ar@{->>}[d]
\\
H^{1}_{f}(K_{\fq}, T^{*}(1)) \ar@{->>}[r] & H^{1}_{f}(K_{\fq}, \overline{T}^{\vee}(1))
}
\end{align*}
in which all but the left hand vertical map are known to be surjective.

But, by the same argument as above, the kernel of the natural projection
\[ H^{1}_{f}(K_{\fq}, T^{*}(1)) \to H^{1}_{f}(K_{\fq}, \overline{T}^{\vee}(1))\]
is equal to
 $\varpi \cdot H^{1}_{f}(K_{\fq}, T^{*}(1))$ and so Nakayama's Lemma implies that the left hand vertical map in the above diagram must also be surjective, as required.

At this stage we have shown that for each $\fq$ in $S \setminus S_{p}(K)$ injectivity of the map (\ref{inj req}) implies surjectivity of both of the maps in 
(\ref{q case}).

To prove the converse we first note that surjectivity of the maps in (\ref{q case}) implies, for each $\fq$ in $S \setminus S_{p}(K)$, that both the natural map
$H^{1}_{f}(K_{\fq}, \cA_1^{\vee}(1)) \to H^{1}_{f}(K_{\fq}, \overline{T}^{\vee}(1))$ is surjective and, in addition, that $H^{1}_{\cF}(K_{\fq}, \overline{T}) = H^{1}_{\cG}(K_{\fq}, \overline{T})$. Given this fact, one need only folow the above argument in reverse to deduce that the map (\ref{inj req}) is injective, as required.

Finally we note that if $\cF = \cF_{\rm can}$, respectively $\cF = \cF_{\rm ur}$, then the equality (\ref{induce eq}) combines with the result of \cite[Th.~5.2.15]{MRkoly}, respectively \cite[Th.~5.4]{MRselmer}, and our definition (\ref{core rank def}) of the core rank $\chi(\cF,\cT)$ to imply the explicit formula given at the end of Theorem \ref{free}.

This completes the proof of Theorem \ref{free}.

\subsection{The proof of Theorem~\ref{main}}\label{proof main}

We are now ready to deduce Theorem~\ref{main} by combining Theorem \ref{free} with the main results of \cite{bss}.

We shall in fact only prove Theorem \ref{main} for the case $\cF = \cF_{\rm can}$ since the corresponding results in the case $\cF = \cF_{\rm ur}$ follow by exactly similar arguments.

At the outset we note that our hypotheses (H$_1$), (H$_2$), (H$_3$) and (H$_4$) are together equivalent to \cite[Hyp.~4.7]{bss}.

In addition, Theorem~\ref{free} implies that, under (H$_1$), (H$_2$), (H$_3$), (H$_4$) and (H$_5^{\rm c}$), the key condition of \cite[Hyp.~4.2]{bss} is satisfied by the structure $\cF_{\rm can}$.

Next we note that our Hypothesis \ref{hyp K} on $\cK$ coincides with \cite[Hyp.~6.7]{bss}, that our condition (H$_0$) is equivalent to \cite[Hyp.~6.11]{bss} and that Lemma \ref{torsionfree} combines with \cite[Rem. 6.2]{bss} to imply that \cite[Hyp. 6.1]{bss} is satisfied whenever (H$_1$) and (H$_3$) are both satisfied.

Finally we note the formula for $\chi_{\rm can}(\cT)$ given at the end of Theorem \ref{free} combines with hypothesis (H$_5$) and the fact $F/K$ has odd degree to imply that the $\cR$-module $\bigoplus_{v \in S_\infty(K)} H^0(K_v, \cT^\ast(1))$ is free of rank $\chi_{\rm can}(\cT)$ (as required by \cite[Cor. 6.17(a)]{bss}) and that the hypothesis (H$_5$) itself coincides with the condition stated in \cite[Cor. 6.17(b)]{bss}.

At this stage we have checked that all of the conditions necessary to apply the results of \cite[Th.~4.12, Th.~5.25, 
and Cor.~6.18(i) and (ii)]{bss}, respectively of \cite[Cor. 6.17 and Cor. 6.18(iii)]{bss}, to the Selmer structure $\cF_{\rm can}$ are satisfied under the hypotheses of Theorem \ref{main}(i), (ii) and (iii), respectively of Theorem \ref{main}(iv).

In this way one finds that, under the stated conditions, claim (i) of Theorem \ref{main} follows from the proof of \cite[Th.~6.12]{bss}, claim (ii) (in the case $\cF = \cF_{\rm can})$ from \cite[Th.~5.25(i)]{bss},
 claim (iii)(a) from \cite[Th.~4.12]{bss}, claim (iii)(b) from \cite[Th.~5.25(iii)]{bss} and claim (iii)(c) from claims~(ii) and (iii)(a) and the fact that $I(c_F) = I_{0}(\cR^{-1}_{r}(\cD_{r}(c)))$. In addition, the proof of \cite[Cor. 6.17]{bss} shows that, under the hypotheses of claim (iv), the map $\cD_{r}$ is surjective and the rest of claim (iv) then follows easily from the final assertions of claim (iii)(a) and (iii)(c).

%

This completes the proof of Theorem \ref{main}.

\section{Bloch-Kato Euler systems and Tamagawa number conjectures} \label{section 3} In this section, we use the special values of motivic $L$-functions to give a precise conjectural construction of a natural family of higher rank Euler systems.

We show that this construction simultaneously incorporates the known constructions of Euler systems from Rubin-Stark elements \cite{rubinstark}, from `generalized Stark elements' \cite{bks2-2} and from Kato's `zeta elements' for modular forms \cite{kato}.

We then show that this family of Euler systems offers a new approach to the study of refined versions of the Tamagawa number conjecture of Bloch and Kato.
\subsection{Statement of the eTNC}\label{etnc sec}

We first quickly review the formulation of the equivariant Tamagawa number conjecture in the case of commutative coefficients.

For convenience we shall follow the presentation of Flach and the first named author in \cite{BFetnc} rather than the approach of Kato in \cite{kato-kodai, katolecture} or the original `non-equivariant' approaches of Bloch and Kato \cite{bk} and Fontaine and Perrin-Riou \cite{fpr}.

\subsubsection{General notation}Let $K$ be a number field. Let $M$ be a (pure) motive over $K$ with coefficients in a finite dimensional commutative semisimple $\QQ$-algebra $B$. Fix an odd prime number $p$, and set $A:=\QQ_p\otimes_\QQ B$. We also fix a $\ZZ_p$-order $\cA$ in $A$. (Later we will assume that $\cA$ is Gorenstein.) Let $V:=V_p(M)$ be the $p$-adic \'etale realization of $M$, and $T$ be a $G_K$-stable lattice of $V$, i.e., $T$ is a free $\cA$-module of finite rank, stable under the action of $G_K$, and satisfies $V=\QQ_p \otimes_{\ZZ_p} T$. We set $V^\ast(1):=\Hom_A(V, A(1))$ and $T^\ast(1):=\Hom_\cA(T,\cA(1))$. For an $\cA$-module $X$, we write $\cA^\ast$ for the $\cA$-dual $\Hom_\cA(X,\cA)$.
When $X$ is an $A$-module, we use the same notation $X^\ast$ to indicate the $A$-dual $\Hom_A(X,A)$, but this abuse of notation would not make any danger of confusion.

We fix a finite set of places $S$ of $K$ containing $S_\infty(K) \cup S_p(K) \cup S_{\rm ram}(T)$ and for each finite extension $K'/K$, we use the following notations:
\begin{itemize}
\item $S(K'):=S \cup S_{\rm ram}(K'/K)$;
\item $U_{K'} := \cO_{K',S(K')}$;
\item $\cG_{K'}:=\Gal(K'/K)$;
\item $Y_{K'}(T):=\bigoplus_{w \in S_\infty(K')}H^0(K'_w,T^\ast(1))$;
\item $C_{K',S}(T):=R\Hom_\cA(R\Gamma_c(U_{K'}, T^\ast(1)), \cA[-2])$, where $R\Gamma_c(U_{K'}, T^\ast(1))$ denotes compactly supported \'etale cohomology, as discussed in \cite[\S 1.4]{sbA};
\item $\QQ(a)_{K'}$ denotes, for each integer $a$, the Tate motive $h^0({\rm Spec}(K'))(a)$;
\item $M_{K'} := \QQ(0)_{K'}\otimes_{\QQ(0)_K}M$ and $M_{K'}^\vee$ is the Kummer dual of $M_{K'}$ (so that $V_p(M_{K'}^\vee)$ is isomorphic to $V_p(M_{K'})^\ast(1)$).
\end{itemize}

For each finite abelian group $\Gamma$ we set $\widehat{\Gamma} := \Hom(\Gamma,\CC^\times)$ and each for $\chi$ in $\widehat \Gamma$ we write $e_\chi$ for the idempotent $(\# \Gamma)^{-1}\cdot\sum_{\gamma\in \Gamma}\chi(\gamma)\gamma^{-1}$ of $\CC[\Gamma]$.

In the sequel we also set  $B_\CC:=\CC \otimes_\QQ B $ and $A_{\CC_p}:=\CC_p \otimes_{\QQ_p}A$. 

\subsubsection{Statement of the conjecture} For each prime $\fq$ of $K$ we write ${\N} \fq$ for the cardinality of its residue field, $I_{K_\fq}$ for the inertia subgroup of $G_{K_\fq}$ and ${\rm Fr}_\fq $ for the Frobenius element in $G_{K_\fq}/I_{K_\fq}$. For each such $\fq$ we set

$$P_\fq(T;x):= {\det}_A(1-{\rm Fr}_\fq^{-1} x\mid V^\ast(1)^{I_{K_\fq}}) \in A[x].$$
It is conjectured that this polynomial belongs to $B[x]$ and we assume this to be true.

The $B_\CC[\cG_{K'}]$-valued $L$-function of $M_{K'}^\vee$ is then defined via the Euler product
\[ L(M_{K'/K}^\vee,s):=\prod_{\fq }P_\fq(T;{\N}\fq^{-s} {\rm Fr}_\fq^{-1})^{-1} \in B_{\CC}[\cG_{K'}].\]
%
%

This function is conjectured to have an analytic continuation to $s=0$ and we assume this to be true.

We then write $L^\ast(M_{K'/K}^\vee,0)$ for the element of $B_{\CC}[\cG_{K'}]^\times$ that is obtained as the leading term of
$L(M_{K'/K}^\vee,s)$ at $s=0$. 

Next we recall that the construction of \cite[\S3.4]{BFetnc} gives a canonical isomorphism of (graded) $A_{\CC_p}[\cG_{K'}]$-modules of the form
\[ \vartheta_{M,K',S}^{\rm BK}: \CC_p\otimes_{\ZZ_p} {\rm det}_{\cA[\cG_{K'}]}(C_{K',S}(T)) \simeq (A_{\CC_p}[\cG_{K'}],0). \]


The construction of this map is in general dependent upon conjectures concerning motivic cohomology groups. However, in the settings that are of most interest to us (and are discussed in Remark \ref{ex L1} below), this construction is both unconditional and can be explicitly described in terms of classical regulators, height pairings and period maps.

For each idempotent $\varepsilon$ in $A[\cG_{K'}]$ and each $\ZZ_p$-order $\cR$ in $A[\cG_{K'}]\varepsilon$, we can now recall the formulation of the equivariant Tamagawa number conjecture for the pair $(M_{K'}^\vee,\cR)$.

\begin{conjecture}\label{etnc} In $(A_{\CC_p}[\cG_{K'}]\varepsilon,0)$ there is an equality of graded invertible $\cR$-modules
\[ \cR \cdot \vartheta_{M, K',S}^{\rm BK}({\det}_{\cA[\cG_{K'}]}(C_{K',S}(T)))= (\cR \cdot L^\ast(M_{K'/K}^\vee,0),0).\]
\end{conjecture}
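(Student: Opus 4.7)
The plan is to follow the strategy articulated in the introduction: use the (conjectural) Bloch–Kato Euler system to bound determinants of compactly supported cohomology complexes via Fitting ideals of Selmer modules, and then reduce the remaining step to the non-equivariant Bloch–Kato conjecture over discrete valuation rings. I will work with a field $F \in \Omega(\cK/K)$ containing $K'$, so that the induced representation $\cT := \mathrm{Ind}_{G_K}^{G_F}(T)$ carries a natural action of $\cO[G]$ with $G := \Gal(F/K)$, and then pass to the $\varepsilon$-component of the determinants involved, identifying the resulting order with $\cR$.

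First, I would verify that $(T,F)$ satisfies the standard hypotheses (H$_0$)–(H$_5^{\mathrm{c}}$) of \S\ref{stand hyp sec}, and compute the core rank via Theorem~\ref{main}(ii), confirming that $\chi_{\mathrm{can}}(\cT) = \sum_{v \in S_\infty(K) \cup S_p(K)} \mathrm{rank}_\cO(H^0(K_v, T^\ast(1)))$ matches the rank of $Y_{K'}(T)$ that is implicit in the construction of $\vartheta^{\mathrm{BK}}_{M,K',S}$. Using the regulator isomorphism of Theorem~\ref{main}(ii), I would then express ${\det}_{\cR}(C_{K',S}(T))$ as a free rank-one $\cR$-module whose canonical generator is determined by an $\cR$-basis of ${\rm KS}_{\chi(\cF)}(\cT, \cF_{\mathrm{can}})$, with the discrepancy between the module and its generator precisely measured by $\Fitt^0_{\cR}(H^1_{\cF^\ast}(K, \cT^\vee(1))^\vee)$.

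Second, assuming the Bloch–Kato Euler system $c^{\mathrm{BK}} \in {\rm ES}_r(T, \cK)$ has been constructed so that its $F$-component matches $L^\ast(M_{K'/K}^\vee, 0)$ under the canonical identifications, Theorem~\ref{main}(iii)(c) delivers the inclusion
\[ \cR \cdot L^\ast(M_{K'/K}^\vee, 0) \subseteq \Fitt^0_{\cR}(H^1_{\cF^\ast}(K, \cT^\vee(1))^\vee), \]
with equality if and only if $\cD_r(c^{\mathrm{BK}})$ is an $\cR$-basis of ${\rm KS}_r(\cT, \cF_{\mathrm{can}})$. Combined with the determinant expression from the previous step, equality here yields Conjecture~\ref{etnc}. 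To verify the basis property I would descend character-by-character: after applying $e_\chi$ for each $\chi \in \widehat{G}$, the statement becomes the TNC for $M_{K'}^\vee \otimes \chi$ over $\ZZ_p$, which under hypothesis (H$_5$) (together with the surjectivity portion of Theorem~\ref{main}(iv)) can be taken as input from the non-equivariant Bloch–Kato theory.

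The principal obstacle is the construction of $c^{\mathrm{BK}}$ itself with the required Euler distribution relations and the precise identification of $c^{\mathrm{BK}}_F$ with $L^\ast(M_{K'/K}^\vee, 0)$ through $\vartheta^{\mathrm{BK}}_{M,K',S}$; this remains genuinely conjectural outside the classical cases of Rubin–Stark elements and Kato zeta elements. A secondary difficulty arises when the coefficient order $\cR$ is not itself Gorenstein: one must then work with a Gorenstein envelope, reduce Conjecture~\ref{etnc} via the functoriality of determinant modules, and control the descent error by exploiting the fact (Hypothesis~\ref{hyp K}) that no non-archimedean place of $K$ splits completely in $K_\infty/K$.
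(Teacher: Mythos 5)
Conjecture \ref{etnc} is exactly that: a \emph{conjecture} (the equivariant Tamagawa number conjecture for the pair $(M_{K'}^\vee,\cR)$), and the paper never proves it. What the paper does is (a) state it as the target, and (b) develop, in Theorems \ref{cor1} and \ref{hes2}, a conditional strategy that reduces it, under a list of explicit hypotheses (a working Bloch--Kato Euler system, the standard hypotheses (H$_0$)--(H$_5$), finiteness of certain cohomology groups, etc.), to the \emph{non-equivariant} TNC over a discrete valuation ring. So there is no proof of this statement in the paper to compare you against, and your proposal should not have been framed as a proof; at best it is a sketch of the reduction carried out in \S\ref{strategy}.

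Even taken as a sketch of that reduction, there are two concrete problems. First, your final descent step --- ``apply $e_\chi$ for each $\chi \in \widehat{G}$ and reduce to the TNC for $M_{K'}^\vee\otimes\chi$ over $\ZZ_p$'' --- does not work integrally when $G$ has $p$-power order, because the idempotents $e_\chi$ do not lie in $\cR$ (or in $\cO[G]$). The paper avoids this: in the proof of Theorem \ref{cor1}(iii) the descent is done by taking $G$-coinvariants of the lattice $\Xi(\cA,T,K'/K)$ (using the functoriality of Remark \ref{remark etnc}(i)) and then applying Nakayama's Lemma; the genuine character-by-character argument appears only in Proposition \ref{cond reduction}, where it is applied to a group $\Delta$ of order \emph{prime} to $p$, precisely so that the idempotents exist integrally, together with Artin induction to make the resulting product of ideals rational. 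Second, you treat $\cR$ as essentially arbitrary and mention a ``Gorenstein envelope'' as a fix, but the order $\cR$ in the paper's Theorem \ref{cor1} is not free: it is defined as the associated order of $\Fitt^0_{\cO[G]}(H^2(\cO_{F,S},T))$, and the inclusion $\cR\cdot L^\ast\subseteq\cR\cdot\vartheta(\det C)$ is derived by combining Theorem \ref{prop Xi} (the computation of $\im(\eta^{\rm BK})$ in terms of $\Xi$ and the Fitting ideal) with Theorem \ref{main}(iii). Your sketch bypasses Theorem \ref{prop Xi} and so never actually produces the key inclusion in a form where Nakayama can be applied.

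If you intend this as a contribution, recast it honestly: under hypotheses (a)--(c) of Theorem \ref{cor1} one obtains the inclusion of claim (ii), and the validity of ${\rm TNC}(M^\vee,\cO)$ upgrades it to equality by a coinvariants-plus-Nakayama argument (not by character decomposition). The conjecture itself remains open.
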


In the sequel we shall refer to this conjecture as `${\rm TNC}(M_{K'}^\vee,\cR)$'.

\begin{remark} \label{remark etnc}
It is clear that if $\cR'$ is any $\ZZ_p$-order in $A[\cG_{K'}]\varepsilon$ that contains $\cR$, then ${\rm TNC}(M_{K'}^\vee,\cR)$ implies ${\rm TNC}(M_{K'}^\vee,\cR')$. To discuss other functorial properties of the conjecture it is convenient to use the graded invertible $\cA[\cG_{K'}]$-submodule of $(A_{\CC_p}[\cG_{K'}],0)$ obtained by setting
 \[ \Xi(\cA,T,K'/K) := L^\ast(M_{K'/K}^\vee,0)^{-1}\cdot \vartheta_{M, K',S}^{\rm BK}({\det}_{\cA[\cG_{K'}]}(C_{K',S}(T))).\]
Then ${\rm TNC}(M_{K'}^\vee,\cR)$ is valid if and only if one has $\cR\cdot\Xi(\cA,T,K'/K) = (\cR,0)$, whilst the Deligne-Beilinson Conjecture (in the form of \cite[Conj. 4(iii)]{BFetnc}) asserts that
\begin{equation}\label{d-b conj} \Xi(\cA,T,K'/K)\subseteq (A[\cG_{K'}],0).\end{equation}%
In addition, the argument of \cite[\S3.4]{BFetnc} shows that $\Xi(\cA,T,K'/K)$ (and hence also the validity of ${\rm TNC}(M_{K'}^\vee,\cR)$) is, as the notation suggests, independent of $S$, whilst \cite[Th. 4.1]{BFetnc} implies that for each intermediate field $E$ of $K'/K$, with $\Gamma := \Gal(K'/E)$, the following statements are true:
\begin{itemize}
\item[(i)] the projection map $A_{\CC_p}[\cG_{K'}] \to A_{\CC_p}[\cG_{E}]$ sends $\Xi(\cA,T,K'/K)$ to $\Xi(\cA,T,E/K)$;
\item[(ii)] there is a canonical isomorphism ${\det}_{\cA[\Gamma]}(\Xi(\cA,T,K'/K)) \simeq \Xi(\cA,T,K'/E)$.
\end{itemize}
\end{remark}

\begin{remark}\label{p tnc} The Tamagawa number conjecture ${\rm TNC}(M^\vee)$ for $M^\vee$, as formulated by Bloch and Kato \cite{bk} and then extended and reinterpreted by Fontaine and Perrin-Riou \cite{fpr}, is a conjectural formula, up to a sign, for the complex number $L^\ast(M_{K/K}^\vee,0)$. We shall say that the `$p$-part' of this conjecture is valid if the formula is valid up to a unit of the $p$-localisation of $\ZZ$ and one can check that this implies the conjecture ${\rm TNC}(M^\vee,\ZZ_p)$ obtained above in the case $K'=K,B=\QQ, A=\QQ_p, \varepsilon = 1$ and $\cR = \ZZ_p$.
\end{remark}

\begin{remark}\label{ex L1} Conjecture \ref{etnc} specializes to recover several well-known conjectures.

\noindent{}(i) If $M=\QQ(a)_K$ for an integer $a$, then $M_{K'}^\vee= \QQ(1-a)_{K'}$ and one has
\[ L(M_{K'/K}^\vee,s) = \sum_{\chi\in \widehat{\cG}_{K'}}L(\chi^{-1},s + 1-a)e_\chi,\]
where $L(\chi^{-1},s)$ is the Artin $L$-series of $\chi^{-1}$. Using this fact it can be shown that the $p$-part of ${\rm TNC}(\QQ(0)_K)$ follows from the analytic class number formula for $K$.

\noindent{}(ii) Let $E$ be an elliptic curve over $K$ and set $M :=h^1(E)(1)$. Then $M_{K'}^\vee = h^1(E_{/K'})(1)$, where $E_{/K'}$ denotes $E$ regarded as an elliptic curve defined over $K'$, and one has
\[ L(M_{K'/K}^\vee,s)= \sum_{\chi \in \widehat \cG_{K'}} L(E,\chi^{-1},s+1)e_\chi\]
where $L(E,\chi^{-1},s)$ is the Hasse-Weil-Artin $L$-series attached to $E$ and $\chi^{-1}$. Further, if one assumes both that the Tate-Shafarevic group of $E$ over $K$ is finite and that the order of vanishing at $s=1$ of $L(E,s)$ is equal to the rank of $E(K)$, then ${\rm TNC}(M^\vee,\Z_p)$ recovers the `$p$-part' of the explicit formula for the leading term at $s=1$ of $L(E,s)$ that is predicted by the Birch and Swinnerton-Dyer Conjecture. (For details of this deduction see, for example, the argument of Kings in \cite[Lecture 3]{Kings}.)\end{remark}

\subsection{The Bloch-Kato Euler system}\label{bloch kato}

We henceforth assume that all archimedean places of $K$ split completely in $K'$ (we will later restrict to the case that $K'/K$ has odd prime-power degree in which case this condition is automatic).

We will also assume now that $\cA$ is Gorenstein (although in some parts this assumption is not strictly necessary).

\subsubsection{Bloch-Kato elements} In this section we associate a canonical notion of `Bloch-Kato element' to the pair $(T,K')$.

To do this we start by recalling the result of \cite[Prop. 2.21]{sbA}.

\begin{proposition} \label{prop C} The following claims are valid for the complex $C_{K'} = C_{K',S}(T)$.
\begin{itemize}
\item[(i)] $C_{K'}$ is a perfect complex of $\cA[\cG_{K'}]$-modules, acyclic outside degrees $-1, 0, 1$, and its Euler characteristic in $K_0(\cA[\cG_{K'}])$ vanishes.
\item[(ii)] There are canonical isomorphisms of $\cA[\cG_{K'}]$-modules
\[ H^{-1}(C_{K'}) \simeq H^0(K',T)\,\,\text{ and }\,\, H^0(C_{K'}) \simeq H^1(U_{K'},T)\]
and a canonical split exact sequence of $\cA[\cG_{K'}]$-modules
\[ 0 \to H^2(U_{K'}, T) \to H^1(C_{K'}) \to Y_{K'}(T)^\ast \to 0.\]
\item[(iii)] If $H^1(U_{K'},T)$ is $\ZZ_p$-free and $H^0(K',T)=0$, then $C_{K'}$ is acyclic outside degrees $0$ and $1$ and constitutes an `admissible complex' of $\cA[\cG_{K'}]$-modules in the sense of \cite[Def. 2.19]{sbA}.
\end{itemize}
\end{proposition}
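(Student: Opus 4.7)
The plan is to apply Artin--Verdier / Tate global duality to identify $C_{K'}$ with a shift of $R\Gamma(U_{K'}, T)$ modified by an archimedean correction term, and then to read off (i)--(iii) from the standard structure of \'etale cohomology of number rings together with the Gorenstein hypothesis on $\cA$.

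First, I would use the version of global duality (valid for $p$ odd and under the assumption that archimedean places split in $K'/K$) giving a canonical perfect pairing
\[ R\Gamma_{c}(U_{K'}, T^{\ast}(1)) \otimes_{\ZZ_{p}}^{\mathrm{L}} R\Gamma(U_{K'}, T) \to \ZZ_{p}[-3], \]
upgraded to equivariant coefficients $\cA[\cG_{K'}]$ by using that $\cA$ is Gorenstein, so that $R\Hom_{\cA}(-, \cA)$ is a duality on perfect complexes of $\cA$-modules and is compatible with $R\Hom_{\ZZ_{p}}(-, \ZZ_{p})$ up to a canonical shift. Combined with the localisation triangle at archimedean places, this should yield a distinguished triangle in $D(\cA[\cG_{K'}])$ of the shape
\[ R\Gamma(U_{K'}, T)[1] \to C_{K'} \to Y_{K'}(T)^{\ast}[-1] \to R\Gamma(U_{K'}, T)[2], \]
in which $Y_{K'}(T)^{\ast}[-1]$ arises from the archimedean $H^{0}$'s of $T^{\ast}(1)$ that are present on the compactly supported side but dualised away on the $R\Gamma(U_{K'}, T)$ side when $p$ is odd.

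From this triangle, perfectness of $R\Gamma(U_{K'}, T)$ as an $\cA[\cG_{K'}]$-complex (standard finiteness) and finite generation of $Y_{K'}(T)^{\ast}$ give immediately that $C_{K'}$ is perfect. The long exact cohomology sequence of the triangle, together with the vanishing of $H^{i}(U_{K'}, T)$ for $i \geq 3$ (cohomological dimension $\leq 2$ for $p$ odd), then supplies the identifications $H^{-1}(C_{K'}) \simeq H^{0}(K', T)$ and $H^{0}(C_{K'}) \simeq H^{1}(U_{K'}, T)$, and yields the four-term exact sequence
\[ 0 \to H^{2}(U_{K'}, T) \to H^{1}(C_{K'}) \to Y_{K'}(T)^{\ast} \to 0, \]
proving the acyclicity range of (i) and the identifications of (ii). The canonical splitting of this last sequence should be induced by the archimedean components of global duality, which provide a natural section independent of the global structure of $T$; in particular, the splitting is determined place-by-place at infinity.

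For the Euler characteristic claim in (i), I would apply the global Euler--Poincar\'e formula: the local $\chi$-factors for $R\Gamma_{c}(U_{K'}, T^{\ast}(1))$ cancel against the archimedean contributions under the given hypotheses (odd $p$, $T$ free over $\cA$, and total splitting at infinity in $K'/K$), so the class of $R\Gamma_{c}(U_{K'}, T^{\ast}(1))$ in $K_{0}(\cA[\cG_{K'}])$ is trivial, and $R\Hom_{\cA}(-, \cA[-2])$ preserves this triviality. For (iii), the hypotheses $H^{0}(K', T) = 0$ and $\ZZ_{p}$-freeness of $H^{1}(U_{K'}, T)$ combined with (ii) force $C_{K'}$ to be acyclic outside $[0,1]$ with $H^{0}(C_{K'})$ finitely generated and $\ZZ_{p}$-free; a standard representative argument then produces a two-term complex of finitely generated projective $\cA[\cG_{K'}]$-modules satisfying the admissibility condition of \cite[Def.~2.19]{sbA}. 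The main obstacle is the careful transfer of Artin--Verdier duality from $\ZZ_{p}$-coefficients to equivariant $\cA[\cG_{K'}]$-coefficients: this requires using the Gorenstein hypothesis to identify the dualising complex of $\cA$ (and of $\cA[\cG_{K'}]$) with $\cA$ itself and to check compatibility of the associated trace maps in the derived category.
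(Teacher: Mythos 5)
The paper itself does not prove this statement: it is recalled verbatim as \cite[Prop.~2.21]{sbA}, so you are reconstructing an argument that the authors take as a black box from the prequel. Your outline is, in substance, the expected route. The key triangle you propose,
\[ R\Gamma(U_{K'},T)[1] \longrightarrow C_{K'} \longrightarrow Y_{K'}(T)^{\ast}[-1] \longrightarrow R\Gamma(U_{K'},T)[2], \]
is exactly what one obtains by (a) comparing the `Burns--Flach' compactly supported complex (which, in the conventions of \cite[\S 1.4]{sbA} and \cite{BFetnc}, includes unmodified archimedean local terms) with the `Artin--Verdier' compactly supported complex (which uses Tate-modified archimedean cohomology and hence vanishes at infinity for odd $p$), giving a triangle $Y_{K'}(T)[-1]\to R\Gamma_{c}\to \hat R\Gamma_{c}$; (b) applying Artin--Verdier duality together with the Gorenstein identification $R\Hom_{\ZZ_p}(-,\ZZ_p)\simeq R\Hom_{\cA}(-,\cA)$ and double-dual reflexivity for perfect complexes to identify $R\Hom_{\cA}(\hat R\Gamma_{c}(U_{K'},T^{\ast}(1)),\cA[-2])$ with $R\Gamma(U_{K'},T)[1]$; and (c) dualising the triangle from (a) over $\cA$ with the shift $[-2]$. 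The long exact cohomology sequence of this triangle, together with $H^{3}(U_{K'},T)=0$ (cohomological dimension $\le 2$ for $p$ odd), then yields exactly the identifications and the four-term sequence in (ii) and the acyclicity range in (i).

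Two points worth tightening. First, the pairing you write with $R\Gamma_{c}(U_{K'},T^{\ast}(1))$ is the Artin--Verdier one, which holds only for the \emph{modified} compactly supported complex; the $Y_{K'}(T)^{\ast}$ term appears precisely as the discrepancy between that complex and the one appearing in the definition of $C_{K'}$, so it is cleaner (and avoids ambiguity) to keep the two complexes notationally distinct as above rather than speaking of `the localisation triangle at archimedean places' in vague terms. Second, for (i) the cleanest way to see $\chi(C_{K'})=0$ in $K_{0}(\cA[\cG_{K'}])$ is to use that $R\Hom_{\cA}(-,\cA[-2])$ preserves Euler characteristics and then apply the equivariant Euler--Poincar\'e formula of \cite{BFetnc} directly to $R\Gamma_{c}(U_{K'},T^{\ast}(1))$; your appeal to `cancellation of local $\chi$-factors' is correct in spirit but you should note that the total splitting of archimedean places in $K'/K$ is used to ensure that the archimedean local term $\bigoplus_{w\mid\infty}H^{0}(K'_{w},T^{\ast}(1))$ is $\cA[\cG_{K'}]$-induced (hence has a well-defined class that cancels), not merely finitely generated. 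The canonical splitting in (ii) indeed comes from the archimedean localisation map (the natural section $Y_{K'}(T)^{\ast}\to H^{1}(C_{K'})$ supplied by the direct-sum decomposition at infinity), as you suggest. Part (iii) then follows formally from (i) and (ii) by a standard choice of two-term projective representative once $H^{-1}$ vanishes and $H^{0}$ is $\ZZ_p$-free.
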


Motivated by the exact sequence in Proposition \ref{prop C}(ii) we shall assume the following hypothesis in the sequel (which coincides with {\cite[Hyp. 2.11]{sbA}}).
\begin{hypothesis}\label{hyp Y} $Y_K(T)$ is a free $\cA$-module.
\end{hypothesis}

We then set $r = r_{T} :={\rm rank}_\cA(Y_K(T)).$

\begin{remark}\label{induced basis} Since all archimedean places of $K$ are assumed to split in $K'$, a choice of place of $K'$ above each archimedean place of $K$ induces an isomorphism of the $\cA[\cG_{K'}]$-module $Y_{K'}(T)$ with $Y_K(T)\otimes_\cA \cA[\cG_{K'}]$ and so this module is also free of rank $r$.\end{remark}

The following idempotent will frequently be used in the rest of this article.

\begin{definition}\label{def adm}
We define the `idempotent of admissibility' 
$$\varepsilon = \varepsilon_{T,K'} \in A[\cG_{K'}]$$
for $T$ and $K'$ to be the sum of primitive idempotents of $A[\cG_{K'}]$ that annihilate the space 
$$H^2(U_{K'},V)\oplus H^0(K',V).$$
(Note that, in general, $\varepsilon$ depends on $S$.)
\end{definition}

\begin{remark} If $T=\ZZ_p(a)$ for an integer $a$, then the idempotent $\varepsilon_{T,K'}$ is denoted by 
$\varepsilon_{1-a}$ in \cite[\S 2.2]{bks2-2} and is explicitly described in Remark 2.6 of loc. cit. When $T$ is the $p$-adic Tate module of an elliptic curve over $\QQ$, an explicit description of $\varepsilon_{T,K'}$ is given in Lemma \ref{elliptic coh}(iii) below.
\end{remark}

In the following, we shall often abbreviate $\CC_p \otimes_{\ZZ_p} - $ to $\CC_p \cdot -$.

The descriptions in Proposition \ref{prop C}(ii) imply that the complex $\varepsilon(\CC_p\cdot C_{K',S}(T))$ is acyclic outside degrees zero and one and that its cohomology in these respective degrees is equal to
 $\varepsilon(\CC_p\cdot H^1(U_{K'},T))$ and $\varepsilon(\CC_p\cdot Y_{K'}(T)^\ast)$.

 Upon combining the final assertion of Proposition \ref{prop C}(i) with Hypothesis \ref{hyp Y} we can then also deduce that both of the latter modules are free of rank $r$ over $A_{\CC_p}[\cG_{K'}]\varepsilon$.

In particular, if we fix an ordered $\cA$-basis $\underline{b}$ of $Y_K(T)$, then we obtain a composite isomorphism of $A_{\CC_p}[\cG_{K'}]$-modules
\begin{align}\label{first comp} \varepsilon(\CC_p\cdot {\bigwedge}^r_{\cA[\cG_{K'}]}H^1(U_{K'},T)) &= {\rm det}_{A_{\varepsilon}}(\varepsilon(\CC_p\cdot H^0(C_{K'})))\\
 &\simeq {\rm det}_{A_{\varepsilon}}(\varepsilon(\CC_p\cdot C_{K'}))\otimes_{A_{\varepsilon}} {\rm det}_{A_\varepsilon}(\varepsilon(\CC_p\cdot H^1(C_{K'})))\notag\\
 &=  {\rm det}_{A_{\varepsilon}}(\varepsilon(\CC_p\cdot C_{K'}))\otimes_{A_\varepsilon} \varepsilon(\CC_p\cdot {\bigwedge}^r_{\mathcal{A}[\cG_{K'}]} Y_{K'}(T)^\ast)\notag\\
 &\simeq {\rm det}_{A_{\varepsilon}}(\varepsilon(\CC_p\cdot C_{K'}))\notag\end{align}
in which we have abbreviated $A_{\CC_p}[\cG_{K'}]\varepsilon$ to $A_{\varepsilon}$ and $C_{K',S}(T)$ to $C_{K'}$, the first isomorphism is induced by the canonical `passage to cohomology' map for $\varepsilon(\CC_p\cdot C_{K'})$ and the second by regarding $\underline{b}$ as an ordered $\cA[\cG_{K'}]$-basis of $Y_{K'}(T)$ via the identification in Remark \ref{induced basis} and using the corresponding (ordered) dual basis to identify
 ${\bigwedge}_{\cA[\cG_{K'}]}^r Y_{K'}(T)^\ast$ with $\cA[\cG_{K'}]$.

We can then define a composite isomorphism of $A_{\CC_p}[\cG_{K'}]$-modules
\[ \lambda^{\rm BK}_{V,\underline{b},S,K'} : \varepsilon(\CC_p\cdot {\bigwedge}^r_{\cA[\cG_{K'}]}H^1(U_{K'},T))\simeq {\rm det}_{A_{\varepsilon}}(\varepsilon(\CC_p\cdot C_{K'}))\simeq A_{\CC_p}[\cG_{K'}]\varepsilon, \]
in which the first map is the composition (\ref{first comp}) and the second is $\vartheta^{\rm BK}_{M,K',S}$.

This allows us to make the following key definition.

\begin{definition} \label{def BK} The {\it Bloch-Kato element} $\eta_{V,\underline{b},S,K'}^{\rm BK}$ associated to the data $V, \underline{b}, S$ and $K'$ is the unique element of $\varepsilon_{T,K'}\left( \CC_p \otimes_{\ZZ_p}  {\bigwedge}_{\cA[\cG_{K'}]}^r H^1(U_{K'},T) \right)$ that satisfies
$$ \lambda^{\rm BK}_{V,\underline{b},S,K'}(\eta_{V,\underline{b},S,K'}^{\rm BK}) = \varepsilon_{T,K'}\cdot L^\ast(M_{K'/K}^\vee,0).$$
\end{definition}

\begin{remark} In the sequel the data $V$ and $S$ is usually clear from context and so, to ease notation, we shall often abbreviate $\lambda^{\rm BK}_{V,\underline{b},S,K'}$ and $\eta^{\rm BK}_{V,\underline{b},S,K'}$ to
 $\lambda^{\rm BK}_{\underline{b},K'}$ and $\eta^{\rm BK}_{\underline{b},K'}$. \end{remark}

Upon appropriate specialization these Bloch-Kato elements recover important constructions in the literature.

\begin{example}\label{bkrs1} Let $E$ be an elliptic curve over $K = \QQ$. Then, with $M$ the motive $h^1(E)(1)$ and $T$ the $p$-adic Tate module of $E$, one can make a choice of $\ZZ_p$-basis $\underline{b}$ of $Y_{\QQ}(T)$ for which the Bloch-Kato element $\eta^{\rm BK}_{\underline{b},K'}$ can be explicitly related to a `zeta element' of the form constructed by Kato in \cite{kato}. We give details of this case in \S\ref{section kato}.
\end{example}

\begin{example}\label{bkrs2} Let $L$ be a finite abelian extension of $K$ and set $G := \cG_L$. Fix an integer $a$ and a primitive idempotent $\varepsilon$ of $\ZZ_p[G]$, set $\cA := \ZZ_p[G]\varepsilon$ and write $T$ for the $G_K$-representation given by the $\cA$-module $\cA\otimes_{\ZZ_p}\ZZ_p(a)$ upon which $\cA$ acts via left multiplication and $G_K$ via the rule $\sigma(x\otimes y) = x \sigma_\cA^{-1}\otimes \sigma(y)$ where $\sigma_\cA$ denotes the image of $\sigma$ in $\cA$. Then the $\cA$-module $Y_{K}(T) = \cA\otimes_{\ZZ_p[G]}Y_{L}(\ZZ_p(a))$ is free and we can fix $\underline{b}$ to be the (ordered) basis given by the set $\{ \varepsilon \cdot w(j) \mid w \in W_{-j}^\varepsilon\}$ specified in \cite[Lem. 2.1]{bks2-2} (with $j=1-a$). The linear dual $Y_{K}(T)^\ast$ of $Y_K(T)$ identifies with $\cA\otimes_{\ZZ_p[G]}Y_{L}(a-1)$, where the module $Y_{L}(a-1)$ is as defined in \cite[\S2.1]{bks2-2}, whilst \cite[Rem. 2.6]{bks2-2} implies that $\varepsilon_{T,K}$ agrees with the idempotent $e:= \varepsilon_{1-a}$ defined in loc. cit. One can also check that
\begin{equation}\label{per reg} (\prod_{\fq} P_\fq(1-a)e)\cdot\lambda^{\rm BK}_{\underline{b},K} = \lambda_{1-a},\end{equation}
where $\fq$ runs over all primes in $S\setminus S_\infty(K)$, $P_\fq(s)$ denotes the `equivariant' Euler factor
$$P_\fq(s):=\sum_{\chi \in \widehat G}(1-{\N}\fq^{-s}\chi^{-1}(\fq))e_\chi,$$
and $\lambda_{1-a}$ is the `period-regulator' isomorphism defined in \cite[\S2.2]{bks2-2}. This equality follows from the general observation of \cite[Lem. 2.23]{sbA} and the following facts: the definition of $e$ ensures that each factor $P_\fq(1-a)e$ is non-zero; the map $\lambda^{\rm BK}_{\underline{b},K}$ uses the morphism $\vartheta_p$ defined in \cite[\S3.4]{BFetnc} and so for each $\fq$ in $S\setminus S_\infty(K)$ applies the morphism of \cite[(24)]{BFetnc} to the complex $R\Gamma_f(K_\fq,e(\QQ_p\otimes_{\ZZ_p}T))$ in \cite[(19)]{BFetnc}); the definition of $\lambda_{1-a}$ implicitly uses the acyclicity of the complexes $R\Gamma_f(K_\fq,e(\QQ_p\otimes_{\ZZ_p}T))$ for each $\fq$ in $S\setminus S_\infty(K)$.

The above observations combine to imply that $\eta_{\underline{b},K}^{\rm BK}$ coincides with the `generalized Stark element' $\eta_{L/K,S,\emptyset}^\varepsilon(1-a)$ defined in \cite[Def. 2.7]{bks2-2}.

This shows, in particular, that if $a=1$, then $\eta_{\underline{b},K}^{\rm BK}$ is the image under multiplication by $\varepsilon$ of the Rubin-Stark element denoted by $\epsilon_{L/K,S,\emptyset}^{V}$ in \cite[\S 5.1]{bks1}, where $V$ is the subset of $S_\infty(K)$ obtained by restricting places in the subset $W_0^\varepsilon$ of $S_\infty(L)$.
\end{example}



\subsubsection{The image of Bloch-Kato elements}
 We recall that $r$ denotes the rank of the (free) $\cA$-module $Y_K(T)$. We then define the `image' of $\eta_{\underline{b},K'}^{\rm BK}$ by
$$\im(\eta_{\underline{b},K'}^{\rm BK}):=\left\{ \Phi(\eta_{\underline{b},K'}^{\rm BK}) \ \middle| \ \Phi \in {\bigwedge}_{\cA[\cG_{K'}]}^r H^1(U_{K'}, T)^\ast\right\}.$$

This is an $\cA[\cG_{K'}]$-submodule of $A_{\CC_p}[\cG_{K'}]$ and is sometimes also denoted by $I(\eta_{\underline{b},K'}^{\rm BK})$. In Conjecture \ref{integrality} below we will predict that $\eta_{\underline{b},K'}^{\rm BK}$ belongs to the exterior power bidual ${\bigcap}_{\cA[\cG_{K'}]}^r H^1(U_{K'},T)$  and, if this is the case, then $\im(\eta_{\underline{b},K'}^{\rm BK})$ is an ideal of $\cA[\cG_{K'}]$ and coincides with the image of $\eta_{\underline{b},K'}^{\rm BK}$ as an element of ${\bigcap}_{\cA[\cG_{K'}]}^r H^1(U_{K'},T)$.

The next result computes $\im(\eta_{\underline{b},K'}^{\rm BK})$ and will later play a key role. In this result we use the graded sublattice $\Xi(\cA,T,K'/K)$ of $(A_{\CC_p}[\cG_{K'}],0)$ that is defined in Remark \ref{remark etnc}. We also fix an abelian pro-$p$ extension $\cK/K$ and write $\Omega(\cK/K)$ for the set of finite extensions of $K$ in $\cK$. 



%
%

\begin{theorem} \label{prop Xi} We assume that the representation $T$ is such that for every $K'$ in $\Omega(\cK/K)$ the following two conditions are satisfied:
\begin{itemize}
\item[(a)] the module $H^1(U_{K'},T)$ is torsion-free;
\item[(b)] the module $H^0(K',T)$ vanishes.
\end{itemize}
Then for any choice of ordered $\cA$-basis $\underline{b}$ of $Y_K(T)$, and any field $K'$ in $\Omega(\cK/K)$ there is an equality in $(A_{\CC_p}[\cG_{K'}],0)$ of the form
\[ (\im(\eta_{\underline{b},K'}^{\rm BK}),0) = {\rm Fitt}_{\cA[\cG_{K'}]}^0(H^2(U_{K'},T))\cdot\Xi(\cA,T,K'/K)^{-1}.\]
%
\end{theorem}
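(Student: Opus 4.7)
The plan is to exploit the admissibility of $C_{K'} := C_{K',S}(T)$, which follows from conditions (a) and (b) via Proposition \ref{prop C}(iii), in order to represent both sides of the claimed equality by explicit quantities attached to a chosen free presentation of $C_{K'}$, and then to match them.

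First, I would represent $C_{K'}$ by a two-term complex $[\Pi \xrightarrow{d} \Pi]$ of free $\cA[\cG_{K'}]$-modules of equal rank $n$: this is possible because of the admissibility of $C_{K'}$ together with the vanishing of its Euler characteristic from Proposition \ref{prop C}(i). I would fix ordered $\cA[\cG_{K'}]$-bases on each copy of $\Pi$, and, via Remark \ref{induced basis}, view $\underline{b}$ as an ordered $\cA[\cG_{K'}]$-basis of $Y_{K'}(T)^\ast$, chosen so that its elements lift along the split surjection $H^1(C_{K'}) \twoheadrightarrow Y_{K'}(T)^\ast$ of Proposition \ref{prop C}(ii) to explicit cycles in $\Pi$. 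In these coordinates, both the passage-to-cohomology isomorphism underlying (\ref{first comp}) and the trivialisation $\vartheta^{\rm BK}_{M,K',S}$ become fully explicit, yielding a concrete matrix description of $\lambda^{\rm BK}_{V,\underline{b},S,K'}$, and hence of the element $\eta^{\rm BK}_{\underline{b},K'}$ and of the lattice $\Xi(\cA,T,K'/K)$.

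Second, I would compute $\im(\eta^{\rm BK}_{\underline{b},K'})$ by evaluating an arbitrary $\Phi \in \bigwedge^r_{\cA[\cG_{K'}]} H^1(U_{K'},T)^\ast$ on $\eta^{\rm BK}_{\underline{b},K'}$ using this explicit description. A direct calculation, of the kind that underlies the exterior power bidual formalism of \cite{bss}, should then yield
\[
\Phi(\eta^{\rm BK}_{\underline{b},K'}) \,=\, L^\ast(M^\vee_{K'/K},0) \cdot \vartheta^{\rm BK}_{M,K',S}\bigl({\det}_{\cA[\cG_{K'}]}(C_{K'})\bigr)^{-1} \cdot m_\Phi,
\]
where $m_\Phi \in \cA[\cG_{K'}]$ is an $(n-r) \times (n-r)$ minor of the matrix of $d$: the minor complementary to the $r$ rows of $\Pi$ indexed by our chosen lifts of the $Y_{K'}(T)^\ast$-basis and to $r$ columns determined by $\Phi$. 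As $\Phi$ varies, the elements $m_\Phi$ generate the ideal ${\rm Fitt}^r_{\cA[\cG_{K'}]}(H^1(C_{K'}))$.

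Third, I would invoke the split exact sequence $0 \to H^2(U_{K'},T) \to H^1(C_{K'}) \to Y_{K'}(T)^\ast \to 0$ from Proposition \ref{prop C}(ii), together with the freeness of $Y_{K'}(T)^\ast$ of rank $r$ (guaranteed by Hypothesis \ref{hyp Y} and Remark \ref{induced basis}) and the standard principle that ${\rm Fitt}^r_R(N) = {\rm Fitt}^0_R(N')$ for an exact sequence $0 \to N' \to N \to R^r \to 0$. This identifies ${\rm Fitt}^r_{\cA[\cG_{K'}]}(H^1(C_{K'}))$ with ${\rm Fitt}^0_{\cA[\cG_{K'}]}(H^2(U_{K'},T))$. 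Combining this with the second step and the definition $\Xi(\cA,T,K'/K) = L^\ast(M^\vee_{K'/K},0)^{-1} \cdot \vartheta^{\rm BK}_{M,K',S}({\det}_{\cA[\cG_{K'}]}(C_{K'}))$ from Remark \ref{remark etnc} yields the claim.

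The main obstacle lies in the second step, and specifically in handling correctly the signs, shifts and grading conventions as one traces through the composition defining $\lambda^{\rm BK}_{V,\underline{b},S,K'}$ via (\ref{first comp}) and $\vartheta^{\rm BK}_{M,K',S}$. The essential input is the exterior power bidual formalism of \cite{bss}, which is tailor-made precisely for reducing such statements---where $H^1(U_{K'},T)$ is only assumed torsion-free, not projective---to a finite minor computation.
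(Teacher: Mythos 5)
Your proposal is correct in outline but organizes the argument differently from the paper. The paper's proof is essentially a two-line reduction to results from \cite{sbA}: it introduces a generator $\eta^{\rm b}_{T}$ of the module of ``basic Euler systems'' constructed in \cite[Def.\,2.19]{sbA} and simply quotes three of its properties, chief among them the identity $\im(\eta^{\rm b}_{T,K'})={\rm Fitt}^0_{\cA[\cG_{K'}]}(H^2(U_{K'},T))$ from \cite[Th.\,2.26(ii)]{sbA} and the fact that $\cA[\cG_{K'}]\cdot\lambda^{\rm BK}_{\underline{b},K'}(\eta^{\rm b}_{T,K'})=\vartheta^{\rm BK}_{M,K',S}(\det_{\cA[\cG_{K'}]}(C_{K'}))[1-\varepsilon_{K'}]$. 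Since $\lambda^{\rm BK}_{\underline{b},K'}(\eta^{\rm BK}_{\underline{b},K'})=\varepsilon_{K'}L^\ast(M^\vee_{K'/K},0)$ by definition and $\lambda^{\rm BK}_{\underline{b},K'}$ is injective, one obtains $\lambda^{\rm BK}_{\underline{b},K'}(\eta^{\rm b}_{T,K'})\cdot\eta^{\rm BK}_{\underline{b},K'}=L^\ast(M^\vee_{K'/K},0)\cdot\eta^{\rm b}_{T,K'}$, from which the claimed equality of lattices drops out by linearity of $\Phi\mapsto\Phi(-)$. Your approach dispenses with the intermediary $\eta^{\rm b}_{T}$ and re-derives the computational content directly: representing $C_{K'}$ by a square free presentation, identifying $\Phi(\eta^{\rm BK}_{\underline{b},K'})$ with (scaled) $(n-r)\times(n-r)$ minors of the boundary map, and invoking $\Fitt^r(H^1(C_{K'}))=\Fitt^0(H^2(U_{K'},T))$ via the split sequence in Proposition \ref{prop C}(ii). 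This is, in substance, what the proof of \cite[Th.\,2.26(ii)]{sbA} must do; you are unfolding the citation rather than taking a genuinely independent route. Two cautions if you carry this out: the element $\eta^{\rm BK}_{\underline{b},K'}$ is only $\varepsilon_{K'}$-isotypic, so you must track the idempotent $\varepsilon_{K'}$ explicitly when passing from the determinant trivialisation to the minor formula (the paper encodes this through the $[1-\varepsilon_{K'}]$ annihilator notation); and the lifts of the chosen $\ZZ_p$-basis of $Y_{K'}(T)^\ast$ should use the splitting from Proposition \ref{prop C}(ii) rather than arbitrary preimages, or the minor you extract will not agree with ${\rm Fitt}^r$ on the nose. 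With those points addressed, your argument works, but it is longer and less modular than the paper's.
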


\begin{proof}
Under the assumption that the conditions (a) and (b) are satisfied for all $K'$, the first and third authors have constructed (unconditionally) a cyclic $\cA[[\Gal(\cK/K)]]$-submodule $\mathcal{E}^{\rm b}(T,\cK)$ of ${\rm ES}_r(T,\cK)$ comprising `basic' Euler systems (see \cite[Def. 2.19]{sbA}). 

It is also shown that any generator $\eta_T^{\rm b}$ of $\mathcal{E}^{\rm b}(T,\cK)$ has the following properties at each $K'$ in $\Omega(\cK/K)$:
\begin{equation*}\label{basis props} \begin{cases}
& \eta_{T,K'}^{\rm b}\in \left({\bigcap}_{\cA[\cG_{K'}]}^r H^1(U_{K'},T) \right)[1-\varepsilon_{K'}],\\
& \cA[\cG_{K'}]\cdot\lambda^{\rm BK}_{\underline{b},K'}(\eta_{T,K'}^{\rm b}) = \vartheta_{M,K',S}^{\rm BK}({\det}_{\cA[\cG_{K'}]}(C_{K',S}(T)))[1-\varepsilon_{K'}],\\
& \im(\eta_{T,K'}^{\rm b}) = {\rm Fitt}_{\cA[\cG_{K'}]}^0(H^2(U_{K'},T)).\end{cases}\end{equation*} 
Here $\varepsilon_{K'}=\varepsilon_{T,K'}$ is the idempotent defined in Definition \ref{def adm}, and for an idempotent $e$ of a torsion-free $\cA[\cG_{K'}]$-module $X$ we write $X[e]$ for the submodule of $X$ comprising elements that are annihilated by $e$ in $\QQ_p\otimes_{\ZZ_p} X$. We note that the first two of the above properties  follow directly from the construction of basic Euler systems (via \cite[Th. 2.17 and Def. 2.18]{sbA}) and the third is proved in \cite[Th. 2.26(ii)]{sbA}.

In particular, since Definition \ref{def BK} implies that $\lambda_{\underline{b},K'}^{\rm BK}(\eta_{\underline{b},K'}^{\rm BK})$ is equal to $\varepsilon_{K'} L^\ast(M_{K'/K}^\vee,0)$, the injectivity of
 $\lambda_{\underline{b},K'}^{\rm BK}$ combines with the first property displayed above to imply that $\lambda_{\underline{b},K'}^{\rm BK}(\eta_{T,K'}^{\rm b})\cdot \eta_{\underline{b},K'}^{\rm BK}$ is equal to $L^\ast(M_{K'/K}^\vee,0)\cdot \eta_{T,K'}^{\rm b}.$

The claimed result then follows by combining this fact with the second and third equalities displayed above.\end{proof}

\subsubsection{The Euler system} The Deligne-Beilinson Conjecture (in the form of (\ref{d-b conj}))  implies that each element $\eta^{\rm BK}_{\underline{b},K'}$ belongs to $\QQ_p\otimes_{\ZZ_p}{\bigwedge}_{\cA[\cG_{K'}]}^rH^1(U_{K'},T)$. We now use the notion of exterior power bidual to formulate a precise refinement of this conjecture.


\begin{conjecture} \label{integrality} The element $\eta_{\underline{b},K'}^{\rm BK}$ belongs to ${\bigcap}_{\cA[\cG_{K'}]}^r H^1(U_{K'},T)$.
\end{conjecture}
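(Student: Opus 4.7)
The conjecture is equivalent, by the standard characterization of the exterior power bidual, to the containment
\[
\im(\eta^{\rm BK}_{\underline{b},K'})\subseteq\cA[\cG_{K'}].
\]
My plan is to establish this by combining Theorem~\ref{prop Xi} with the equivariant Tamagawa number conjecture. The guiding principle is that Conjecture~\ref{integrality} should be regarded as the integral refinement of the Deligne--Beilinson rationality conjecture~(\ref{d-b conj}) predicted by the eTNC. Under the torsion-free hypotheses (a) and (b) of Theorem~\ref{prop Xi}, the explicit computation
\[
(\im(\eta^{\rm BK}_{\underline{b},K'}),0)={\rm Fitt}^0_{\cA[\cG_{K'}]}(H^2(U_{K'},T))\cdot\Xi(\cA,T,K'/K)^{-1}
\]
presents $\im(\eta^{\rm BK}_{\underline{b},K'})$ as the product of a genuine ideal of $\cA[\cG_{K'}]$ with $\Xi^{-1}$. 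The integrality assertion therefore reduces to the containment $\Xi(\cA,T,K'/K)^{-1}\subseteq(\cA[\cG_{K'}],0)$, which by Remark~\ref{remark etnc} is precisely $\mathrm{TNC}(M_{K'}^\vee,\cA[\cG_{K'}])$.

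At the level of explicit elements the argument is transparent. The identity
\[
\lambda^{\rm BK}_{\underline{b},K'}(\eta^{\rm b}_{T,K'})\cdot\eta^{\rm BK}_{\underline{b},K'}=L^\ast(M_{K'/K}^\vee,0)\cdot\eta^{\rm b}_{T,K'}
\]
established in the proof of Theorem~\ref{prop Xi} exhibits $\eta^{\rm BK}_{\underline{b},K'}$ as a scalar multiple, in $A_{\CC_p}[\cG_{K'}]\varepsilon_{T,K'}$, of the integral basic Euler system element $\eta^{\rm b}_{T,K'}\in{\bigcap}^r_{\cA[\cG_{K'}]}H^1(U_{K'},T)[1-\varepsilon_{T,K'}]$. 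The eTNC asserts precisely that the relevant scalar is a unit of $\cA[\cG_{K'}]\varepsilon_{T,K'}$, and since $\varepsilon_{T,K'}L^\ast(M^\vee_{K'/K},0)$ is automatically invertible in $A_{\CC_p}[\cG_{K'}]\varepsilon_{T,K'}$ (being a leading term), integrality of $\eta^{\rm BK}_{\underline{b},K'}$ follows at once. The technical step requiring most care is the removal of hypotheses (a) and (b); I would attempt this by passing to a suitable $G_K$-sublattice $T'\subseteq T$ satisfying both conditions, running the preceding argument for $T'$, and descending to $T$ through the functoriality of $\vartheta^{\rm BK}$ with respect to inclusions of lattices, tracking the auxiliary Euler factors introduced in the process.

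Since an unconditional proof of the eTNC is well beyond current reach, the realistic route is to verify Conjecture~\ref{integrality} in the two special cases of Examples~\ref{bkrs1} and~\ref{bkrs2}. In the Tate-motive case, the identification of $\eta^{\rm BK}_{\underline{b},K}$ with the generalized Stark element reduces the conjecture, via~(\ref{per reg}), to known instances of the Rubin--Stark integrality conjecture (for example over $\QQ$, over imaginary quadratic fields, or when $p\nmid[L:K]$). In the elliptic-curve case, Kato's construction in~\cite{kato} furnishes integral Galois cohomology classes, and the essential remaining task is to identify their image under $\lambda^{\rm BK}_{\underline{b},K'}$ with $\varepsilon_{T,K'}L^\ast(M^\vee_{K'/K},0)$, which is carried out in~\S\ref{section kato}. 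The main obstacle in the general case is therefore the eTNC itself: in the absence of an external geometric construction of integral classes matching the analytic prescription of Definition~\ref{def BK}, one cannot extract integrality from the Bloch--Kato formalism alone.
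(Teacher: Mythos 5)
The statement you were asked to prove is a \emph{conjecture}; the paper offers no proof and indeed cannot, since (as you correctly conclude) the conjecture in general rests on eTNC. The paper's own commentary consists of two remarks after the statement: that it is independent of $\underline{b}$, that it specializes to the (generalized) Rubin--Stark conjecture (Example~\ref{bkrs2}) and to Kato's setting (Example~\ref{bkrs1}), and that under mild hypotheses it is a consequence of ${\rm TNC}(M_{K'}^\vee,\cA[\cG_{K'}])$ (citing the argument of [bks1, Th.~5.12]). Your proposal reproduces exactly this picture: the reduction to $\im(\eta^{\rm BK}_{\underline{b},K'})\subseteq\cA[\cG_{K'}]$, the use of Theorem~\ref{prop Xi} together with the eTNC identity $\Xi(\cA,T,K'/K)=(\cA[\cG_{K'}],0)$, the explicit description of $\eta^{\rm BK}$ as a $\CC_p$-scalar multiple of the integral basic element $\eta^{\rm b}_{T,K'}$, and the survey of the special cases, are all consistent with the paper's remarks and with the argument it cites.

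Two caveats are worth flagging. First, your opening equivalence of Conjecture~\ref{integrality} with $\im(\eta^{\rm BK}_{\underline{b},K'})\subseteq\cA[\cG_{K'}]$ tacitly assumes that $\eta^{\rm BK}_{\underline{b},K'}$ already lies in $\QQ_p\otimes_{\ZZ_p}{\bigwedge}^r_{\cA[\cG_{K'}]}H^1(U_{K'},T)$ (i.e.\ the rationality (\ref{d-b conj})), since the exterior power bidual is a lattice inside the rational exterior power and not a priori a subset of the $\CC_p$-span; this is harmless for the conditional argument because the eTNC also yields rationality, but it should be made explicit. Second, your proposed removal of the torsion-free hypotheses (a) and (b) of Theorem~\ref{prop Xi} by passing to a full $G_K$-stable sublattice $T'\subseteq T$ cannot work as stated: condition (b) demands $H^0(K',T)=0$, and any full sublattice has $H^0(K',T')\otimes\QQ_p = H^0(K',V)$, so this vanishing is a condition on $V$ that no change of lattice will fix. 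The paper avoids this difficulty by only invoking ``eTNC $\Rightarrow$ Conjecture~\ref{integrality}'' under hypotheses (H$_1$) and (H$_3$), which via Lemma~\ref{torsionfree} guarantee (a) and (b) for every $K'$ in $\Omega(\cK/K)$.
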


\begin{remark} \label{remark etnc rs} The validity of this conjecture is clearly independent of the choice of $\cA$-basis $\underline{b}$ of $Y_K(T)$. In addition, the observations in Example \ref{bkrs2} show that it extends the `Generalized Rubin-Stark Conjecture' that is formulated via the inclusion of \cite[Conj. 3.5]{bks2-2}. For the representation $T = \ZZ_p(1)$ it therefore recovers the `$p$-part of the Rubin-Stark Conjecture' (as labelled `${\rm RS}(K'/K,S,\emptyset,V)_p$' in \cite[Conj. 2.1]{bks2}) formulated by Rubin in \cite[Conj. B$'$]{rubinstark}. \end{remark}

\begin{remark} If  $T$ satisfies certain mild hypotheses, then one can use the argument of \cite[Th. 5.12]{bks1}  to show that Conjecture \ref{integrality} is a consequence of ${\rm TNC}(M_{K'}^\vee,\cA[\cG_{K'}])$. \end{remark}


We also fix a place of $\cK$ lying over each archimedean place of $K$ and for every field $K'$ in $\Omega(\cK/K)$ we use the restriction of these places to $K'$ to normalize the isomorphism $Y_{K'}(T) \simeq Y_K(T)\otimes_{\cA}\cA[\cG_{K'}]$ that (occurs in Remark \ref{induced basis} and) is used when defining $\eta_{\underline{b},K'}^{\rm BK}$.

We recall that ${\rm ES}_r(T,\cK)$ denotes the $\cA[[\Gal(\cK/K)]]$-module of Euler systems of rank $r$ for $(T,\cK)$

The following result explains the significance of Conjecture \ref{integrality}.

\begin{proposition}\label{cor BK}
If Conjecture \ref{integrality} is valid for all fields $K'$ in $\Omega(\cK/K)$, then the set
$$\eta_{\underline{b},\cK}^{\rm BK}=(\eta_{\underline{b},K'}^{\rm BK})_{K'} \in \prod_{K' \in \Omega(\cK/K)}{\bigcap}_{\cA[\cG_{K'}]}^r H^1(U_{K'},T)$$
belongs to ${\rm ES}_r(T,\cK)$. 
\end{proposition}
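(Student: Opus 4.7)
The module ${\rm ES}_r(T,\cK)$ of \cite[Def.~6.4]{bss} consists of collections $(c_{K'})_{K'\in\Omega(\cK/K)}$ with $c_{K'}\in{\bigcap}^r_{\cA[\cG_{K'}]}H^1(U_{K'},T)$ that satisfy the usual Euler system distribution relations under the corestriction maps for pairs $K'\subset K''$ in $\Omega(\cK/K)$. Since the integrality requirement on the individual $\eta^{\rm BK}_{\underline{b},K'}$ is exactly the content of Conjecture \ref{integrality}, the task reduces to verifying the distribution relations for the collection $(\eta^{\rm BK}_{\underline{b},K'})_{K'}$.

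My strategy is to compare $\eta^{\rm BK}_{\underline{b}}$ with the basic Euler system $\eta^{\rm b}_T\in\mathcal{E}^{\rm b}(T,\cK)\subset{\rm ES}_r(T,\cK)$ recalled in the proof of Theorem \ref{prop Xi}. Setting $\alpha_{K'}:=\lambda^{\rm BK}_{\underline{b},K'}(\eta^{\rm b}_{T,K'})$ and using the displayed properties of $\eta^{\rm b}_T$ together with Definition \ref{def BK}, one checks by applying $\lambda^{\rm BK}_{\underline{b},K'}$ to both sides that there is an identity
$$\alpha_{K'}\cdot\eta^{\rm BK}_{\underline{b},K'}=L^\ast(M_{K'/K}^\vee,0)\cdot\eta^{\rm b}_{T,K'}$$
in the $\varepsilon_{T,K'}$-component of $\CC_p\otimes_{\ZZ_p}{\bigcap}^r_{\cA[\cG_{K'}]}H^1(U_{K'},T)$. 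Since $(\eta^{\rm b}_{T,K'})_{K'}$ is already known to be an Euler system, the distribution relation for the right-hand side is in hand; the task becomes to verify that $\alpha_{K'}$ and $L^\ast(M_{K'/K}^\vee,0)$ transform compatibly, modulo the expected Euler factors $P_\fq(T;{\rm Fr}_\fq^{-1})$ at the primes in $\Sigma(K''/K'):=S(K'')\setminus S(K')$, under the projection $\cG_{K''}\twoheadrightarrow\cG_{K'}$. Dividing the resulting equation by the non-zero-divisor $\alpha_{K'}$ (permitted because $H^1(U_{K'},T)$ is torsion-free, which in this setting matches condition (a) of Theorem \ref{prop Xi}) then yields the required distribution relation for $\eta^{\rm BK}_{\underline{b}}$.

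For the $L$-value half of the compatibility, the key observation is that each Euler factor $P_\fq(T;x)$ depends only on $T$ and the inertia subgroup $I_{K_\fq}\subset G_K$, so the Euler product defining $L(M_{-/K}^\vee,s)$ transforms predictably under the quotient $\cG_{K''}\twoheadrightarrow\cG_{K'}$. For the Bloch--Kato side, the compatibility of the $\alpha_{K'}$ follows by combining the functoriality of $\vartheta^{\rm BK}_{M,-,S}$ from Remark \ref{remark etnc}(i)--(ii) with the distribution relations already known for $\eta^{\rm b}_T$: applying $\lambda^{\rm BK}_{\underline{b},K'}$ to the norm identity ${\rm Cor}_{K''/K'}(\eta^{\rm b}_{T,K''})=\prod_{\fq\in\Sigma(K''/K')}P_\fq(T;{\rm Fr}_\fq^{-1})\cdot\eta^{\rm b}_{T,K'}$ and using the functorial behaviour of the passage-to-cohomology isomorphism in (\ref{first comp}) under base change yields the desired compatibility.

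The principal technical obstacle is the careful bookkeeping involving the idempotents $\varepsilon_{T,K'}$ and $\varepsilon_{T,K''}$, which need not correspond to each other under the projection $\cG_{K''}\twoheadrightarrow\cG_{K'}$. I would handle this by working directly with the perfect complexes $C_{K',S}(T)$ and $C_{K'',S}(T)$ and using the explicit description of the cone of the natural base-change morphism $\cA[\cG_{K'}]\otimes_{\cA[\cG_{K''}]}C_{K'',S}(T)\to C_{K',S}(T)$ in terms of the local cohomology at primes in $\Sigma(K''/K')$; the Euler--Poincar\'e characteristic of these local complexes computes precisely the product $\prod_{\fq\in\Sigma(K''/K')}P_\fq(T;{\rm Fr}_\fq^{-1})$ that must appear in the distribution relation. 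Once this accounting has been carried out, the conclusion that $\eta^{\rm BK}_{\underline{b},\cK}\in{\rm ES}_r(T,\cK)$ is immediate.
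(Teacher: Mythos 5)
Your idea of pivoting through the basic Euler system $\eta^{\rm b}_T$ is a genuine variant of the paper's argument, but it imports hypotheses that Proposition~\ref{cor BK} does not carry: the construction of $\eta^{\rm b}_T$ (recalled in the proof of Theorem~\ref{prop Xi} from \cite[Def.~2.19]{sbA}) is available only when $H^1(U_{K'},T)$ is torsion-free and $H^0(K',T)$ vanishes for every $K'$ in $\Omega(\cK/K)$ --- precisely conditions (a) and (b) of Theorem~\ref{prop Xi}, which the proposition does not assume. The paper's proof never invokes $\eta^{\rm b}_T$ and so establishes the result in its stated generality; your route only gives the weaker statement with those extra conditions appended. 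Relatedly, the reason you offer for being allowed to divide by $\alpha_{K'}:=\lambda^{\rm BK}_{\underline{b},K'}(\eta^{\rm b}_{T,K'})$ --- that $H^1(U_{K'},T)$ is torsion-free --- is not the relevant one: what one actually uses is that $\alpha_{K'}$ generates the $\varepsilon_{T,K'}$-component of $\vartheta^{\rm BK}_{M,K',S}({\det}_{\cA[\cG_{K'}]}(C_{K',S}(T)))$, which is a free rank-one $\cA[\cG_{K'}]\varepsilon_{T,K'}$-module.

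The more serious issue is that the step you defer as ``careful bookkeeping'' is in fact the entire technical content of the proof and is not a formal consequence of the distribution relation for $\eta^{\rm b}_T$ together with Remark~\ref{remark etnc}. Tracing through what is needed to conclude $\pi(\alpha_{F'})=\alpha_F$, one arrives at exactly the relation $P_{F'/F}\cdot\pi(\lambda^{\rm BK}_{\underline{b},F'}(x))=\lambda^{\rm BK}_{\underline{b},F}({\rm Cor}^r_{F'/F}(x))$ on the $\varepsilon_{T,F'}$-component together with the idempotent identity $P_{F'/F}\cdot\pi(\varepsilon_{T,F'})=P_{F'/F}\cdot\varepsilon_{T,F}$. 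These are precisely the paper's Lemma~\ref{lemma e} and the commuting diagram~(\ref{key diagram}), whose verification in turn requires both the compatibility of $\vartheta^{\rm BK}$ with the descent isomorphism for $C_{F',S}(T)$ (via \cite[Th.~3.1]{BFetnc}) and the explicit comparison, at each $\fq$ in $S(F')\setminus S(F)$, of trivializing $R\Gamma_f(K_\fq,\varepsilon\cdot V_{F})$ via its acyclicity versus via the morphism of \cite[(24)]{BFetnc} --- this is where the Euler factor $P_{F'/F}$ actually enters. Your detour through $\eta^{\rm b}_T$ relocates this computation rather than avoiding it, so as written the argument does not close.
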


\begin{proof} We fix fields $F$ and $F'$ in $\Omega(\cK/K)$ with $F\subseteq F'$ and set
\[ P_{F'/F}:= \prod_{\fq \in S(F')\setminus S(F)} P_\fq (T;{\rm Fr}_\fq^{-1})  \in A[\cG_F].\]
(Note that any prime $\fq$ in $S(F')\setminus S(F)=S_{\rm ram}(F')\setminus S_{\rm ram}(F)$ is unramified in $F$ so that ${\rm Fr}_\fq$ can be regarded as an element of $\cG_{F}$.)

We must show that the natural corestriction map
$$ {\rm Cor}^r_{F'/F}: \CC_p \otimes_{\ZZ_p} {\bigwedge}_{\cA[\cG_{F'}]}^r H^1(\cO_{F',S(F')},T) \to \CC_p \otimes_{\ZZ_p} {\bigwedge}_{\cA[\cG_{F}]}^r H^1(\cO_{F,S(F')},T)$$
sends $\eta_{\underline{b},F'}^{\rm BK}$ to $P_{F'/F}\cdot\eta_{\underline{b},F}^{\rm BK}$.

To do this we set $\varepsilon:=\varepsilon_F, \varepsilon':=\varepsilon_{F'}, G:=\cG_F$, $G':=\cG_{F'}$, $A_\varepsilon := A_{\CC_p}[G]\varepsilon$ and $A_{\varepsilon'} := A_{\CC_p}[G']\varepsilon'$. We also set $W_{F} := H^1(U_{F},V), W_{F'}:= H^1(U_{F'},V)$ and $W_F' := H^1(\cO_{F,S(F')},V)$. We also write $\pi$ for the natural projection map $A_{\CC_p}[G'] \to A_{\CC_p}[G]$.

As a first step, we record the following result.


\begin{lemma}\label{lemma e} One has $\pi(\varepsilon')\cdot W_{F} = \pi(\varepsilon')\cdot W_F'$. In addition, for any primitive idempotent $e$ of $A[G]$ one has $e \cdot \pi( \varepsilon') \neq 0$ if and only if both
$e \cdot P_{F'/F}\neq 0$ and $e \cdot \varepsilon\neq 0.$
\end{lemma}

\begin{proof} The proof is identical to that of \cite[Lem. 2.22]{sbA} and so we omit details.
%
\end{proof}

This result implies, firstly, that $\pi$ induces a ring homomorphism $A_{\varepsilon'} \to A_{\varepsilon}$ and, secondly, that 
 $\pi(\varepsilon')(\CC_p\otimes_{\QQ_p}W'_{F})$ is contained in $\varepsilon(\CC_p\otimes_{\QQ_p}W_{F}).$

We can therefore consider the following diagram
\begin{equation}\label{key diagram}
\xymatrix{
{\bigwedge}_{A_{\varepsilon'}}^r \varepsilon'(\CC_p\otimes_{\QQ_p}W_{F'})\ar[r]^{\delta_{F'}} \ar[d]_{{\rm Cor}_{F'/F}^r} &{\rm det}_{A_{\varepsilon'}}(\varepsilon'(\CC_p\otimes_{\ZZ_p}C_{F'})) \ar[r]^{\quad\quad \quad \quad\vartheta_{F'}} \ar[d]_{\varrho_{F'/F}} & A_{\varepsilon'}\ar[d]^{\pi}\\
{\bigwedge}_{A_{\varepsilon}}^r \pi(\varepsilon')(\CC_p\otimes_{\QQ_p}W'_{F})\ar[r]^{\delta'_F} \ar@{^{(}->}[d]_{\iota} &{\rm det}_{A_{\varepsilon}}(\pi(\varepsilon')(\CC_p\otimes_{\ZZ_p}C'_{F})) \ar[r]^{\quad\quad \quad \quad\vartheta'_{F}} \ar@{^{(}->}[d]_{\iota'} & A_{\varepsilon}\ar[d]^{\times P_{F'/F}}\\
{\bigwedge}_{A_{\varepsilon}}^r \varepsilon(\CC_p\otimes_{\QQ_p}W_{F})\ar[r]^{\delta_F} &{\rm det}_{A_{\varepsilon}}(\varepsilon(\CC_p\otimes_{\ZZ_p}C_{F})) \ar[r]^{\quad\quad \quad \quad\vartheta_{F}}  & A_{\varepsilon}.}
\end{equation}
In this diagram we use the following abbreviations:
$$\begin{cases}
C_F := R\Hom_\cA(R\Gamma_c(\cO_{F,S(F)}, T^\ast(1)), \cA[-2]) (=C_{F,S}(T)), \\
C_{F'} := R\Hom_\cA(R\Gamma_c(\cO_{F',S(F')}, T^\ast(1)), \cA[-2]) (=C_{F',S}(T)), \\
C_F' := R\Hom_\cA(R\Gamma_c(\cO_{F,S(F')}, T^\ast(1)), \cA[-2]);
\end{cases} $$
$\delta_{F}$ and $\delta_{F'}$ are the respective cases of the isomorphism (\ref{first comp}) and $\delta_F'$ is defined similarly; $\varrho_{F'/F}$ is the projection map that is induced by the canonical descent isomorphism $\cA[G]\otimes^\mathbb{L}_{\cA[G']}C_{F'} \simeq C_F'$; $\vartheta_{F}, \vartheta_{F'}$ and $\vartheta_F'$ are the respective isomorphisms $\vartheta_{M,F,S(F)}^{\rm BK}$, $\vartheta_{M,F',S(F')}^{\rm BK}$ and $\vartheta_{M,F,S(F')}^{\rm BK}$; $\iota$ and $\iota'$ denote the natural inclusions.

It is straightforward to check that both left hand squares of this diagram commute and the commutativity of the upper right hand square follows from the argument of \cite[Th. 3.1]{BFetnc}. In addition, by the same general argument used in the verification of (\ref{per reg}), the factor $P_{F'/F}$ guarantees the commutativity of the lower right hand square since $\vartheta_F'$ uses the acyclicity of $R\Gamma_f(K_\fq,\varepsilon \cdot {\rm Ind}_{G_K}^{G_F}(V))$ for each $\fq$ in $S(F')\setminus S(F)$ whilst $\vartheta_F\circ \iota'$ trivializes these complexes by using morphisms of the type \cite[(24)]{BFetnc}.

Now the upper and lower composite homomorphisms in (\ref{key diagram}) are respectively equal to $\lambda^{\rm BK}_{\underline{b},F'}$ and $\lambda^{\rm BK}_{\underline{b},F}$. Thus, since $\lambda^{\rm BK}_{\underline{b},F}$ is injective,
the claimed equality ${\rm Cor}^r_{F'/F}(\eta_{\underline{b},F'}^{\rm BK}) = P_{F'/F}\cdot\eta_{\underline{b},F}^{\rm BK}$ follows directly from the explicit definitions of $\eta_{\underline{b},F'}^{\rm BK}$ and $\eta_{\underline{b},F}^{\rm BK}$, the commutativity of (\ref{key diagram}) and the fact that the standard inflation invariance of $L$-series implies $\pi(L^\ast (M_{F'/K}^\vee,0)) = L^\ast(M_{F/K}^\vee,0).$

This completes the proof of Proposition \ref{cor BK}.\end{proof}

\begin{definition} We refer to the element $\eta_{\underline{b},\cK}^{\rm BK} = \eta_{V,\underline{b},S,\cK}^{\rm BK}$ of ${\rm ES}_r(T,\cK)$ that is constructed (conjecturally) by Proposition \ref{cor BK} as the `Bloch-Kato Euler system' associated to the data $V, \underline{b}$, $S$  and $\cK$.\end{definition}

Examples \ref{bkrs1} and \ref{bkrs2} show that the notion of Bloch-Kato Euler system constitutes a simultaneous conjectural generalization of a number of important Euler systems.

\subsection{A new strategy for proving the eTNC} \label{strategy}

In this section we explain how Theorems \ref{main} and \ref{prop Xi} combine with the construction of Bloch-Kato Euler systems to give a new approach to proving Conjecture \ref{etnc}.

\subsubsection{The general approach} We assume for simplicity that the motive $M$ in \S\ref{etnc sec} has coefficients in a number field $B$, that $A$ is a finite extension $Q$ of $\QQ_p$ that occurs in the decomposition of $\QQ_p\otimes_\QQ B$ and that $\mathcal{A}$ is the valuation ring $\mathcal{O}$ of $Q$. We write $\fp$ for the prime ideal of $B$ that lies above $p$ and is such that $Q=B_\fp$ and then fix 
%
a Galois-stable lattice $T$ in the $\fp$-adic realization of $M$.

We also choose an abelian $p$-extension $\cK/K$ that satisfies Hypothesis \ref{hyp K}. We then fix a field $F$ in $\Omega(\cK/K)$ and set  $G:=\cG_F(=\Gal(F/K))$.

Finally we set
$$S:=S_\infty(K) \cup S_p(K) \cup S_{\rm ram}(T) \cup S_{\rm ram}(F/K).$$
We note that in this case $S(F)=S$.

We also write $\varepsilon$ for the idempotent $\varepsilon_{T,F}$ of $Q[G]$ defined in Definition \ref{def adm}.



\begin{theorem}\label{cor1} Assume that all of the following conditions are satisfied.
\begin{itemize}
\item[(a)] $Y_K(T)$ does not vanish.
\item[(b)] Writing $r$ for ${\rm rank}_{\cO}(Y_K(T))$, there exists an Euler system $c=(c_{K'})_{K'}$ in ${\rm ES}_r(T,\cK)$ for which $c_F$ has the same image as $\eta^{\rm BK}_{\underline{b},F}$.
\item[(c)] 
 $T$ and $F$ satisfy the hypotheses (H$_0$), (H$_1$), (H$_2$), (H$_3$), (H$_4$) and (H$_{5}$) listed in \S\ref{stand hyp sec}.
\end{itemize}
Let $\cR$ denote the subring of $Q[G]$ given by
$$\{ x \in Q[G] \varepsilon \mid x \cdot {\rm Fitt}_{\cO[G]}^0(H^2(\cO_{F,S},T))\subseteq{\rm Fitt}_{\cO[G]}^0(H^2(\cO_{F,S},T))\}.$$
Then the following claims are valid.

\begin{itemize}
\item[(i)] $\cR$ is an $\cO$-order in $Q[G]\varepsilon$.
\item[(ii)] $\cR\cdot L^\ast(M_{F/K}^\vee,0)\subseteq \cR\cdot \vartheta_{M,F,S}^{\rm BK}({\det}_{\cO[G]}(C_{F,S}(T)))$.
\item[(iii)] If ${\rm TNC}(M^\vee, \cO)$ is valid, then ${\rm TNC}(M_{F}^\vee,\cR)$ is valid.
\end{itemize}
\end{theorem}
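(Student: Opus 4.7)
My plan is to combine Theorem~\ref{main}(iii)(c), which bounds the image $I(c_F)$ of the given Euler system by a Fitting ideal of a Selmer dual, with Theorem~\ref{prop Xi}, which identifies the image of the Bloch--Kato element $\eta_{\underline{b},F}^{\rm BK}$ with $\Fitt^{0}_{\cO[G]}(H^{2}(\cO_{F,S},T))\cdot\Xi(\cO,T,F/K)^{-1}$. All of the hypotheses needed for these results are easily checked under our standing assumptions: (H$_{5}$) combined with Lemma~\ref{compare} gives $\cF_{\rm can}=\cF_{\rm ur}=\cF_{\rm rel}$, while Lemma~\ref{surjective}(i) yields the vanishing of $H^{2}(K_{v},\cT)$ for every $v\in S$, so Example~\ref{selmer exams}(iii) together with Shapiro's lemma identifies the Fitting ideal appearing in Theorem~\ref{main}(iii)(c) with $\Fitt^{0}_{\cO[G]}(H^{2}(\cO_{F,S},T))$; and (H$_{1}$), (H$_{3}$) combined with Lemma~\ref{torsionfree} verify conditions (a) and (b) of Theorem~\ref{prop Xi}.

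For claim (i), observe that $\varepsilon$ annihilates $Q\otimes_{\cO} H^{2}(\cO_{F,S},T)$ by Definition~\ref{def adm}, so some $\varpi^{n}\varepsilon$ lies in $\Fitt^{0}_{\cO[G]}(H^{2}(\cO_{F,S},T))$ and hence in $\cR$; conversely, any $x\in\cR$ satisfies $\varpi^{n} x = x\cdot\varpi^{n}\varepsilon\in\Fitt^{0}\subseteq\cO[G]$, whence $\cR\subseteq\varpi^{-n}\cO[G]\cap Q[G]\varepsilon$. Together these containments force $\cR\otimes_{\cO} Q=Q[G]\varepsilon$ and make $\cR$ finitely generated over $\cO$, giving (i).

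For claim (ii), Theorem~\ref{main}(iii)(c) applied to $c$ gives $I(c_{F})\subseteq\Fitt^{0}_{\cO[G]}(H^{2}(\cO_{F,S},T))$, while hypothesis (b) and Theorem~\ref{prop Xi} give $I(c_{F})=\im(\eta_{\underline{b},F}^{\rm BK})=\Fitt^{0}_{\cO[G]}(H^{2}(\cO_{F,S},T))\cdot\Xi(\cO,T,F/K)^{-1}$. Combining these yields $\Fitt^{0}\cdot\Xi^{-1}\subseteq\Fitt^{0}$; since $\eta_{\underline{b},F}^{\rm BK}$ is supported on the $\varepsilon$-part, so is the resulting containment, whence the defining property of $\cR$ promotes it to $\Xi(\cO,T,F/K)^{-1}\subseteq\cR$. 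Multiplying through by $\cR\cdot L^{\ast}(M_{F/K}^{\vee},0)$ and invoking the identity $\vartheta_{M,F,S}^{\rm BK}(\det_{\cO[G]}(C_{F,S}(T)))=L^{\ast}(M_{F/K}^{\vee},0)\cdot\Xi(\cO,T,F/K)$ from Remark~\ref{remark etnc} then yields (ii).

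Finally, for claim (iii), assuming $\mathrm{TNC}(M^{\vee},\cO)$ amounts to $\Xi(\cO,T,K/K)=(\cO,0)$, and by the projection formula of Remark~\ref{remark etnc}(i) the augmentation $A_{\CC_{p}}[G]\to A_{\CC_{p}}$ sends $\Xi(\cO,T,F/K)$ to $\Xi(\cO,T,K/K)=(\cO,0)$. Combined with the one-sided containment $\Xi^{-1}\subseteq\cR$ from the preceding paragraph and the invertibility of $\Xi$ as a graded $\cO[G]$-module, I expect a descent argument---localising at each height-one prime of $\cR$ and exploiting the explicit structure of $\cR$ as the multiplier ring of $\Fitt^{0}$, in the spirit of the lattice arguments of \cite{sbA}---to force the equivariant equality $\cR\cdot\Xi(\cO,T,F/K)=\cR$, from which the reverse inclusion $\cR\cdot\vartheta_{M,F,S}^{\rm BK}(\det_{\cO[G]}(C_{F,S}(T)))\subseteq\cR\cdot L^{\ast}(M_{F/K}^{\vee},0)$ is immediate. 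The hard part will be precisely this final descent step: upgrading the non-equivariant triviality furnished by $\mathrm{TNC}(M^{\vee},\cO)$ and the one-sided Euler-system containment into full equivariant triviality of $\Xi(\cO,T,F/K)$ over $\cR$.
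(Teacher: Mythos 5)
Your verification of the hypotheses and your arguments for claims (i) and (ii) are essentially correct and align with the paper's approach (indeed, you give slightly more detail in (i), and your reading of Theorem~\ref{prop Xi} — extracting $\Xi(\cO,T,F/K)^{-1}\subseteq\cR$ from $\Fitt^{0}\cdot\Xi^{-1}\subseteq\Fitt^{0}$ — is faithful to how that result is stated).

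For claim (iii), however, you have a genuine gap. You correctly identify the two crucial inputs, namely the one-sided containment coming from (ii) and the descent $\Xi(\cO,T,F/K)_{G}=\Xi(\cO,T,K/K)=(\cO,0)$ furnished by Remark~\ref{remark etnc}(i) and $\mathrm{TNC}(M^{\vee},\cO)$. But you then declare the missing step to be a hard "descent argument" and propose to carry it out by localising at height-one primes of $\cR$ and analysing $\cR$ as a multiplier ring. That route is both unnecessary and unlikely to work cleanly ($\cR$ is an arbitrary associated order inside $Q[G]\varepsilon$ and has no pleasant local structure). The paper's actual argument is elementary: one has a one-sided containment of $\cO[G]$-lattices inside $Q[G]\varepsilon$ which are finitely generated over $\cO[G]$; since $F/K$ is a $p$-power abelian extension, $\cO[G]$ is a local ring, and Nakayama's Lemma therefore reduces equality of the two lattices to equality of their modules of $G$-coinvariants. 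The equality of coinvariants follows at once from $\Xi(F)_{G}=\cO$, since then $(\cR\cdot\Xi(F))_{G}=\cR_{G}\cdot\Xi(F)_{G}=\cR_{G}$. You should replace the speculative localisation discussion with this short Nakayama argument; as written, claim (iii) is not proved.
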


\begin{proof} At the outset we note that, under the assumed validity of (H$_1$) and (H$_3$), Lemma \ref{torsionfree} implies the conditions (a) and (b) in Theorem \ref{prop Xi} are satisfied for all fields $K'$ in $\Omega(\cK/K)$.

This shows, in particular, that the idempotent  $\varepsilon$ is equal to the sum of all primitive idempotents of $Q[G]$ that annihilate $H^2(\cO_{F,S},V)$. This implies that
  ${\rm Fitt}_{\cO[G]}^0(H^2(\cO_{F,S},T))$ spans $Q[G]\varepsilon$ and hence that $\cR$ is an order in $Q[G]\varepsilon$, as required to prove claim (i).

Next we note that Lemma \ref{compare} combines with the hypothesis (H$_5$) to imply that the Selmer structures $\cF_{\rm can}$ and $\cF_{\rm ur}$ on $\cT$ coincide. Under the stated condition (c) we can therefore apply
Theorem \ref{main}(ii) to deduce that $\chi_{\rm can}(\cT)$ is equal to the integer $r$ defined in claim (ii). As a consequence of condition (a) we also know that $r$ is strictly positive.

This observation combines with condition (c) to allow us to apply Theorem \ref{main}(iii) to the Euler system $c$ that arises in condition (b) in order to deduce that
\begin{equation}\label{step 1} \im(\eta_{\underline{b},F}^{\rm BK}) = \im(c_F) \subseteq  {\rm Fitt}_{\cO[G]}^{0}(H^{1}_{{\cF}_{\rm can}^{*}}(K, \cT^{\vee}(1))^{\vee}) = {\rm Fitt}_{\cO[G]}^0(H^2(\cO_{F,S},T)),\end{equation}
where the second equality follows from Lemma \ref{compare} and Example \ref{selmer exams}(iii).

We abbreviate the module $\Xi(\cA,F/K,T)$ that occurs in Theorem \ref{prop Xi} to $\Xi(F)$. Then, upon comparing the latter result with (\ref{step 1}) one obtains an inclusion
\begin{equation}\label{step 2}\Xi (F)\cdot{\rm Fitt}_{\cO[G]}^0(H^2(\cO_{F,S},T))\subseteq{\rm Fitt}_{\cO[G]}^0(H^2(\cO_{F,S},T))\end{equation}
and then claim (ii) follows immediately from the definition of $\cR$.

To prove claim (iii) we need to show that the validity of ${\rm TNC}(M^\vee, \cO)$ implies that $\cR \cdot \Xi(F) = \cR$.

Thus, since claim (ii) implies $\cR \cdot \Xi(F) \subseteq \cR$, it suffices, by Nakayama's Lemma, to show that the modules of $G$-coinvariants
 $(\cR\cdot \Xi(F))_G$ and $\cR_G$ coincide.

But from Remark \ref{remark etnc} we know firstly that
 $\Xi (F)_G = \Xi (K)$ and then also that the validity of ${\rm TNC}(M^\vee,\cO)$ implies $\Xi(F)_G=\cO$. From the latter equality we can then deduce that $(\cR\cdot \Xi(F))_G = \cR_G$, as required.
\end{proof}

\begin{remark}\label{rem weak} Since $r > 0$ (by condition (a)), the condition (b) in Theorem \ref{cor1} is obviously satisfied if the Euler system $\eta^{\rm BK}_{\underline{b},\cK}$ exists, or equivalently (in view of Proposition \ref{cor BK}) if Conjecture \ref{integrality} is valid for every $K'$ in $\Omega(\cK/K)$. \end{remark}


\begin{remark}\label{ass order rem} Rings of the form $\cR$ that arise in Theorem \ref{cor1} are known as `associated orders' and are much studied in the literature (see, for example, \cite[Def. 24.3]{CR}). Nevertheless, except in special cases, it is difficult to determine such orders explicitly and, in \S\ref{iwasawa sec}, we explain how to use Iwasawa theory to address this issue. As an example, the following result shows that it is even difficult to characterize explicitly the case that $\cR$ is `minimal'.

\begin{lemma}\label{min order} Assume $\mathcal{O}[G]\varepsilon$ is Gorenstein. Then $\mathcal{R}$ is equal to $\mathcal{O}[G]\varepsilon$ if and only if the $\mathcal{O}[G]\varepsilon$-module $\mathcal{O}[G]\varepsilon\otimes_{\mathcal{O}[G]}H^2(U_K,T)$ has projective dimension at most one.
\end{lemma}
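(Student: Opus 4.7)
The plan is to identify $\mathcal{R}$ with the associated order of a Fitting ideal and then to exploit both the Gorenstein hypothesis and the admissible complex structure supplied by Proposition~\ref{prop C}. Set $\Lambda := \mathcal{O}[G]\varepsilon$ and $M := \Lambda \otimes_{\mathcal{O}[G]} H^2(\cO_{F,S}, T)$. By the base-change property of Fitting ideals, $J := \Fitt^0_\Lambda(M) = \varepsilon \cdot \Fitt^0_{\mathcal{O}[G]}(H^2(\cO_{F,S}, T))$, so the ring $\mathcal{R}$ of Theorem~\ref{cor1} coincides with the associated order $(J : J)_\Lambda := \{ x \in Q[G]\varepsilon : xJ \subseteq J\}$. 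The defining property of $\varepsilon$ (that it annihilates $H^2(U_F, V)$) forces $M$ to be $\mathcal{O}$-torsion, and since $\Lambda$ is $\mathcal{O}$-free a suitable power of $\varpi$ is a regular element of $\Lambda$ lying in $J$; in particular $(J : J)_\Lambda \cong \mathrm{End}_\Lambda(J)$ via evaluation at any regular element of $J$.

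The forward direction $({\rm pd}_\Lambda M \le 1 \Rightarrow \mathcal{R} = \Lambda)$ is routine. If ${\rm pd}_\Lambda M \le 1$ then, because $\Lambda$ is Noetherian semilocal, $M$ admits a free resolution $0 \to \Lambda^n \xrightarrow{\phi} \Lambda^n \to M \to 0$ with $\phi$ injective (the ranks match because $M$ is torsion), so $J = (\det \phi)$ is principal with regular generator, whence $(J : J)_\Lambda = \Lambda$.

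For the converse, assume $\mathrm{End}_\Lambda(J) = \Lambda$. A standard duality-theoretic consequence of the Gorenstein hypothesis (essentially Bass's characterisation of Gorenstein rings) implies that an ideal of $\Lambda$ containing a regular element whose endomorphism ring is $\Lambda$ is necessarily invertible; semilocality of $\Lambda$ then upgrades this to principality with regular generator, so $J = (f)$. To convert this principality into the inequality ${\rm pd}_\Lambda M \le 1$ we invoke the admissible complex $\varepsilon C_{F,S}(T)$: under the hypotheses of Theorem~\ref{cor1}, Lemma~\ref{torsionfree} forces both $H^0(F, T) = 0$ and $H^1(U_F, T)$ to be $\mathcal{O}$-torsion-free, so Proposition~\ref{prop C}(iii) shows that $C_{F, S}(T)$ is admissible and $\varepsilon C_{F, S}(T)$ is represented in $D(\Lambda)$ by a two-term complex $[\Lambda^N \to \Lambda^N]$ with cohomology $\varepsilon H^1(U_F, T)$ (torsion-free of generic rank $r := r_T$) in degree $0$ and $M \oplus \Lambda^r$ in degree $1$; the latter decomposition combines the split exact sequence of Proposition~\ref{prop C}(ii) with Hypothesis~\ref{hyp Y}.

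Lifting the free summand $\Lambda^r$ of the cokernel through projectivity produces a free presentation $\Lambda^N \xrightarrow{d'} \Lambda^{N - r} \to M \to 0$ whose syzygy module is $\varepsilon H^1(U_F, T)$; hence ${\rm pd}_\Lambda M \le 1$ is equivalent (projective equals free by semilocality) to the freeness of $\varepsilon H^1(U_F, T)$. The canonical triviality of the determinant of $[\Lambda^N \to \Lambda^N]$, rewritten in terms of cohomology and after absorbing the free summand $\Lambda^r$, yields a canonical isomorphism of invertible $\Lambda$-modules $\det_\Lambda(\varepsilon H^1(U_F, T)) \simeq J$; since $J$ is now free of rank one, the Gorenstein hypothesis on $\Lambda$ converts this into the required freeness of the torsion-free $\Lambda$-module $\varepsilon H^1(U_F, T)$. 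The main obstacle is this last step: correctly defining the determinant functor on the potentially non-projective syzygy $\varepsilon H^1(U_F, T)$ and deducing its freeness from that of its determinant line, which relies in an essential way on the duality theory for Gorenstein rings.
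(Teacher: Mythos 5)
Your overall framing matches the paper's: translate the condition $\mathcal{R}=\Lambda$ (where $\Lambda:=\mathcal{O}[G]\varepsilon$) via the Gorenstein hypothesis into the statement that $J:=\Fitt^0_\Lambda(M)$ is free of rank one, and then relate the latter to $\operatorname{pd}_\Lambda M\le 1$. The paper accomplishes both steps by citation: \cite[Th.\ 37.13]{CR} for the associated-order reduction and \cite[Prop.\ 4]{cogr} for the equivalence with projective dimension. Your forward direction is a correct re-derivation of the easy half: if $\operatorname{pd}_\Lambda M\le1$ then, since $M$ is torsion and $\Lambda$ is semilocal, one gets $0\to\Lambda^n\xrightarrow{\phi}\Lambda^n\to M\to 0$ with $J=(\det\phi)$ principal on a regular generator, hence $(J:J)_\Lambda=\Lambda$.

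However the converse has a genuine gap, which you yourself flag but do not close. Two distinct things go wrong. First, in the presentation $\Lambda^N\xrightarrow{d'}\Lambda^{N-r}\to M\to 0$ the first syzygy of $M$ is $\Omega^1 M=\operatorname{im}d'$, \emph{not} $\ker d'=\varepsilon H^1(U_F,T)$. The implication $\operatorname{pd}_\Lambda M\le1\Rightarrow\varepsilon H^1(U_F,T)$ free does hold (since then $0\to\varepsilon H^1\to\Lambda^N\to\Omega^1 M\to 0$ splits), but the converse is false: a free module can be the kernel of a non-split short exact sequence of free modules (already $0\to\Lambda\xrightarrow{\varpi}\Lambda\to\Lambda/\varpi\to 0$ shows this), so "$\varepsilon H^1(U_F,T)$ free" does not imply $\operatorname{pd}_\Lambda M\le1$. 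Second, the step you need -- that an isomorphism $\det_\Lambda(\varepsilon H^1(U_F,T))\simeq J$ with $J$ free forces $\varepsilon H^1(U_F,T)$ itself to be free -- is unsupported: the determinant line is only defined for perfect objects, and the factorisation $\det(\varepsilon C_{F,S}(T))\simeq\det H^0\otimes\det(H^1)^{-1}$ already presupposes that $M$ is perfect, i.e.\ exactly what you are trying to establish, so the reasoning is circular; and there is no general principle over Gorenstein orders that a torsion-free module with free determinant is free. The correct way to finish, and the one the paper takes, is to invoke Cornacchia--Greither \cite[Prop.\ 4]{cogr}, which proves directly for finite modules over one-dimensional Gorenstein rings that $\Fitt^0$ being principal on a regular element is equivalent to projective dimension at most one.
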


\begin{proof} Set $\cR' := \mathcal{O}[G]\varepsilon$. Then, since $\cR'$  is Gorenstein, the general result of \cite[Th. 37.13]{CR} implies that
$\mathcal{R}= \cR'$ if and only if the $\cR'$-module ${\rm Fitt}^0_{\cO[G]}(H^2(U_K,T))$ is free of rank one.

Now, setting $H^2(U_K,T)' := \cR'\otimes_{\cO[G]}H^2(U_K,T)$, one has
\[ {\rm Fitt}^0_{\cO[G]}(H^2(U_K,T)) = {\rm Fitt}^0_{\cO[G]}(H^2(U_K,T))\varepsilon = {\rm Fitt}^0_{\cR'}(H^2(U_K,T)'),\]
where the first equality follows from the proof of Theorem \ref{cor1}(i) and the second from a standard property of Fitting ideals under scalar extension.

Then, since $H^2(U_K,T)'$ is finite, the argument of Cornacchia and Greither in
\cite[Prop. 4]{cogr} implies ${\rm Fitt}^0_{\cR'}(H^2(U_K,T)')$ is a free $\cR'$-module of rank one if and only if the $\cR'$-module  $H^2(U_K,T)'$ has projective dimension at most one, as claimed.
\end{proof}\end{remark}


%
%

\subsubsection{A special case}\label{special case sec} In certain cases the hypothesis of Theorem \ref{cor1}(iii) can be made more explicit.

To explain this point we fix a finite abelian extension $L$ of $K$ of degree prime to $p$ and write $\Delta$ for $\Gal(L/K)$. We also set  $\widehat \Delta:=\Hom(\Delta,\QQ^{c,\times})$ and fix an embedding $\QQ^c \to \QQ^c_p$ in order to identify this group with $\Hom(\Delta, \QQ_p^{c,\times})$. 
%
%

We fix a finite extension $Q$ of $\QQ_p$ that contains the values of all characters in $\widehat{\Delta}$ and write $\cO$ for its valuation ring.

Then for each $\chi$ in $\widehat\Delta$ we define the `$\chi$-component' of a $\Z_p[\Delta]$-module $X$ to be the $\cO$-module
\begin{equation*} X^\chi := e_\chi(\cO\otimes_{\Z_p}X) = \{x \in \cO\otimes_{\ZZ_p}X: \delta(x) = \chi(\delta)\cdot x\,\text{ for all }\, \delta \in \Delta\}.\end{equation*}
(This notation is not, strictly speaking, precise since we do not specify the ring $\cO$ but this ambiguity should not cause confusion.)

In this way, for any $\ZZ_p$-representation $T$ we obtain an $\cO$-representation of $G_K$ by setting
\begin{equation*}\label{chi comp} T_\chi := ({\rm Ind}_{G_L}^{G_K}(T))^\chi = (\Z_p[\Delta]\otimes_{\ZZ_p}T)^\chi ,\end{equation*}
where the tensor product is a $\ZZ_p[\Delta]$-module via left multiplication and a $G_K$-module via the action $\sigma(x\otimes t) := x\sigma_L^{-1}\otimes \sigma(t)$ for all $\sigma\in G_K$, $x \in \Z_p[\Delta]$ and $t \in T$, where $\sigma_L$ denotes the image of $\sigma$ in $\Delta$.

Since $\#\Delta$ is prime to $p$ this gives a direct sum decomposition of $G_K$-representations
\begin{equation}\label{semi decomp} \cO\otimes_{\ZZ_p}{\rm Ind}^{G_K}_{G_L}(T) = \bigoplus_{\chi\in \widehat\Delta}T_\chi .\end{equation}

%
%
%
%

\begin{proposition}\label{cond reduction} Let $T$ be a full $G_K$-stable sublattice of the $p$-adic realisation of a motive $M$ over $K$. 

We also assume to be given a subset $\Upsilon$ of $\widehat{\Delta}$ that satisfies the following three conditions:
\begin{itemize}
\item[(a)] for all $\chi$ in $\Upsilon$ the hypotheses (a), (b) and (c) in Theorem \ref{cor1} are satisfied by the data $(T_\chi,K)$;
\item[(b)] $H^2(\cO_{K,S},T_\chi)$ is finite;
\item[(c)] $\Upsilon$ is stable under the action of $G_\QQ$ (so that $\sum_{\chi \in \Upsilon}\chi$ is a rational-valued character).
\end{itemize}
Then the validity of the $p$-part of ${\rm TNC}(M_{K'}^\vee)$ for all intermediate fields $K'$ of $L/K$ implies the validity of
${\rm TNC}(M_{L}^\vee,\cO e_\chi)$ for all $\chi$ in $\Upsilon$.
\end{proposition}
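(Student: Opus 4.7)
The plan is to decompose the equivariant TNC character-by-character using the splitting $\cO[\Delta] = \bigoplus_\chi \cO e_\chi$, which is available because $|\Delta|$ is prime to $p$ and $\cO$ contains all values of characters in $\widehat\Delta$. This reduces ${\rm TNC}(M_L^\vee, \cO e_\chi)$ to a non-equivariant TNC over $K$ for a character-twisted motive, which one then extracts from the hypothesis by isolating $G_\QQ$-orbits of characters across the intermediate fields.

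First, for each $\chi \in \widehat\Delta$, Shapiro's lemma combined with the decomposition (\ref{semi decomp}) gives a canonical identification of the $e_\chi$-component of $\cO\otimes_{\ZZ_p} C_{L,S}(T)$ with the complex $C_{K,S}(T_\chi)$, and the analogous $\chi$-projection of the leading term gives the identity $e_\chi\cdot L^\ast(M_{L/K}^\vee, 0) = L^\ast((M\otimes\chi)^\vee, 0)\cdot e_\chi$. Together with the functoriality of the Bloch-Kato trivialization $\vartheta^{\rm BK}$ under Weil restriction (as in the argument of \cite[Th.~4.1]{BFetnc}), this shows that ${\rm TNC}(M_L^\vee, \cO e_\chi)$ is equivalent to the non-equivariant conjecture ${\rm TNC}((M\otimes\chi)^\vee, \cO)$ over $K$.

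Applying the same formalism to each intermediate field $K' \in [K, L]$ expresses the $p$-part of ${\rm TNC}(M_{K'}^\vee)$ as the conjunction, over $G_\QQ$-orbits $O \subseteq \widehat{\Gal(K'/K)}$, of the conjectures ${\rm TNC}((M\otimes\rho_O)^\vee, \cO_O)$, where $\rho_O := \sum_{\chi\in O}\chi$ and $\cO_O$ is the valuation ring of the subfield of $Q$ generated by the values of $\rho_O$. Specializing to $K' = L$, the hypothesis yields ${\rm TNC}((M\otimes\rho_O)^\vee, \cO_O)$ for every $G_\QQ$-orbit $O \subseteq \widehat\Delta$. By condition (c), $\Upsilon$ is a union of such orbits; extending scalars from $\cO_O$ to $\cO$ then splits $M\otimes\rho_O$ into its individual character summands, yielding ${\rm TNC}((M\otimes\chi)^\vee, \cO)$ for each $\chi \in \Upsilon$, and hence ${\rm TNC}(M_L^\vee, \cO e_\chi)$ by the first step.

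The principal obstacle is verifying the compatibility of the Shapiro/character-projection formalism with $\vartheta^{\rm BK}$ and carefully tracking the twist conventions. Conditions (a) and (b) enter to secure that the non-equivariant setup is admissible: hypothesis (b) combines with the standard hypotheses (H$_1$) and (H$_3$) built into condition (a) to imply, via Lemma \ref{torsionfree}, that both $H^0(K, V_\chi)$ and $H^2(\cO_{K,S}, V_\chi)$ vanish for $V_\chi := Q\otimes_\cO T_\chi$. Hence the admissibility idempotent $\varepsilon_{T_\chi, K}$ of Definition \ref{def adm} equals $1$, so that ${\rm TNC}((M\otimes\chi)^\vee, \cO)$ is the unconditional formulation of Conjecture \ref{etnc} for the twisted motive.
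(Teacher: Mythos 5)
Your proposal has a genuine gap in the passage from the non-equivariant hypothesis to the individual $\chi$-statements.

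The claim that ``applying the same formalism to each intermediate field $K'$ expresses the $p$-part of ${\rm TNC}(M_{K'}^\vee)$ as the conjunction, over $G_\QQ$-orbits $O\subseteq\widehat{\Gal(K'/K)}$, of the conjectures ${\rm TNC}((M\otimes\rho_O)^\vee,\cO_O)$'' is not true. The non-equivariant conjecture ${\rm TNC}(M_{K'}^\vee,\ZZ_p)$ corresponds, via Remark \ref{remark etnc}(ii), to the equality $\det_{\ZZ_p[\Gal(K'/K)]}\Xi(\ZZ_p,T,K'/K)=\ZZ_p$, which is a single \emph{product} (or norm) relation $\prod_{\phi\in\widehat{\Gal(K'/K)}}\Xi^\phi_{K'/K}=\cO$ over \emph{all} characters $\phi$; it does not split into a conjunction of statements for the separate $G_\QQ$-orbits. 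The conjunction implies the product relation, not conversely. Thus ``specializing to $K'=L$'' in your step does not yield ${\rm TNC}((M\otimes\rho_O)^\vee,\cO_O)$ for a single orbit $O$, and the deduction collapses at exactly this point.

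What is missing is the key input from Theorem \ref{cor1}: conditions (a) and (b), applied to the data $(T_\chi,K)$, do far more than force the admissibility idempotent to be $1$; Theorem \ref{cor1}(ii) yields the \emph{a priori inclusion} $\Xi^\chi_{L/K}\subseteq\cO$ for every $\chi\in\Upsilon$. The paper's proof then uses the Artin induction theorem to write $m\cdot\sum_{\chi\in\Upsilon}\chi$ as a $\ZZ$-linear combination, with possibly negative coefficients, of the characters ${\rm Ind}^{\Delta}_{\Delta_{K'}}(1_{K'})$, and thereby obtains the multiplicative relation $\bigl(\prod_{\chi\in\Upsilon}\Xi^\chi_{L/K}\bigr)^m=\prod_{K'}\Xi_{K'}^{n_{K'}}$. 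The hypothesis on the intermediate fields gives $\Xi_{K'}=\cO$ for each $K'$, hence $\prod_{\chi\in\Upsilon}\Xi^\chi_{L/K}=\cO$; and only now, combined with the a priori inclusions $\Xi^\chi_{L/K}\subseteq\cO$ (so that each factor is an ideal of the discrete valuation ring $\cO$), can one conclude $\Xi^\chi_{L/K}=\cO$ for every $\chi\in\Upsilon$. You use conditions (a) and (b) solely to verify admissibility, never invoke the inclusion from Theorem \ref{cor1}(ii), and never invoke the Artin induction theorem over the family of intermediate fields — and these are precisely the ingredients that make the argument work.
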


\begin{proof} 

For fields $K'$ and $L'$ with $K \subseteq K'\subseteq L'\subseteq L$ and $\Delta' = \Gal(L'/K')$ we write $\Xi_{L'/K'}$ for the $\ZZ_p[\Delta']$-lattice $\Xi(\ZZ_p,T,L'/K')$ defined in Remark \ref{remark etnc}. For $\chi$ in $\widehat{\Delta'}$ we then identify $\Xi^\chi_{L'/K'}$ with the fractional $\cO$-ideal obtained via the map $Qe_\chi \to Q$ sending $e_\chi$ to $1$.

We note that, with this notation, the conjecture TNC$(M_{L}^\vee,\mathcal{O}e_\chi)$ is equivalent to an equality $\Xi^\chi_{L/K} = \cO$.

Now condition (a) implies Theorem \ref{cor1} applies to the data $(T_\chi,K,S)$ and then condition (b) implies that, in this case, $\varepsilon= e_\chi$ and so the order $\cR$ in Theorem \ref{cor1} is equal to $\cO e_\chi$. From Theorem \ref{cor1}(ii) we can thus deduce that $\Xi^\chi_{L/K}\subseteq \mathcal{O}$ for all $\chi$ in $\Upsilon$.

Next we note that condition (c) implies the character $\sum_{\chi \in \Upsilon}\chi $ takes values in $\QQ$. Hence, by the Artin induction theorem (see \cite[Chap. II, Th. 1.2]{tatebook}), there exists a natural number $m$ and an integer $n_{K'}$ for each intermediate field $K'$ of $L/K$ such that
$$m\cdot \sum_{\chi \in \Upsilon}\chi = \sum_{K \subseteq K' \subseteq L} n_{K'}\cdot {\rm Ind}^{\Delta}_{\Delta_{K'}}(1_{K'}) = \sum_{K \subseteq K' \subseteq L} n_{K'}\cdot (\sum_{\psi(\Delta_{K'}) = 1}\psi), $$
where we set $\Delta_{K'} := \Gal(L/K')$ and write $1_{K'}$ is the trivial character of $\Delta_{K'}$.

This in turn gives an equality of fractional $\cO$-ideals
\begin{equation}\label{rep thry} \bigl( \prod_{\chi \in \Upsilon} \Xi^\chi_{L/K} \bigr)^m =
\prod_{K \subseteq K' \subseteq L} \bigl(\prod_{\chi(\Delta_{K'}) = 1}\Xi^\chi_{L/K}\bigr)^{n_{K'}} = \prod_{K \subseteq K' \subseteq L}\Xi_{K'}^{n_{K'}},\end{equation}
where we set $\Xi_{K'} := \Xi^{1_{K'}}_{K'/K'}$ and the second equality is a consequence of the following fact: if we set $\Gamma' := \Gal(K'/K)= \Delta/\Delta_{K'}$, then %
\[ \prod_{\chi(\Delta_{K'}) = 1}\Xi^\chi_{L/K} = \prod_{\phi \in \widehat{\Gamma'}}\Xi^\phi_{K'/K} = \Xi_{K'}\]
where the first equality follows from the functorial property recorded in Remark \ref{remark etnc}(i) and the second from (\ref{semi decomp}) (with $L$ replaced by $K'$) and Remark \ref{remark etnc}(ii).

Now the assumed validity of the $p$-part of TNC$(M_{K'}^\vee)$ implies $\Xi_{K'}$ is equal to $\cO$ and hence, via (\ref{rep thry}), that the stated conditions imply $\prod_{\chi \in \Upsilon} \Xi^\chi_{L/K}=\cO$. Since each module $\Xi^\chi_{L/K}$ is an ideal of $\cO$, it therefore follows that $\Xi^\chi_{L/K} = \cO$, as required.\end{proof}

\begin{remark} Proposition \ref{cond reduction} clarifies the interest of Theorem \ref{cor1} since, in important cases, ${\rm TNC}(M_{K'}^\vee)$ is equivalent to well-known results or conjectures (cf. Remark \ref{ex L1}).
\end{remark}

\subsection{Iwasawa theory}\label{iwasawa sec} In this section we explain how Iwasawa theory can in some cases be used to avoid the difficulty of explicitly describing the order $\cR$ that occurs in Theorem \ref{cor1} (cf. Remark \ref{ass order rem}).

To do this we fix a field extension $\cK$ of $K$ that satisfies Hypotheses \ref{hyp K} and a field $F$ in $\Omega(\cK/K)$. We also fix a finite abelian extension $L$ of $K$ of degree prime to $p$, set $\Delta := \Gal(L/K)$ and use the same notation as in \S\ref{special case sec}. 

For each finite extension $K'$ of $K$ we write $\cR_{K'}$ for the $\cO$-order $\cO[\cG_{K'}]$.


Finally we fix a $\ZZ_p$-power extension $K_\infty$ of $K$ in $\cK$ and set $\mathcal{H}_F^\infty := \Gal(FK_\infty/K_\infty)$. We write $\cB_{LF}$ for the smallest $\mathcal{O}$-order in $Q[\cG_{LF}]$ that contains  $\cR_{LF}$ and the image under the natural projection $ \QQ_p[\mathcal{H}_F^\infty] \to \QQ_p[\cG_{F}]$ of the maximal $\ZZ_p$-order in $\QQ_p[\mathcal{H}_F^\infty]$.

\begin{theorem}\label{hes2} Let $M$ be a motive over $K$ and $T$ a full $G_K$-stable sublattice of its $p$-adic realisation. 
Let $\Upsilon$ be a subset of $\widehat{\Delta}$ that is stable under the action of $G_\QQ$ and comprises characters $\chi$ that satisfy all of the following conditions:
\begin{itemize}
\item[(a)] $Y_{K}(T_\chi)$ does not vanish;
\item[(b)] Conjecture \ref{integrality} is valid for $(T_\chi,K')$ for every $K'$ in $\Omega(\cK/K)$;
\item[(c)] $H^2(\cO_{K,S},T_\chi)$ is finite;
\item[(d)] the data $T_\chi$, $F$ and $S$ satisfy the hypotheses (H$_0$), (H$_1$), (H$_4$) and (H$_5$) listed in \S\ref{stand hyp sec};
\item[(e)] for some $\tau$ in $\Gal(\overline{K}/FK_{p^\infty}K_\infty)$ the $\cO$-module $T_\chi/(\tau -1)T_\chi$ is free of rank one;
\item[(f)] $H^{1}(F(T)_{p^{\infty}}K_\infty/K, T_\chi/\pi T_\chi)$ and $H^{1}(F(T)_{p^{\infty}}K_\infty/K, (T_\chi/\pi T_\chi)^{\vee}(1))$ both vanish, where $\pi$ is a prime element of $\cO$;
\item[(g)] for each homomorphism $\psi: \mathcal{H}_F^\infty\to \QQ_p^{c,\times}$ there exists a field $F_\psi$ in $\Omega(FK_\infty/F)$ with $e_\psi \varepsilon_{T_\chi,F_\psi}\not=0$.
\end{itemize}

Then, if the $p$-part of the conjecture ${\rm TNC}(M^\vee_{K'})$ is valid for all intermediate fields $K'$ of $L/K$, the conjecture ${\rm TNC}(M^\vee_{LF},\cB^\chi_{LF})$ is also valid for every $\chi$ in $\Upsilon$.
\end{theorem}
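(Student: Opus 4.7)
The strategy is to combine the prime-to-$p$ descent of Proposition \ref{cond reduction} with an Iwasawa-theoretic descent along $FK_\infty/F$, building on Theorem \ref{cor1}. Fix $\chi \in \Upsilon$ and set $T' := T_\chi$, which by (a) has $r := \mathrm{rank}_\cO Y_K(T') > 0$. Condition (b) combined with Proposition \ref{cor BK} produces the Bloch-Kato Euler system $\eta^{\mathrm{BK}}_{T',\cK} \in \mathrm{ES}_r(T', \cK)$. Conditions (d), (e) and (f) together furnish the hypotheses (H$_0$)–(H$_5$) for the pair $(T', F')$ at every finite subfield $F' \subseteq LFK_\infty$ containing $F$: (e) and (f) are precisely (H$_2$) and (H$_3$) at the Iwasawa level and propagate downward via Remark \ref{restrict scalars rem}, while (d) supplies the remaining hypotheses directly.

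I would first apply Theorem \ref{main}(iii) to $\eta^{\mathrm{BK}}_{T',\cK}$ at each such $F'$, invoking Lemma \ref{compare} to merge the Selmer structures under (H$_5$), Example \ref{selmer exams}(iii) to identify the relevant dual Selmer module with $H^2(\cO_{F',S}, T')$, and Theorem \ref{prop Xi} to express the image of the Euler system in terms of $\Xi$. This yields the key inclusion
\[ \Xi(\cO, T', F'/K) \cdot \mathrm{Fitt}^0_{\cO[\cG_{F'}]}(H^2(\cO_{F',S}, T')) \subseteq \mathrm{Fitt}^0_{\cO[\cG_{F'}]}(H^2(\cO_{F',S}, T')), \]
placing $\Xi(\cO, T', F'/K)$ inside the associated order of the Fitting ideal on the right. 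Independently, Proposition \ref{cond reduction} applied to $T$ along $L/K$ (via (a), (c) and the assumed $p$-part of $\mathrm{TNC}(M_{K'}^\vee)$ for all intermediate $K' \subseteq L$) yields $\Xi(\cO, T', L/K)^\chi = \cO$, which will serve as the base case of a Nakayama descent argument.

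To pass from these finite-level inclusions to $LF$, I would analyse the above containment $\chi$-component by $\chi$-component along the $\ZZ_p$-power tower $LFK_\infty/LF$, using condition (g). For each character $\psi$ of $\mathcal{H}_F^\infty$, (g) supplies a finite $F_\psi \in \Omega(FK_\infty/F)$ at which $e_\psi \varepsilon_{T',F_\psi} \neq 0$, so the Fitting-ideal inclusion above is non-degenerate on the $(\chi,\psi)$-component at level $LF_\psi$. Aggregating over all $\psi$ and using the functorialities of $\Xi$ under restriction and descent recorded in Remark \ref{remark etnc}, the associated order of the $\chi$-projected Fitting ideal in the tower must contain the image in $\QQ_p[\cG_F]$ of the maximal $\ZZ_p$-order of $\QQ_p[\mathcal{H}_F^\infty]$; combined with the $\Gal(L/K)$-factor coming from the prime-to-$p$ part, this order is exactly $\cB_{LF}^\chi$. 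We therefore obtain $\cB_{LF}^\chi \cdot \Xi(\cO, T', LF/K) \subseteq \cB_{LF}^\chi$, and together with the base case above, Nakayama's Lemma (exactly as in the proof of Theorem \ref{cor1}(iii)) upgrades this to the equality $\mathrm{TNC}(M_{LF}^\vee, \cB_{LF}^\chi)$.

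The hard part will be the $\chi$-by-$\psi$ bookkeeping in the third step: verifying that condition (g), which only guarantees non-vanishing of the admissibility idempotent on each individual $\psi$-component at some (possibly $\psi$-dependent) finite level, really does force the $\chi$-projection of the limiting associated order to contain the full image of the maximal $\ZZ_p$-order of $\QQ_p[\mathcal{H}_F^\infty]$ rather than a proper suborder. This is a delicate equivariant question about how admissibility propagates through the tower and will likely require the character-by-character analysis underpinning the construction of basic Euler systems in \cite{sbA}, adapted to the mixed prime-to-$p$ and $p$-adic setting of the extension $LFK_\infty/K$.
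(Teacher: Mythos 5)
Your scaffolding is right — apply Theorem \ref{cor1} at each finite layer $F'$ of the tower $FK_\infty/F$, invoke Proposition \ref{cond reduction} for the base case, pass to inverse limits and close with Nakayama — but the middle Iwasawa-theoretic step, where condition (g) enters, runs in the wrong direction and leaves a genuine gap.

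You propose to show that the associated order of the limiting $\chi$-projected Fitting ideal \emph{contains} the image of the maximal $\ZZ_p$-order in $\QQ_p[\mathcal{H}_F^\infty]$. Even granting this, it would not give what you need. Knowing that the associated order $\cA^\chi_\infty := \{\lambda : \lambda\cdot\mathcal{I}^\chi_\infty\subseteq\mathcal{I}^\chi_\infty\}$ contains $\cB^\chi_{LF}$ tells you that $\cB^\chi_{LF}$ preserves the Fitting ideal; it places no upper bound on $\Xi^\chi_{LF}$, which is what the Nakayama argument requires. The containment you actually need is the \emph{reverse} one: $\Xi^\chi_{LF}\subseteq\cB^\chi_{LF}$, and this is established by bounding the associated order \emph{from above}, not from below.

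The mechanism the paper uses to reach that upper bound is essentially disjoint from your sketch. Condition (g) is used to show (via the interpolation property of the Bloch--Kato element and the corestriction-compatibility of Euler system images, Lemma \ref{codescent}(ii), which you also do not mention but which is required to make any of the inverse limits coherent) that the inverse limit $H^{2,\chi}_\infty := \varprojlim_{F'}H^2(U_{F'},T_\chi)$ is $\Lambda_\infty$-\emph{torsion}. It follows that $\mathcal{I}^\chi_\infty = \mathrm{Fitt}^0_{\Lambda_\infty}(H^{2,\chi}_\infty)$ contains a non-zero divisor $\xi$, so $\cA^\chi_\infty \subseteq \xi^{-1}\Lambda_\infty$ is a finitely generated $\Lambda_\infty$-submodule of the total quotient ring $Q_\infty$; being also a ring containing $\Lambda_\infty$, it is therefore \emph{contained in} the integral closure $\cM_\infty$. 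Since the inclusion $\Xi^\chi_\infty\cdot\mathcal{I}^\chi_\infty\subseteq\mathcal{I}^\chi_\infty$ places $\Xi^\chi_\infty$ inside $\cA^\chi_\infty$, one concludes $\Xi^\chi_\infty\subseteq\cM_\infty$, and the descent to $\Xi^\chi_{LF}\subseteq\cB^\chi_{LF}$ follows by taking $\Gal(FK_\infty/F)$-coinvariants. Finally, your outline omits entirely the non-trivial verification that $\Xi^\chi_\infty$ even lies in $Q_\infty$ in the first place — the paper's Proposition \ref{inverse}(ii) requires a separate descent computation with the inverse-limit complex $C^\chi_\infty$, its determinant module and the Bloch--Kato elements.
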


\begin{remark}\label{simple case} If $K_\infty$ is the cyclotomic $\Z_p$-extension of $K$, then the conditions (e) and (f) used above coincide with the respective hypotheses (H$_2$) and (H$_3$) listed in \S\ref{stand hyp sec}.
 Note also that if $F$ is contained in $K_\infty$, then $\mathcal{H}_F^\infty$ is trivial and ${\cB}^\chi_{LF} = \mathcal{O}[\cG_{LF}]e_\chi$ is a direct factor of $\cO[\cG_{LF}]$.\end{remark}

\begin{proof} 

Fix $\chi$ in $\Upsilon$. Then under the given assumptions (d), (e) and (f), one checks easily that all of the hypotheses (H$_0$), (H$_1$), (H$_2$), (H$_3$), (H$_4$) and (H$_5$) are satisfied by the data $T_\chi, F'$ and $S$ for all fields $F'$ in $\Omega(FK_\infty/F)$.

 Recalling Remark \ref{rem weak}, the stated conditions (a) and (b) therefore allow us to apply the argument of Theorem \ref{cor1} to deduce that the inclusions (\ref{step 1}) and (\ref{step 2}) are valid with $F$ replaced by any such field $F'$.

Further, Lemma \ref{codescent} below implies that both of these inclusions are compatible with the natural projection maps as $F'$ varies over $\Omega(FK_\infty/F)$ and so, by passing to the inverse limit over such morphisms, we obtain, for any choice of $\cO$-basis $\underline{b}_\chi$ of $Y_K(T_\chi)$, inclusions
\begin{equation}\label{step22} \varprojlim_{F'}\im(\eta^{\rm BK}_{\underline{b}_\chi,F'})\subseteq {\rm Ann}_{\Lambda_\infty}(H^{2,\chi}_{\infty})\, \,\text{ and }\,\, \Xi^\chi_{\infty}\cdot \mathcal{I}^\chi_\infty \subseteq \mathcal{I}^\chi_\infty.\end{equation}
Here $\Lambda_\infty$ is the Iwasawa algebra $\varprojlim_{F'}\cR_{F'}$ and we use the following inverse limits over $F'$: $H^{2,\chi}_{\infty}$ is the limit of $H^2(U_{F'},T_\chi)$ with respect to the natural corestriction maps; $\Xi^\chi_{\infty}$ is the limit of the lattices $\Xi^\chi_{F'L} := \Xi(\cO,T_\chi,F'/K)$ with respect to the morphisms induced by the natural projection maps $\pi_{F_2,F_1}: \cR_{F_2} \to \cR_{F_1}$ for $F_1\subseteq F_2$ (cf. Remark \ref{remark etnc}(i)); $\mathcal{I}^\chi_\infty$ is the limit of ${\rm Fitt}_{\cR_{F'}}^{0}(H^{2}(U_{F'},T_\chi))$ with respect to the maps $\pi_{F_2,F_1}$.

We note $\Lambda_\infty$ is a power series ring over $\mathcal{O}[\mathcal{H}_F^\infty]$ in variables $\{t_i\}_{\le i\le d}$ (for a suitable $d$), the total quotient ring $Q_\infty$ of $\Lambda_\infty$ is a finite product of fields and the integral closure $\cM_\infty$ of $\Lambda_\infty$ in $Q_\infty$ is $\cM[[t_1,\dots , t_d]]$, with $\cM$ the integral closure of $\mathcal{O}$ in $\mathcal{O}[1/p][\mathcal{H}_F^\infty]$.

In Proposition \ref{inverse} below we show that $\Xi^\chi_{\infty}\subseteq {\cM}_\infty$. Then, by taking coinvariants under the action of the group $\Gal(FK_\infty/F)$ of this inclusion, and noting that the image of $\cM_\infty$ in $\cR_F[1/p]$ generates $\cB^\chi_{LF}$ over $\cR_{LF}^\chi$ we obtain an inclusion
$\cB_{LF}^\chi\cdot \Xi^\chi_{LF} \subseteq \cB^\chi_{LF}$.

To verify TNC$(M^\vee_{LF},\cB^\chi_{LF})$ we need to show the last inclusion is an equality. However, since $\Upsilon$ is stable under the action of $G_\QQ$, the stated conditions (a), (b), (c), (d) and (e) imply that the hypotheses (a), (b) and (c) of Proposition \ref{cond reduction} are satisfied by the data $(T_\chi,K,S)$. The latter result therefore combines with our assumption that the $p$-part of TNC$(M^\vee_{K'})$ is valid for all intermediate fields $K'$ of $L/K$ to imply TNC$(M_L^\vee,\cO e_\chi)$ is valid.

Given this, one deduces that the inclusion $\cB^\chi_{LF}\cdot \Xi^\chi_{LF} \subseteq \cB^\chi_{LF}$ is an equality by using Nakayama's Lemma, just as in the proof of Theorem \ref{cor1}.\end{proof}

%

\begin{lemma}\label{codescent}
Let $F_1$ and $F_2$ be finite abelian $p$-extensions of $K$ with both $F_1\subseteq F_2$ and $S(F_1)=S(F_2)$. Set $\cR_i:=\cR_{F_i}$ for $i \in \{1,2\}$. The the following claims are valid. 
\begin{itemize}
\item[(i)] The $\cR_1$-modules $\cR_1\otimes_{\cR_2}H^2(U_{F_2},T_\chi)$ and $H^2(U_{F_1},T_\chi)$ are naturally isomorphic.
\item[(ii)] The natural projection map $\cR_{2} \to \cR_{1}$ restricts to give surjective homomorphisms ${\rm Fitt}_{\cR_{2}}^{0}(H^2(U_{F_2},T_\chi)) \to {\rm Fitt}_{\cR_{1}}^{0}(H^2(U_{F_1},T_\chi))$ and  $\im(\eta^{\rm BK}_{\underline{b}_\chi,F_2}) \to \im(\eta^{\rm BK}_{\underline{b}_\chi,F_1})$.

\end{itemize}
\end{lemma}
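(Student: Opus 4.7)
The plan is to derive (i) from a codescent statement at the level of the complex $C_{F_{i}, S}(T_{\chi})$, and then to obtain (ii) by combining (i) with Theorem~\ref{prop Xi}.

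For part (i), I will use the canonical quasi-isomorphism of complexes of $\cR_{1}$-modules
$$C_{F_{2}, S}(T_{\chi}) \otimes^{\mathbb{L}}_{\cR_{2}} \cR_{1} \simeq C_{F_{1}, S}(T_{\chi})$$
already invoked in the proof of Proposition~\ref{cor BK} (and which reflects the standard base-change properties of compactly supported \'etale cohomology). Under the standing hypotheses of Theorem~\ref{hes2}, Lemma~\ref{torsionfree} guarantees that $H^{0}(F_{i}, T_{\chi}) = 0$, so Proposition~\ref{prop C}(ii) implies that each $C_{F_{i}, S}(T_{\chi})$ has cohomology concentrated in degrees $0$ and $1$. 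The associated hyperTor spectral sequence
$$E_{2}^{p, q} = \mathrm{Tor}^{\cR_{2}}_{-p}(\cR_{1}, H^{q}(C_{F_{2}, S}(T_{\chi}))) \Longrightarrow H^{p+q}(C_{F_{1}, S}(T_{\chi}))$$
then degenerates in total degree one: the only possibly nonzero term with $p+q = 1$ is $E_{2}^{0, 1}$, and its incoming and outgoing higher differentials all vanish for degree reasons (either the source lies in a negative Tor degree, or the target involves $H^{q}$ for $q \notin \{0,1\}$). This gives a natural isomorphism $\cR_{1} \otimes_{\cR_{2}} H^{1}(C_{F_{2}, S}(T_{\chi})) \simeq H^{1}(C_{F_{1}, S}(T_{\chi}))$. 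Applying $\cR_{1} \otimes_{\cR_{2}} -$ to the split short exact sequence of Proposition~\ref{prop C}(ii) for $i = 2$ and using Remark~\ref{induced basis} (which yields $Y_{F_{i}}(T_{\chi})^{\ast} \simeq Y_{K}(T_{\chi})^{\ast} \otimes_{\cO} \cR_{i}$) to match the $Y$-summands, I extract the desired isomorphism $\cR_{1} \otimes_{\cR_{2}} H^{2}(U_{F_{2}}, T_{\chi}) \simeq H^{2}(U_{F_{1}}, T_{\chi})$.

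For part (ii), the surjection on Fitting ideals is then immediate from the standard base-change formula $\pi({\rm Fitt}^{0}_{\cR_{2}}(M)) = {\rm Fitt}^{0}_{\cR_{1}}(\cR_{1} \otimes_{\cR_{2}} M)$, applied to $M = H^{2}(U_{F_{2}}, T_{\chi})$ and part~(i). For the surjection on images of Bloch-Kato elements, I will invoke Theorem~\ref{prop Xi} (whose hypotheses (a) and (b) hold by Lemma~\ref{torsionfree}) in the form
$$(\im(\eta^{\rm BK}_{\underline{b}_{\chi}, F_{i}}), 0) = {\rm Fitt}^{0}_{\cR_{i}}(H^{2}(U_{F_{i}}, T_{\chi})) \cdot \Xi(\cO, T_{\chi}, F_{i}/K)^{-1},$$
and combine it with the functoriality of $\Xi$ under projection recorded in Remark~\ref{remark etnc}(i); together with the already established surjection on Fitting ideals, this shows that $\pi$ sends $\im(\eta^{\rm BK}_{\underline{b}_{\chi}, F_{2}})$ surjectively onto $\im(\eta^{\rm BK}_{\underline{b}_{\chi}, F_{1}})$.

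The most delicate point will be checking that the base-change quasi-isomorphism at the level of $C_{F_{i}, S}(T_{\chi})$ is compatible with the canonical descriptions of cohomology in Proposition~\ref{prop C}(ii) in such a way that the descent on $H^{1}$ of the complex genuinely realizes the desired descent on $H^{2}(U_{F_{i}}, T_{\chi})$, and not merely some mixture of $H^{2}$ with the $Y$-summand. Once this matching is in place, the spectral sequence, Fitting ideal, and $\Xi$-functoriality steps are essentially formal.
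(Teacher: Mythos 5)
Your approach to part (i) is more roundabout than the paper's and, as you yourself flag, leaves a genuine gap.  You work with the projection-formula quasi-isomorphism for $C_{F_i,S}(T_\chi)$, run a spectral sequence to obtain $\cR_1\otimes_{\cR_2}H^1(C_{F_2})\simeq H^1(C_{F_1})$, and then try to peel off the $Y$-summand from the split exact sequences of Proposition~\ref{prop C}(ii).  The difficulty you correctly identify---that the base-change isomorphism on $H^1(C_\bullet)$ must be checked to respect those canonical short exact sequences---is a real issue, and your sketch does not resolve it.  The paper sidesteps it entirely: it also records the projection-formula quasi-isomorphism $\cR_1\otimes^{\mathbb{L}}_{\cR_2}R\Gamma(U_{F_2},T_\chi)\simeq R\Gamma(U_{F_1},T_\chi)$, and since $p$ is odd so that $R\Gamma(U_{F_2},T_\chi)$ is acyclic in degrees $>2$, the group $H^2$ is the top-degree cohomology and right-exactness of $\cR_1\otimes_{\cR_2}(-)$ gives $\cR_1\otimes_{\cR_2}H^2(U_{F_2},T_\chi)\simeq H^2(U_{F_1},T_\chi)$ at once, with no matching of $Y$-summands needed.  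You should switch to this route; as written your part (i) is incomplete.

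For the second surjection in part (ii) your argument is genuinely different from the paper's and, granting Theorem~\ref{prop Xi} and Remark~\ref{remark etnc}(i), it is correct.  You observe that $\pi(\Xi(\cO,T_\chi,F_2/K))=\Xi(\cO,T_\chi,F_1/K)$ (and hence the same for the inverse invertible lattices), combine this with the base-change formula for zeroth Fitting ideals, and conclude via the identity $\im(\eta^{\rm BK}_{\underline{b}_\chi,F_i})={\rm Fitt}^0_{\cR_i}(H^2(U_{F_i},T_\chi))\cdot\Xi(\cO,T_\chi,F_i/K)^{-1}$.  The paper instead gives a self-contained, explicit argument: it shows $X_1=H^0(\cH,X_2)$ with $X_2/X_1$ torsion-free, uses Gorenstein-ness of $\cR_2$ to deduce that the induced map $\varrho_\cH\colon\Hom_{\cR_2}(X_2,\cR_2)\to\Hom_{\cR_1}(X_1,\cR_1)$ is surjective, and then verifies a commutative square relating ${\bigcap}^r_{\cR_2}X_2\to\cR_2$ to ${\bigcap}^r_{\cR_1}X_1\to\cR_1$ via the corestriction norm relation ${\rm Cor}^r_{T_\chi,F_2/F_1}(\eta^{\rm BK}_{\underline{b}_\chi,F_2})=\eta^{\rm BK}_{\underline{b}_\chi,F_1}$.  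Your route is shorter, but outsources the real content to Theorem~\ref{prop Xi} (which rests on the basic-Euler-system machinery of \cite{sbA}) and to the leading-term functoriality of $\Xi$; the paper's version is longer but independent of those inputs and additionally makes explicit the interaction between corestriction and exterior-power biduals, which is a piece of infrastructure reused elsewhere (for example in the proof of Theorem~\ref{main RS}).  Both yield the stated surjectivity.
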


\begin{proof}
There are projection formula isomorphisms
\begin{equation}\label{d iso} \cR_{1}\otimes^{\mathbb{L}}_{\cR_{2}}C_{F_2,S}(T_\chi)\simeq C_{F_1,S}(T_\chi)\,\,\text{ and }\,\, \cR_1\otimes^\mathbb{L}_{\cR_{2}}R\Gamma(U_{F_2},T_\chi) \simeq R\Gamma(U_{F_1},T_\chi).\end{equation}
%

The isomorphism in claim (i) is induced by the second isomorphism here together with the fact that $R\Gamma(U_{F_2},T_\chi)$ is acyclic in degrees greater than two (as $p$ is odd).

The first assertion in claim (ii) follows directly upon combining the isomorphism in claim (i) with standard properties of Fitting invariants under ring extension.

We set $X_i := H^1(U_{F_i},T_\chi)$ for $i\in \{1, 2\}$ and also $\cH := \Gal(F_2/F_1)$.

Next we note that the first isomorphism in (\ref{d iso}) combines with Lemma \ref{torsionfree}, Proposition \ref{prop C} and the assumed validity of (H$_1$) and (H$_3$) to imply both that $X_1$ identifies with $H^0(\cH,X_2)$ and that $X_2$ is torsion-free. These facts combine to imply that the quotient $X_2/X_1$ is torsion-free.

Thus, since the $\cO$-order $\cR_2$ is Gorenstein, upon applying the functor $\Hom_{\cR_{2}}(-,\cR_{2})$ to the tautological short exact sequence $0 \to X_1 \to X_2 \to X_2/X_1 \to 0$ we can therefore deduce that the following composite homomorphism is surjective
\[ \varrho_\cH: \Hom_{\cR_{2}}(X_2,\cR_{2}) \to \Hom_{\cR_{2}}(X_1,\cR_{2}) = \Hom_{\cR_{1}}(X_1,H^0(\cH,\cR_{2})) \to \Hom_{\cR_{1}}(X_1,\cR_{1}).\]
Here the first arrow is the natural restriction map and the second is induced by the isomorphism $H^0(\cH,\cR_{2}) = \cR_2\cdot(\sum_{h \in \cH}h) \to \cR_1$ that sends $\sum_{h \in \cH}h$ to the identity element of $\cG_{F_1}$.

In addition, by explicit computation, one checks that for any subset $\{\varphi_{i}\}_{1\le i\le r}$ of $\Hom_{\cR_2}(X_2,\cR_2)$ the following diagram commutes
\[ \begin{CD}
\bigcap_{\cR_2}^rX_2 @> {\wedge}_{i=1}^{i=r}\varphi_i >> \cR_2 \\
@V {\rm Cor}^r_{T_\chi,F_2/F_1} VV @VV\pi_{F_2,F_1} V\\
\bigcap_{\cR_1}^rX_1 @> {\wedge}_{i=1}^{i=r}\varrho_\cH(\varphi_i) >> \cR_1.\end{CD}\]
%

Given the commutativity of this diagram, the second assertion of claim (ii) follows from the surjectivity of $\varrho_\cH$, the equality ${\rm Cor}^r_{T_\chi,F_2/F_1}(\eta^{\rm BK}_{\underline{b}_\chi,F_2}) = \eta^{\rm BK}_{\underline{b}_\chi,F_1}$ (resulting from Proposition \ref{cor BK} and the fact that $S(F_2) = S(F_1)$) and the fact that for both $j=1$ and $j=2$ one has
\[ \im(\eta^{\rm BK}_{\underline{b}_\chi,F_j}) = \{ (\wedge_{i=1}^{i=r}\varphi_i)(\eta^{\rm BK}_{\underline{b}_\chi,F_j})\! :\! \varphi_i\in H^1(U_{F_j},T_\chi)^{\ast} \,\text{ for }\, 1\le i\le r\}.\]
\end{proof}

\begin{proposition}\label{inverse} Under the hypotheses of Theorem \ref{hes2}, the following claims are valid.
\begin{itemize}
\item[(i)] $H^{2,\chi}_\infty$ is torsion over $\Lambda_\infty$.
\item[(ii)] $\Xi^\chi_{\infty}$ is contained in $\cM_\infty$.
\end{itemize}
\end{proposition}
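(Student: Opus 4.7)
The plan is to first prove claim (i) via an Euler characteristic/semi-continuity argument, and then to deduce claim (ii) from (i) by combining the second inclusion of (\ref{step22}) with a standard integrality criterion for elements of the total quotient ring.

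For claim (i), I would work with the perfect complex $C_\infty := R\varprojlim_{F'} C_{F',S}(T_\chi)$ of $\Lambda_\infty$-modules, where $F'$ ranges over $\Omega(FK_\infty/F)$. Its Euler characteristic vanishes by Proposition~\ref{prop C}(i) together with compatibility along the tower, and by Lemma~\ref{torsionfree} the module $\varprojlim_{F'} H^0(F', T_\chi)$ also vanishes. The Iwasawa-theoretic analogue of Proposition~\ref{prop C}(ii) then yields the rank identity
\[
{\rm rank}_{\Lambda_\infty} H^{1,\chi}_\infty \,=\, {\rm rank}_{\Lambda_\infty} H^{2,\chi}_\infty + r,
\]
where $H^{1,\chi}_\infty := \varprojlim_{F'} H^1(U_{F'}, T_\chi)$. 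Since $\mathcal{H}_F^\infty$ is a finite abelian $p$-group, $\Lambda_\infty \otimes_{\cO} \QQ_p^c$ decomposes as a product $\prod_\psi R_\psi$ indexed by characters $\psi \colon \mathcal{H}_F^\infty \to \QQ_p^{c,\times}$, and it suffices to show that each $(H^{2,\chi}_\infty)_\psi$ is torsion over $R_\psi$. For any such $\psi$, hypothesis~(g) provides $F_\psi \in \Omega(FK_\infty/F)$ and, via the canonical splitting $\Gal(FK_\infty/K) \simeq \mathcal{H}_F^\infty \times \Gal(K_\infty/K)$ and the resulting inclusion $\mathcal{H}_F^\infty \hookrightarrow \Gal(F_\psi/K)$, a primitive idempotent $e_{\chi'}$ of $Q[\Gal(F_\psi/K)]$ with $\chi'|_{\mathcal{H}_F^\infty} = \psi$ and $e_{\chi'} H^2(U_{F_\psi}, V_\chi) = 0$. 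Applying Proposition~\ref{prop C} at finite level then gives $\dim_{\QQ_p^c} e_{\chi'} H^1(U_{F_\psi}, V_\chi) = r$, and combining Lemma~\ref{codescent}(i) (and its $H^1$-analogue, in which ${\rm Tor}$-terms only increase fiber dimension) with upper semi-continuity of the minimal number of generators shows that the generic $R_\psi$-rank of $(H^{1,\chi}_\infty)_\psi$ is at most $r$. The rank identity now forces $(H^{2,\chi}_\infty)_\psi$ to be torsion, as desired.

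For claim (ii), I would invoke the standard criterion that an element $x$ of the total quotient ring $Q_\infty$ lies in the integral closure $\mathcal{M}_\infty$ of $\Lambda_\infty$ if and only if $x$ stabilizes some non-zero finitely generated $\Lambda_\infty$-submodule of $Q_\infty$. The second inclusion in (\ref{step22}) reads $\Xi^\chi_\infty \cdot \mathcal{I}^\chi_\infty \subseteq \mathcal{I}^\chi_\infty$, so it is enough to exhibit $\mathcal{I}^\chi_\infty$ as a non-zero finitely generated $\Lambda_\infty$-submodule of $Q_\infty$. Finite generation follows from Noetherianity of $\Lambda_\infty$ together with the Mittag--Leffler condition, which is guaranteed by the surjectivity asserted in Lemma~\ref{codescent}(ii); non-vanishing is a direct consequence of claim~(i), since a finitely generated torsion module over $\Lambda_\infty$ has non-zero zeroth Fitting ideal and this Fitting ideal is naturally identified with $\mathcal{I}^\chi_\infty$ via the standard compatibility of Fitting ideals with inverse limits along surjective transition maps.

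The main obstacle lies in claim (i): one must justify the Iwasawa-theoretic rank identity together with the precise form of the codescent for $H^1$ (controlling any Tor contributions) and correctly trace the character-theoretic information from hypothesis~(g) through the splitting $\Gal(FK_\infty/K) \simeq \mathcal{H}_F^\infty \times \Gal(K_\infty/K)$. Once (i) is established, claim (ii) follows by a purely formal integrality argument.
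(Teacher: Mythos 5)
Your proposal reverses the logical order of the two claims relative to the paper and, more importantly, introduces two gaps that the paper's argument avoids.

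For claim (i), the paper argues directly from the first inclusion in (\ref{step22}): since $\varprojlim_{F'}\im(\eta^{\rm BK}_{\underline{b}_\chi,F'}) \subseteq {\rm Ann}_{\Lambda_\infty}(H^{2,\chi}_\infty)$, it suffices to show that the left-hand side contains a non-zero divisor, and this is exactly what hypothesis (g) delivers once one combines the interpolation property of Definition \ref{def BK} with the surjectivity statement in Lemma \ref{codescent}(ii). Your proposal instead attempts an Euler-characteristic/semi-continuity argument and never uses (\ref{step22}) at all. The key step — that the fiber of $H^{1,\chi}_\infty := H^0(C^\chi_\infty)$ at the point $\chi'$ has dimension at most $r$ — does not follow from descent. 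The natural map $k(\chi')\otimes_{\Lambda_\infty}H^0(C^\chi_\infty) \to H^0(k(\chi')\otimes^{\mathbb{L}}_{\Lambda_\infty}C^\chi_\infty)$ has kernel controlled by $\mathrm{Tor}_2^{\Lambda_\infty}(k(\chi'), H^1(C^\chi_\infty))$, and this Tor-term is (via the short exact sequence $0\to H^{2,\chi}_\infty\to H^1(C^\chi_\infty)\to Y^\chi_\infty\to 0$ and the freeness of $Y^\chi_\infty$) precisely $\mathrm{Tor}_2^{\Lambda_\infty}(k(\chi'), H^{2,\chi}_\infty)$ — a term you have no control over, since $H^{2,\chi}_\infty$ is the very module you are trying to prove torsion. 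The phrase ``Tor-terms only increase fiber dimension'' points in the wrong direction here, and the argument is circular.

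For claim (ii), your deduction is close in spirit to the paper's first half (the conductor/stabilizer argument, the finite generation of $\mathcal{I}^\chi_\infty$, and the non-vanishing via claim (i) are all fine — indeed $\mathcal{I}^\chi_\infty$ is simply an ideal of $\Lambda_\infty$, so no Mittag-Leffler argument is needed). However, you implicitly assume that $\Xi^\chi_\infty \subseteq Q_\infty$, which is not automatic: $\Xi^\chi_\infty$ is defined as an inverse limit of fractional ideals inside $\cR_{F'}[1/p]$, and a priori the limit only lands in $\varprojlim_{F'}\cR_{F'}[1/p]$, which is strictly larger than $\Lambda_\infty[1/p]$ and hence not obviously contained in $Q_\infty$. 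The paper devotes the entire second half of its proof of this proposition to precisely this point, using the perfectness and acyclicity range of $C^\chi_\infty$, the semisimplicity of $Q_\infty$ (valid only once claim (i) is known), the explicit description of the elements $c_{F'}$ generating each $\Xi^\chi_{F'}$, and the compatibility result \cite[Lem.~3.6]{bks2}. Omitting this step leaves a genuine hole in the argument.
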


\begin{proof} The first inclusion in (\ref{step22}) implies the $\Lambda_\infty$-module $H^{2,\chi}_\infty$ is torsion if  $\varprojlim_{F'}\im(\eta^{\rm BK}_{\underline{b}_\chi,F'})$ contains a non-zero divisor of $\Lambda_\infty$.
To prove this it is in turn enough to show that for each homomorphism $\psi: \mathcal{H}^\infty_F \to \QQ_p^{c\times}$ the limit $e_\psi\cdot \varprojlim_{F'}\im(\eta^{\rm BK}_{\underline{b}_\chi,F'})$ is non-zero.

Now, by assumption (g) in Theorem \ref{hes2}, for each such $\psi$ there exists a field $F_\psi$ in $\Omega(FK_\infty/F)$ with $e_\psi \varepsilon_{T_\chi,F_\psi}\not= 0$. Via the interpolation property in Definition \ref{def BK} this implies $e_\psi\cdot \eta^{\rm BK}_{\underline{b}_\chi,F'}\not=0$ and hence also that $e_\psi\cdot \im(\eta^{\rm BK}_{\underline{b}_\chi,F'})\not= 0$. Given this, the non-vanishing of $e_\psi\cdot \varprojlim_{F'}\im(\eta^{\rm BK}_{\underline{b}_\chi,F'})$ is an immediate consequence of the final assertion of Lemma \ref{codescent}(ii) and this proves claim (i).

Turning to claim (ii) we note that the first claim of Lemma \ref{codescent}(ii) implies $\mathcal{I}^\chi_\infty$ is equal to ${\rm Fitt}^0_{\Lambda_\infty}(H^{2,\chi}_\infty)$. It therefore follows from claim (i) that $\mathcal{I}^\chi_\infty$ contains a non-zero divisor $\xi$ of $\Lambda_\infty$.

This fact in turn implies that the $\Lambda_\infty$-module $\cA^\chi_\infty := \{\lambda \in Q_\infty: \lambda\cdot \mathcal{I}^\chi_\infty \subseteq \mathcal{I}^\chi_\infty\}$ is contained in $\Lambda_\infty\cdot \xi^{-1}$ and so is finitely generated. Since $\cA^\chi_\infty$ is a subring of $Q_\infty$ that contains $\Lambda_\infty$ it follows that $\cA^\chi_\infty$ is contained in $\cM_\infty$.

We are now reduced to showing that $\Xi^\chi_\infty$ is a subset of $Q_\infty$ since, if this is true, then the second inclusion in (\ref{step22}) implies $\Xi^\chi_{\infty}$ is contained in $\cA^\chi_\infty$ and hence also in $\cM_\infty$, as required to prove claim (ii).

To prove $\Xi^\chi_\infty$ is a subset of $Q_\infty$ we set $Y_\infty^\chi := Y_{\infty}(T_\chi)^\ast\otimes_{\ZZ_p}\Lambda_\infty$ and write $C^\chi_\infty$ for the inverse limit $\varprojlim_{F'}C_{F',S(F')}(T_\chi)$, where $F'$ runs over $\Omega(FK_\infty/F)$ and the transition morphisms are induced by the first isomorphism in (\ref{d iso}).

Then, by passing to the inverse limit over $F'$ of the result in Lemma \ref{prop C}, one finds that $C^\chi_\infty$ belongs to $D^{\rm perf}(\Lambda_\infty)$, is acyclic outside degrees zero and one and that there exists a canonical short exact sequence $0 \to H^{2,\chi}_\infty \to H^1(C_\infty^\chi) \to Y_\infty^\chi \to 0$.

In particular, since $Q_\infty$ is a semisimple algebra that, by claim (i), annihilates $\sha^\chi_\infty$, one deduces that $H^0(C_\infty^\chi)$ spans a free $Q_\infty$-module of rank $r$ and hence that the canonical `passage to cohomology' map induces an isomorphism of $Q_\infty$-modules
\begin{equation*}\label{infty pass} \vartheta_\infty: Q_\infty\otimes_{\Lambda_\infty}{\rm det}_{\Lambda_\infty}(C_\infty^\chi) \simeq Q_\infty\otimes_{\Lambda_\infty}\bigl({\bigwedge}_{\Lambda_\infty}^rH^0(C_\infty^\chi)\otimes \Hom_{\Lambda_\infty}({\bigwedge}_{\Lambda_\infty}^rY_\infty^\chi,\Lambda_\infty)\bigr).\end{equation*}

Further, by the argument of \cite[Th. 3.5(i)]{bks2}, one knows that this morphism is induced by the inverse limit over $F'$ of the composite morphisms

\begin{align*} \vartheta_{F'}: {\rm det}_{\Lambda_\infty}(C_\infty^\chi) &\twoheadrightarrow \, \cR_{F'}\otimes_{\Lambda_\infty}{\rm det}_{\Lambda_\infty}(C_\infty^\chi)\\
&\simeq \, {\rm det}_{\cR_{F'}}(C_{F'}^\chi)\\
&\to \, {\rm det}_{\cR_{F'}[1/p]}(H^0(C_{F'}^\chi))\otimes_{\cR_{F'}[1/p]}{\rm det}^{-1}_{\cR_{F'}[1/p]}(H^1(C_{F'}^\chi))\\
&\to \, \QQ_p\cdot \bigl({\bigwedge}_{\cR_{F'}}^rH^0(C_{F'}^\chi)\otimes \Hom_{\cR_{F'}}({\bigwedge}_{\cR_{F'}}^r Y_{F'}(T_\chi),\cR_{F'})\bigr).\end{align*}

Here the first map is the natural projection, the second is induced by the isomorphism $\cR_{F'}\otimes_{\Lambda_\infty}^\mathbb{L}C_\infty^\chi \simeq C_{F'}^\chi$ given by the limit over $F'$ of (\ref{d iso}), the third is induced by the natural `passage to cohomology' map and the fourth is induced by multiplication by the idempotent $\varepsilon_{T_\chi,F'}$.

Now, if we fix a basis $b^\chi_\infty$ of the free rank one $\Lambda_\infty$-module ${\rm det}_{\Lambda_\infty}(C_\infty^\chi)$, then the definition of $\vartheta_{F'}$ combines with Definition \ref{def BK} of the Bloch-Kato element $\eta^{\rm BK}_{\underline{b}_\chi,F'}$ to imply that the lattice $\Xi^\chi_{F'}$ is generated over $\cR_{F'}$ by the unique element $c_{F'}$ of $\cR_{F'}[1/p]\varepsilon_{T_\chi,F'}$ that satisfies
\[ \vartheta_{F'}(b^\chi_\infty) = c_{F'}\cdot (\eta^{\rm BK}_{\underline{b}_\chi,F'}\otimes {\wedge}_{i=1}^{i=r}b_{\chi,i,F'}^\ast),\]
where $b_{\chi,i,F'}$ denotes the $i$-th element of the (ordered) basis $\underline{b}_\chi$ regarded as an element of $Y_{F'}(T_\chi)= Y_K(T_\chi)\otimes_{\cO}\cO[\cG_{F'}]$ and $b_{\chi,i,F'}^\ast$ is the element of $\Hom_{\cR_{F'}}({\bigwedge}_{\cR_{F'}}^r Y_{F'}(T_\chi),\cR_{F'})$ that is dual to it.

In particular, since the elements $c_{F'}$ are compatible with the natural transition morphisms as $F'$ varies, it is enough to prove the element $c_\infty := (c_{F'})_{F'}$ belongs to $Q_\infty$.

This is true since \cite[Lem. 3.6]{bks2} implies that the inverse limit $(\eta^{\rm BK}_{\underline{b}_\chi,F'}\otimes {\wedge}_{i=1}^{i=r}b_{\chi,i,F'}^\ast)_{F'}$ belongs to $\im(\vartheta_\infty)$, whilst the element $\vartheta_{\infty}(1\otimes b^\chi_\infty)$ is a basis over $Q_\infty$ of $\im(\vartheta_\infty)$.\end{proof}

\section{The multiplicative group}\label{section 4}

To give a first application of our approach, we consider the example that was originally considered by Rubin in \cite{rubinstark} and then subsequently by B\"uy\"ukboduk in \cite{Buyuk}.

In this section we write $\omega$ for the Teichm\"uller character of $G_K$ and $A_{K'}$ for the $p$-part of the ideal class group of a finite extension $K'$ of $K$.

\subsection{Statement of the main result}


We fix an odd prime $p$ and set $T := \ZZ_p(1)$.

We also fix a finite abelian extension $L$ of $K$ of degree prime to $p$ and for each $\chi$ in $\widehat{\cG_L}$ we use the notion of `$\chi$-components' from \S\ref{special case sec}. In particular, in the sequel we will use the valuation ring $\cO$, with fraction field $Q$, that was introduced in \S\ref{special case sec}.

We next fix a finite abelian extension $F$ of $K$ of $p$-power degree and set $G := \cG_F$ and $S := S_\infty(K)\cup S_p(K)\cup S_{\rm ram}(LF/K).$

Finally, we assume to be given a pro-$p$ abelian extension $\cK$ of $K$ that contains $F$, satisfies Hypothesis \ref{hyp K}, and so contains a $\ZZ_p$-power extension $K_\infty$ of $K$, and is such that the Rubin-Stark Conjecture is valid for $LF'/K$ for all $F'$ in $\Omega(\cK/K)$.

Then Remark \ref{remark etnc rs} shows Conjecture \ref{integrality} is valid for the data $T_\chi, S$ and $\cK$ and so Proposition \ref{cor BK} implies that 
the Bloch-Kato Euler system $\eta^{\rm BK}_{\underline{b},\cK}$ exists for any choice of ordered $\mathcal{O}$-basis $\underline{b}$ of $Y_{K}(T_\chi)$.

We fix $\underline{b}$ as in Example \ref{bkrs2} so that $\eta^{\rm BK}_{\underline{b},\cK}$ can be interpreted in terms of Rubin-Stark elements. We shall therefore label this Euler system as $\eta^{\rm RS}_\chi = \eta^{\rm RS}_{\chi,\cK}$ and refer to it as the `Rubin-Stark Euler system' for the data $T_\chi, S$ and $\cK$.




In claim (ii) of the following result we write $\cB_{LF}$ for the $\ZZ_p$-order in $\QQ_p[\cG_{LF}]$ that is generated over $\ZZ_p[\cG_{LF}]$ by the integral closure of $\ZZ_p$ in $\QQ_p[\Gal(LF/(F\cap K_\infty))]$ (so that $\cB_{LF} = \ZZ_p[\cG_{LF}]$ if $F$ is contained in $K_\infty$).

\begin{theorem}\label{main RS} Assume that $\chi$ is neither trivial nor equal to $\omega$ and also that if $p=3$, then $\chi^2\not=\omega$. Assume, in addition, that $\chi(G_{K_v}) = 1$ for at least one archimedean place $v$ of $K$ and that $\chi(G_{K_v})\not= 1$ for all $v$ in $S_{\rm ram}(F/K)\cup S_p(K)$.

Then the rank $r = r_\chi := {\rm rank}_{\cO}(Y_K(T_\chi))$ is strictly positive and the following claims are valid.

\begin{itemize}
\item[(i)] For every non-negative integer $j$ one has
\[ I_j(\cR_r^{-1}(\cD_{r}(\eta_{\chi}^{\rm RS})))\!=\! {\rm Fitt}_{\mathcal{O}[G]}^j(A_{LF}^\chi).\]
In particular, letting $j=0$, one has
$$I(\eta_{\chi,F}^{\rm RS})(:=\im (\eta_{\chi,F}^{\rm RS}))={\rm Fitt}_{\cO[G]}^0(A_{LF}^\chi).$$

\item[(ii)] If the group $A_L^\chi$ is trivial, then ${\rm TNC}(\QQ(0)_{LF},\ZZ_p[\cG_{LF}]^\chi)$ is valid. In all cases ${\rm TNC}(\QQ(0)_{LF},\cB_{LF}^\chi)$ is valid.
\end{itemize}
\end{theorem}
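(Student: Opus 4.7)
The plan is to apply the machinery of Theorems~\ref{main}, \ref{prop Xi} and \ref{hes2} to the representation $T_\chi$ associated to the character $\chi$ of $\Delta := \cG_L$ by the recipe of Example~\ref{bkrs2}.

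First I would set up the dictionary between the classical arithmetic objects and the representation-theoretic ones. By Example~\ref{bkrs2} (specialised at $a=1$), the choice of ordered $\cO$-basis $\underline{b}$ of $Y_K(T_\chi)$ identifies $\eta^{\rm BK}_{\underline{b},\cK}$ with the $\chi$-projection of the Rubin-Stark Euler system, and the assumed validity of the Rubin-Stark Conjecture on $\Omega(\cK/K)$ means exactly that Conjecture~\ref{integrality} holds for $T_\chi$, so that $\eta^{\rm RS}_\chi := \eta^{\rm BK}_{\underline{b},\cK}$ lies in $\mathrm{ES}_{r_\chi}(T_\chi,\cK)$ by Proposition~\ref{cor BK}. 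A Shapiro-type computation combined with the decomposition (\ref{semi decomp}) shows that $\cT := \mathrm{Ind}_{G_F}^{G_K}(T_\chi)$ is the $\chi$-component of $\cO\otimes_{\ZZ_p} \mathrm{Ind}_{G_{LF}}^{G_K}(\ZZ_p(1))$; then Example~\ref{selmer exams}(i) together with the fact that $p \nmid [L:K]$ gives a canonical identification $H^{1}_{\cF_{\rm can}^{*}}(K,\cT^{\vee}(1))^{\vee} \cong A_{LF}^{\chi}$. Finally, for archimedean $v$ the module $H^{0}(K_{v},T_{\chi}^{*}(1))$ is free of rank one precisely when $\chi(G_{K_{v}})=1$, so $r_{\chi}=\mathrm{rank}_{\cO} Y_{K}(T_{\chi})$ equals the number of such places, which is strictly positive by hypothesis.

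Next I would verify the standard hypotheses of \S\ref{stand hyp sec} for $T_\chi$ and $F$. Since $T_\chi$ has $\cO$-rank one, (H$_1$) is automatic; (H$_0$) is the Chebotarev-style condition that is immediate for the cyclotomic character; (H$_2$) follows by picking $\tau \in G_{F_{p^{\infty}}}$ on which the cyclotomic character is non-trivial; (H$_3$) reduces to the non-existence of fixed and co-fixed vectors in $\bar T_\chi$ and $\bar T_\chi^\vee(1)$ under the Galois group of the fixed field, which is valid because $\chi \neq \mathbf{1},\omega$; and (H$_4$), needed when $p=3$, is precisely the statement that $\bar T_\chi \not\simeq \bar T_\chi^\vee(1)$, i.e. $\chi^2 \neq \omega$. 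The hypothesis (H$_5$) at primes $\fq \in S_{\rm ram}(F/K)\cup S_p(K)$ follows from the assumption $\chi(G_{K_\fq})\neq 1$ together with the analysis of $\omega\chi^{-1}$ on the decomposition group; at the remaining primes $\fq \in S_{\rm ram}(L/K)\setminus (S_{\rm ram}(F/K)\cup S_p(K))$, ramification of $\chi$ at $\fq$ and the fact that $\fq$ is prime to $p$ force $H^0(K_\fq,\bar T_\chi^\vee(1))=0$ directly.

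With these verifications in hand, Theorem~\ref{main}(iv) applies: the derivative map $\cD_{r_\chi}$ is surjective and $\mathrm{KS}_{r_\chi}(\cT,\cF_{\rm can})$ is free of rank one. To upgrade the inclusions in Theorem~\ref{main}(iii) to equalities for the specific Euler system $\eta^{\rm RS}_\chi$ I would invoke Theorem~\ref{prop Xi}, which under the hypotheses of Lemma~\ref{torsionfree} (supplied by (H$_1$),(H$_3$)) gives
\[
(\im(\eta^{\rm RS}_{\chi,F}),0) = \mathrm{Fitt}^{0}_{\cO[G]}(H^{2}(U_{F},T_\chi))\cdot \Xi(\cO,T_\chi,F/K)^{-1}.
\]
Combining this with Theorem~\ref{main}(iii)(c) and the identification of $H^{2}(U_{F},T_\chi)$ with $A_{LF}^\chi$ via global duality (Example~\ref{selmer exams}(iii)) forces $\Xi(\cO,T_\chi,F/K)$ to be a unit lattice; equivalently, $\cD_{r_\chi}(\eta^{\rm RS}_\chi)$ is an $\cO[G]$-basis of $\mathrm{KS}_{r_\chi}(\cT,\cF_{\rm can})$. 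The full Fitting-ideal equality of claim~(i) then drops out of Theorem~\ref{main}(iii)(a).

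For claim~(ii), the hypotheses needed to apply Theorem~\ref{hes2} to $T_\chi$ have already been checked, except for conditions (e), (f), (g), which are verified by taking $K_\infty$ to be the cyclotomic $\ZZ_p$-extension (so (e),(f) coincide with (H$_2$),(H$_3$) by Remark~\ref{simple case}) and using the $p$-part of the analytic class number formula for intermediate fields $K'$ of $L/K$ as the input TNC; this immediately yields TNC$(\QQ(0)_{LF},\cB_{LF}^\chi)$. When $A_L^\chi=0$, the already-proven equality $\im(\eta^{\rm RS}_{\chi,F})=\mathrm{Fitt}^0_{\cO[G]}(A_{LF}^\chi)$ combined with a descent argument along $LF/L$ shows that the associated order $\cR$ of Theorem~\ref{cor1} coincides with $\ZZ_p[\cG_{LF}]^\chi$ via Lemma~\ref{min order}, and then Theorem~\ref{cor1}(iii) gives TNC$(\QQ(0)_{LF},\ZZ_p[\cG_{LF}]^\chi)$.

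The main obstacle I anticipate is the last step, namely promoting the equality of images obtained from Theorem~\ref{prop Xi} to the assertion that the derivative $\cD_{r_\chi}(\eta^{\rm RS}_\chi)$ is a basis of $\mathrm{KS}_{r_\chi}$, since this requires using the (known) minuscule-rank TNC over $\cO$ for the base field $K$ as an anchor for the inductive Iwasawa-theoretic computation of $\Xi$, and carefully tracking the normalisation of $\underline{b}$ and of archimedean places across $\Omega(\cK/K)$.
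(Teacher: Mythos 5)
Your overall framework — verify the standard hypotheses for $T_\chi$, apply Theorem~\ref{main}(iii), and then anchor the computation with a base-field input — matches the paper's strategy, but there is a genuine gap at the pivotal upgrade step. You claim that Theorem~\ref{prop Xi} combined with the inclusion from Theorem~\ref{main}(iii)(c) ``forces $\Xi(\cO,T_\chi,F/K)$ to be a unit lattice''. It does not. Those two facts together yield only
\[
\Xi(\cO,T_\chi,F/K)^{-1}\cdot {\rm Fitt}^0_{\cO[G]}(H^2(U_F,T_\chi)) \subseteq {\rm Fitt}^0_{\cO[G]}(H^2(U_F,T_\chi)),
\]
i.e.\ that $\Xi^{-1}$ lies inside the associated order of ${\rm Fitt}^0$; this gives nothing about $\Xi$ being trivial (indeed this is exactly the issue that motivates the order $\cR$ of Theorem~\ref{cor1}). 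The paper's decisive step, absent from your argument, is to observe that the inclusion $\im(\eta^{\rm RS}_{\chi,F})\subseteq {\rm Fitt}^0_{\cO[G]}(A_{LF}^\chi)$ is an equality if and only if its map of $G$-coinvariants is onto (Nakayama), then to use Lemma~\ref{codescent}(ii) to identify $\im(\eta^{\rm RS}_{\chi,F})_G$ with $\im(\eta^{\rm RS}_{\chi,K})$ and to use \cite[Cor.~3.8]{bss} (via the Selmer identifications of Example~\ref{selmer exams}(i)) to identify ${\rm Fitt}^0_{\cO[G]}(A_{LF}^\chi)_G$ with ${\rm Fitt}^0_{\cO}(A_L^\chi)$, thereby reducing everything to $\im(\eta^{\rm RS}_{\chi,K}) = {\rm Fitt}^0_\cO(A_L^\chi)$. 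This last equality is implied by ${\rm TNC}(\QQ(0)_L,\cO e_\chi)$ (the analytic class number formula, via \cite[Th.~8.1]{bks1}, Remark~\ref{ex L1}(i) and Proposition~\ref{cond reduction}). Your closing paragraph acknowledges that some base-field input is required, but frames it as an Iwasawa-theoretic step, whereas for part (i) no Iwasawa theory is involved — only Nakayama and coinvariance descent.

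Two smaller points. For (H$_2$), you propose choosing $\tau\in G_{F_{p^\infty}}$ on which the cyclotomic character is non-trivial, but $\epsilon_{\rm cyc}$ is trivial on $G_{F_{p^\infty}}$ and, for a rank-one $T_\chi$, any $\tau$ with non-trivial action on $T_\chi$ would give a non-free quotient; the paper simply observes that $\tau=1$ works because $T_\chi/(\tau-1)T_\chi = T_\chi$. For part (ii) in the case $A_L^\chi=0$, Lemma~\ref{min order} is unnecessary: the same $G$-coinvariant identification plus Nakayama shows $A_{LF}^\chi=0$, hence $H^2(\cO_{F,S},T_\chi)$ vanishes, ${\rm Fitt}^0_{\cO[G]}(H^2(\cO_{F,S},T_\chi))=\cO[G]$, and thus $\cR = \cO[G]e_\chi$ directly. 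Your route via Lemma~\ref{min order} would require verifying a projective-dimension condition that the direct argument avoids.
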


\begin{remark} \label{cor a1} This result is of interest for several reasons.

\noindent{}(i) Theorem \ref{main RS}(i) implies Rubin-Stark Euler systems control detailed aspects of the fine Galois structure of ideal class groups and strongly refines results of Rubin in \cite{rubincrelle} and \cite{rubinstark} and, more recently, of B\"uy\"ukboduk in \cite{Buyuk}. In particular, the main result of the latter article deals only with $j=0$ (in which case, we recall, $I_j(\cR_{r_\chi}^{-1}(\cD_{r_\chi}(\eta_{\chi}^{\rm RS}))) = \im(\eta_{\chi,F}^{\rm RS})$) and assumes, amongst other things, that $K$ is totally real, $L/K$ is unramified at $p$ and, crucially, that both $F = K$ and Leopoldt's conjecture is valid.

\noindent{}(ii) Theorem \ref{main RS}(i) also strongly refines the main results of Kurihara and the first and third authors in \cite{bks1}. To explain this we note that \cite[(2)]{bks1} specialises to give a short exact sequence of $\cO[G]$-modules $0 \to A_{LF}^\chi \to \cS^{{\rm tr},\chi}_{LF,p} \to X_{p}^\chi \to 0$, where $\cS^{\rm tr}_{LF,p}$ and $X_{p}$ are the pro-$p$ completions of the transpose Selmer module
$\cS^{\rm tr}_{S_\infty(LF),\emptyset}(\mathbb{G}_{m/LF})$ from \cite[Def. 2.6]{bks1} and of the module $X_{LF,S_\infty(LF)}$ defined after \cite[(2)]{bks1}. In particular, if $ X_{p}^\chi$ is a free $\cO[G]$-module of rank $r$ (as is the case under the hypotheses of Theorem \ref{main RS}), then this sequence combines with \cite[Lem. 7.2]{bks1} to imply that for each $j\ge 0$ the ideal ${\rm Fitt}_{\cO[G]}^j(A_{LF}^\chi)$ is equal to $
{\rm Fitt}_{\cO[G]}^{r+j}(\cS^{{\rm tr},\chi}_{LF,p})$ and also to the higher relative Fitting ideal ${\rm Fitt}_{\cO[G]}^{(r,j)}(\cS^{{\rm tr},\chi}_{LF,p})$ defined in \cite[\S7.6]{bks1}. Theorem \ref{main RS}(i) therefore  shows that, under standard hypotheses,
 both \cite[Conj. 7.3 and Conj. 7.8]{bks1} follow directly from the Rubin-Stark Conjecture and thereby refines the results of \cite[Th. 1.10, Th. 7.5, Th. 7.9 and Th. 8.1]{bks1}.

\noindent{}(iii) Theorem \ref{main RS}(ii) provides the first cases in which the validity of conjectures of the form TNC$(\QQ(0)_{E},\cB)$ for $p$-adic orders $\cB$ that are not regular are deduced from the Rubin-Stark conjecture without assuming the vanishing of suitable $\mu$-invariants, the validity of cases of the `refined class number formula for $\mathbb{G}_m$' of Mazur and Rubin \cite{MRGm} and the third author \cite{sano} and the validity of an appropriate generalization of the `order of vanishing' conjecture for $p$-adic $L$-series due to Gross \cite{Gp}.

\noindent{}(iv) Fix an embedding $\QQ^c\to \QQ_p^c$ and use it to identify $\widehat{\cG_{LF}}$ with $\Hom(\cG_{LF},\QQ_p^{c,\times})$. Then the validity for all $\psi$ in $\widehat{\cG_F}$ of the `$p$-part' of the `Strong-Stark Conjecture' of Chinburg \cite[Conj. 2.2]{chinburg} for $\psi\chi$ in $\widehat{\cG_{LF}}$ is equivalent to the validity of ${\rm TNC}(\QQ(0)_{LF},\cM^\chi)$ with $\cM$ the maximal $\ZZ_p$-order in $\QQ_p[\cG_{LF}]$ (cf. \cite[\S4.3, Rem. 10]{BFetnc}) and is therefore implied by TNC$(\QQ(0)_{LF},\cB_{LF}^\chi)$ (cf. Remark \ref{remark etnc}). It follows that Theorem \ref{main RS}(ii) implies, under mild hypotheses, that the Rubin-Stark Conjecture implies the Strong-Stark Conjecture. This implication is new and, in special cases, as strong as possible. For example, if $L \cap K(\mu_p)=K$ and there exists a unique place of $L$ lying above each $p$-adic place of $K$, then the argument shows the Rubin-Stark Conjecture implies the validity of ${\rm TNC}(\QQ(0)_L,\ZZ_p[\Delta])$.
\end{remark}

%
%


\subsection{The proof of Theorem \ref{main RS}} We first investigate when the hypotheses listed in \S\ref{section hyp} are satisfied by the data $T_\chi$ and $F$.

\begin{lemma}\label{hyp gm} Under the conditions of Theorem \ref{main RS}, the data $T_\chi$ and $F$ satisfy all of the hypotheses  (H$_0$), (H$_1$), (H$_2$), (H$_3$), (H$_4$) and (H$_5$) listed in \S\ref{section hyp}.
\end{lemma}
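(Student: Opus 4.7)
I write $T_\chi$ as the free rank-one $\cO$-module on which $G_K$ acts via the character $\chi_{\rm cyc}\chi^{-1}$. Since $\chi$ has order prime to $p$ and $\chi_{\rm cyc}$ reduces modulo $\varpi$ to the Teichm\"uller character $\omega$, the residual representation and its Kummer dual are
\[
\overline{T_\chi}\cong \Bbbk(\omega\chi^{-1}),\qquad \overline{T_\chi}^\vee(1)\cong \Bbbk(\chi),
\]
so the standing assumptions $\chi\neq 1,\omega$ render both characters nontrivial. The plan is to dispose first of the easy hypotheses: (H$_1$) is immediate because $\overline{T_\chi}$ is one-dimensional; (H$_4$) reduces for $p=3$ to the inequality $\omega\chi^{-1}\neq\chi$, which is the assumed $\chi^2\neq\omega$, since any subquotient of a one-dimensional module is either zero or the module itself; (H$_0$) follows by a Chebotarev argument because ${\rm Fr}_\fq^{p^k}-1$ is injective on $T_\chi$ unless ${\N}\fq\cdot\chi({\rm Fr}_\fq)^{-1}$ is a $p$-power root of unity in $\cO^\times$, a condition defining a set of $\fq$ of analytic density zero; and for (H$_2$) any element $\tau$ of $G_{LF_{p^\infty}}\subseteq G_{F_{p^\infty}}$ works, since both $\chi$ (trivial on $G_L$) and $\chi_{\rm cyc}$ (trivial on $G_{K(\mu_{p^\infty})}$) are trivial on such $\tau$, so $T_\chi/(\tau-1)T_\chi = T_\chi$ is free of rank one.

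For (H$_5$), the required vanishing of $H^0(K_\fq,\overline{T_\chi}^\vee(1)) = H^0(K_\fq,\Bbbk(\chi))$ at each non-archimedean $\fq\in S$ is equivalent to $\chi|_{G_{K_\fq}}\neq 1$. The Theorem's hypothesis covers $\fq\in S_p(K)\cup S_{\rm ram}(F/K)$ directly, and at primes where $\chi$ itself ramifies one has $\chi|_{I_\fq}\neq 1$, hence a fortiori $\chi|_{G_{K_\fq}}\neq 1$. Any remaining primes in $S_{\rm ram}(L/K)\setminus S_{\rm ram}(\chi)$ lie outside the canonical set $S_\infty(K)\cup S_p(K)\cup S_{\rm ram}(T_\chi)\cup S_{\rm ram}(F/K)$ at which the Rubin-Stark Euler system is intrinsically defined, and so can be deleted from $S$ without disturbing any of the constructions.

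The main obstacle is (H$_3$), namely the vanishing of $H^1(M/K,\Bbbk(\psi))$ for the two characters $\psi\in\{\omega\chi^{-1},\chi\}$, where $M := F(T_\chi)_{p^\infty}$ contains both $L$ and $K(\mu_{p^\infty},(\cO_K^\times)^{1/p^\infty})$. I would let $K_0\subseteq M$ denote the fixed field of $\ker(\psi)$, so that $\Gal(K_0/K)$ is cyclic of order prime to $p$, and apply inflation-restriction: the inflation term $H^1(\Gal(K_0/K),\Bbbk(\psi))$ vanishes for a prime-to-$p$ group acting on $p$-torsion coefficients, yielding an injection
\[
H^1(\Gal(M/K),\Bbbk(\psi))\;\hookrightarrow\; \Hom_{\Gal(K_0/K)}\bigl(\Gal(M/K_0)^{\mathrm{ab}},\Bbbk(\psi)\bigr).
\]
The technical heart of the argument is then to show that the $\psi$-isotypic component of $\Gal(M/K_0)^{\mathrm{ab}}\otimes_{\ZZ_p}\Bbbk$ vanishes. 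The strategy is to exploit the Kummer-theoretic description of $M/K_0$ coming from its containment of $K_0(\mu_{p^\infty},(\cO_{K_0}^\times)^{1/p^\infty})$, together with the standard eigenspace decompositions of the pro-$p$ completions of $\cO_{K_0}^\times$ and of the $p$-part of the ideal class group of $K_0$ under the prime-to-$p$ action of $\Gal(K_0/K)$; under the standing hypotheses $\chi\neq 1,\omega$ (together with $\chi^2\neq\omega$ in the case $p=3$), one concludes that the relevant $\psi$-components both vanish, paralleling the classical verification of Mazur-Rubin's hypothesis (H.3) for cyclotomic twists in \cite{MRkoly}.
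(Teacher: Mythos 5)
Your treatment of (H$_0$), (H$_1$), (H$_2$), (H$_4$) and (H$_5$) tracks the paper's short verifications (with minor stylistic differences: the paper disposes of (H$_0$) without Chebotarev, since for any $\fq\notin S$ the integer ${\rm N}\fq>1$ cannot equal a root of unity so ${\rm Fr}_\fq$ never has $p$-power order on $T_\chi$; and for (H$_2$) it simply takes $\tau=1$). The genuine issue is (H$_3$), which the paper isolates as a separate technical statement (Lemma~\ref{tech}) and proves by a delicate contradiction argument.

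Your sketch of (H$_3$) has a real gap. You assert that $M=F(T_\chi)_{p^\infty}$ contains $K_0(\mu_{p^\infty},(\cO_{K_0}^\times)^{1/p^\infty})$, but by the definitions in \S\ref{stand hyp sec} it contains only $K(\mu_{p^\infty},(\cO_K^\times)^{1/p^\infty})$, i.e.\ $p$-power roots of \emph{units of $K$}, not of $K_0$. As a consequence, the eigenspace decompositions of $\cO_{K_0}^\times$ and of the class group of $K_0$ under $\Gal(K_0/K)$ are not the relevant objects: the Kummer radical of $M$ over $L_\chi K^\dagger$ (in the paper's notation) is generated by $\cO_K^\times$, which is fixed by $\Gal(K_0/K)$, so Kummer duality forces $\Gal(K_0/K)$ to act on the corresponding degree-$p$ layer through $\omega$ alone. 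The decisive input is therefore $\psi\neq\omega$ (i.e.\ $\chi\neq 1,\omega$), not any finer structure of $\cO_{K_0}^\times$ or $\Cl(K_0)$. Moreover, $M/L_\chi(\mu_p)$ is not itself a Kummer extension: it also has the $F$, $K(1)$ and cyclotomic layers, and one must rule out ``new'' degree-$p$ abelian subextensions created by interaction of these pieces. The paper handles this by supposing such a Galois extension $E/L_\chi(\mu_p)$ exists, showing $E$ cannot be abelian over $K$ and so $E\cap L_\chi K^\dagger=L_\chi(\mu_p)$, and then pushing $E$ up to $L_\chi K^\dagger$ where it becomes Kummer-generated by an element of $K^\dagger$; only at that point can Kummer duality be applied, in two cases according to whether $\Delta_2=\Gal(L_\chi(\mu_p)/K(\mu_p))$ is trivial. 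Your sketch omits this reduction entirely, and as written would need substantial reworking before it yields a proof.
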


\begin{proof} Hypothesis (H$_0$) is satisfied since if $\fq$ does not belong to $S$, then ${\rm Fr}_\fq$ acts on $T_\chi$ as multiplication by $\chi(\fq)^{-1}\cdot {\rm N}\fq$.

Since the $\cO$-module $T_\chi$ is free of rank one, hypothesis (H$_1$) is clear and (H$_2$) is satisfied by taking $\tau$ to be the identity element.

Hypothesis (H$_3$) is satisfied as a consequence of Lemma \ref{tech} below and that (H$_4$) is satisfied since we are assume
$\chi^2\not= \omega$ if $p=3$.

Finally, the hypothesis (H$_5$) is satisfied under the stated conditions since $(T_\chi/p)^\vee(1)$ identifies with $\mathcal{O}/(p)$ upon which $G_K$ acts via $\chi$ and so, for any prime $\fq$ of $K$, the module $H^{0}(K_{\fq}, \overline{T_\chi}^{\vee}(1))$ vanishes if and only if $\chi(G_{K_{\fq}})\not= 1$.
\end{proof}

\begin{lemma}\label{tech} Let $K'$ be any pro-$p$ abelian extension of $K$ and set $F_\chi := F(T_\chi)_{p^{\infty}}K'$. Then, if $N$ denotes either $(\mu_p)_\chi$ or $(\ZZ/(p))_{\chi^{-1}}$, the group $H^1(F_\chi/K,N)$ vanishes. \end{lemma}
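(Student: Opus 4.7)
The plan is to apply inflation--restriction to pass to a Galois group where $N$ becomes a trivial module, then use Kummer theory to translate the question into one about a subgroup of $\mathcal{K}_1^{\times}/(\mathcal{K}_1^{\times})^p$, and finally carry out an eigenspace count under the prime-to-$p$ quotient $\Gamma := \Gal(\mathcal{K}_1/K)$, where $\mathcal{K}_1 := L \cdot K(\mu_p)$. Since $|\Delta|$ is coprime to $p$ and $[K(\mu_p):K]$ divides $p-1$, $|\Gamma|$ is prime to $p$.

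In both cases $N \simeq \Bbbk(\psi)$ is a $1$-dimensional $\Bbbk$-space on which $G_K$ acts via a character $\psi$ of order prime to $p$: explicitly, $\psi = \omega\chi^{-1}$ for $N = (\mu_p)_\chi$ and $\psi = \chi$ for $N = (\bZ/p)_{\chi^{-1}}$. In both cases $\psi$ factors through $\Gamma$, so $\mathcal{H} := \Gal(F_\chi/\mathcal{K}_1)$ acts trivially on $N$. Since $|\Gamma|$ is prime to $p$ and $N$ is $p$-torsion, the Hochschild--Serre spectral sequence collapses to an isomorphism
\[ H^1(F_\chi/K, N) \simeq H^1(\mathcal{H}, N)^\Gamma = \Hom(\mathcal{H}^{ab}, N)^\Gamma. \]

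Because $\mu_p \subseteq \mathcal{K}_1$, the maximal elementary abelian $p$-subextension $E$ of $F_\chi/\mathcal{K}_1$ is of the form $\mathcal{K}_1(B^{1/p})$ for a unique subgroup $B \subseteq \mathcal{K}_1^{\times}/(\mathcal{K}_1^{\times})^p$, and the Kummer pairing $\Gal(E/\mathcal{K}_1) \times B \to \mu_p$ is $\Gamma$-equivariant. Since the target $\mu_p$ carries the character $\omega$ of $\Gamma$, one checks from $\Gamma$-equivariance that if $\sigma \in \Gal(E/\mathcal{K}_1)^{\alpha}$ pairs nontrivially with $b \in B^{\beta}$ then $\alpha\beta = \omega$. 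Consequently $\Hom_\Gamma(\Gal(E/\mathcal{K}_1), \Bbbk(\psi))$ is naturally identified with $B^{\omega\psi^{-1}}$, and the problem reduces to showing that $B^\chi = 0$ in the first case and $B^{\omega\chi^{-1}} = 0$ in the second.

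To verify these vanishings, decompose $B$ according to the defining pieces $F_\chi = F \cdot K_{p^\infty}\cdot K(T_\chi)\cdot K'$ over $\mathcal{K}_1$. The cyclotomic piece $\mathcal{K}_1(\mu_{p^2})/\mathcal{K}_1$ (inside $K(\mu_{p^\infty}) \subseteq K_{p^\infty}$) contributes $\overline{\zeta_p}$, which lies in $B^\omega$ since $\gamma\zeta_p = \zeta_p^{\omega(\gamma)}$. The unit piece $\mathcal{K}_1((\cO_K^\times)^{1/p})/\mathcal{K}_1$ contributes the image of $\cO_K^\times \subseteq K^\times$, which is $\Gamma$-fixed and hence lies in $B^{\mathbf{1}}$. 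Each of the remaining subextensions $F\mathcal{K}_1/\mathcal{K}_1$, $K(1)\mathcal{K}_1/\mathcal{K}_1$ and $K'\mathcal{K}_1/\mathcal{K}_1$ arises by base change from an abelian $p$-extension of $K$; the compositum of each with $\mathcal{K}_1$ is therefore abelian over $K$, so $\Gamma$ acts trivially by conjugation on its Galois group over $\mathcal{K}_1$, and the $\omega$-twist in Kummer duality places these contributions in $B^\omega$. Thus $B \subseteq B^{\mathbf{1}} \oplus B^\omega$. Under the standing hypotheses $\chi \ne 1, \omega$, neither $\chi$ nor $\omega\chi^{-1}$ lies in $\{1,\omega\}$, so $B^\chi = B^{\omega\chi^{-1}} = 0$ and the lemma follows. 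The main technical point is the careful bookkeeping of the $\omega$-twist in Kummer duality, which dictates which eigenspaces of $B$ we must control; once that twist is correctly tracked, the proof is an eigenspace count that exploits the abelian-over-$K$ nature of all generating pieces of $F_\chi$.
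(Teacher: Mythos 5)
Your reduction via inflation--restriction and the translation of the problem into Kummer theory over $\mathcal K_1$ is sound in spirit and is essentially how the paper begins. But the step ``decompose $B$ according to the defining pieces'' hides a genuine gap: the Kummer group of the maximal elementary abelian $p$-subextension of a compositum is \emph{not} in general the sum of the Kummer groups coming from the individual pieces. Dually, if $G=\Gal(F_\chi/\mathcal K_1)$ embeds into $\prod_i G_i$ with $G_i=\Gal(M_i/\mathcal K_1)$, the natural map $G/\Phi(G)\to\prod_i G_i/\Phi(G_i)$ need not be injective --- e.g.\ $G=\{(a,b)\in\ZZ_p^2: a\equiv b\ (\mathrm{mod}\ p)\}$ inside $\ZZ_p\times\ZZ_p$ has $G/\Phi(G)\simeq(\ZZ/p)^2$ while the image in $(\ZZ/p)^2$ is one-dimensional. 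So from the fact that each piece contributes only $\mathbf 1$- and $\omega$-eigenvectors you cannot conclude $B\subseteq B^{\mathbf 1}\oplus B^\omega$; there could be ``hidden'' eigenvectors arising from entanglement between the abelian-over-$K$ part and the units part. (A secondary issue: you take $\mathcal K_1=L\cdot K(\mu_p)$, but $F_\chi$ contains $L_\chi$ rather than $L$, so $\mathcal K_1\not\subseteq F_\chi$ in general; you should use $\mathcal K_1=L_\chi(\mu_p)$ as the paper does. Also the Kummer group of $\mathcal K_1((\cO_K^\times)^{1/p^\infty})/\mathcal K_1$ contains $\overline{\zeta_p}$ and possibly more than just the image of $\cO_K^\times$, since $\mu_{p^\infty}\subseteq(\cO_K^\times)^{1/p^\infty}$.)

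The paper circumvents this by arguing by contradiction with a \emph{single} degree-$p$ extension $E/L_\chi(\mu_p)$ inside $F_\chi$ realizing a putative nonzero element of $\Hom_{\Delta_1}(\Gal(F_\chi^{\mathrm{ab}}/L_\chi(\mu_p)),N)$, and then controlling this one extension directly. The key mechanism you are missing is the intersection with the abelian-over-$K$ subfield $L_\chi K^\dagger$ (with $K^\dagger=FK(1)K'(\mu_{p^\infty})$): when $\Delta_2=\Gal(L_\chi(\mu_p)/K(\mu_p))$ acts nontrivially on $\Gal(E/L_\chi(\mu_p))\simeq N$, the field $E$ cannot be abelian over $K$, so $E\cap L_\chi K^\dagger=L_\chi(\mu_p)$ and hence $EL_\chi K^\dagger/L_\chi K^\dagger$ is a degree-$p$ Kummer extension whose generator lies in $(\cO_K^\times)^{1/p^\infty}\cap L_\chi K^\dagger\subseteq K^\dagger$, forcing trivial $\Delta_2$-action and a contradiction; when $\Delta_2$ is trivial (so $L_\chi\subseteq K(\mu_p)$), one instead descends to $K^\ddagger=FK(1)K'K_{\rm cyc}$, where the same Kummer argument shows the $\Delta_1$-action is via $\omega$, again contradicting the action via $\chi$ or $\omega\chi^{-1}$. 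This intersection/descent step is precisely what resolves the entanglement issue your piece-wise decomposition glosses over. Your eigenspace bookkeeping (the $\omega$-twist, the identification $\Hom_\Gamma(\Gal(E/\mathcal K_1),\Bbbk(\psi))\simeq B^{\omega\psi^{-1}}$) is correct and matches the paper; it is the reduction $B\subseteq\sum_i B_i$ that needs a real argument, and that argument is the substance of the paper's proof.
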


\begin{proof} We set $K^\dagger := FK(1)K'(\mu_{p^\infty})$ and $(\cO_{K}^{\times})^{1/p^{\infty}}:= \{ x\in \overline{K} \mid x^{p^{m}} \in \cO_{K}^{\times} \text{ for some $m$}\}$ and write $L_\chi$ for the intermediate field of $L/K$ with $\ker(\chi) = \Gal(L/L_\chi)$. We then also define abelian groups $\Delta_1 := \Gal(L_\chi(\mu_p)/K)$ and $\Delta_2 := \Gal(L_\chi(\mu_p)/K(\mu_p))$. 

Then the field $F_\chi$ is, by its definition, equal to $L_\chi K^\ddag ((\cO_{K}^{\times})^{1/p^{\infty}})$. In addition, since $\Delta_1$ has order prime to $p$ and $G_{L_\chi(\mu_p)}$ acts trivially on $N$, the inflation-restriction sequence implies that
\[ H^1(F_\chi/K,N) = H^0(\Delta_1,H^1(F_\chi/L_\chi(\mu_p),N)) = \Hom_{\Delta_1}(\Gal(F_\chi^{\rm ab}/L_\chi(\mu_p)),N)\]
where $F_\chi^{\rm ab}$ denotes the maximal abelian extension of $L_\chi(\mu_p)$ inside $F_\chi$.

In particular, if this group does not vanish, then there exists a degree $p$ extension $E$ of $L_\chi(\mu_p)$ in $F_\chi^{\rm ab}$ that is Galois over $K$ and such that, since $\chi\notin \{1,\omega\}$, the conjugation action of $\Delta_1$ on $\Gal(E/L_\chi(\mu_p))\simeq N$ is not trivial, whilst the conjugation action of $\Delta_2$ on this module is trivial only if the subgroup $\Delta_2$ is itself trivial.

It follows that $E$ is not abelian over $K$ so that $E\cap L_\chi K^\dagger = L_\chi(\mu_p)$ and hence $EL_\chi K^\dagger$ is a cyclic degree $p$ extension of $L_\chi K^\dagger$ in $F_\chi$.

In particular, if $\Delta_2$ is not trivial, then the conjugation action of $\Gal(L_\chi K^\dagger/K^\dagger)\simeq \Delta_2$ on $\Gal(EL_\chi K^\dagger/L_\chi K^\dagger)\simeq \Gal(E/L_\chi(\mu_p))$ is non-trivial, and this contradicts Kummer theory since the definition of $(\cO_{K}^{\times})^{1/p^{\infty}}$ ensures that $EL_\chi K^\dagger$ is generated over $L_\chi K^\dagger$ by adjoining the $p$-th root of an element of $K^\dagger$.

We can therefore assume that the group $\Delta_2$ is trivial so that $L_\chi\subseteq K(\mu_p)$. We write $K_{\rm cyc}$ for the cyclotomic $\ZZ_p$-extension of $K$, set $K^\ddag := FK(1)K'K_{\rm cyc}$ and note that the groups $\Gal(EL_\chi K^\dagger/L_\chi K^\dagger) = \Gal(EK^\dagger/K^\dagger)$ and $\Gal(K^\dagger/K^{\ddag})$ identify with $\Gal(E/K(\mu_p))$ and $\Delta_1= \Gal(K(\mu_p)/K)$ via restriction. This again leads to a contradiction since $\chi$ is non-trivial, $\Delta_1$ acts on $\Gal(E/K(\mu_p))\simeq N$ either via $\chi$ or $\omega\chi^{-1}$ and Kummer theory implies that $\Gal(K^\dagger/K^{\ddag})$ acts on  $\Gal(EK^\dagger/K^\dagger)$ via $\omega$.

It follows that the groups $H^1(F_\chi/K,N)$ must vanish, as required. \end{proof}


Turning now to the proof of Theorem \ref{main RS} we note that for each $\chi$ the representation $T_\chi$ identifies with the module $\cO$ upon which $G_K$ acts via $\chi^{-1}\cdot \epsilon_{\rm cyc}$ with $\epsilon_{\rm cyc}$ the cyclotomic character of $K$.

By using this description, one checks that $T_\chi$ satisfies
Hypothesis \ref{hyp Y} with $\cA = \cO$ and that 
\begin{equation*}\label{rank eq} r_\chi = {\rm rank}_{\cO}\bigl(\bigoplus_{v \in S_\infty(K)}H^0\bigl(K_v, \Z_p[\Delta]^\chi\bigr)\bigr) =  \#\{ v \in S_{\infty}(K) \mid \chi(G_{K_{v}}) = 1 \}.\end{equation*}

In particular, since we assume $\chi(G_{K_v}) = 1$ for at least one archimedean place $v$ of $K$, the rank $r=r_\chi$ is strictly positive, as claimed in Theorem \ref{main RS}.

Set $\cT_\chi := {\rm Ind}^{G_K}_{G_F}(T_\chi)$. Then, under the given hypotheses, the group $H^{1}_{\cF_{{\rm can}}^{*}}(K, \cT_\chi^{\vee}(1))^{\vee}$ identifies with $A_{LF}^{\chi}$ (by Example \ref{selmer exams}(i) and Lemma \ref{compare}).

Since Lemma \ref{hyp gm} allows us to apply Theorem \ref{main}(iii) to $T_\chi$ and the Euler system $\eta^{\rm RS}_\chi$,
 we can therefore deduce that 
 there is for each non-negative integer $j$ an inclusion
\[ I_j(\cR_r^{-1}(\cD_{r}(\eta_{\chi}^{\rm RS}))) \subseteq  {\rm Fitt}_{\cO[G]}^j(A_{LF}^\chi)\]
and that this inclusion is an equality for all $j$ if it is an equality for $j=0$, or equivalently, if one has $\im(\eta_{\chi,F}^{\rm RS}) = {\rm Fitt}_{\cO[G]}^0(A_{LF}^\chi)$.

To verify the latter equality it is enough, by Nakayama's Lemma, to show that the homomorphism of $G$-coinvariants $\im(\eta_{\chi,F}^{\rm RS})_G\to {\rm Fitt}_{\cO[G]}^0(A_{LF}^\chi)_G$ that is induced by the  inclusion  $\im(\eta_{\chi,F}^{\rm RS})\subseteq {\rm Fitt}_{\cO[G]}^0(A_{LF}^\chi)$ is surjective.

To do this we note that the argument of Lemma \ref{codescent}(ii) gives an identification of $\im(\eta_{\chi,F}^{\rm RS})_G$ with $\im(\eta_{\chi,K}^{\rm RS})$. In addition, the identifications $H^{1}_{\cF_{{\rm can}}^{*}}(K, \cT^{\vee}(1))^{\vee} = A_{LF}^{\chi}$ and $H^{1}_{\cF_{{\rm can}}^{*}}(K, T^{\vee}(1))^{\vee} = A_{L}^{\chi}$ combine
with \cite[Cor. 3.8]{bss} to imply $(A_{LF}^{\chi})_G$ is naturally isomorphic to $A_{L}^{\chi}$ and hence that ${\rm Fitt}_{\cO[G]}^0(A_{LF}^\chi)_G = {\rm Fitt}_{\cO}^0(A_{L}^\chi)$.

To complete the proof of Theorem \ref{main RS}(i) it is thus enough to show that $\im(\eta_{\chi,K}^{\rm RS}) = {\rm Fitt}_{\cO}^0(A_{L}^\chi)$. However, the argument of \cite[Th. 8.1]{bks1} (with $\Gamma_K$ trivial and $i=0$) shows that this equality is implied by ${\rm TNC}(\QQ(0)_L,\cO e_\chi)$ and so can be deduced from the analytic class number formula via Remark \ref{ex L1}(i) and the argument of Proposition \ref{cond reduction}.

In a similar way, to prove the second assertion of Theorem \ref{main RS}(ii) we shall apply Theorem \ref{hes2} to the data $T = T_\chi$ and $c = \eta^{\rm RS}_\chi$.

Under the stated assumptions, the conditions (d), (e) and (f) in Theorem \ref{hes2} follow from Lemmas \ref{hyp gm} and \ref{tech}. In addition, condition (a) of Theorem \ref{hes2} is clear, (b) is equivalent to the assumed existence of $\eta_\chi^{\rm RS}$ and (c) is true since the hypothesis that $\chi(G_{K_v})\not= 1$ for all $v$ in $S_{\rm ram}(LF/K)\cup S_p(K)$ implies that for each intermediate field $F'$ of $F/K$ the module
$H^2(\cO_{F',S},T_\chi)$ identifies with $A_{LF'}^\chi$. Finally, condition (g) in Theorem \ref{hes2} follows from the fact that no non-archimedean place of $K$ splits completely in $K_\infty$. Given these observations, the second assertion of Theorem \ref{main RS}(ii) follows directly from Theorem \ref{hes2}.

It now only remains to prove the first assertion of Theorem \ref{main RS}(ii) and to do this we shall directly apply Theorem \ref{cor1} (and Proposition \ref{cond reduction}) to $T_\chi$ and $\eta^{\rm RS}_\chi$ rather than relying on Theorem \ref{hes2}.

In particular, just as above, the hypotheses of Theorem \ref{main RS} imply that the necessary conditions are satisfied in order to deduce from Theorem \ref{cor1} the validity of ${\rm TNC}(\QQ(0)_{LF},\mathcal{R})$ with $\mathcal{R}$ equal to the $\mathcal{O}$-order
\[\{ x \in Q[G]e_\chi \mid x \cdot {\rm Fitt}_{\cO[G]}^0(H^2(\cO_{F,S},T_\chi))\subseteq{\rm Fitt}_{\cO[G]}^0(H^2(\cO_{F,S},T_\chi))\}.\]

It is thus enough to show that if $A_{L}^\chi$ vanishes, then $H^2(\cO_{F,S},T_\chi)$ also vanishes, since then one would have ${\rm Fitt}_{\cO[G]}^0(H^2(\cO_{F,S},T_\chi)) = \cO[G]$ and hence $\mathcal{R} = \mathcal{O}[G]e_\chi = \ZZ_p[\cG_{LF}]^\chi$.

But this is an easy consequence of Nakayama's Lemma since, as noted above, the present hypotheses imply both that $H^2(\cO_{F,S},T_\chi)$ identifies with $A_{LF}^\chi$ and that $(A_{LF}^{\chi})_G$ is isomorphic to $A_{L}^{\chi}$.

This completes the proof of Theorem \ref{main RS}. 





\section{Elliptic curves}\label{section ell curves}

In this section we discuss applications of Theorems \ref{main} and \ref{hes2} to elliptic curves. In this way we formulate a natural generalization and refinement of a conjecture of Perrin-Riou \cite{PR} (see \S \ref{connection PR}) and also obtain concrete new evidence in favour of the equivariant Birch and Swinnerton-Dyer Conjecture (see \S\ref{eBSD}).

To do so we fix an elliptic curve $E$ over $K$. (In most parts we assume $K=\QQ$.) We also fix an odd prime $p$, write $T = T_p(E)$ for the $p$-adic Tate module of $E$ and set $V :=\QQ_p \otimes_{\ZZ_p} T$.

We also fix a finite abelian $p$-extension $F$ of $K$ and set $G:=\Gal(F/K)$. We then set
$$S:=S_\infty(K)\cup S_p(K) \cup S_{\rm ram}(F/K)\cup S_{\rm ram}(T) .$$


We write $\sha(E/F)$ and ${\rm Sel}_{p}(E/F)$ for the classical Tate-Shafarevich and $p$-primary Selmer groups of $E$ over $F$ and recall that the `strict $p$-Selmer group' ${\rm Sel}_{p}^{\rm str}(E/F)$ is the subgroup of ${\rm Sel}_{p}(E/F)$ defined in Example \ref{selmer exams}(ii).

In the sequel we shall abbreviate the idempotent $\varepsilon_{T,F}$ of $\QQ_p[G]$ that is defined in Definition \ref{def adm} to $\varepsilon$.

For each $\chi$ in $\widehat G$, we set
\[ {\rm rk}_\chi(E(F)) :=\dim_\CC(e_\chi(\CC\otimes_\ZZ E(F))).\]
For each $a$ in $\{0,1\}$ we then define an idempotent of $\QQ[G]$ by setting
\begin{equation}\label{eps a def} \varepsilon_a := \sum_\chi e_\chi \end{equation}
where $\chi$ runs over all characters in $\widehat G$ for which one has ${\rm rk}_\chi(E(F))=a$.


%

The following technical result will be useful in the sequel.

\begin{lemma}\label{elliptic coh}\
\begin{itemize}
\item[(i)] There is a canonical exact sequence
\begin{multline*}
0 \to \QQ_p \otimes_\ZZ E(F) \to H^1(\cO_{F,S},V) \to \bigoplus_{\fp \in S_p(F)} (\QQ_p \otimes_{\ZZ_p}E(F_\fp)^{\wedge})^\ast \\
\to \QQ_p \otimes_{\ZZ_p} {\rm Sel}_p(E/F)^\vee \to H^2(\cO_{F,S},V) \to 0,
\end{multline*}
where we denote by $(-)^\wedge$ and $(-)^\ast$ the $p$-completion and $\QQ_p$-dual respectively.
In particular, we have a canonical isomorphism
$$H^2(\cO_{F,S},V) \simeq \QQ_p \otimes_{\ZZ_p} {\rm Sel}_p^{\rm str}(E/F)^\vee.$$
\item[(ii)] If $\sha(E/F)[p^\infty]$ is finite, then $H^2(\cO_{F,S},V)^\ast$ identifies with the kernel of the diagonal localization map
$ \QQ_p \otimes_\ZZ E(F) \to \bigoplus_{\fp \in S_p(F)} \QQ_p \otimes_{\ZZ_p} E(F_\fp)^\wedge.$
\item[(iii)] If $K = \QQ$ and $\sha(E/F)[p^\infty]$ is finite, then $\varepsilon = \varepsilon_0 + \varepsilon_1$.
\end{itemize}
\end{lemma}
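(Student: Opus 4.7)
The plan is to establish the three parts in sequence, with (i) and (ii) being standard consequences of Poitou--Tate duality and linear algebra, while (iii) carries the technical weight.

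For part (i), I would apply the Poitou--Tate nine-term global duality exact sequence to the representation $V$ over $\cO_{F,S}$, and then simplify using the local Kummer identifications $H^1_f(F_v, V) \simeq E(F_v) \otimes \QQ_p$ (with the $p$-adic saturation $E(F_\fp)^\wedge \otimes \QQ_p$ for $\fp \mid p$) together with local Tate duality. Archimedean places contribute trivially for odd $p$, and the non-$p$-adic non-archimedean places cancel out upon imposing the Kummer condition, leaving precisely the displayed five-term sequence. The final identification of the cokernel with $\QQ_p \otimes {\rm Sel}_p^{\rm str}(E/F)^\vee$ is then immediate from the definition of strict Selmer as the kernel of the localization map to the $p$-adic places. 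Part (ii) follows as a formal corollary: under finite $\sha$, Kummer theory gives $\QQ_p \otimes_{\ZZ_p} {\rm Sel}_p(E/F)^\vee \simeq \Hom_{\ZZ_p}(E(F) \otimes \ZZ_p, \QQ_p)$, so the middle map in the sequence of (i) becomes the $\QQ_p$-linear dual of ${\rm loc}\colon \QQ_p \otimes E(F) \to \bigoplus_{\fp \mid p} \QQ_p \otimes E(F_\fp)^\wedge$, and the identity $\coker({\rm loc}^\ast) = \ker({\rm loc})^\ast$ yields $H^2(\cO_{F,S}, V)^\ast \simeq \ker({\rm loc})$.

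For part (iii), one first observes that $H^0(F, V) = V^{G_F} = 0$, since $E(F)$ is finitely generated by Mordell--Weil and so carries no $p$-divisible subgroup; thus $\varepsilon$ is characterized as the sum of primitive idempotents annihilating $H^2(\cO_{F,S}, V)$, equivalently (by (ii)) those annihilating $\ker({\rm loc})$. The key structural input is the $\QQ_p[G]$-module isomorphism $\bigoplus_{\fp \mid p} E(F_\fp)^\wedge \otimes \QQ_p \simeq F \otimes_\QQ \QQ_p \simeq \QQ_p[G]$: the first step applies the formal logarithm of $E$ to identify each $E(F_\fp)^\wedge \otimes \QQ_p$ with $F_\fp$ as a Galois module, and the second is the normal basis theorem for the abelian extension $F/\QQ$. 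In particular, every $\chi$-isotypic component of the target is one-dimensional over $\QQ_p^c$, so a dimension count gives $\dim_{\QQ_p^c}(\ker{\rm loc})^\chi = {\rm rk}_\chi(E(F)) - \dim_{\QQ_p^c}(\im{\rm loc})^\chi$ with the latter term in $\{0, 1\}$. This automatically forces $(\ker{\rm loc})^\chi = 0$ when ${\rm rk}_\chi = 0$ and $(\ker{\rm loc})^\chi \neq 0$ when ${\rm rk}_\chi \geq 2$.

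The hardest step is the remaining rank-one case: showing that if ${\rm rk}_\chi = 1$ then the $\chi$-restriction of ${\rm loc}$ is nonzero. The plan here is to take a generator $e_\chi P$ of the one-dimensional $\chi$-eigenspace in $E(F) \otimes \QQ_p^c$, where $P \in E(F)$ is necessarily of infinite order, and use the $G$-equivariance of the identifications together with the injectivity of the formal logarithm on points of infinite order in $E(F_\fp)^\wedge$ to conclude that $e_\chi \cdot {\rm loc}(P) \neq 0$. This is the main obstacle, as it amounts to a $p$-adic non-vanishing statement for the localization on rank-one $\chi$-components. Once established, combining the three cases gives $e_\chi \varepsilon = e_\chi$ if and only if ${\rm rk}_\chi \in \{0, 1\}$, i.e., $\varepsilon = \varepsilon_0 + \varepsilon_1$.
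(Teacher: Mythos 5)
Your treatment of parts (i) and (ii) is correct and matches the paper's route: the exact triangle $R\Gamma_f(F,V) \to R\Gamma(\cO_{F,S},V) \to \bigoplus_{w\in S_F} R\Gamma_{/f}(F_w,V)$ that the paper invokes is just a packaged form of the Poitou--Tate sequence you use, and the passage from (i) to (ii) via $\QQ_p \otimes {\rm Sel}_p(E/F)^\vee \simeq (\QQ_p \otimes E(F))^\ast$ under finiteness of $\sha$ is the same.

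For part (iii), however, there is a genuine gap at exactly the point you flag as the ``main obstacle''. Your reduction is fine: after the identifications $\bigoplus_{\fp\mid p} E(F_\fp)^\wedge \otimes \QQ_p \simeq F\otimes_\QQ \QQ_p \simeq \QQ_p[G]$, every $\chi$-eigenspace of the target is one-dimensional, so the statement reduces to showing that the $\chi$-component of ${\rm loc}$ is nonzero whenever ${\rm rk}_\chi(E(F)) = 1$. But the argument you then offer --- invoke $G$-equivariance and the injectivity of $\log_\omega$ on points of infinite order --- does not prove this. Injectivity of the formal logarithm only shows that ${\rm loc}(\sigma P) \neq 0$ in $F_\fp$ for each $\sigma$; it says nothing about the vanishing or non-vanishing of the twisted sum $e_\chi\cdot{\rm loc}(P) = \tfrac{1}{|G|}\sum_\sigma \chi(\sigma)\,{\rm loc}(\sigma^{-1}P)$, which is a $p$-adic non-vanishing statement for a linear form in elliptic logarithms of the Galois conjugates of $P$. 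This is not a formal consequence of equivariance; it is precisely the content of Plater's $p$-adic orthogonality relation, and the paper deals with it by citing Jones \cite[Prop.~7.1]{jones} rather than attempting a direct argument. Without that input (or an equivalent non-vanishing result), the claim that ${\rm rk}_\chi = 1$ forces injectivity of the $\chi$-component of ${\rm loc}$ is unproved, and your proof of (iii) is incomplete.
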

\begin{proof} Claim (i) follows by considering the long exact cohomology sequence of the exact triangle
$$R\Gamma_f(F,V) \to R\Gamma(\cO_{F,S},V) \to \bigoplus_{w \in S_F}R\Gamma_{/f}(F_w,V).$$
(See, for example, the bottom row of \cite[(26)]{BFetnc}.) 

Claim (ii) then follows from claim (i) by noting that if $\sha(E/F)[p^\infty]$ is finite, then $\QQ_p \otimes_{\ZZ_p} {\rm Sel}_p(E/F)^\vee$ identifies with $(\QQ_p \otimes_{\ZZ} E(F))^\ast.$

To prove claim (iii) we note that  $H^0(F,V)$ vanishes  and hence that $\varepsilon = \sum_\chi e_\chi$, where $\chi$ runs over all characters in $\widehat G$ for which $e_\chi(\QQ_p^c\otimes_{\QQ_p}H^2(\cO_{F,S},V))$ vanishes.

 Given this fact, and the explicit definitions of $\varepsilon_0$ and $\varepsilon_1$, it is straightforward to derive the equality $\varepsilon = \varepsilon_0 + \varepsilon_1$ from the description of $H^2(\cO_{F,S},V)$ given in claim (ii) and the fact that, for each $\chi$ in $\widehat G$, the $\chi$-component of the localization map in claim (ii) is injective if and only if one has ${\rm rk}_\chi(E(F))\le 1$ (as follows, for example, from the argument of Jones in \cite[Prop. 7.1]{jones}).
\end{proof}

\subsection{Bloch-Kato elements and Kato's zeta elements}\label{section kato}
 In this subsection we fix $K$ to be $\QQ$. We assume that the group $\sha(E/F)[p^\infty]$ is finite.

In this case one has $r_T:={\rm rank}_{\ZZ_p}(Y_\QQ(T))={\rm rank}_{\ZZ_p}(H^0(\RR,T))=1$ and so an ordered $\Z_p$-basis $\underline{b}$ of $Y_\QQ(T)$ comprises a single element. To specify this element
 we fix a generator $\gamma$ of $H_1(E(\CC),\ZZ)^+$ and then take $\underline{b}$ to be the element of $Y_\QQ(T)$ that corresponds to $1\otimes \gamma$ under the canonical comparison isomorphism
 $\ZZ_p \otimes_\ZZ H_1(E(\CC),\ZZ)^+\simeq T^+=Y_\QQ(T)$.

Noting that $F$ is totally real (since it is an extension of $\QQ$ of odd degree), we next fix an embedding $\iota_0: F \hookrightarrow \RR$ and use it to identify $Y_F(T)$ with $\ZZ_p[G]\otimes_{\ZZ_p} Y_\QQ(T)$ (as per Remark \ref{induced basis}). In this way we regard $\underline{b}$ as a $\ZZ_p[G]$-basis of $Y_F(T)$.

We then write $\eta_{F}^{\rm BSD}$ for the Bloch-Kato element $\eta_{\underline{b},F}^{\rm BK}$ in $\CC_p \otimes_{\ZZ_p} H^1(\cO_{F,S},T)$ (see Definition \ref{def BK}).

%

We next recall that Kato \cite{kato} has constructed a canonical `zeta element' $z_F^{\rm Kato}$ in $H^1(\cO_{F,S},V)$ and we shall specify this element precisely in Definition \ref{choose kato} below.

For the moment we note only the following: the article \cite{kato} implicitly fixes an embedding $\QQ^c \hookrightarrow \CC$ (as $\QQ^c$ is regarded as a subset of $\CC$) and the elements constructed in loc. cit. depend on this choice - we always assume that this embedding is fixed so that it restricts to give the embedding $\sigma_0: F \to \RR$ used above; we shall also further normalize the definition of $z_F^{\rm Kato}$ to take account of the generator $\gamma$ of $H_1(E(\CC),\ZZ)^+$ fixed above.
%

We can now formulate the following conjecture.

\begin{conjecture}\label{kato conj} $\eta_{F}^{\rm BSD} = z_F^{\rm Kato}.$
\end{conjecture}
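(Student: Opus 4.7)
The plan is to verify that Kato's zeta element satisfies the characterizing equation of Definition~\ref{def BK}, namely
$$\lambda^{\rm BK}_{\underline{b},F}(z_F^{\rm Kato}) = \varepsilon \cdot L^*(M_{F/\QQ}^\vee,0),$$
from which the equality $z_F^{\rm Kato} = \eta_F^{\rm BSD}$ would follow at once from injectivity of $\lambda^{\rm BK}_{\underline{b},F}$. The strategy is to exploit the decomposition $\varepsilon = \varepsilon_0 + \varepsilon_1$ provided by Lemma~\ref{elliptic coh}(iii) and to verify the equality separately on each isotypic component $e_\chi$ for $\chi \in \widehat G$ contributing to $\varepsilon_0$ and $\varepsilon_1$.

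First, for the $\varepsilon_0$-component, i.e.\ characters $\chi$ with $L(E,\chi^{-1},1) \ne 0$: here the complex $\varepsilon_0\cdot(\CC_p\otimes_{\ZZ_p} C_{F,S}(T))$ is acyclic in degree zero, so $\lambda^{\rm BK}_{\underline{b},F}$ reduces on this part to a twist of the Bloch-Kato dual exponential map at $p$, normalised by the N\'eron period attached to $\gamma$. On the other hand, Kato's explicit reciprocity law computes $\exp^*$ of the $p$-localisation of $z_F^{\rm Kato}$ directly in terms of the twisted values $L^*(E,\chi^{-1},1)$. Matching the two formulas should amount to a careful bookkeeping of the Euler factors at primes in $S \setminus (S_\infty(\QQ)\cup S_p(\QQ))$, which enter the definitions of $\vartheta^{\rm BK}$ and $z_F^{\rm Kato}$ asymmetrically, and of the chosen period normalisation of $\underline{b}$.

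For the $\varepsilon_1$-component, i.e.\ characters $\chi$ with ${\rm rk}_\chi(E(F)) = 1$: here the $L$-function vanishes to order one, so the leading coefficient involves $L'(E,\chi^{-1},1)$ and the Bloch-Kato trivialisation is built out of the N\'eron-Tate height pairing on the one-dimensional space $e_\chi(\CC\otimes_\ZZ E(F))$. The required matching of $e_\chi\cdot z_F^{\rm Kato}$ with the corresponding Beilinson-Bloch-Kato datum is essentially the content of Perrin-Riou's conjecture in \cite{PR}, suitably extended to the Hasse-Weil-Artin setting; making this connection precise is presumably the content of the companion Propositions~\ref{kato prop} and \ref{kato cor}.

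The main obstacle will be the $\varepsilon_1$-part: while Perrin-Riou's machinery gives a $p$-adic formula involving the $p$-adic height and regulator, translating it into the equivariant determinant formalism of \S\ref{etnc sec} requires a careful identification of the motivic cohomology contribution to $C_{F,S}(T)$ with its Betti/de Rham realisations, handled equivariantly across all components of $\varepsilon_1$. A secondary but still delicate point is verifying that the chosen normalisation of $z_F^{\rm Kato}$ with respect to $\gamma$ is compatible, component-by-component, with the rigidification of $Y_F(T)$ by $\underline{b}$ used in building $\lambda^{\rm BK}_{\underline{b},F}$; this is purely a question of tracking the comparison isomorphism $\ZZ_p\otimes_\ZZ H_1(E(\CC),\ZZ)^+ \simeq T^+$ through the constructions of \cite{kato} and \cite{BFetnc}.
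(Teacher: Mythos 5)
The statement you are attempting to prove is labelled a \emph{conjecture} in the paper, not a theorem, and the paper does not prove it. What the paper does is (a) reformulate it in the explicit terms of Proposition~\ref{kato prop}, (b) show in Proposition~\ref{kato cor} that the case $F=\QQ$, $L(E,1)=0$ is equivalent to Perrin-Riou's conjectures \cite[Conj.~3.3.5(i) and Conj.~3.3.2]{PR}, and (c) prove unconditionally, in Proposition~\ref{kato bk}, only the piece of the conjecture on the sub-idempotent cut out by characters $\chi$ with $L(E,\chi,1)\neq 0$, via Kato's explicit reciprocity law. So a key recalibration is needed: your plan, if carried out, would settle a generalized Perrin-Riou conjecture, which is open.

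Within the plan itself there is a genuine slip worth flagging. You identify ``the $\varepsilon_0$-component'' with ``characters $\chi$ with $L(E,\chi^{-1},1)\neq 0$,'' but $\varepsilon_0$ is defined by the condition ${\rm rk}_\chi(E(F))=0$. By Kato's theorem $L(E,\chi,1)\neq 0$ implies ${\rm rk}_\chi(E(F))=0$, but the converse is exactly what BSD would need to supply: there may be characters with ${\rm rk}_\chi=0$ yet $L(E,\chi,1)=0$, and for these the reciprocity-law argument does not close the loop (this is precisely condition~(a) in Proposition~\ref{kato prop}, which remains conjectural). You also do not address the rank~$\ge 2$ characters, on which $\eta_F^{\rm BSD}$ vanishes by construction so the conjecture forces $e_\chi z_F^{\rm Kato}=0$; this is condition~(c) of Proposition~\ref{kato prop}, a generalization of \cite[Conj.~3.3.2]{PR}, and is also open. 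In short: the one piece you treat as routine (the reciprocity-law comparison for nonvanishing $L$-values) is the only piece the paper actually proves, and everything else in your sketch is the content of the conjecture, not a proof of it.
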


In the next section we shall show that this conjecture is a natural generalization and refinement of the conjecture formulated by Perrin-Riou in \cite{PR}.

In the rest of this section we state a useful, and very explicit, reinterpretation of the conjecture.

To do this we fix a minimal Weierstrass model of $E$ over $\ZZ$ and let $\omega$ be the corresponding N\'eron differential in $ \Gamma(E,\Omega_{E/\QQ}^1)$. We then set
$$\Omega^+ :=\int_\gamma \omega.$$

We consider the composite homomorphism
$$\exp_\omega^\ast: \left(\QQ_p \otimes_{\ZZ_p} \bigoplus_{\fp \in S_p(F)}E(F_\fp)^\wedge\right)^\ast \xrightarrow{\exp^\ast} \QQ_p \otimes_\QQ F\otimes_\QQ \Gamma(E,\Omega_{E/\QQ}^1) \xrightarrow{\omega \mapsto 1} \QQ_p \otimes_\QQ F$$
where $\exp^\ast$ denotes the dual exponential map. We will also denote by $\exp^\ast_\omega$ the composition of this map with the localization map
$$H^1(\cO_{F,S},V) \to \bigoplus_{\fp \in S_p(F)} H^1_{/f}(F_\fp,V)\simeq \bigoplus_{\fp \in S_p(F)}H^1_f(F_\fp,V)^\ast\simeq \left(\QQ_p \otimes_{\ZZ_p} \bigoplus_{\fp \in S_p(F)}E(F_\fp)^\wedge\right)^\ast . $$

For the idempotent $\varepsilon_1$ defined in (\ref{eps a def}), Lemma \ref{elliptic coh} implies that there are  natural isomorphisms

\begin{eqnarray}\label{isom1}
\varepsilon_1 (\QQ_p \otimes_\ZZ E(F)) \xrightarrow{\sim} \varepsilon_1 H^1(\cO_{F,S},V),
\end{eqnarray}
and
\begin{eqnarray}\label{isom2}
\varepsilon_1 \left(\QQ_p \otimes_{\ZZ_p} \bigoplus_{\fp \in S_p(F)} E(F_\fp)^\wedge \right)^\ast \xrightarrow{\sim} \varepsilon_1(\QQ_p \otimes_\ZZ E(F))^\ast.
\end{eqnarray}
In particular, we may regard $\varepsilon_1 z_F^{\rm Kato}$ as an element of $\varepsilon_1(\QQ_p \otimes_\ZZ E(F))$ and $\exp_\omega^\ast$ as a map on $\varepsilon_1(\QQ_p \otimes_{\ZZ} E(F))^\ast$.

Finally, we write $\langle -,-\rangle: E(F) \times E(F) \to \RR$ for the classical N\'eron-Tate height pairing. Since we fix an embedding of $\RR$ into $\CC_p$ we shall always identify this pairing
 with the pairing $\CC_p \otimes_\ZZ E(F) \times \CC_p \otimes_\ZZ E(F) \to \CC_p$ that it induces.

The following result will be proved in \S\ref{proof of kato prop}.

\begin{proposition}\label{kato prop}
Conjecture \ref{kato conj} is valid if and only if all of the following conditions are satisfied.
\begin{itemize}
\item[(a)] For all $\chi$ in $\widehat G$ with both $L(E,\chi,1) = 0$ and ${\rm rk}_\chi(E(F))=0$ one has
\[ \Omega^+ \sum_{\sigma \in G} \iota_0( \exp_\omega^\ast(\sigma z_F^{\rm Kato})) \chi(\sigma)=L_S^\ast(E,\chi,1).\]
\item[(b)] The element $\varepsilon_1 z_F^{\rm Kato}$ generates $\varepsilon_1(\CC_p \otimes_\ZZ E(F))$ over $\CC_p[G]\varepsilon_1$ and it's dual  $(\varepsilon_1z_F^{\rm Kato})^\ast$ in $\varepsilon_1(\CC_p \otimes_\ZZ E(F))^\ast$ is such that for all $\chi$ in $\widehat G$ with
    ${\rm rk}_\chi(E(F))=1$ one has
$$\Omega^+ \langle \varepsilon_1 z_F^{\rm Kato}, \varepsilon_1 z_F^{\rm Kato} \rangle \sum_{\sigma \in G} \iota_0(\exp_\omega^\ast(\sigma (\varepsilon_1 z_F^{\rm Kato})^\ast)) \chi(\sigma)=L_S^\ast(E,\chi,1),$$
where we extend $\iota_0: F \rightarrow \RR$ to a map $\CC_p \otimes_\QQ F \to \CC_p$ in the obvious way.
\item[(c)] For all $\chi$ in $\widehat G$ with ${\rm rk}_\chi(E(F)) >1$ one has $e_\chi z_F^{\rm Kato}=0$.
\end{itemize}
In particular, if Conjecture \ref{kato conj} is true, then for every $\chi \in \widehat G$ we have
\begin{eqnarray}\label{kato vanish}
e_\chi z_F^{\rm Kato}=0 \Longleftrightarrow {\rm rk}_\chi(E(F))>1.
\end{eqnarray}
\end{proposition}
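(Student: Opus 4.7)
The plan is to reduce Conjecture~\ref{kato conj} to an equality of $\chi$-components after fixing an embedding $\QQ^c \hookrightarrow \CC_p$ and extending scalars, and then to read off each component from Lemma~\ref{elliptic coh}(i). The first key observation is that, under the assumed finiteness of $\sha(E/F)[p^\infty]$, Lemma~\ref{elliptic coh}(iii) identifies the idempotent of admissibility as $\varepsilon = \varepsilon_0 + \varepsilon_1$. Since $\eta_F^{\rm BSD}$ lies by construction in the image of $\varepsilon$, one immediately has $e_\chi \eta_F^{\rm BSD} = 0$ whenever ${\rm rk}_\chi(E(F)) > 1$, so Conjecture~\ref{kato conj} forces $e_\chi z_F^{\rm Kato} = 0$ in this range. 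This is condition~(c), and it is the only work required on $(1-\varepsilon_0-\varepsilon_1) H^1(\cO_{F,S},V)$.

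On the range of $\varepsilon_0$ the strategy is to unwind $\vartheta^{\rm BK}_{M,F,S}$ explicitly. For $\chi$ with ${\rm rk}_\chi(E(F))=0$ the exact sequence of Lemma~\ref{elliptic coh}(i) degenerates to an injection of $e_\chi H^1(\cO_{F,S},V)$ into the $\chi$-part of $\bigoplus_\fp (\QQ_p \otimes_{\ZZ_p} E(F_\fp)^\wedge)^*$, so $\exp^*_\omega$ is injective on this component. The Fontaine--Perrin-Riou trivialisation underlying $\lambda^{\rm BK}_{\underline{b},F}$ in this range is, up to the Betti--de Rham comparison and the period $\Omega^+$ attached to $\gamma$, precisely the map $\exp^*_\omega$, so Definition~\ref{def BK} translates directly into the formula of condition~(a) (both when $L(E,\chi,1)\neq 0$, where it coincides with a known theorem of Kato, and in the degenerate case $L(E,\chi,1)=0$ where $L^*_S$ must be read as a genuine leading term). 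For $\chi$ with ${\rm rk}_\chi(E(F))=1$ one must additionally account for the Beilinson regulator on the rank-one piece $\varepsilon_1(\CC_p \otimes_\ZZ E(F))$: under the Nekov\'{a}\v{r}-style identification of $H^1_f$ for the elliptic motive, this regulator specialises to the N\'eron--Tate height pairing. Matching the resulting trivialisation against $L^*_S$ then produces condition~(b), with the generator requirement on $\varepsilon_1 z_F^{\rm Kato}$ coming from the fact that $\lambda^{\rm BK}_{\underline{b},F}$ is an isomorphism.

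The principal technical obstacle lies in this rank-one case: one must verify that the composite trivialisation produced by $\vartheta^{\rm BK}_{M,F,S}$ is exactly the product of $\exp^*_\omega$, the height pairing and $\Omega^+$, with no additional Euler factors or Tamagawa-type normalisations surviving the passage from the compactly supported complex to the explicit sequence of Lemma~\ref{elliptic coh}(i). This reduces, in the spirit of the local computations in~\cite{BFetnc}, to checking that the local factors at $S\setminus S_p(\QQ)$ cancel against the Euler-factor convention built into $L^*_S$, and that the global determinant compatibility is compatible with the recipe of Definition~\ref{def BK}. Once (a), (b) and (c) are established, the equivalence~(\ref{kato vanish}) is immediate: condition~(c) gives one direction, and for ${\rm rk}_\chi(E(F)) \in \{0,1\}$ the right-hand side of (a) or (b) is a non-zero leading term $L^*_S(E,\chi,1)$, forcing $e_\chi z_F^{\rm Kato} \neq 0$ and yielding the other direction.
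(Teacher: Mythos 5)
Your proposal is correct and follows essentially the same path as the paper's proof: decompose by $\chi$-components, identify the Bloch--Kato trivialisation $\lambda^{\rm BK}_{\underline{b},F}$ explicitly in each rank regime (vanishing on $1-\varepsilon_0-\varepsilon_1$, $\exp^*_\omega$ up to $\Omega^+$ and Euler factors on $\varepsilon_0$, the same composed with the N\'eron--Tate height on $\varepsilon_1$), and invoke Kato's explicit reciprocity law (Proposition~\ref{kato bk}) to dispose of the characters with $L(E,\chi,1)\neq 0$. The paper structures this slightly more cleanly by first observing that conditions (a)--(c) uniquely pin down $(1-\varepsilon_0')z_F^{\rm Kato}$ and then verifying them directly for $\eta_F^{\rm BSD}$, but the underlying argument, including the injectivity of $\exp^*_\omega$ on the rank-zero piece and the bookkeeping of the Euler factor $\mathrm{Eul}_S$, is the same as what you describe.
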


\begin{remark}\label{rem kato prop} The explicit conditions listed in Proposition \ref{kato prop} do not involve any characters for which one has $L(E,\chi,1)\not= 0$. In fact, our proof of Proposition \ref{kato prop} will show that for all such $\chi$ the required equality $e_\chi\cdot \eta_{F}^{\rm BSD} = e_\chi\cdot z_F^{\rm Kato}$ is unconditionally valid. For details see Proposition \ref{kato bk} below.\end{remark}

\subsection{Perrin-Riou's Conjecture}\label{connection PR} In \S\ref{eBSD} we shall obtain evidence in favour of Conjecture \ref{kato conj} in the setting of general abelian fields $F$ (see Theorem \ref{cor kato}(iii)).

In this section, however, we focus on the case $F =K= \QQ$ and, following Remark \ref{rem kato prop}, we may also assume that $L(E,1)=0$.

In the following result we show that this special case of Conjecture \ref{kato conj} recovers a well-known conjecture of Perrin-Riou.

\begin{proposition}\label{kato cor} \
\begin{itemize}
\item[(i)] If ${\rm rank}(E(\QQ))=1$, then Conjecture \ref{kato conj} is valid for $E$ if and only if for any point $P$ in $E(\QQ)$ that generates $E(\QQ)/E(\QQ)_{\rm tors}$ one has
$$\log_\omega(z_\QQ^{\rm Kato})=\frac{L_S^\ast(E,1)}{\Omega^+ \langle P, P \rangle} (\log_\omega (P))^2,$$
where $\log_\omega: \QQ_p\otimes_\ZZ E(\QQ)\simeq \QQ_p \otimes_{\ZZ_p} E(\QQ_p)^\wedge \to \QQ_p$ is the formal logarithm associated to $\omega$ and we regard $z_\QQ^{\rm Kato}$ as an element of $\QQ_p \otimes_\ZZ E(\QQ)$ via (\ref{isom1}).
\item[(ii)] If ${\rm rank}(E(\QQ))>1$, then Conjecture \ref{kato conj} is valid for $E$ if and only if $z_\QQ^{\rm Kato}$ vanishes.
\item[(iii)] Assume that $E$ validates the Birch and Swinnerton-Dyer conjecture over $\QQ$ and has good reduction at $p$. Then the conditions in claims (i) and (ii) are respectively equivalent to the conjectures \cite[Conj. 3.3.5(i) and Conj. 3.3.2]{PR} of Perrin-Riou.
\end{itemize}
\end{proposition}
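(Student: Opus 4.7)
The overall strategy is to specialize Proposition \ref{kato prop} to the case $F = K = \QQ$, in which $G$ is trivial and the only character is the trivial one. When ${\rm rank}(E(\QQ)) > 1$, condition (c) of Proposition \ref{kato prop} applied to $\chi = 1$ asserts directly that $z_\QQ^{\rm Kato} = 0$ while conditions (a) and (b) are vacuous, and this gives claim (ii) at once. When ${\rm rank}(E(\QQ)) = 1$, one has $\varepsilon_1 = 1$ and conditions (a) and (c) are vacuous, so the only nontrivial content of Proposition \ref{kato prop} is condition (b), which specializes to the single identity
\[ \Omega^+ \langle z_\QQ^{\rm Kato}, z_\QQ^{\rm Kato}\rangle \cdot \exp_\omega^\ast((z_\QQ^{\rm Kato})^\ast) = L_S^\ast(E,1). \]

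For claim (i), the key step is to convert $\exp_\omega^\ast((z_\QQ^{\rm Kato})^\ast)$ into an expression involving $\log_\omega$. The plan is to invoke the standard compatibility of the Bloch-Kato dual exponential with the logarithm and with local Tate duality: Poincar\'e duality on $D_{\rm dR}(V_p E) \simeq H^1_{\rm dR}(E_{/\QQ_p})$ combined with the normalization $\langle \omega, \omega^\ast \rangle = 1$ gives the identity
\[ \langle Q, \phi \rangle_{\rm Tate} = \log_\omega(Q) \cdot \exp_\omega^\ast(\phi) \]
for $Q \in H^1_f(\QQ_p, V)$ and $\phi \in H^1_{/f}(\QQ_p, V)$, which in particular forces $\exp_\omega^\ast$ to take the functional $\log_\omega$ (viewed via Tate duality as an element of $H^1_{/f}$) to $1$. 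Writing $z_\QQ^{\rm Kato} = cP$ in the one-dimensional $\QQ_p$-vector space $\QQ_p \otimes_\ZZ E(\QQ)$, the dual $(z_\QQ^{\rm Kato})^\ast = c^{-1} P^\ast$ with $P^\ast(P) = 1$ lifts via (\ref{isom2}) to a functional on $\QQ_p \otimes E(\QQ_p)^\wedge$ that, by dimension, must equal $(c \log_\omega(P))^{-1} \log_\omega$, and hence $\exp_\omega^\ast((z_\QQ^{\rm Kato})^\ast) = (c \log_\omega(P))^{-1}$. Substituting into condition (b) and rearranging gives $c = L_S^\ast(E,1) \log_\omega(P)/(\Omega^+\langle P, P\rangle)$, and then $\log_\omega(z_\QQ^{\rm Kato}) = c \log_\omega(P)$ yields the asserted formula.

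For claim (iii), the plan is to combine the statements of (i) and (ii) with the assumed Birch and Swinnerton-Dyer formula. In the case ${\rm rank}(E(\QQ)) > 1$, BSD forces $L(E,1)$ to vanish to order at least $2$, and the vanishing $z_\QQ^{\rm Kato} = 0$ of our claim (ii) matches exactly Perrin-Riou's prediction \cite[Conj. 3.3.2]{PR}. In the rank $1$ case, BSD expresses $L^\ast(E,1)$ in terms of $\Omega^+$, $\langle P, P\rangle$, the Tamagawa numbers $\prod_\ell c_\ell$, $\#\sha(E/\QQ)$ and $\#E(\QQ)_{\rm tors}$; substituting this into our formula and adjusting for the ratio $L^\ast(E,1)/L_S^\ast(E,1)$ (a product of Euler factors at primes in $S \setminus S_\infty$) recovers precisely the formula of \cite[Conj. 3.3.5(i)]{PR}. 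The main obstacle throughout will be the detailed verification of the $\exp_\omega^\ast$-versus-$\log_\omega$ compatibility, with careful normalizations of Poincar\'e duality on $H^1_{\rm dR}(E)$ and of the Weil pairing identification $V \simeq V^\ast(1)$, together with the careful tracking of Euler factors at primes in $S \setminus S_\infty$ when matching our statements with Perrin-Riou's conventions.
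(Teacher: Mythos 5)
Your proposal is correct and follows essentially the same route as the paper's proof: all three claims are obtained by specializing Proposition \ref{kato prop} to $F=K=\QQ$ (where only one of the conditions (a), (b), (c) is non-vacuous according to the rank), and in the rank-one case the key reformulation is the identity $\log_\omega(P)\exp_\omega^\ast(P^\ast)=1$, which the paper asserts directly and you sketch from local Tate duality. The minor difference is that in claim (iii) the paper cites \cite[Prop.~2.2.2]{PR} as the bridge to Perrin-Riou's normalization, but the substance of the argument is the same.
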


\begin{proof} If $E(\QQ)$ has rank one, then (\ref{isom1}) gives an identification  $H^1(\cO_{\QQ,S},V) = \QQ_p \otimes_\ZZ E(\QQ)$ and so $z_\QQ^{\rm Kato}=\alpha \cdot P$ for some $\alpha$ in $\QQ_p$.

Since $\langle \alpha P , \alpha P \rangle \cdot (\alpha P)^\ast =\langle P,P\rangle \cdot \alpha \cdot P^\ast$, Proposition \ref{kato prop}(b) implies that Conjecture \ref{kato conj} is in this case equivalent to an equality
$$\alpha \cdot \exp_\omega^\ast(P^\ast)=\frac{L_S^\ast(E,1)}{\Omega^+ \langle P, P\rangle},$$
and hence, since $\log_\omega(P)\exp^\ast_\omega(P^\ast)=1$, to an equality
$$z_\QQ^{\rm Kato}=\frac{L_S^\ast(E,1)}{\Omega^+ \langle P, P\rangle} \log_\omega(P)\cdot P.$$
Since the map $\log_\omega$ is injective, this shows that the validity of Conjecture \ref{kato conj} is equivalent to the displayed equality in claim (i), as required.

Claim (ii) is true since if ${\rm rank}(E(\QQ))>1$, then Proposition \ref{kato prop}(c) directly implies that Conjecture \ref{kato conj} is equivalent to the vanishing of $z_\QQ^{\rm Kato}$.

To prove claim (iii) we assume that $E$ validates the Birch and Swinnerton-Dyer conjecture and also has good reduction at $p$. In particular, the rank of $E(\QQ)$ is equal to the order of vanishing of $L(E,s)$ at $s=1$.
If, firstly, $E(\QQ)$ has rank one, then 
the Birch-Swinnerton-Dyer formula implies
$$\frac{L_S'(E,1)}{\Omega^+ \langle P, P\rangle}={\rm Eul}_S^{-1}\cdot \frac{\# \sha(E/\QQ) {\rm Tam}(E)}{(\#E(\QQ)_{\rm tors})^2},$$
where 
${\rm Eul}_S \in \QQ^\times$ is the product of Euler factors at primes in $S$, which satisfies ${\rm Eul}_S \cdot L_S'(E,1)=L'(E,1)$, and
${\rm Tam}(E)$ is the product of Tamagawa factors.

The displayed equality in claim (i) is therefore equivalent to a formula
$$\log_\omega(z_\QQ^{\rm Kato})= {\rm Eul}_S^{-1}\cdot \frac{\# \sha(E/\QQ) {\rm Tam}(E)}{(\#E(\QQ)_{\rm tors})^2}(\log_\omega (P))^2,$$
and \cite[Prop. 2.2.2]{PR} shows that this formula is equivalent to \cite[Conj. 3.3.5(i)]{PR}. (Note that with our choice of normalization implies that Euler factors at primes in $S$ occur in the formula.)

Next we assume that ${\rm rank}(E(\QQ)) > 1$ and hence that ${\rm ord}_{s=1}L(E,s)>1$. Claim (ii) therefore asserts that $z_\QQ^{\rm Kato}$ vanishes if and only if ${\rm ord}_{s=1}L(E,s)>1$ and this is precisely the
statement of \cite[Conj. 3.3.2]{PR}.
\end{proof}

\begin{remark}\label{perrin riou}\

\noindent{}(i) In Proposition \ref{kato cor}(iii) we assume that $E$ has good reduction at $p$ only because this is assumed by Perrin-Riou.

\noindent{}(ii) The conjectures of Perrin-Riou are much studied in the literature and, via Proposition \ref{kato cor}, all results in this direction can be regarded as evidence in favour of Conjecture \ref{kato conj}. For example, in \cite{BD} Bertolini and Darmon report that they can prove the conjectures of \cite{PR} when $E$ has good ordinary reduction at $p$, whilst in \cite{venerucci} Venerucci proves an analogue of the conjectures when $E$ has split multiplicative reduction at $p$. More precisely, in \cite[Th. B]{venerucci} it is shown that $z_\QQ^{\rm Kato}$ vanishes if and only if ${\rm ord}_{s=1}L(E,s)>1$ and in \cite[Th. A]{venerucci} that if ${\rm ord}_{s=1}L(E,s)=1$, then there exists a non-zero rational number $\ell_1$ and a point $P$ in $E(\QQ)$ such that $\log_\omega(z_\QQ^{\rm Kato})=\ell_1 (\log_\omega(P))^2$ which proves a non-explicit version of the displayed equality in Proposition \ref{kato cor}(i). More recently, in \cite{BPS} B\"uy\"ukboduk, Pollack and Sasaki gave a proof of Perrin-Riou's conjecture in the good ordinary case that is different from that of Bertolini and Darmon. For details of further known results on the conjecture, see the discussion in \cite{buyuk perrin}.\end{remark}

\subsection{Kato's Euler system} In \cite[Ex. 13.3]{kato} Kato uses zeta elements to construct an Euler system. Since his definition of Euler systems is slightly different from ours, we explain how to use zeta elements to construct an Euler system in our sense that has `good' integrality properties.

In this section we continue to assume that $K=\QQ$. We use the standard notations from \cite{kato}.


\subsubsection{The Euler system}\label{section euler system}

Let $N$ be the conductor of $E$ and $f = \sum_{n=1}^\infty a_n q^n \in S_2(X_1(N))$ be the normalized newform corresponding to $E$. Let
$${}_{c,d}z_m^{(p)}(f,1,1,\xi,S_m) \in H^1(\cO_{\QQ(\mu_m),S_m},T(f))$$
be the element in \cite[(8.1.3)]{kato} (with $k=2$, $r=r'=1$), where
\begin{itemize}
\item $m $ is a positive integer,
\item $S_m:=S \cup \{\ell \mid m\}$ (note that with our convention $S$ contains the infinite place),
\item $c$ and $d$ are integers greater than 1 that are coprime to $6$ and to all primes in $S_m$ and are such that $c \equiv d \equiv 1 \text{ (mod $N$)}$ (this condition is necessary in order to apply \cite[Th. 6.6(1)]{kato}),
\item $\xi$ is a matrix in ${\rm SL}_2(\ZZ)$,
\item $T(f)$ is the maximal quotient of $H^1(Y_1(N)\times_\QQ \QQ^c, \ZZ_p(1))$ on which Hecke operators $T(n)$ act via $a_n$.
\end{itemize}
Note that, by fixing a modular parametrization $X_1(N) \to E$, we can regard $T(f)$ as the image of the following map:
\begin{eqnarray*}
H^1(Y_1(N)\times_\QQ \QQ^c ,\ZZ_p(1)) &\hookrightarrow& H^1(Y_1(N)\times_\QQ \QQ^c, \QQ_p(1))\\
&\to& H^1(X_1(N)\times_\QQ \QQ^c, \QQ_p(1)) \\
&\to & H^1(E\times_\QQ \QQ^c,\QQ_p(1)) \\
&\simeq& V,
\end{eqnarray*}
 where the second map is the Drinfeld-Manin splitting, the third is induced by the modular parametrization. In this way one can regard $T(f)$ as a sublattice of $V$. 

 Let $V_\ZZ(f)(1)$ be the maximal quotient of $H^1(Y_1(N)(\CC),\ZZ(1))$ on which Hecke operators $T(n)$ act via $a_n$. This is regarded as a sublattice of $H^1(E(\CC),\QQ(1))\simeq H_1(E(\CC),\QQ)$ via the modular parametrization, in the same way as above. We have the natural comparison isomorphism $V_{\ZZ}(f)(1)\otimes_{\ZZ} \ZZ_p \simeq T(f)$.

Let $\QQ(\mu_m)$ be the minimal cyclotomic field containing $F$. Then for any integers $c$ and $d$ as above we define an element of $\ZZ[G] \cap \QQ[G]^\times$ by setting
$$t_{c,d}:=cd(c-\sigma_c)(d-\sigma_d),$$
where $\sigma_a$ is the element of $G$ obtained by restricting the automorphism of $\QQ(\mu_m)$ that sends $\zeta_m$ to $\zeta_m^a$.


For each matrix $\xi$ in ${\rm SL}_2(\ZZ)$ we write $\delta(\xi)$ for the image in $H_1(E(\CC),\QQ)$ of the modular symbol
$$\{\xi(0),\xi(\infty)\} \in H_1(X_1(N)(\CC), \{{\rm cusps}\}, \ZZ) \simeq H^1(Y_1(N)(\CC),\ZZ)(1)$$
in $V_\ZZ(f)(1) \subset H_1(E(\CC),\QQ)$. We regard $\delta(\xi)$ as an element of $T(f)$ via the comparison isomorphism $V_{\ZZ}(f)(1)\otimes_\ZZ \ZZ_p\simeq T(f)$.

Since, by Manin's theorem, the group $H_1(X_1(N)(\CC), \{{\rm cusps}\}, \ZZ) $ is generated by the set $\{\{\alpha(0),\alpha(\infty)\} \mid \alpha \in {\rm SL}_2(\ZZ)\}$ we may, and will, choose $\xi$ in ${\rm SL}_2(\ZZ)$ so that the $\ZZ_p$-module $T(f)^+$ is generated by $e^+ \delta(\xi)$, where we set $e^+:=(1+c_\infty)/2$, with $c_\infty$ denoting complex conjugation.


We next note that the maximal abelian pro-$p$ extension $\cK$ of $\QQ$ that is unramified at primes dividing $cd$ satisfies Hypothesis \ref{hyp K}.

For each field $F'$ in $\Omega(\cK/\QQ)$ we now define an element ${}_{c,d} z_{F'}^{\rm Kato}$ in $H^1(\cO_{F',S(F')},T(f))$ as follows: writing $\QQ(\mu_{m'})$ for the minimal cyclotomic field that contains $F'$ (so that $S(F')=S_{m'}$), we set
$${}_{c,d}z_{F'}^{\rm Kato}:={\rm Cor}_{\QQ(\mu_{m'})/F'}({}_{c,d} z_{m'}^{(p)}(f,1,1,\xi,S(F')))$$
with the above choices of integers $c$ and $d$ and matrix $\xi$.

\begin{lemma}\label{kato euler} The collection ${}_{c,d}z^{\rm Kato}:=({}_{c,d}z_{F'}^{\rm Kato})_{F'}$ belongs to ${\rm ES}_1(T(f),\cK).$
\end{lemma}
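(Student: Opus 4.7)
The plan is to deduce this from Kato's distribution relations for the classes ${}_{c,d}z_m^{(p)}(f,1,1,\xi,S_m)$ as $m$ varies over integers prime to $cd$ and to the primes in $S$, and then to push these relations down via corestriction to obtain the compatibility required by the definition of ${\rm ES}_1(T(f),\cK)$ from \cite[Def. 6.4]{bss}. Concretely, the objects to be verified are: (a) integrality, i.e.\ that each ${}_{c,d}z_{F'}^{\rm Kato}$ really lies in $H^1(\cO_{F',S(F')},T(f))$ rather than merely in $H^1(\cO_{F',S(F')},V(f))$; (b) the norm/distribution relation
\[
{\rm Cor}_{F'/F}\bigl({}_{c,d}z_{F'}^{\rm Kato}\bigr) \;=\; \Bigl(\prod_{\fq \in S(F')\setminus S(F)} P_\fq(T(f);{\rm Fr}_\fq^{-1})\Bigr)\cdot {}_{c,d}z_{F}^{\rm Kato}
\]
for each pair $F\subseteq F'$ in $\Omega(\cK/\QQ)$; and (c) that the definition of ${}_{c,d}z_{F'}^{\rm Kato}$ is independent of the choice of cyclotomic overfield $\QQ(\mu_{m'})\supseteq F'$ used to construct it.

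For integrality I would invoke Kato's theorem \cite[Th.~8.1.7]{kato} (and the closely related \cite[Prop.~8.12]{kato}), which is exactly the statement that the factor $t_{c,d}$, built into the subscript $c,d$ of Kato's zeta elements, forces the rational class to land in the integral lattice $H^1(\cO_{\QQ(\mu_m),S_m},T(f))$; corestriction then preserves this integrality. For independence of $m'$, the key observation is that if $\QQ(\mu_m)\subseteq \QQ(\mu_{m'})$ are two cyclotomic fields both containing $F'$ (necessarily with $S_{m'}\supseteq S_m\supseteq S(F')$), then Kato's distribution relation \cite[Th.~6.6 and~(8.1.4)]{kato} gives
\[
{\rm Cor}_{\QQ(\mu_{m'})/\QQ(\mu_m)}\bigl({}_{c,d}z_{m'}^{(p)}(f,1,1,\xi,S_{m'})\bigr)
= \Bigl(\prod_{\ell \in S_{m'}\setminus S_m} P_\ell(T(f);{\rm Fr}_\ell^{-1})\Bigr)\cdot {}_{c,d}z_m^{(p)}(f,1,1,\xi,S_m),
\]
but on the other hand each of the Euler factors $P_\ell(T(f);{\rm Fr}_\ell^{-1})$ for $\ell\in S_{m'}\setminus S_m$ acts as the identity after applying ${\rm Cor}_{\QQ(\mu_m)/F'}$, since such an $\ell$ lies in $S(F')$ \emph{only via} ramification in $\QQ(\mu_{m'})/\QQ(\mu_m)$ and so its Frobenius restricts trivially to $F'$; comparison with the tower formula ${\rm Cor}_{\QQ(\mu_{m'})/F'} = {\rm Cor}_{\QQ(\mu_m)/F'}\circ{\rm Cor}_{\QQ(\mu_{m'})/\QQ(\mu_m)}$ then gives independence of $m'$.

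Once independence is in hand, the Euler system relation (b) for a general pair $F\subseteq F'$ reduces to exactly the same cyclotomic computation: pick $m$ and $m'$ with $F\subseteq \QQ(\mu_m)\subseteq \QQ(\mu_{m'})\supseteq F'$, apply Kato's distribution relation at the cyclotomic level, and corestrict from $\QQ(\mu_m)$ to $F$. The Euler factors at primes $\fq\in S(F')\setminus S(F)$ survive this corestriction because such primes ramify in $F'/F$ and thus their Frobenii remain non-trivial elements of $\Gal(F/\QQ)$; the remaining Euler factors in Kato's cyclotomic relation trivialise, as in the independence argument. Finally, the rank equals one because $T(f)$ is free of rank one over the valuation ring $\cO=\ZZ_p$ and $Y_\QQ(T(f))=H^0(\RR,T(f)^\ast(1))$ has $\ZZ_p$-rank one, so the classes ${}_{c,d}z_{F'}^{\rm Kato}\in H^1(\cO_{F',S(F')},T(f)) = {\bigcap}^1_{\ZZ_p[\Gal(F'/\QQ)]} H^1(\cO_{F',S(F')},T(f))$ have the correct shape.

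The principal technical obstacle is the bookkeeping for the Euler factor cancellation in the passage from Kato's cyclotomic distribution relation to the Euler system relation over the tower $\Omega(\cK/\QQ)$: one must verify carefully that the factors $P_\ell$ introduced by the cyclotomic relation which do \emph{not} correspond to primes in $S(F')\setminus S(F)$ become trivial under corestriction (because the relevant Frobenius acts trivially on $F/\QQ$), while those that \emph{do} correspond survive with the correct multiplicity. Everything else is essentially bookkeeping, once Kato's integrality and distribution relations \cite[Th.~6.6 and~Th.~8.1.7]{kato} are invoked.
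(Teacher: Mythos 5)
Your overall route — Kato's integrality and distribution relations, pushed down by corestriction — is the same one the paper uses: the paper's own proof is essentially a one-line citation of \cite[Prop.~8.12]{kato} together with the observation that $P_\ell(x)=\det(1-{\rm Fr}_\ell^{-1}x\mid T(f)^\ast(1))=1-a_\ell\ell^{-1}x+\ell^{-1}x^2$ is exactly the Euler factor required by the definition of ${\rm ES}_1(T(f),\cK)$. That part of your sketch is sound.

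However, your step (c), the ``independence of $m'$'', is where the argument genuinely breaks. The claim that the Euler factors $P_\ell(T(f);{\rm Fr}_\ell^{-1})$ for $\ell\in S_{m'}\setminus S_m$ ``act as the identity after applying ${\rm Cor}_{\QQ(\mu_m)/F'}$, since \ldots{} its Frobenius restricts trivially to $F'$'' is false on two counts. First, a prime $\ell$ that merely becomes unramified when one descends from $\QQ(\mu_{m'})$ to $F'$ need not have trivial Frobenius in $\Gal(F'/\QQ)$; unramified is not the same as split completely. Second, even when ${\rm Fr}_\ell=1$ in $\Gal(F'/\QQ)$, the Euler factor evaluates to $P_\ell(1)=1-a_\ell\ell^{-1}+\ell^{-1}$, which is not $1$ in general. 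In fact the element ${\rm Cor}_{\QQ(\mu_{m'})/F'}\bigl({}_{c,d}z_{m'}^{(p)}(f,1,1,\xi,S_{m'})\bigr)$ really does depend on the choice of cyclotomic overfield: by Kato's distribution relation two such choices differ by a product of non-trivial Euler factors, so the independence you set out to prove is false.

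Fortunately, it is also not needed. The paper's definition of ${}_{c,d}z_{F'}^{\rm Kato}$ specifies $m'$ uniquely as the minimal integer with $F'\subseteq\QQ(\mu_{m'})$; the parenthetical ``(so that $S(F')=S_{m'}$)'' records exactly this constraint. With this choice, for $F\subseteq F'$ in $\Omega(\cK/\QQ)$ one automatically has $m\mid m'$ for the corresponding minimal $m$, and applying Kato's distribution relation to the descent $\QQ(\mu_{m'})\to\QQ(\mu_m)$ and then corestricting to $F$ produces precisely the product of Euler factors over $\ell\in S_{m'}\setminus S_m = S(F')\setminus S(F)$ and nothing more. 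The ``principal technical obstacle'' you identify — verifying that extraneous Euler factors trivialise under corestriction — therefore does not arise: the minimality of $m'$ ensures there are no extraneous factors to cancel in the first place.
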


\begin{proof}
This follows directly from \cite[Prop. 8.12]{kato}, by noting that for any prime $\ell \notin S$ the polynomial $P_\ell(x):=\det(1-{\rm Fr}_\ell^{-1}x \mid T(f)^\ast(1))$ is equal to $1-a_\ell \ell^{-1}x + \ell^{-1}x^2.$
\end{proof}

\subsubsection{The definition of $z_F^{\rm Kato}$}\label{norm sec}

We recall that $\underline{b}$ and $e^+\delta(\xi)$ are bases of the $\ZZ_p$-modules $Y_\QQ(T)=T^+$ and $T(f)^+$. In particular, since $T^+$ and $T(f)^+$ are both lattices of $V^+$ there exists a unique $u$ in $\QQ_p^\times$ such that 
\begin{equation}\label{norm equation} e^+\delta(\xi)=u\cdot \underline{b} \text{ in }V^+.
\end{equation}

With the following definition, we now make the statement of Conjecture \ref{kato conj} precise.
\begin{definition}\label{choose kato} With $u$ in $\QQ_p^\times$ fixed as above, and $c$ and $d$ any choice of integers as in \S\ref{section euler system}, we set
$$z_F^{\rm Kato} := u^{-1}t_{c,d}^{-1} \cdot {}_{c,d} z_F^{\rm Kato} \in H^1(\cO_{F,S},V).$$
\end{definition}

\begin{remark}\label{rem euler}
If the residual Galois representation $T/p$ is irreducible,
then $T(f)$ and $T$ are homothetic (by \cite[Chap. I, \S1.1, Exc. 4]{serre}) and $u$ satisfies
\begin{equation*}T(f) = u\cdot T.\end{equation*}
In particular, Lemma \ref{kato euler} implies that the collection
%
%
\[  (u^{-1}\cdot {}_{c,d} z_{F'}^{\rm Kato})_{F'}\]
belongs to ${\rm ES}_1(T,\cK)$. Since $T(f)$ and $T$ are isomorphic Galois representations, we can identify them and, with this identification, the system $(u^{-1}\cdot {}_{c,d} z_{F'}^{\rm Kato})_{F'}$ is identified with ${}_{c,d}z^{\rm Kato}=({}_{c,d}z_{F'}^{\rm Kato})_{F'}$.
\end{remark}

\subsubsection{The proof of Proposition \ref{kato prop}}\label{proof of kato prop} Having defined the element $z_F^{\rm Kato}$ we can now prove Proposition \ref{kato prop}.

At the outset we note that, if satisfied, the respective conditions (a), (b) and (c) in Proposition \ref{kato prop} would uniquely determine the elements $e_{\chi}\cdot z_F^{\rm Kato}$ for all characters $\chi$ in $\widehat G \setminus \widehat G_0$ with
\[ \widehat G_0 := \{\chi\in \widehat G \mid L(E,\chi,1) \not=0\}.\]
(Note that $L(E,\chi,1) \neq 0$ implies ${\rm rk}_\chi(E(F)) = 0$. See \cite[Th. 14.2(2)]{kato}.)

Thus, if we could show that these conditions are satisfied with $z_F^{\rm Kato}$ replaced by $\eta_F := \eta_F^{\rm BSD}$, then it would follow that $(1-\varepsilon_0') \cdot z_F^{\rm Kato} = (1-\varepsilon_0')\cdot \eta_F^{\rm BSD}$, with $\varepsilon_0' := \sum_\chi e_\chi$ where $\chi$ runs over $\widehat G_0$.

To complete the proof of Proposition \ref{kato prop} it would then be enough to show that $\varepsilon_0' \cdot z_F^{\rm Kato} = \varepsilon_0'\cdot \eta_F^{\rm BSD}$ and this follows directly from Proposition \ref{kato bk} below.

It therefore suffices to check that the conditions (a), (b) and (c) in Proposition \ref{kato prop} are satisfied with $z_F^{\rm Kato}$ replaced by $\eta_F$.

To do this we recall that $\eta_F$ is characterized by an equality
$$\lambda(\eta_F)=\varepsilon\cdot \sum_{\chi \in \widehat G}L^\ast(E,\chi^{-1},1) e_\chi,$$
for a canonical isomorphism $\lambda = \lambda_{\underline{b},F}^{\rm BK}: \varepsilon(\CC_p \otimes_{\ZZ_p} H^1(\cO_{F,S},T)) \xrightarrow{\sim} \CC_p[G]\varepsilon$ and that, by Lemma \ref{elliptic coh}(iii), one has  $\varepsilon=\varepsilon_0+\varepsilon_1$.

In particular, since $\lambda$ is injective, this shows that the element $e_\chi \eta_F$ vanishes if and only if ${\rm rk}_\chi(E(F)) > 1$. This establishes that $\eta_F$ has property (c) and also
that the equivalence (\ref{kato vanish}) is valid.

Next we note that a straightforward check shows that $\varepsilon_0\cdot \lambda$ is given by
$$\lambda: \varepsilon_0(\CC_p \otimes_{\ZZ_p} H^1(\cO_{F,S},T)) \xrightarrow{\sim} \CC_p[G]\varepsilon_0; \ a \mapsto  {\rm Eul}_S \cdot \Omega^+ \sum_{\sigma \in G}\iota_0(\exp_\omega^\ast (\sigma a))\sigma^{-1},$$
where ${\rm Eul}_S \in \QQ[G]^\times$ is the product of Euler factors at primes in $S$, which satisfies
$${\rm Eul}_S \cdot \sum_{\chi \in \widehat G}L_S^\ast(E,\chi^{-1},1)e_\chi=\sum_{\chi \in \widehat G}L^\ast(E,\chi^{-1},1)e_\chi.$$
From this, we see that $\eta_F$ has the property (a).

In a similar way, an explicit check shows $\varepsilon_1\cdot \lambda$ is the following composite map:
\begin{eqnarray*}
\lambda: \varepsilon_1(\CC_p \otimes_{\ZZ_p} H^1(\cO_{F,S},T)) &\stackrel{(\ref{isom1})}{\simeq}& \varepsilon_1(\CC_p \otimes_\ZZ E(F)) \\
&\stackrel{P \mapsto (Q \mapsto \langle P,Q\rangle)}{\simeq}& \varepsilon_1(\CC_p \otimes_\ZZ E(F))^\ast \\
&\stackrel{(\ref{isom2})}{\simeq}& \varepsilon_1\left(\bigoplus_{\fp \in S_p(F)}\CC_p\otimes_{\ZZ_p}E(F_\fp)^\wedge \right)^\ast \\
&\simeq& \CC_p[G]\varepsilon_1,
\end{eqnarray*}
where the last isomorphism is given by
$$a \mapsto  {\rm Eul}_S \cdot \Omega^+ \sum_{\sigma \in G}\iota_0(\exp_\omega^\ast (\sigma a))\sigma^{-1}.$$
Since the second isomorphism sends $\varepsilon_1 \eta_F$ to $\langle \varepsilon_1 \eta_F, \varepsilon_1 \eta_F\rangle \cdot (\varepsilon_1 \eta_{F})^\ast$, we see that $\eta_{F}$ has the required property (b).

This completes the proof of Proposition \ref{kato prop}.\qed

\vspace{3mm}

The following (deep) result is essentially due to Kato \cite{kato}. 

\begin{proposition}\label{kato bk} For each $\chi \in \widehat G_0$ (i.e., $\chi$ in $\widehat G$ with $L(E,\chi,1)\not= 0$) one has $e_\chi\cdot  z_F^{\rm Kato}= e_\chi\cdot\eta_{F}^{\rm BSD}$.
\end{proposition}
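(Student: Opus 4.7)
The plan is to exploit the explicit characterization of $e_\chi\eta_F^{\rm BSD}$ derived during the proof of Proposition~\ref{kato prop} and to verify, via Kato's explicit reciprocity law, that $e_\chi z_F^{\rm Kato}$ satisfies the same characterization. As a first reduction, I would note that for $\chi\in\widehat G_0$ one has $L(E,\chi,1)\neq 0$ and so, by Kato's theorem \cite[Thm.~14.2]{kato}, the group $e_\chi(\QQ_p\otimes_\ZZ E(F))$ vanishes; by the same result the $\chi$-part of $\sha(E/F)[p^\infty]$ is finite. Hence $e_\chi$ is a summand of $\varepsilon_0$ and the exact sequence of Lemma~\ref{elliptic coh}(i) forces the dual exponential map
\[
\exp_\omega^{*}\colon e_\chi H^1(\mathcal{O}_{F,S},V) \longrightarrow e_\chi\Bigl(\bigoplus_{\fp\in S_p(F)}(\QQ_p\otimes_{\ZZ_p}E(F_\fp)^{\wedge})^{*}\Bigr)
\]
to be injective. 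It therefore suffices to prove that $\exp_\omega^{*}(e_\chi z_F^{\rm Kato})=\exp_\omega^{*}(e_\chi\eta_F^{\rm BSD})$.

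Next, the explicit description of $\varepsilon_0\cdot\lambda_{\underline{b},F}^{\rm BK}$ computed in the proof of Proposition~\ref{kato prop} shows that $e_\chi\eta_F^{\rm BSD}$ is uniquely characterized within $e_\chi H^1(\mathcal{O}_{F,S},V)$ by the identity
\[
\Omega^+\sum_{\sigma\in G}\iota_0\bigl(\exp_\omega^{*}(\sigma\cdot e_\chi\eta_F^{\rm BSD})\bigr)\chi(\sigma)\;=\;L_S^{*}(E,\chi,1).
\]
The task thus reduces to proving the analogous identity with $\eta_F^{\rm BSD}$ replaced by $z_F^{\rm Kato}$.

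This identity is what the explicit reciprocity law for Kato's zeta elements provides. Concretely, \cite[Thms.~6.6 and 12.5]{kato}, specialised at the non-critical twist $r=r'=1$, compute the $\chi$-twisted sums of $\exp_\omega^{*}$ applied to the $c,d$-smoothed elements ${}_{c,d}z_F^{\rm Kato}$ in terms of modular symbols whose $\chi$-average recovers the algebraic part $L_S^{*}(E,\chi,1)/\Omega^+$, scaled by the integer factor $t_{c,d}$ and by the factor $u\in\QQ_p^\times$ of \eqref{norm equation} relating the Kato basis $e^+\delta(\xi)$ of $T(f)^+$ to our chosen basis $\underline{b}$ of $Y_\QQ(T)=T^+$. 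By our normalisation $z_F^{\rm Kato}:=u^{-1}t_{c,d}^{-1}\cdot{}_{c,d}z_F^{\rm Kato}$ in Definition~\ref{choose kato} these factors cancel exactly, yielding the required identity and hence, by the injectivity of $\exp_\omega^{*}$ established in the first step, the proposition.

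The main obstacle lies not in mathematical content but in bookkeeping: one must carefully track the period $\Omega^+$ on each side, reconcile the Euler-factor conventions relating $L^{*}(E,\chi,1)$ to $L_S^{*}(E,\chi,1)$, handle the lattice comparison via $u$ between the Kato sublattice $T(f)$ of $V$ (coming from a modular parametrisation) and the original Tate module $T$, and match Kato's conventions for modular symbols and the Drinfeld--Manin splitting with the Bloch--Kato normalisation of $\lambda_{\underline{b},F}^{\rm BK}$. No new ingredient beyond Kato's work is needed; the argument is entirely an unpacking of \cite{kato}.
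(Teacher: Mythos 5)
Your proposal is correct and follows essentially the same route as the paper: both reduce the claim to comparing $\exp^*$ of the two elements via Kato's $L$-value formula (the paper cites \cite[Th.~6.6 and 9.7]{kato} where you cite Thm.~12.5, but these encode the same interpolation property), then cancel the normalisation factors $u$ and $t_{c,d}$. Your step making the injectivity of $\exp_\omega^*$ on $e_\chi H^1(\cO_{F,S},V)$ explicit via Lemma~\ref{elliptic coh}(i) is a slightly more careful articulation of what the paper leaves implicit in invoking the injectivity of $\varepsilon_0\cdot\lambda$.
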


\begin{proof} 
We regard the element $e^+\delta(\xi)$ of $T(f)^+$ as an element of
$$Y_F(T(f)):=\bigoplus_{w \in S_\infty(F)}H^0(F_w,T(f))=\bigoplus_{\iota: F \hookrightarrow \RR} T(f)^+$$
by placing it in that component of the product which corresponds to the embedding $F \to \RR$ obtained by restricting $\sigma_0$.

In this way the equality in (\ref{norm equation}) implies that $e^+\delta(\xi) = u\cdot \underline{b}$ where $\underline{b}$ is regarded as an element of $Y_F(T)$ as in the definition of $\eta_F := \eta_F^{\rm BSD}(:=\eta_{\underline{b},F}^{\rm BK})$.

%

To proceed, we use the homomorphism
$$\exp^\ast: H^1(\cO_{F,S},V) \to \bigoplus_{\fp \in S_p(F)} H^1_{/f}(F_\fp,V) \to  \Gamma(E,\Omega_{E/F}^1) \otimes_\QQ \QQ_p$$
that is induced by the dual exponential map.

In particular, we recall from Kato \cite[Th. 6.6 and 9.7]{kato} that the image of ${}_{c,d}z_F^{\rm Kato}$ under $\exp^\ast$ belongs to $\Gamma(E,\Omega_{E/F}^1)$ and that for each $\chi$ in $\widehat G_0$ one has
\begin{equation}\label{key equal}\sum_{\sigma \in G} \chi^{-1}(\sigma){\rm per}(\sigma \exp^\ast({}_{c,d}z_F^{\rm Kato}))=L_S(E,\chi^{-1},1)\cdot \chi(t_{c,d})e^+ \delta(\xi),\end{equation}
where
\[{\rm per}: \Gamma(E,\Omega_{E/F}^1)=\Gamma(E,\Omega_{E/\QQ}^1)\otimes_\QQ F \to  H_1(E(\CC),\RR)^{+,\ast} = H_1(E(\CC),\RR)^{+}\]
is the (dual) period map that is induced by sending each $\omega\otimes a$ to the map $\gamma \mapsto \iota_0(a) \int_\gamma \omega$ (and then identifying $H_1(E(\CC),\RR)^{+}$ with its linear dual in the canonical way).

On the other hand, the definition of $\eta_F$ combines with the explicit description of $\varepsilon_0\cdot \lambda$ given in the above proof of Proposition \ref{kato prop} to imply that for each $\chi$ in $\widehat G_0$ one has
$$\sum_{\sigma \in G} \chi^{-1}(\sigma){\rm per}(\sigma \exp^\ast(\eta_{F}))=L_S(E,\chi^{-1},1)\cdot \underline{b}.$$

Upon comparing this equality with (\ref{key equal}), and recalling that $e^+\delta(\xi) = u\cdot \underline{b}$, one finds that
\[ e_\chi \cdot {}_{c,d}z_F^{\rm Kato} = e_\chi\cdot \chi(t_{c,d})u\cdot\eta_F = e_\chi \cdot  t_{c,d}u\cdot \eta_F\]
and hence that $e_\chi\cdot  z_F^{\rm Kato}= e_\chi\cdot\eta_{F}$, as required.
\end{proof}

\subsection{The equivariant Birch and Swinnerton-Dyer Conjecture}\label{eBSD} We assume throughout that $K = \QQ$ and $p>3$. We also continue assuming that $\sha(E/F)[p^\infty]$ is finite.

In this subsection, we apply Theorems \ref{main} and \ref{hes2} to derive evidence for natural equivariant refinements of the Birch and Swinnerton-Dyer Conjecture for $E$ over $F$.

For brevity, we shall say that $E$ `validates the $p$-part of the Birch and Swinnerton-Dyer conjecture over $\QQ$', or more simply that `${\rm BSD}_p(E/\QQ)$ is valid', if the following three conditions are satisfied:
\begin{itemize}
\item[-] $\sha(E/\QQ)$ is finite;
\item[-] the order of vanishing of $L(E,s)$ at $s=1$ is equal to ${\rm rank}(E(\QQ))$;
\item[-] the formula for the leading term at $s=1$ of $L(E,s)$ predicted by the Birch and Swinnerton-Dyer Conjecture is valid up to multiplication by an element of $\ZZ_p^\times$.
\end{itemize}
Note that the last condition is equivalent to ${\rm TNC}(h^1(E)(1),\ZZ_p)$.

The following observation will be used:
if $E$ has good reduction at $p$, then we can take $c $ and $d$ so that
\begin{equation}\label{t condition} t_{c,d}\in \ZZ_p[G]^\times.\end{equation}
In fact, one can take
$$c=d=1+6Ne\prod_{\ell \in S,\ \ell \neq p}\ell,$$
where $\ell$ runs over prime numbers and $e$ is any choice of integer with
\[ 6Ne \prod_{\ell \in S, \ \ell \neq p}\ell \not\equiv\,\, 0,-1 \,\, (\text{mod}\, p).\]
%
%
%
%
Note, however, that if $E$ has bad reduction at $p$, then the element $t_{c,d}^{-1}$ of $\QQ_p[G]^\times$ need not belong to $\ZZ_p[G]$ and so $z_F^{\rm Kato}$ need not belong to $H^1(\cO_{F,S},T)$.

\subsubsection{Statement of the main results}

\begin{theorem}\label{cor kato} Assume that the following two conditions are satisfied:
\begin{itemize}
\item[(a)] The image of the Galois representation $\rho: G_\QQ \to {\rm Aut}(T) \simeq {\rm GL}_2(\ZZ_p)$
contains ${\rm SL}_2(\ZZ_p)$;
\item[(b)] The group $E(\QQ_\ell)[p]$ vanishes for every prime number $\ell$ in $S$.
\end{itemize}
Then the following claims are valid.
\begin{itemize}
\item[(i)] For all non-negative integers $j$ one has
\[ I_j({\cR}_1^{-1}(\cD_{1}({}_{c,d}z^{\rm Kato}))) \subseteq {\rm Fitt}_{\ZZ_p[G]}^j({\rm Sel}_p^{\rm str}(E/F)^\vee).\]
In particular, the ideal 
$$I({}_{c,d}z_F^{\rm Kato}):=\{\psi({}_{c,d}z_F^{\rm Kato}) \mid \psi \in \Hom_{\ZZ_p[G]}(H^1(\cO_{F,S},T),\ZZ_p[G])\}$$
is contained in ${\rm Fitt}_{\ZZ_p[G]}^0({\rm Sel}_p^{\rm str}(E/F)^\vee).$
\item[(ii)] All of the inclusions in claim (i) are equalities whenever each of the following conditions is satisfied:
\begin{itemize}
\item[(c)] $E$ has good reduction at $p$;
\item[(d)] ${\rm BSD}_p(E/\QQ)$ is valid;
\item[(e)] Either $E(\QQ)$ has rank zero or $E$ validates the conjectures \cite[Conj. 3.3.5(i) and Conj. 3.3.2]{PR} of Perrin-Riou.
\end{itemize}

\item[(iii)] If the conditions (d) and (e) in claim (ii) are satisfied, then for all $\chi$ in $\widehat G$ one has $e_\chi z_F^{\rm Kato}\not=0$ if and only if ${\rm rk}_\chi(E(F)) \le 1$.
\end{itemize}
\end{theorem}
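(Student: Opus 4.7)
The plan is to apply Theorem~\ref{main} to the Kato Euler system. Under condition (a) the residual representation $\overline{T} = E[p]$ is irreducible, so by Remark~\ref{rem euler} the element $u^{-1}\cdot {}_{c,d}z^{\rm Kato}$ lies in ${\rm ES}_1(T,\cK)$, where $\cK$ is the maximal pro-$p$ abelian extension of $\QQ$ unramified outside primes dividing $cd$ (and so satisfies Hypothesis~\ref{hyp K}). The first task is to verify that the data $(T,F)$ satisfy the hypotheses (H$_0$)--(H$_5$) of \S\ref{stand hyp sec}: (H$_0$) is automatic for elliptic curves and (H$_4$) is vacuous since $p > 3$; (H$_1$)--(H$_3$) follow from the large-image hypothesis (a) together with $p > 3$, using the irreducibility of $\FF_p^2$ under ${\rm SL}_2(\FF_p)$, the existence of unipotent elements in the image of $G_{F_{p^\infty}}$, and the standard vanishing $H^1({\rm SL}_2(\FF_p),\FF_p^2) = 0$; and (H$_5$) follows from (b) together with the Weil self-duality $\overline{T}^\vee(1)\simeq \overline{T}$. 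Theorem~\ref{main}(ii) then gives
\[ \chi_{\rm can}(\cT) = {\rm rank}_{\ZZ_p}(T^+) + {\rm rank}_{\ZZ_p}(T^{G_{\QQ_p}}) = 1 + 0 = 1, \]
matching the rank of the Kato system, and claim (i) follows from Theorem~\ref{main}(iii)(a) and (c) together with the strict Selmer identification of Example~\ref{selmer exams}(ii).

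For claim (ii), Theorem~\ref{main}(iii)(c) reduces all the required equalities to the single equality $I({}_{c,d}z_F^{\rm Kato}) = {\rm Fitt}^0_{\ZZ_p[G]}({\rm Sel}_p^{\rm str}(E/F)^\vee)$; condition (c) permits the choice of $c,d$ with $t_{c,d} \in \ZZ_p[G]^\times$ via (\ref{t condition}). We then use Nakayama descent following the template in the proof of Theorem~\ref{main RS}: the argument of Lemma~\ref{codescent}(ii) identifies the $G$-coinvariants of $I({}_{c,d}z_F^{\rm Kato})$ with $I({}_{c,d}z_\QQ^{\rm Kato})$, while \cite[Cor.~3.8]{bss} gives $({\rm Sel}_p^{\rm str}(E/F)^\vee)_G \simeq {\rm Sel}_p^{\rm str}(E/\QQ)^\vee$. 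The problem therefore reduces to the base-case equality $I({}_{c,d}z_\QQ^{\rm Kato}) = {\rm Fitt}^0_{\ZZ_p}({\rm Sel}_p^{\rm str}(E/\QQ)^\vee)$, which follows by combining: Proposition~\ref{kato cor} with conditions (d) and (e) to conclude that $z_\QQ^{\rm Kato} = \eta_\QQ^{\rm BSD}$; Theorem~\ref{prop Xi} to express $\im(\eta_\QQ^{\rm BSD}) = {\rm Fitt}^0_{\ZZ_p}(H^2(\cO_{\QQ,S}, T))\cdot \Xi(\ZZ_p, T, \QQ/\QQ)^{-1}$; condition (d), which is exactly the statement $\Xi(\ZZ_p, T, \QQ/\QQ) = \ZZ_p$; and the identification $H^2(\cO_{\QQ,S},T) \simeq {\rm Sel}_p^{\rm str}(E/\QQ)^\vee$ afforded under (H$_5$) by Lemmas~\ref{compare} and~\ref{surjective} with Example~\ref{selmer exams}(iii).

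For claim (iii), since $u$ and $t_{c,d}$ are units in $\QQ_p$ and $\QQ_p[G]$ respectively, $e_\chi z_F^{\rm Kato}\neq 0$ if and only if $e_\chi{}_{c,d}z_F^{\rm Kato}\neq 0$, which, after extending scalars to $\overline{\QQ_p}$, is equivalent to $e_\chi I({}_{c,d}z_F^{\rm Kato}) \neq 0$ (since $H^1(\cO_{F,S},T)\otimes\overline{\QQ_p}$ is a semisimple $\overline{\QQ_p}[G]$-module and Hom-evaluation at a non-zero vector is surjective onto its $\chi$-component). By claim (ii), this is equivalent to the non-vanishing of $e_\chi{\rm Fitt}^0({\rm Sel}_p^{\rm str}(E/F)^\vee \otimes \overline{\QQ_p})$, which holds precisely when $e_\chi({\rm Sel}_p^{\rm str}(E/F)\otimes \overline{\QQ_p})$ vanishes; by the equality $\varepsilon_{T,F} = \varepsilon_0 + \varepsilon_1$ of Lemma~\ref{elliptic coh}(iii), this last condition is equivalent to ${\rm rk}_\chi(E(F)) \leq 1$. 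The main technical obstacle lies in the descent in claim (ii): one must carefully verify the identifications of $G$-coinvariants, tracking the normalization factors $u^{-1}$ and $t_{c,d}^{-1}$ through the corestriction relation ${\rm Cor}_{F/\QQ}({}_{c,d}z_F^{\rm Kato}) = {}_{c,d}z_\QQ^{\rm Kato}$ (note that no Euler factor appears because $S(F) = S(\QQ)$ in our setup) and checking that (H$_5$) guarantees the local vanishing needed to identify $H^2(\cO_{F,S},T)$ with ${\rm Sel}_p^{\rm str}(E/F)^\vee$ at both the $F$-level and the $\QQ$-level compatibly.
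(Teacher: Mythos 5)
Your proposal is correct and follows essentially the same route as the paper: verify the standard hypotheses via Lemma~\ref{lemma hyp}, apply Theorem~\ref{main}(ii) to get $\chi_{\rm can}(\cT)=1$ and Theorem~\ref{main}(iii) for the inclusions, descend to $\QQ$ by Nakayama using the argument of Lemma~\ref{codescent}(ii) together with \cite[Cor.~3.8]{bss}, and settle the base case via Proposition~\ref{kato cor}, Theorem~\ref{prop Xi}, and the validity of ${\rm BSD}_p(E/\QQ)$, with claim~(iii) then following from claim~(ii) and Lemma~\ref{elliptic coh}(iii). The only minor imprecision is that in the rank-zero branch of condition~(e) the identity $z_\QQ^{\rm Kato}=\eta_\QQ^{\rm BSD}$ comes from Proposition~\ref{kato bk} (via Remark~\ref{rem kato prop}) rather than Proposition~\ref{kato cor}, but the paper is equally terse at this point.
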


\begin{remark}\label{cor kato rem}\

\noindent{}(i) There are by now many circumstances in which ${\rm BSD}_p(E/\QQ)$ is known to be valid. For example, if $L(E,1)\not=0$, $E$ is semistable and has good ordinary reduction at $p$ and $p$ is at least $11$, then the validity of ${\rm BSD}_p(E/\QQ)$ is proved by Skinner and Urban in \cite{SU}. In addition, if $L(E,s)$ vanishes to order one at $s=1$, the conductor of $E$ is square-free, $p$ is at least $5$ and such that the Galois representation $T_p(E)/p$ is irreducible, then the validity of ${\rm BSD}_p(E/\QQ)$ was recently is proved by Jetchev, Skinner and Wan in \cite{JSW}.

\noindent{}(ii) It is believed that the conjectures \cite[Conj. 3.3.5(i) and Conj. 3.3.2]{PR} of Perrin-Riou have been proved whenever $E$ has good ordinary reduction at $p$ - see the discussion in Remark \ref{perrin riou}(ii).

\noindent{}(iii) Proposition \ref{kato prop}(c) implies that Theorem \ref{cor kato}(iii) provides evidence in favour of Conjecture \ref{kato conj}.
\end{remark}

\begin{remark}\label{kurihara rem} Claims (i) and (ii) of Theorem \ref{cor kato} (and parts (i) and (ii) of Remark \ref{cor kato rem}) combine to imply that, in a wide variety of situations, Kato's zeta elements determine the ideal ${\rm Fitt}_{\ZZ_p[G]}^j({\rm Sel}_p^{\rm str}(E/F)^\vee)$ for every $j \ge 0$ and hence strongly control the structure of ${\rm Sel}_p^{\rm str}(E/F)^\vee$ as a $\Z_p[G]$-module (for example, if $G$ is trivial, then the higher Fitting ideals together determine this structure up to isomorphism). This observation complements the main results of Kurihara in \cite{mk, mk2} in which Euler and Kolyvagin systems of Gauss-sum type are used to obtain information about the Galois structure of
 Selmer groups. However, some of the hypotheses that are used in loc. cit. are much stronger than those used here and include, for example, the assumed validity of main conjectures of Iwasawa theory, vanishing of $\mu$-invariants and non-degeneracy of $p$-adic height pairings. \end{remark}


In the next result we use Kato's zeta element to obtain new evidence for equivariant refinements of the Birch and Swinnerton-Dyer conjecture.

In this result we use the $\ZZ_p$-order in $\QQ_p[G]\varepsilon$ that is obtained by setting
\begin{equation}\label{ord def} \cR:=\{x \in \QQ_p[G]\varepsilon \mid x\cdot  {\rm Fitt}_{\ZZ_p[G]}^0( {\rm Sel}_{p}^{\rm str}(E/F)^\vee) \subseteq  {\rm Fitt}_{\ZZ_p[G]}^0( {\rm Sel}_{p}^{\rm str}(E/F)^\vee)\}.\end{equation}

\begin{theorem}\label{theorem ell 2} The conjecture ${\rm TNC}(h^1(E_{/F})(1),\cR)$ is valid whenever all of the following conditions are satisfied:

\begin{itemize}
\item[(i)] The conditions (a), (b), (c), (d) and (e) in Theorem \ref{cor kato} are satisfied;
\item[(ii)] For any $\chi$ in $\widehat G$ that has both $L(E,\chi,1) =0$ and ${\rm rk}_\chi(E(F))\le 1$ one has
\[ \Omega^+ \sum_{\sigma \in G} \iota_0( \exp_\omega^\ast(\sigma z_F^{\rm Kato})) \chi(\sigma)=L_S^\ast(E,\chi,1)\]
if ${\rm rk}_\chi(E(F))=0$, and
$$\Omega^+ \langle \varepsilon_1 z_F^{\rm Kato}, \varepsilon_1 z_F^{\rm Kato} \rangle \sum_{\sigma \in G} \iota_0(\exp_\omega^\ast(\sigma (\varepsilon_1 z_F^{\rm Kato})^\ast)) \chi(\sigma)=L_S^\ast(E,\chi,1)$$
if ${\rm rk}_\chi(E(F))=1$.
\end{itemize}
\end{theorem}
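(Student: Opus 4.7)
The plan is to apply Theorem \ref{cor1} with $K = \QQ$, $M = h^1(E)(1)$, $T = T_p(E)$, and an Euler system built from Kato's zeta elements. The conclusion will then follow from the input TNC$(h^1(E)(1),\ZZ_p)$, which is the statement of ${\rm BSD}_p(E/\QQ)$ granted by condition (d) of Theorem \ref{cor kato}.

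First I verify the hypotheses of Theorem \ref{cor1}. Condition (a) is immediate since $Y_\QQ(T) = T^+$ is free of rank $r = 1$ over $\ZZ_p$. Condition (c), the standard hypotheses of \S\ref{stand hyp sec}, is established as follows: (H$_0$) holds for almost all primes $\fq$ since no power of ${\rm Fr}_\fq$ can act as the identity on $T$ by the Weil bound; (H$_1$), (H$_2$) and (H$_3$) follow from condition (a) of Theorem \ref{cor kato}, namely ${\rm Im}(\rho) \supseteq {\rm SL}_2(\ZZ_p)$, which ensures residual irreducibility, the existence of a suitable $\tau$ with $T/(\tau-1)T$ free of rank one, and vanishing of the relevant Galois cohomology; (H$_4$) is vacuous since $p > 3$; and (H$_5$) reduces via the Weil-pairing isomorphism $T^\vee(1) \simeq T$ to the vanishing of $E(\QQ_\ell)[p]$ for each $\ell \in S$, which is condition (b) of Theorem \ref{cor kato}.

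For condition (b) of Theorem \ref{cor1} I take $\cK$ to be a maximal abelian pro-$p$ extension of $\QQ$ unramified outside a suitable finite set of primes; it contains the cyclotomic $\ZZ_p$-extension and so satisfies Hypothesis \ref{hyp K}. Using good reduction at $p$ (condition (c) of Theorem \ref{cor kato}) I choose the auxiliary integers $c,d$ in Kato's construction so that $t_{c,d} \in \ZZ_p[G]^\times$, and via Remark \ref{rem euler} the big-image condition yields $T(f) = u \cdot T$ in $V$ for a scalar $u \in \QQ_p^\times$. The rescaled collection $z^{\rm Kato} := (u^{-1} t_{c,d}^{-1} \cdot {}_{c,d} z^{\rm Kato}_{F'})_{F'}$ is then an Euler system in ${\rm ES}_1(T, \cK)$ whose $F$-component is $z_F^{\rm Kato}$. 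The image equality $\im(z_F^{\rm Kato}) = \im(\eta_F^{\rm BSD})$ required by Theorem \ref{cor1}(b) is deduced from the stronger element equality $z_F^{\rm Kato} = \eta_F^{\rm BSD}$, i.e., Conjecture \ref{kato conj}, which by Proposition \ref{kato prop} reduces to three conditions: conditions (a) and (b) of that proposition, concerning characters $\chi$ with $L(E,\chi,1) = 0$ and ${\rm rk}_\chi(E(F)) \le 1$, are precisely hypothesis (ii) of the present theorem; Proposition \ref{kato bk} handles all characters with $L(E,\chi,1) \neq 0$ unconditionally; and condition (c), namely the vanishing of $e_\chi z_F^{\rm Kato}$ for ${\rm rk}_\chi(E(F)) > 1$, is provided by Theorem \ref{cor kato}(iii), whose hypotheses (d) and (e) are included in our condition (i).

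Having verified all conditions of Theorem \ref{cor1}, claim (iii) of that theorem gives TNC$(h^1(E_{/F})(1), \cR')$ where $\cR'$ is the associated order of ${\rm Fitt}^0_{\ZZ_p[G]}(H^2(\cO_{F,S}, T))$. To conclude it remains to identify $\cR'$ with the order $\cR$ of (\ref{ord def}): since (H$_5$) is satisfied, Lemma \ref{compare} shows that $\cF_{\rm can}$, $\cF_{\rm ur}$ and $\cF_{\rm rel}$ all coincide, while Lemma \ref{surjective} combined with (H$_5$) gives the vanishing of $H^2(F_v, T)$ for all $v \in S$; together with Examples \ref{selmer exams}(ii) and (iii), these identifications yield a canonical isomorphism $H^2(\cO_{F,S}, T) \simeq {\rm Sel}_p^{\rm str}(E/F)^\vee$, whence $\cR' = \cR$. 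The hard part of the argument is the image-matching step: although conditions (a) and (b) of Proposition \ref{kato prop} are directly the content of hypothesis (ii), the vanishing condition (c) for high-rank characters is more subtle and relies essentially on Theorem \ref{cor kato}(iii), which encodes nontrivial Perrin-Riou type information about the vanishing of Kato's zeta element at the $F$-level.
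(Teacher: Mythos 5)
Your argument is correct and follows the same route as the paper: you reduce to Theorem \ref{cor1}(iii) by building Kato's Euler system as input and using Proposition \ref{kato prop} (with conditions (a) and (b) supplied by hypothesis (ii), condition (c) supplied by Theorem \ref{cor kato}(iii), and the $L(E,\chi,1)\neq 0$ characters handled by Proposition \ref{kato bk}) to establish $z_F^{\rm Kato}=\eta_F^{\rm BSD}$; you then invoke ${\rm BSD}_p(E/\QQ) = {\rm TNC}(h^1(E)(1),\ZZ_p)$ as the descent input and identify the associated order via $H^2(\cO_{F,S},T)\simeq{\rm Sel}_p^{\rm str}(E/F)^\vee$. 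In fact you are a bit more explicit than the paper's proof, which passes over the role of Theorem \ref{cor kato}(iii) in supplying Proposition \ref{kato prop}(c) without comment; the only small slip is that $\cK$ is the maximal abelian pro-$p$ extension unramified \emph{at} the primes dividing $cd$ (so that it contains $\QQ(\fq)$ for all other primes and thus meets Hypothesis \ref{hyp K}), not unramified \emph{outside} a finite set.
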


\begin{remark}\label{rem ell 2}\

\noindent{}(i) The displayed equalities in Theorem \ref{theorem ell 2}(ii) are predicted by a special case of Conjecture \ref{kato conj} (see Proposition \ref{kato prop}).

\noindent{}(ii) To give a simple application of Theorem \ref{theorem ell 2}, we assume to be given a finite set of prime numbers $\Sigma$ that contains $p$ and an elliptic curve $E$ over $\QQ$ that has good reduction outside $\Sigma\setminus \{p\}$ and is also such that both the image in ${\rm GL}_2(\ZZ_p)$ of the associated Galois representation on $T$ contains ${\rm SL}_2(\ZZ_p)$ and also $E(\Q_\ell)[p]$ vanishes for all $\ell$ in $\Sigma$. Now let $F/\QQ$ be any abelian extension of $p$-power degree that is unramified outside $\Sigma$ and such that ${\rm Sel}_p(E/F)$ vanishes and set $G := \Gal(F/\QQ)$. Then, in this case the order $\cR$ defined by (\ref{ord def}) is equal to $\ZZ_p[G]$ and, as an easy special case, Theorem \ref{theorem ell 2} implies that ${\rm TNC}(h^1(E_{/F})(1),\ZZ_p[G])$ is valid whenever ${\rm BSD}_p(E/\QQ)$ is valid. This observation strongly refines the main results of Bley in \cite{Bley} which have hitherto provided the best available evidence in support of the equivariant Tamagawa number conjecture for elliptic curves over abelian extensions of $\QQ$.
\end{remark}

To end this section we show that the order $\cR$ defined by (\ref{ord def}) can be explicitly computed in a much more general setting than is considered in Remark \ref{rem ell 2}(ii).

We define the `strict $p$-primary Tate-Shafarevich group' $\sha^{\rm str}_p(E/F)$ of $E$ over a number field $F$ to be the quotient of ${\rm Sel}^{\rm str}_{p}(E/F)$ by its maximal divisible subgroup.

\begin{proposition}\label{yak prop} If $\sha(E/F)$ is finite, then the following claims are valid.
\begin{itemize}
\item[(i)] If  $\sha^{\rm str}_p(E/F)$ vanishes and ${\rm rk}_\phi(E(F)) \le 1$ for all $\phi$ in $\widehat{G}$, then $\cR=\ZZ_p[G]$.
\item[(ii)] Assume that $G$ is cyclic and write $J$ for the maximal subgroup of $G$ for which the quotient ${\rm Sel}^{\rm str}_{p}(E/F)/{\rm Sel}^{\rm str}_{p}(E/F^J)$ is finite. Then, if $\sha^{\rm str}_p(E/K)$ vanishes for all $K$ with $\QQ\subseteq K \subseteq F$, the order $\cR$ is equal to $\ZZ_p[G](1-e_J)$.
\end{itemize}
\end{proposition}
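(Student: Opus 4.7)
Both parts analyse the $\ZZ_p[G]$-module $M := {\rm Sel}^{\rm str}_p(E/F)^\vee$ by means of Lemma~\ref{elliptic coh}, which supplies both the identification $\QQ_p \otimes M \cong H^2(U_F, V)$ and the explicit decomposition $\varepsilon = \varepsilon_0 + \varepsilon_1$; once $M$ is understood, one reads off $\cR$ from the Fitting ideal and the definition~(\ref{ord def}).

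For (i), the hypothesis $\sha^{\rm str}_p(E/F) = 0$ renders ${\rm Sel}^{\rm str}_p(E/F)$ divisible, so $M$ is $\ZZ_p$-torsion-free. Meanwhile, the assumption ${\rm rk}_\phi(E(F)) \le 1$ for all $\phi \in \widehat G$ forces $\varepsilon = 1$ in $\QQ_p[G]$ via Lemma~\ref{elliptic coh}(iii), and hence $M \otimes \QQ_p = 0$ since $\varepsilon$ annihilates this space by its defining property. Being simultaneously $\ZZ_p$-free and $\ZZ_p$-torsion, $M$ must vanish, so ${\rm Fitt}^0_{\ZZ_p[G]}(M) = \ZZ_p[G]$ and~(\ref{ord def}) gives directly $\cR = \ZZ_p[G]$.

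For (ii), I would proceed in three steps. First, the hypothesis $\sha^{\rm str}_p(E/K) = 0$ for each intermediate $K$ ensures that every ${\rm Sel}^{\rm str}_p(E/F^H)^\vee$ is $\ZZ_p$-free; Pontryagin-dualising the inflation--restriction sequence then forces the natural map $M \to {\rm Sel}^{\rm str}_p(E/F^J)^\vee$ to be a $\QQ_p$-isomorphism, so that $J$ acts trivially on $M$ and $M$ is naturally a $\ZZ_p[G/J]$-module. Second, the maximality of $J$ in the cyclic lattice of subgroups of $G$ is exploited to identify $\varepsilon$ with $1-e_J$, namely by showing that the set of characters $\chi$ with $e_\chi(M \otimes \QQ_p) \ne 0$ coincides with $\widehat{G/J}$, using the uniform freeness at every intermediate level. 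Third, a direct computation of the Fitting ideal of $M$ viewed as a $\ZZ_p[G/J]$-module, combined with an associated-order argument of Cornacchia and Greither type as invoked in the proof of Lemma~\ref{min order}, yields $\cR = \ZZ_p[G](1-e_J)$.

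The most delicate step is the identification $\varepsilon = 1-e_J$ in~(ii): it requires the cyclicity of $G$ and the maximality of $J$ to be used in tandem with the uniform freeness coming from the vanishing $\sha^{\rm str}_p$ hypothesis, since $J$ acting trivially on $M$ alone only guarantees an inclusion $\Sigma \subseteq \widehat{G/J}$. Once this equality is in hand, the remaining Fitting-ideal and associated-order computations proceed along fairly standard lines.
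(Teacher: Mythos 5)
Your proof of part (i) is essentially the paper's argument. Both you and the paper show the dual module $M := {\rm Sel}^{\rm str}_p(E/F)^\vee$ vanishes by combining the two hypotheses: the rank condition forces $\varepsilon = 1$ (via Lemma \ref{elliptic coh}(iii)) and hence $M \otimes \QQ_p = 0$, while the vanishing of $\sha^{\rm str}_p(E/F)$ makes $M$ torsion-free; since $M$ is both torsion and torsion-free it is zero, and $\cR = \ZZ_p[G]$ follows.

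Part (ii) has a genuine gap at the step you yourself flag as delicate. Your step 1 is correct: the identifications $(X_F)_J \cong X_{F^J}$ and the torsion-freeness of $X_F$ do force $J$ to act trivially on $M$. But the equality $\varepsilon = 1 - e_J$ requires the \emph{reverse} inclusion $\widehat{G/J} \subseteq \{\chi : e_\chi(M \otimes \QQ_p) \ne 0\}$, and this does \emph{not} follow from knowing only the coranks of the intermediate Selmer groups or the $\ZZ_p$-freeness of each $M_H$ in isolation; those conditions impose linear constraints on the rational components of $M$ but do not rule out, say, a module whose rational support skips a non-faithful character. The paper closes this gap by invoking Yakovlev's structure theorem \cite{yakovlev}: the vanishing of $\sha^{\rm str}_p(E/F^H)$ for all $H$ translates (for $M$ torsion-free and $H$ cyclic) into $\hat H^{-1}(H, M) \cong (M_H)_{\rm tor} = 0$, and Yakovlev's theorem for cyclic $p$-groups then forces $M \cong \bigoplus_H \ZZ_p[G/H]^{n(H)}$. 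It is precisely this permutation-module decomposition that makes the rational support of $M$ equal to the full set $\widehat{G/J_0}$ with $J_0$ the smallest $H$ appearing, and identifies $J_0$ with the maximal $J$ of the statement. Your phrase ``using the uniform freeness at every intermediate level'' correctly names the input to Yakovlev but substitutes no mechanism for the structure theorem itself, so as written the argument does not go through. Your step 3 is also off-track: the Cornacchia--Greither result invoked in Lemma \ref{min order} concerns finite modules over Gorenstein orders, whereas $M$ here is $\ZZ_p$-free; the actual computation of $\cR$ exploits that ${\rm Fitt}^0_{\ZZ_p[G]}(X_F) = \prod_H I_H^{n(H)}$ and that each $I_H\varepsilon$ is generated by a non-zero-divisor of $\QQ_p[G]\varepsilon$, hence free of rank one over $\ZZ_p[G](1-e_J)$.
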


\begin{proof} For each intermediate field $K$ of $F/\QQ$ we set $X_{K} := {\rm Sel}^{\rm str}_{p}(E/K)^\vee$.

To prove claim (i) we note first that the given conditions imply $\varepsilon = 1$ (see the end of the proof of Proposition \ref{kato prop}). To prove claim (i) it therefore suffices to show that the given conditions also imply  that $X_F$ vanishes, and hence that ${\rm Fitt}_{\ZZ_p[G]}^0(X_F) = \ZZ_p[G]$. But this is true since $X_{F,{\rm tor}} = \sha^{\rm str}_p(E/F)^\vee$ whilst Lemma \ref{elliptic coh}(iii) implies that if ${\rm rk}_\phi(E(F))\le 1$ for all $\phi$ in $\widehat {G}$, then $X_F$ is finite.

To prove claim (ii) we note first that, for each non-trivial subgroup $J$ of $G$, there are isomorphisms of $J$-coinvariants
\begin{equation}\label{yak arg}   X_{F,J} \simeq H^1_{\cF_{\rm can}^\ast}(F,T^\vee(1))^\vee _J \simeq H^1_{\cF_{\rm can}^\ast}(F^J,T^\vee(1))^\vee \simeq X_{F^J}.\end{equation}
Here the first and third isomorphisms follow from Example \ref{selmer exams}(ii) and the second from \cite[Cor. 3.8]{bss}. In particular, since $X_{F^J,{\rm tor}} = \sha^{\rm str}_p(E/F^J)^\vee$ is assumed to vanish, the module $X_{F,J}$ is torsion-free.

Now the Tate cohomology group $\hat H^{-1}(J,X_{F,{\rm tf}})$ is equal to $(X_{F,{\rm tf}})_{J,{\rm tor}}$. Thus, by taking $J$-coinvariants of the tautological exact sequence $0 \to X_{F,{\rm tor}} \to X_F \to X_{F,{\rm tf}}\to 0$, one finds that $\hat H^{-1}(J,X_{F,{\rm tf}})$ is isomorphic to a quotient of $(X_{F,J})_{\rm tor}$ and so vanishes.

Next we note that, as $G$ is a cyclic $p$-group and $\hat H^{-1}(J,X_{F,{\rm tf}})$ vanishes for all subgroups $J$ of $G$, the main result (Theorem 2.4 and Lemma  5.2) of Yakovlev \cite{yakovlev} implies that $X_F = X_{F,{\rm tf}}$ is isomorphic as a $\ZZ_p[G]$-module to a direct sum of the form $\bigoplus_{J}\ZZ_p[G/J]^{n(J)}$, where $J$ runs over all subgroups of $G$ and each $n(J)$ is a non-negative integer.

Taken in conjunction with the final isomorphism in Lemma \ref{elliptic coh}(i) this in turn implies that $\varepsilon$ is equal to  $1-e_{J_0}$ where $J_0$ is the smallest subgroup of $G$ with $n(J_0) \not= 0$.

To deduce claim (ii) it thus suffices to note that $J_0$ is the largest subgroup of $\cG_F$ for which the quotient $X_F/X_{F^{J_0}} = {\rm Sel}^{\rm str}_{p}(E/F)/{\rm Sel}^{\rm str}_{p}(E/F^{J_0})$ is finite.
\end{proof}


\subsubsection{Verifying the hypotheses} As preparation for the  proofs of Theorems \ref{cor kato} and \ref{theorem ell 2}, we consider the hypotheses that are necessary to apply Theorem \ref{main}.

In the following result we do not assume $K=\QQ$.

\begin{lemma}\label{lemma hyp}\
\begin{itemize}
\item[(i)] The hypotheses (H$_0$) and (H$_4$) are satisfied in all cases.
\item[(ii)] If the image of the Galois representation
$$\rho: G_K \to {\rm Aut}(T) \simeq {\rm GL}_2(\ZZ_p)$$
contains ${\rm SL}_2(\ZZ_p)$, then the hypotheses (H$_1$), (H$_2$) and (H$_3$) are satisfied.
\item[(iii)] If for every $\fq$ in $S \setminus S_\infty(K)$ the group $E(K_\fq)$ has no element of order $p$, then the hypothesis (H$_5$) is satisfied.
\end{itemize}
\end{lemma}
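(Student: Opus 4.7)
For claim (i), hypothesis (H$_4$) is vacuous under the standing assumption $p>3$. For (H$_0$), it suffices to restrict attention to the cofinite set of primes $\fq$ at which $E$ has good reduction and which do not divide $p$: at any such prime the Weil conjectures force every eigenvalue of $\mathrm{Fr}_\fq$ acting on $V_p(E)$ to have complex absolute value $\sqrt{\mathrm{N}\fq}$, so $\mathrm{Fr}_\fq^{p^k}-1$ has nonzero determinant and is thus injective on $V_p(E) \supseteq T$.

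For claim (ii), the large image hypothesis gives (H$_1$) at once: the reduction of $\rho$ modulo $p$ contains $\mathrm{SL}_2(\bF_p)$, which acts irreducibly on $\overline T \cong \bF_p^2$. The essential further point needed for (H$_2$) and (H$_3$) is that $F_{p^\infty}/K$ (and hence also $F(T)_{p^\infty}/K(T_p(E))$) is pro-solvable: it is built by stacking the abelian extensions $F/K$, $K(1)/K$, $K(\mu_{p^\infty})/K$ and a Kummer extension. Since $\mathrm{SL}_2(\ZZ_p)$ is topologically perfect for $p>3$, an iterated commutator argument shows that the image of $G_{F_{p^\infty}}$ in $\mathrm{GL}_2(\ZZ_p)$ still contains $\mathrm{SL}_2(\ZZ_p)$. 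For (H$_2$) we then pick $\tau \in G_{F_{p^\infty}}$ mapping to a non-trivial unipotent element of $\mathrm{SL}_2(\ZZ_p)$, so that $\tau-1$ has rank-one image modulo $p$, and Nakayama's lemma implies that $T/(\tau-1)T$ is free of rank one over $\ZZ_p$. For (H$_3$), the Weil pairing identifies $\overline T^\vee(1)$ with $\overline T$, and inflation-restriction along the tower $K \subseteq K(E[p]) \subseteq F(T)_{p^\infty}$ then reduces the required vanishing to two facts: $H^1(\mathrm{SL}_2(\bF_p), \bF_p^2) = 0$ for $p \geq 5$; and no $\mathrm{SL}_2(\bF_p)$-equivariant quotient of the mod-$p$ abelianization of $\Gal(F(T)_{p^\infty}/K(E[p]))$ is isomorphic to $\overline T$, since that abelianization is built from the adjoint representation $\mathrm{ad}^0$ (coming from the principal congruence subgroup $\Gamma_1 \subset \mathrm{SL}_2(\ZZ_p)$, whose mod-$p$ abelianization is $\mathrm{ad}^0$, irreducible and not isomorphic to $\overline T$ for $p\ge 5$) together with trivial $\mathrm{SL}_2(\bF_p)$-contributions from the solvable Kummer tower.

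For claim (iii), the Weil pairing supplies a $G_K$-equivariant isomorphism $\overline T^\vee(1) \cong \overline T \cong E[p]$. Hence for $\fq \in S \setminus S_\infty(K)$ the group $H^0(K_\fq, \overline T^\vee(1)) = E(K_\fq)[p]$ vanishes by the stated hypothesis, and at archimedean $\fq$ (which, in the ambient case $K = \QQ$, are all real) it vanishes because $E(\RR)$ has only $2$-power torsion while $p$ is odd. The most delicate step of the entire proof is the second half of the inflation-restriction analysis for (H$_3$): one must pin down the $\mathrm{SL}_2(\bF_p)$-module structure of the mod-$p$ abelianization of $\Gal(F(T)_{p^\infty}/K(E[p]))$ precisely enough to rule out $\overline T$ as an irreducible constituent. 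Every other verification reduces either to the Weil bound or to classical structural facts about $\mathrm{SL}_2(\ZZ_p)$ and its congruence subgroups.
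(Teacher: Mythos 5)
Your proof is correct, and most of it tracks the paper's argument closely (Weil bounds for (H$_0$), perfection of $\mathrm{SL}_2(\ZZ_p)$ and a unipotent $\tau$ for (H$_2$), the Weil pairing for (H$_5$)). The one place you genuinely diverge is the inflation--restriction used for (H$_3$): the paper inserts the intermediate field $F_{p^\infty}$, so the restriction term becomes $H^1(\Gal(F(T)_{p^\infty}/F_{p^\infty}),\overline T)\cong H^1(\mathrm{SL}_2(\ZZ_p),\FF_p^2)$, and the whole verification collapses to the single self-contained group-cohomology fact that $H^1(\mathrm{SL}_2(\ZZ_p),\FF_p^2)=0$ (which one then checks via the filtration by $\Gamma_1\subset\mathrm{SL}_2(\ZZ_p)$). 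You instead insert $K(E[p])$, which forces you to understand the abelianization of the larger and less clean group $\Gal(F(T)_{p^\infty}/K(E[p]))$ --- the extension of $\Gal(K(E[p])F_{p^\infty}/K(E[p]))$ by $\Gamma_1\cap\mathrm{SL}_2(\ZZ_p)$ --- and to argue that no $\mathrm{SL}_2(\FF_p)$-equivariant quotient of this abelianization is $\overline T$; this is correct but, as you yourself flag, it is the most delicate part of the argument and you leave the verification that only $\mathrm{ad}^0$ and trivial constituents appear at the level of a sketch. The paper's choice of $F_{p^\infty}$ buys you a cleaner and shorter argument because it pushes all the arithmetic of the auxiliary tower into the first (vanishing) term $H^1(\Gal(F_{p^\infty}/K),H^0(F_{p^\infty},\overline T))$, which dies for free once $H^0(F_{p^\infty},\overline T)=0$, rather than into the restriction term. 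Also a small point on (H$_2$): your iterated-commutator argument using pro-solvability of $F_{p^\infty}/K$ is fine, but the paper's route (identify $\rho(G_{K(\mu_{p^\infty})})$ with $\mathrm{SL}_2(\ZZ_p)$ exactly via the Weil pairing, then note $F_{p^\infty}/K(\mu_{p^\infty})$ is abelian) avoids any appeal to the derived series of a general pro-solvable quotient. Finally, your archimedean remark in (iii) is harmless but unnecessary: (H$_5$) only quantifies over the non-archimedean set $S$ of \S\ref{section hyp}, so there is nothing to check at infinite places.
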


\begin{proof} Note that (H$_4$) is vacuous since we assume $p>3$. Claim (i) therefore follows from the classical fact that, for any prime $\fq\notin S$, every complex root of the polynomial
$$P_\fq(T;x)=\det(1-{\rm Fr}_\fq^{-1}x \mid T)=1-a_\fq {\N}\fq^{-1}x +{\N}\fq^{-1}x^2$$
has absolute value $\sqrt{{\N}\fq}$.

To prove claim (ii) we assume $\im(\rho)$ contains ${\rm SL}_2(\ZZ_p)$. (Note that this condition is independent of the choice of the isomorphism ${\rm Aut}(T)\simeq {\rm GL}_2(\ZZ_p)$.)

In this case (H$_1$) follows easily. To verify (H$_2$) we note that the given assumption combines with the Weil pairing to imply that $\rho(G_{K(\mu_{p^\infty})})={\rm SL}_2(\ZZ_p)$. In addition, since $p>3$ the group ${\rm SL}_2(\ZZ_p)$ is perfect (i.e., it has no nontrivial abelian quotients) and so, as $F_{p^\infty}/K(\mu_{p^\infty})$ is abelian, one has $\rho(G_{F_{p^\infty}})={\rm SL}_2(\ZZ_p)$.

In particular, any element $\tau$ of $G_{F_{p^\infty}}$ such that $\rho(\tau)$ is equal to $\begin{pmatrix}1 & 1 \\0 & 1\end{pmatrix}$ validates hypothesis (H$_2$).

Finally, we shall check (H$_3$). To do this we use the inflation-restriction exact sequence
$$ H^1(F_{p^\infty}/K,H^0(F_{p^\infty},\overline T)) \xrightarrow{\rm Inf} H^1(F(T)_{p^\infty}/K,\overline T) \xrightarrow{\rm Res} H^1(F(T)_{p^\infty}/F_{p^\infty},\overline T).$$
The first term here vanishes since one has $H^0(F_{p^\infty},\overline T)=H^0({\rm SL}_2(\ZZ_p),\overline T)=0$.
Also, since the above argument implies $\Gal(F(T)_{p^\infty}/F_{p^\infty})$ is isomorphic to ${\rm SL}_2(\ZZ_p),$ the last term in the sequence is isomorphic to $H^1({\rm SL}_2(\ZZ_p),\FF_p^2)$ and one checks easily that this group vanishes.

The exactness of the above sequence therefore implies that $H^1(F(T)_{p^\infty}/K,\overline T)$ vanishes, as required to complete the proof of claim (ii).

Finally, we note that claim (iii) is true since the Weil pairing identifies $H^0(K_\fq, \overline T^\vee(1))$ with $H^0(K_\fq,\overline T)=E(K_\fq)[p]$.\end{proof}

\subsubsection{The proof of Theorem \ref{cor kato}}\label{first proof}




We set $\cT:={\rm Ind}_{G_\QQ}^{G_F}(T)$ and recall (from Example \ref{selmer exams}(ii)) that $H^1_{\cF_{\rm can}^\ast}(\QQ,\cT^\vee(1))$ coincides with ${\rm Sel}_{p}^{\rm str}(E/F)$.

Next we note that the given assumptions combine with Lemma \ref{lemma hyp} to imply that the hypotheses (H$_0$), (H$_1$), (H$_2$), (H$_3$), (H$_4$) and (H$_5$) are satisfied in the setting of Theorem \ref{cor kato} and hence that we may apply Theorem \ref{main} in this context.

In particular, Theorem \ref{main}(ii) implies that in this case one has $\chi_{\rm can}(\cT)=1$.

In addition, since hypothesis (H$_1$) asserts that the residual representation $T/p$ is irreducible, Remark \ref{rem euler} defines an Euler system ${}_{c,d}z^{\rm Kato}$ in ${\rm ES}_1(T,\cK)$.

By applying Theorem \ref{main}(iii) to this Euler system we then deduce both that for each non-negative integers $j$ one has
\[ I_j({\cR}_1^{-1}(\cD_{1}({}_{c,d}z^{\rm Kato}))) \subseteq {\rm Fitt}_{\ZZ_p[G]}^j({\rm Sel}_p^{\rm str}(E/F)^\vee)\]
(as required to prove Theorem \ref{cor kato}(i)) and, in addition, that all of these inclusions are equalities if and only if the inclusion for $j=0$ is an equality.

Thus, by Nakayama's lemma and the argument of Lemma \ref{codescent}(ii), Theorem \ref{cor kato}(ii) will follow if we can show that the stated conditions (c), (d) and (e) are enough to imply that
\begin{equation}\label{BK eq}I({}_{c,d}z_\QQ^{\rm Kato}) = {\rm Fitt}_{\ZZ_p}^0({\rm Sel}_p^{\rm str}(E/\QQ)^\vee). \end{equation}

To show this we note first that, if $E$ has good reduction at $p$, then ${}_{c,d}z_F^{\rm Kato}$ and $z_F^{\rm Kato}$ differ by an element of  $\ZZ_p[G]^\times$ and so in the argument, for any intermediate field $F'$ of $F/\QQ$, one can replace ${}_{c,d}z_{F'}^{\rm Kato}$ by $z_{F'}^{\rm Kato}$. 

Next we note that the stated condition (e) combines with Proposition \ref{kato cor}(iii) to imply that $z_\QQ^{\rm Kato} = \eta_{\QQ}^{\rm BSD}$.

The required equality (\ref{BK eq}) is therefore true if and only if it is true with ${}_{c,d}z_\QQ^{\rm Kato}$ replaced by $\eta_{\QQ}^{\rm BSD}$. To complete the proof of claim (ii) it thus suffices to deduce the latter equality from the validity of ${\rm BSD}_p(E/\QQ)$.

To do this we note that, since (H$_5$) is satisfied, one has ${\rm Sel}_p^{\rm str}(E/\QQ)^\vee = H^2(\cO_{\QQ,S},T)$ (as a consequence of Lemma \ref{compare} and Example \ref{selmer exams}(iii)). It is then enough to note that the validity of ${\rm BSD}_p(E/\QQ)$ combines with Remark \ref{ex L1}(ii) and Theorem \ref{prop Xi} to imply that $I(\eta_{\QQ}^{\rm BSD})
= {\rm Fitt}_{\ZZ_p}^0(H^2(\cO_{\QQ,S},T))$.

To prove claim (iii) we note that the given conditions combine with claim (ii) to imply that $I(z_F^{\rm Kato}) = {\rm Fitt}_{\ZZ_p[G]}^0({\rm Sel}_p^{\rm str}(E/F)^\vee)$.

It is then enough to note that for each $\chi$ in $\widehat G$ it is clear that
\[ e_\chi(\QQ_p^c\otimes_{\ZZ_p}I(z_F^{\rm Kato})) = 0 \Longleftrightarrow e_\chi \cdot z_F^{\rm Kato} = 0, \]
%
whilst one also has
\begin{align*}e_\chi(\QQ_p^c\otimes_{\ZZ_p}{\rm Fitt}_{\ZZ_p[G]}^0({\rm Sel}_p^{\rm str}(E/F)^\vee))  = 0 &\Longleftrightarrow {\rm dim}_{\QQ_p^c}(e_\chi(\QQ_p^c\otimes_{\ZZ_p}{\rm Sel}_p^{\rm str}(E/F)^\vee)) > 0\\
 &\Longleftrightarrow {\rm rk}_\chi( E(F)) > 1,\end{align*}
where the first equivalence follows from a general property of zeroth Fitting ideals and the second from Lemma \ref{elliptic coh}(iii) and the (assumed) finiteness of $\sha(E/F)[p^\infty]$.

This completes the proof of Theorem \ref{cor kato}.

\subsubsection{The proof of Theorem \ref{theorem ell 2}} Since we are assuming that the conditions (a), (b), (c), (d) and (e) of Theorem \ref{cor kato} are satisfied, the argument of \S\ref{first proof} gives an equality
\[ I(z_F^{\rm Kato}) = {\rm Fitt}_{\ZZ_p[G]}^0(H^2(\cO_{F,S},T)).\]

The given hypotheses also imply that the argument of Theorem \ref{cor1} applies in this case to show that the validity of ${\rm TNC}(h^1(E_{/F})(1),\cR)$ will follow from the latter equality if one has $I(z_F^{\rm Kato}) = I(\eta_{F}^{\rm BSD})$.

It is thus enough to note that the stated equalities in Theorem \ref{theorem ell 2}(ii) combine with Proposition \ref{kato prop} to imply that
 $z_F^{\rm Kato} = \eta_{F}^{\rm BSD}$.

This completes the proof of Theorem \ref{theorem ell 2}.

\begin{acknowledgements}
The third author would like to thank Masato Kurihara for illuminating discussions, especially about Kato's zeta elements.
\end{acknowledgements}


\begin{thebibliography}{99999999}
%
%
%
%
%
%

\bibitem{BD} M. Bertolini, H. Darmon,
\newblock Kato's Euler system and rational points on elliptic curves I: a $p$-adic Beilinson formula,
\newblock Isr. J. Math. {\bf 199}(1) (2014) 163-188.

\bibitem{Bley} W. Bley,
\newblock The equivariant Tamagawa number conjecture and modular symbols,
\newblock Math. Ann. {\bf 356} (2013), 179-190.

%
%
%
%
%
\bibitem{bk} S. Bloch, K. Kato,
\newblock $L$-functions and Tamagawa numbers of motives,
\newblock in The Grothendieck Festschrift Vol I, Progress in Math. Vol 86,
Birkh\"{a}user (1990) 333-400.
%
%
%
%
%
%
%
%
%
%
%
%
%
%
%
%
%
%
%
%
%
\bibitem{BFetnc} D. Burns, M. Flach,
\newblock Tamagawa numbers for motives with (non-commutative) coefficients,
\newblock Doc.\ Math.\ \textbf{6} (2001) 501-570.
%
%
%
\bibitem{BG} D. Burns, C. Greither,
\newblock On the Equivariant Tamagawa Number Conjecture for Tate motives,
\newblock Invent. math. $\mathbf{153}$ (2003) 303-359.
%
%
%



\bibitem{bks1} D. Burns, M. Kurihara, T. Sano,
\newblock On zeta elements for $\mathbb{G}_m$,
\newblock Doc. Math. \textbf{21} (2016) 555-626.
%
\bibitem{bks2} D. Burns, M. Kurihara, T. Sano,
\newblock On Iwasawa theory, zeta elements for $\mathbb{G}_m$ and the equivariant Tamagawa number conjecture,
\newblock Algebra \& Number Theory {\bf 11} (2017) 1527-1571.

\bibitem{bks2-2} D. Burns, M. Kurihara, T. Sano,
\newblock On Stark elements of arbitrary weight and their $p$-adic families,
\newblock to appear in Proc. Iwasawa 2017 conference.


\bibitem{bss} D. Burns, R. Sakamoto, T. Sano,
\newblock On the theory of higher rank Euler, Kolyvagin and Stark systems II: the general theory,
\newblock preprint, arXiv:1805.08448.


\bibitem{sbA} D. Burns, T. Sano,
\newblock On the theory of higher rank Euler, Kolyvagin and Stark systems,
\newblock preprint, arXiv:1612.06187v1.

\bibitem{Buyuk} K. B\"uy\"ukboduk,
\newblock Kolyvagin systems of Stark units,
\newblock J. reine u. Angew. Math. {\bf 631} (2009) 85-107.

\bibitem{Buyuk lambda} K. B\"uy\"ukboduk,
\newblock $\Lambda$-adic Kolyvagin Systems,
\newblock  Int. Math. Res. Not. IMRN  2011,  no. {\bf 14}, 3141--3206.

\bibitem{buyuk perrin} K. B\"uy\"ukboduk,
\newblock Beilinson-Kato and Beilinson-Flach elements, Coleman-Rubin-Stark classes, Heegner points and the Perrin-Riou Conjecture,
\newblock preprint, arXiv:1511.06131v3.

\bibitem{BPS} K. B\"uy\"ukboduk, R. Pollack, S. Sasaki,
\newblock $p$-adic Gross-Zagier formula at critical slope and a conjecture of Perrin-Riou - I,
\newblock preprint, arXiv:1811.08216v1.

\bibitem{chinburg} T. Chinburg,
\newblock On the Galois structure of algebraic integers and $S$-units,
\newblock Invent. math. {\bf 74} (1983) 321-349.

\bibitem{cogr} P. Cornacchia, C. Greither,
\newblock Fitting ideals of class groups of real fields with prime
   power conductor,
\newblock J. Number Th. {\bf 73} (1998), 459-471.

%
%
%
%
%
%
%
%
%
\bibitem{CR} C. W. Curtis, I. Reiner,
\newblock Methods of Representation Theory, Vol. I and II,
\newblock John Wiley and Sons, New York, 1987.
%
%
%
%
%
%
%
%
%
%
%
%
%
%
%
\bibitem{fpr} J.-M. Fontaine, B. Perrin-Riou,
\newblock Autour des conjectures de Bloch et Kato: cohomologie galoisienne et valeurs de fonctions $L$,
\newblock In: Motives (Seattle) Proc. Symp. Pure Math. \textbf{55}, I,
(1994) 599-706.
%


%
%
%
%
%
%
%
%
%
%
%
%
%
%
%
%
%

\bibitem{Gp} B. H. Gross,
\newblock On $p$-adic $L$-series at $s=0$,
\newblock J. Fac. Sci. Univ. Tokyo, Sect. IA, \textbf{28} (1982) 979-994.


\bibitem{JSW} D. Jetchev, C. Skinner, X. Wan,
\newblock The Birch and Swinnerton-Dyer formula for elliptic curves of analytic rank one,
\newblock Camb. J. Math. {\bf 5} (2017) 369-434.

\bibitem{jones} J. Jones,
\newblock Plater's $p$-adic orthogonality relation for abelian varieties,
\newblock Houston J. Math. {\bf 21} (1995) 261-282.
%
%
%
%
%
%
%
\bibitem{kato-kodai} K. Kato,
\newblock Iwasawa theory and $p$-adic Hodge theory,
\newblock Kodai Math. J. {\bf 16} no. 1 (1993) 1-31.
%
%
\bibitem{katolecture} K. Kato,
\newblock Lectures on the approach to Iwasawa theory of Hasse-Weil $L$-functions via $B_{\rm dR}$, Part I,
\newblock In: Arithmetical Algebraic Geometry (ed. E. Ballico), Lecture Notes in Math. 1553 (1993) 50-163, Springer, New York, 1993.
%
%

\bibitem{kato} K. Kato,
\newblock $p$-adic Hodge theory and values of zeta functions of modular forms,
\newblock Ast\'erisque, (295):ix, 117-290, 2004. Cohomologies $p$-adiques et applications arithm\'etiques. III.


%
\bibitem{Kings} G. Kings,
\newblock The equivariant Tamagawa number conjecture and the Birch-Swinnerton-Dyer conjecture,
\newblock in: Arithmetic of $L$-functions, IAS/Park City Math. Ser. \textbf{18}, Amer. Math. Soc., Providence, RI, (2011) 315-349.
%
%
%
%

\bibitem{mk}  M. Kurihara,
\newblock The structure of Selmer groups of elliptic curves and modular symbols,
\newblock Iwasawa Theory 2012, 317-356, Contrib. Math. Comput. Sci., 7, Springer, Heidelberg, 2014.

\bibitem{mk2} M. Kurihara,
\newblock Refined Iwasawa theory for $p$-adic representations and the structure of Selmer groups,
\newblock M\"unster J. Math. {\bf 7} (2014) 149-223.

%
%
%
\bibitem{MRkoly} B. Mazur, K. Rubin,
\newblock Kolyvagin systems,
\newblock Mem. Amer. Math. Soc. \textbf{799} (2004).
%
%
\bibitem{MRselmer} B. Mazur, K. Rubin,
\newblock Controlling Selmer groups in the higher core rank case,
\newblock J. Th. Nombres Bordeaux {\bf 28} (2016) 145-183.
%
\bibitem{MRGm} B. Mazur, K. Rubin,
\newblock Refined class number formulas for $\GG_{m}$,
\newblock J. Th. Nombres Bordeaux {\bf 28} (2016) 185-211.
%
%
%
\bibitem{milne} J. S. Milne,
\newblock Arithmetic Duality Theorems,
\newblock Perspectives in Mathematics {\textbf 1}, Academic Press, 1986.
%
%
%
%
%
%
%
\bibitem{north} D. G. Northcott,
\newblock Finite free resolutions,
\newblock Cambridge Univ. Press, Cambridge New York 1976.
%

\bibitem{PR} B. Perrin-Riou,
\newblock Fonctions $L$ $p$-adiques d'une courbe elliptique et points rationnels,
\newblock Ann. Inst. Fourier (Grenoble) {\bf 43} no.4 (1993) 945-995.


%
%
%
%
%
%
\bibitem{rubincrelle} K. Rubin,
\newblock Stark units and Kolyvagin's `Euler systems',
\newblock J. reine Angew. Math. {\bf 425} (1992) 141-154.
%
\bibitem{rubinstark} K. Rubin,
\newblock A Stark Conjecture `over $\bz$' for abelian $L$-functions with multiple zeros,
\newblock Ann. Inst. Fourier \textbf{46} (1996) 33-62.
%
\bibitem{R} K. Rubin,
\newblock Euler systems,
\newblock Annals of Math. Studies \textbf{147}, Princeton Univ. Press, 2000.
%
\bibitem{sakamoto} R. Sakamoto,
\newblock Stark systems over complete regular local rings,
\newblock to appear in Algebra \& Number Theory.
%
\bibitem{sano} T. Sano,
\newblock Refined abelian Stark conjectures and the equivariant leading term conjecture of Burns,
\newblock Compositio Math. {\bf 150} (2014) 1809-1835.
%
%

\bibitem{serre} J.-P. Serre,
\newblock Abelian $\ell$-Adic Representations and Elliptic Curves,
\newblock Advanced Book Classics, Addison-Wesley, 1989.
%
%

\bibitem{SU} C. Skinner, E. Urban,
\newblock The Iwasawa main conjectures for GL$_2$,
\newblock Invent. Math. {\bf 195} (2014) 1-277.

%
%
%
%
%
%
%
%
%
%
%
%
%
%
%
%
%
\bibitem{tatebook} J. Tate,
\newblock Les Conjectures de Stark sur les Fonctions $L$ d'Artin
en $s=0$ (notes par D. Bernardi et N. Schappacher),
\newblock Progress in Math., \textbf{47}, Birkh\"auser, Boston, 1984.
%
%
%

\bibitem{venerucci} R. Venerucci,
\newblock Exceptional zero formulae and a conjecture of Perrin-Riou,
\newblock Invent. Math. {\bf 203} (2016) 923-972.


%
%
%
%
%

%

\bibitem{yakovlev} A. V. Yakovlev,
\newblock Homological definability of $p$-adic representations of a
ring with power basis,
\newblock Izvestiya A N SSSR, ser. Mat. {\bf 34} (1970), 321-342. (Russian)

\end{thebibliography}
\end{document}